\documentclass[reqno]{amsart}

\usepackage{amssymb,amsmath,amsthm}
\usepackage{color}
\usepackage{enumerate}
\usepackage{fullpage}
\usepackage{url}
\usepackage[Symbol]{upgreek}
\usepackage{hyperref}
\usepackage[T1]{fontenc}
\usepackage{mathtools}
\usepackage{tikz}
\usetikzlibrary{cd}
\usepackage{xspace} 
\usepackage{comment}

\numberwithin{equation}{section}
\newtheorem{lemma}{Lemma}[section]
\newtheorem{thm}[lemma]{Theorem}
\newtheorem{corollary}[lemma]{Corollary}

\newtheorem*{conjecture*}{Conjecture}
\newtheorem{proposition}[lemma]{Proposition}
\newtheorem{open problem}[lemma]{Open problem}

\newtheorem*{fact*}{Fact}
\newtheorem{fact}[lemma]{Fact}

\newtheorem*{claim*}{Claim}

\newtheorem{thmA}{Theorem}

\newcommand{\thistheoremname}{}
\newtheorem*{genericthm*}{\thistheoremname}
\newenvironment{namedthm*}[1]
{\renewcommand{\thistheoremname}{#1}%
\begin{genericthm*}}
{\end{genericthm*}}

\theoremstyle{definition}

\newtheorem{definition}[lemma]{Definition}

\newtheorem{remark}[lemma]{Remark}
\newtheorem{final remark}[lemma]{Final remark}
\newtheorem{example}[lemma]{Example}

\DeclareMathOperator{\acl}{acl}
\DeclareMathOperator{\Aff}{Aff}
\DeclareMathOperator{\cl}{c\ell}

\DeclareMathOperator{\dcl}{dcl}

\DeclareMathOperator{\eq}{eq}

\DeclareMathOperator{\Jet}{Jet}

\DeclareMathOperator{\ring}{ring}
\DeclareMathOperator{\rk}{rk}

\DeclareMathOperator{\trdeg}{trdeg}
\DeclareMathOperator{\Th}{Th}
\DeclareMathOperator{\tp}{tp}
\DeclareMathOperator{\Inv}{Inv}

\newcommand{\M}{\mathbb{M}}

\newcommand{\N}{\mathbb{N}}
\newcommand{\PP}{\mathbb{P}}

\newcommand{\R}{\mathbb{R}}

\newcommand{\cC}{\mathcal C}

\newcommand{\cL}{\mathcal L}

\newcommand{\cP}{\mathcal P}

\newcommand{\cX}{\mathcal X}
\newcommand{\cY}{\mathcal Y}
\newcommand{\cZ}{\mathcal Z}

\newcommand{\av}{{\bar a}}
\newcommand{\bv}{{\bar b}}
\newcommand{\cv}{{\bar c}}
\newcommand{\dv}{{\bar d}}

\newcommand{\rv}{{\bar r}}

\newcommand{\uv}{{\bar u}}

\newcommand{\x}{{\bar x}}
\newcommand{\y}{{\bar y}}
\newcommand{\z}{{\bar z}}

\newcommand{\zero}{{\bar 0}}

\newcommand{\aclL}{\acl_{\cL}}
\newcommand{\acleq}{\acl_{\cL}^{\eq}}
\newcommand{\aclD}{\acl_{\Delta}}
\newcommand{\clD}{\cl^\Delta}

\newcommand{\dclL}{\dcl_{\cL}}
\newcommand{\dclD}{\dcl_{\Delta}}

\newcommand{\dimL}{\dim_{\cL}}

\newcommand{\On}{\mathbf{On}}
\newcommand{\LD}{\cL^\Delta}

\newcommand{\rkD}{\rk^\Delta}
\newcommand{\rkL}{\rk_{\cL}}
\newcommand{\rkeq}{\rk_{\cL}^{\eq}}

\newcommand{\TD}{T^{\Delta}}
\newcommand{\TDn}{T^{\Delta,\operatorname{nc}}}
\newcommand{\Tb}{T^{\Delta,?}}

\newcommand{\TDG}{\TD_g}
\newcommand{\TDnG}{\TDn_g}
\newcommand{\TG}{\Tb_g}
\newcommand{\tpL}{\tp_{\cL}}
\newcommand{\tpD}{\tp_{\LD}}

\newcommand{\Uth}{U^{\th}}

\DeclareTextSymbol{\thh}{T1}{254}
\def\th{\textnormal{\thh}}

\makeatletter
\def\indsym#1#2{%
  \setbox0=\hbox{$\m@th#1x$}%
  \kern\wd0%
  \hbox to 0pt{\hss$\m@th#1\mid$\hbox to 0pt{$\m@th#1^{#2}$}\hss}%
  \lower.9\ht0\hbox to 0pt{\hss$\m@th#1\smile$\hss}%
  \kern\wd0}
\newcommand{\ind}[1][]{\mathop{\mathpalette\indsym{#1}}}

\def\nindsym#1#2{%
  \setbox0=\hbox{$\m@th#1x$}%
  \kern\wd0%
  \hbox to 0pt{\hss$\m@th#1\not$\kern1.4\wd0\hss}
  \hbox to 0pt{\hss$\m@th#1\mid$\hbox to 0pt{$\m@th#1^{\,#2}$}\hss}%
  \lower.9\ht0\hbox to 0pt{\hss$\m@th#1\smile$\hss}%
  \kern\wd0}
\newcommand{\nind}[1][]{\mathop{\mathpalette\nindsym{#1}}}

\def\thind{\ind[\th]}
\def\Tind{\ind[\operatorname{M}]} 
\def\eqind{\ind[\eq]}
\def\find{\ind[\operatorname{f}]}

\def\Dind{\ind[\Delta]}
\renewcommand{\preceq}{\preccurlyeq}

\renewcommand{\leq}{\leqslant}
\renewcommand{\geq}{\geqslant}
\renewcommand{\epsilon}{\varepsilon}

\author{Antongiulio Fornasiero}
\address{Dipartimento di Matematica e Informatica ``Ulisse Dini,'' Viale Morgagni, 67/a, 50134 Firenze, Italy}
\email{antongiulio.fornasiero@gmail.com}

\author{Elliot Kaplan}
\address{Universit\'{e} de Mons, D\'{e}partement de Math\'{e}matique, Avenue Maistriau 15, 7000 Mons, Belgium}
\email{elliot.kaplan@umons.ac.be}

\author{Angus Matthews}
\address{School of Mathematics, University of Leeds, Leeds LS2 9JT, United Kingdom}
\email{mmamat@leeds.ac.uk}

\title{Geometric fields, ranks, and generic derivations}

\date{\today}

\subjclass{Primary: 03C45; 
Secondary: 12L12; 
12H05; 
03C64
}
\keywords{Generic derivation, geometric theory, stability, simplicity, rosyness, neostability, Kolchin polynomial, o-minimality}

\begin{document}
\maketitle
\begin{abstract}
In this note, we show various minimality results for a geometric theory of fields $T$: $T$ is stable if and only if it is strongly minimal, $T$ is simple if and only if it has SU-rank~1, and $T$ is rosy if and only if $T$ is surgical.
Combining the first equivalence with an earlier result of Hrushovski, we deduce that algebraically bounded stable fields are precisely expansions of algebraically closed fields by constants.

We then consider algebraically bounded and o-minimal expansions of fields with generic derivations.
We show that if $\M$ is a simple algebraically bounded structure and $\Delta$ is a generic tuple of derivations on $\M$, then $(\M;\Delta)$ is supersimple if and only if the derivations commute.
Similarly, if $\M$ is an o-minimal structure and $\Delta$ is a generic tuple of $T$-derivations on $\M$, then $(\M;\Delta)$ is superrosy if and only if the derivations commute.
We obtain explicit bounds on ranks using the Kolchin polynomial.
\end{abstract}

\section*{Introduction}
\subsection*{Geometric theories of fields}
In many interesting fields and expansions of fields, the model-theoretic algebraic closure $\acl$ has the exchange property: $a \in \acl(A,b)\setminus \acl(A) \Rightarrow b \in \acl(A,a)$.
Examples include:
\begin{enumerate}
\item All algebraically bounded fields (for example, algebraically closed and pseudofinite fields)~\cite{vdD89}.
\item All o-minimal expansions of ordered fields~\cite[Theorem 4.1]{PS86}.
\item All $C$-minimal expansions of valued fields~\cite{Jo24}.
\item All 1-h-minimal expansions of characteristic zero valued fields.
In the equicharacteristic case, this is~\cite[Lemma 5.3.5]{CHRK22}.
The mixed characteristic case follows by considering the finest equicharacteristic coarsening and applying~\cite[Lemma 2.5.3]{CHRKV23}.
\end{enumerate}
These fields necessarily eliminate the quantifier $\exists^\infty$ (any definable family of finite sets has a bound on the cardinalities of the sets), and so definable sets come equipped with a definable dimension.
We call a theory of fields in which $\acl$ always satisfies exchange a \textbf{geometric theory of fields}.

Often, exchange for $\acl$ follows from some form of minimality.
For example, if $T$ is strongly minimal then $\acl$ satisfies exchange in all models of $T$.
More generally, if $T$ is supersimple of SU-rank~1 (or even superrosy of \thh-rank~1), then $\acl$ satisfies exchange.
It is folklore that superrosy theories of \thh-rank~1 are precisely the \emph{surgical theories} of Gagelman~\cite{Ga05}; we provide a proof in Appendix~\ref{sec:appendixa}.
Of course, many geometric theories of fields are not rosy; any field with a nontrivial definable valuation is not rosy~\cite[Fact 1.8]{Kr15}.

In the first part of this note, we show that for theories of fields, being geometric is actually \emph{equivalent} to minimality under appropriate assumptions.
More precisely, we show:

\begin{thmA}[Theorems~\ref{thm:stablestronglyminimal},~\ref{thm:simple_supersimple_rank_1}, and~\ref{thm:rosy_surgical}]\label{theoremA}
Let $T$ be a geometric theory of fields.
\begin{enumerate}
\item If $T$ is stable, then $T$ is strongly minimal.
\item If $T$ is simple, then $T$ is supersimple of SU-rank~1.
\item If $T$ is rosy, then $T$ is surgical.
\end{enumerate}
\end{thmA}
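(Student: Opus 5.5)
The three parts share one template. Work in a monster model $\M \models T$ with field $K$ and fix a small base $A$. Since $T$ is geometric, $\acl$ is a pregeometry, giving an additive dimension $\dim$ on tuples and definable sets. Each of strong minimality, SU-rank~1 and \thh-rank~1 says that the relevant independence notion ($\find$, $\find$, $\thind$) coincides with $\acl$-independence. So the plan in each case is the following: assume some $a$ with $a \notin \acl(A)$ forks, respectively \thh-forks, with some $b$ over $A$ while still $a\notin\acl(Ab)$, and derive a contradiction from the field structure. For stability we can say more: $T$ is strongly minimal iff every non-algebraic type over a model has a unique nonalgebraic extension. Stable plus geometric already gives rank~1 for the generic $1$-type, so it suffices to show that $K$ has a unique generic type over any set, i.e.\ a unique non-forking extension of the $\dim$-generic $1$-type.

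The key field-theoretic step is a \emph{generic translation/multiplication trick}. Suppose $a\in K$ is $\acl$-generic over $Ab$ but $a\nind_A b$. Take $c\in K$ generic over $Aab$ with respect to $\acl$-dimension, and consider $a' = a + c$, or more usefully $a' = ca + d$ for $(c,d)$ generic. Then $a'$ is $\acl$-generic over $Abcd$ as well. By additivity of $\dim$ we get $\dim(c,d/Aa'b)=2$, so $(c,d)$ is $\acl$-independent from $a'b$ over $A$.

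Now use the dividing witness. Let $\varphi(x,b)\in\tp(a/Ab)$ divide, say $\{\varphi(x,b_i)\}$ is $k$-inconsistent for an indiscernible sequence $(b_i)$. Translate this to a formula $\psi(x',b,c,d)=\varphi((x'-d)/c,b)$ satisfied by $a'$. Varying $(c,d)$ over the generic $2$-dimensional set, the sets $\psi(K,b,c,d)$ cover a generic part of $K$. The aim is to show that every dimension-$1$ definable set $X$ is non-forking: the affine group $\Aff$ acts on $K$ transitively with $\acl$-generic stabilizers, so finitely many generic affine images $g_1X,\dots,g_nX$ of $X$ cover $K$ minus a finite set. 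This is the standard argument that a $1$-dimensional definable subset of a field "generically" covers it. To get it, apply the exchange-based dimension computation to $\{(g,y): y\in gX\}$: that set has dimension $3$ in the $3$-dimensional space $\Aff\times K$, so for a generic $y$ the fibre $\{g : y\in gX\}$ has dimension $2$, and compactness gives finitely many translates. Since $x=x$ does not fork, some $g_iX$ and hence $X$ does not fork. In simple/stable theories, forking equals dividing and is invariant under automorphisms, so every non-algebraic formula is non-forking over $A$. This gives SU-rank~1 in the simple case. In the rosy case the same argument runs with \thh-forking, using the fact that \thh-forking is also preserved by definable bijections, so $\Uth=1$, i.e.\ surgical by the Appendix~\ref{sec:appendixa} equivalence.

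For (1), once SU-rank~1 holds, stability gives that the generic type over a model is stationary exactly when $K$ is connected. Connectedness of a stable field, i.e.\ having no definable subgroup of finite index, is classical: a stable field has no proper finite-index definable subgroup of $(K,+)$ or $(K^\times,\cdot)$. Hence the generic is unique, and since every non-algebraic formula in one variable is generic by the covering argument, each definable subset of $K$ is finite or cofinite. That is strong minimality.

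The main obstacle is the covering step: showing from exchange alone that finitely many affine translates of any infinite definable $X\subseteq K$ cover all but finitely many points, uniformly enough to apply compactness over an arbitrary base. The delicate part is tracking parameters, since the $g_i$ must be chosen generic over the parameters of $X$. I would handle this by choosing $g_1,\dots,g_n$ as a Morley-type $\acl$-independent sequence and using elimination of $\exists^\infty$ to bound the exceptional set.
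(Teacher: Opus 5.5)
There is a genuine gap: the covering step on which your whole argument rests is false. You claim that finitely many affine images $g_1X,\dots,g_nX$ of any infinite definable $X\subseteq K$ cover $K$ minus a finite set.

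In a real closed field, which is rosy and indeed surgical, take $X=(0,1)$. Every affine image is a bounded interval, so no finite union of them is cofinite.

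It fails in the simple case too. Take ACF with a generic predicate $P$, which is supersimple of SU-rank~$1$, as the paper notes. Given distinct affine maps $g_1,\dots,g_n$ and $y$ transcendental over their coefficients, the points $g_1^{-1}y,\dots,g_n^{-1}y$ are distinct and non-algebraic. The genericity axioms then give infinitely many such $y$ with $g_i^{-1}y\notin P$ for all $i$, i.e.\ $y\notin\bigcup_i g_iP$.

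Covering by finitely many translates is a feature of generic sets in \emph{stable} groups; non-forking or non-\thh-forking sets in simple or rosy theories need not have it. Your dimension count cannot see the difference: that $\{g : y\in gX\}$ is $2$-dimensional for generic $y$ holds in every geometric field, including both examples above. Compactness would require that no $y$ generic over a small model $M$ avoids every $gX$ with $g$ an affine map over $M$, and that is exactly what fails (take $y$ infinitely large over $M$ in RCF). So nothing in your argument shows that a nonalgebraic formula does not fork. Note also that the dividing witness $\psi(x',b,c,d)$ you set up is never actually used.

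The missing idea is to argue by contrapositive and use affine maps to \emph{propagate} a dividing pattern down a tree, rather than to cover $K$. Assuming some infinite member of a family $\cY=(Y_z)_{z\in Z}$ $k$-divides (resp.\ \thh-divides), the paper builds dividing trees of every finite depth, in two cases.
In Case~1, rescaled copies $a+(c-b)Y_{z_i}$ of a $k$-inconsistent sequence can be made to meet the current branch set $X$ in infinitely many points, so the tree lives in $\Aff(\cY)$.
Otherwise, the sets $X^\cY_{a,b,z_i}$ form an $\ell$-inconsistent family. Lemma~\ref{lem:sufficiently_cover_implies_many_subsets}, which rests on the infinite fibres of $\Inv_X(\alpha)$ from Lemma~\ref{lem:very_strong_JY}, shows that for independent $(a,b)\in W^2$ each of these sets meets the branch set $W$ infinitely. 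So the tree lives in $\cX$.

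Your reduction of (1) to (2) via connectedness of stable fields is sound in principle: U-rank~$1$ plus connectedness of $(K,+)$ gives a unique nonalgebraic $1$-type over each model. But it inherits the gap in (2), and your remark that stable plus geometric ``already gives rank~$1$'' is precisely what needs proof. The paper proves (1) directly and more cheaply. By $2$-transitivity of the affine group and a rank count (Lemma~\ref{lem:intersections}), some affine image of an infinite coinfinite $X$ splits any infinite definable $Z$ into two infinite pieces. Hence $\Aff(X)$ has the binary tree property and $T$ is unstable.
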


We note that these minimality results do not hold for Kim-independence in NSOP$_1$-theories; see Remark~\ref{rem:kim}.

A theory $T$ is said to be \textbf{algebraically bounded} if model-theoretic algebraic closure agrees with field-theoretic algebraic closure over $\PP\coloneqq \dcl(\emptyset)$ in every model $\M\models T$.
That is, if 
\[
a \in \aclL(B) \iff \trdeg(a|\PP(B)) = 0
\]
for $a\in \M$ and $B \subseteq \M$.
This is not van den Dries' original definition of algebraic boundedness, but it is equivalent~\cite{JY23}.
Combining Theorem~\ref{theoremA} with an earlier result of Hrushovski~\cite{Hr92}, we obtain:

\begin{namedthm*}{Corollary~\ref{cor:stable_acf_constants}}
Any algebraically bounded stable field is an expansion of an algebraically closed field by constants.
\end{namedthm*}

On the other hand, Hrushovski demonstrates that there are many strongly minimal expansions of an algebraically closed field (all of which are necessarily geometric)~\cite{Hr92}.
The rigidity observed above for algebraically bounded stable fields does not hold for simple fields.
Indeed, any bounded perfect pseudo-algebraically closed (PAC) field is algebraically bounded and supersimple of SU-rank~1~\cite{Hr02}, as is the expansion of an algebraically closed field by a generic predicate~\cite{CP98}.
Of course, it is conjectured that the only supersimple pure fields are bounded perfect PAC which, combined with Theorem~\ref{theoremA}, implies that these are also the only geometric simple pure fields.
There is a similar conjecture for superrosy pure fields, where PAC is replaced by \emph{pseudo-real closed}~\cite{Kr15}.
This can be similarly extended to geometric rosy pure fields using Theorem~\ref{theoremA}.

\subsection*{Expansions by generic derivations}
Beginning in Section~\ref{sec:derivations}, we turn our attention to ranks on geometric fields (always of characteristic zero) equipped with a generic tuple of derivations.
As of now, there is no general framework for expanding a geometric field by derivations and obtaining a model completion (the naive approach fails in general, see~\cite{FT25b}), though there are some frameworks in which things can be done \emph{assuming} the existence of a model companion~\cite{LSM25,Po25}.
While our methods can likely be generalized to these frameworks, we restrict our attention to the two settings where the expansion by a tuple of derivations is known to admit a model completion: algebraically bounded structures~\cite{FT25} and o-minimal structures~\cite{FK21}.
Note that in~\cite{FK21}, the model companion is only shown to exist for o-minimal structures with several \emph{commuting} compatible derivations; though, as we show in Appendix~\ref{sec:appendix}, the model companion also exists in the noncommuting case.

Let $T$ be an algebraically bounded or o-minimal theory extending the theory of fields of characteristic zero.
Let $\TDn$ be the theory of models of $T$ equipped with a finite tuple $\Delta$ of derivations (compatible in the o-minimal case; see Definition~\ref{def:Tderivation}), and let $\TD$ extend $\TDn$ by axioms stating that the derivations in $\Delta$ commute.
Both $\TD$ and $\TDn$ have model completions, which we denote by $\TDG$ and $\TDnG$ respectively.
In Corollary~\ref{cor:derivation-like}, we show that the theories $\TD$ and $\TDn$ fit into the framework of \emph{derivation-like theories} from~\cite{LSM25}.
We use this to deduce the following:

\begin{thmA}[Corollaries~\ref{cor:Dindcoincide},~\ref{cor:simple}, and~\ref{cor:NSOP1}]\label{theoremB}\
\begin{enumerate}
\item\label{B1} If $T$ is simple, then so are $\TDG$ and $\TDnG$.
\item\label{B2} If $T$ is NSOP$_1$, then so are $\TDG$ and $\TDnG$.
\item\label{B3} If $\TDG$ has GEI, then it is rosy; likewise for $\TDnG$.
\end{enumerate}
\end{thmA}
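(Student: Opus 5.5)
The plan is to place $\TD$ and $\TDn$ inside the framework of derivation-like theories from~\cite{LSM25} (this is Corollary~\ref{cor:derivation-like}). Then each clause should follow from the transfer theorems proved there, once we know how independence in $\TDG$ (resp.\ $\TDnG$) relates to independence in $T$.

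The central object is the relation $A \Dind_C B$. We define it as $\aclL$-independence in $T$ of the $\Delta$-closures: $\langle AC\rangle_\Delta \ind_{\langle C\rangle_\Delta} \langle BC\rangle_\Delta$ computed in the reduct, where $\langle\cdot\rangle_\Delta$ denotes the $\aclL$-closure of the differential field generated.

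The first step is to describe $\aclD$ and types in $\TDG$. Using quantifier elimination for the model completion (relative to $T$, via the axiomatizations in~\cite{FT25} and~\cite{FK21} and Appendix~\ref{sec:appendix}), we show:
\begin{enumerate}
\item $\aclD(A) = \aclL(\text{differential field generated by }A)$;
\item the $\LD$-type of a $\Delta$-closed tuple is determined by the $\cL$-type of its closure together with the action of $\Delta$.
\end{enumerate}
This is exactly what Corollary~\ref{cor:derivation-like} packages, and I would cite the relevant lemmas of~\cite{LSM25} identifying $\Dind$ with an independence relation built from any given independence relation on $T$.

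For (\ref{B1}), assume $T$ is simple and let $\ind[f]$ be forking independence in $T$. Define $A \ind_C B$ in $\TDG$ by $\ind[f]$-independence of the $\Delta$-closures in the reduct. We then verify the Kim--Pillay axioms: invariance, symmetry, transitivity, finite character and local character are inherited directly from $T$.

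The key step is the independence theorem over models. Given two $\Delta$-closed amalgamation problems over $M \models \TDG$, first amalgamate the underlying $\cL$-structures using the independence theorem in $T$. Then extend the derivations to the $\aclL$-closure of the union of the three pieces so that they agree with the given ones. In the commuting case we must additionally make them commute, and finally embed into a model of $\TDG$ by model completeness.

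Extending derivations along an $\aclL$-independent amalgam is where the geometric hypothesis enters:
\begin{enumerate}
\item in the algebraically bounded case, derivations extend uniquely to $\aclL$ (field-algebraic closure over $\PP$);
\item in the o-minimal case, compatible $T$-derivations extend uniquely to $\dcl$-closures;
\item on an independent set, derivations can be freely prescribed.
\end{enumerate}
Commutation of the extended derivations should follow from the uniqueness of extensions, since the commutator is again a derivation that vanishes on generators.

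Clause (\ref{B2}) is the same argument with Kim-independence and the Kaplan--Ramsey criterion (independence theorem for $\ind[K]$ over models). The main obstacle is checking that ``$\ind[K]$ in $T$ of the closures'' still gives a relation satisfying the criterion. This needs the amalgamation step above together with the witnessing property; I would try to import the corresponding transfer theorem from~\cite{LSM25} directly.

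For (\ref{B3}), I would use the characterization that a theory with GEI is rosy iff it admits a strict independence relation satisfying local character (Adler). The candidate is $\aclD$-independence, meaning $\aclL$-dimension independence of $\Delta$-closures. This is an independence relation in Adler's sense and is strict, by anti-reflexivity of $\aclD$. Local character follows because every $\Delta$-closed set has a countable-type base, as $\aclL$ is a pregeometry. GEI is used to pass from real to imaginary parameters.

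Overall, I expect the amalgamation of derivations in the commuting case to be the hardest step.
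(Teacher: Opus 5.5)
The real gap is (\ref{B2}), where you do not actually give an argument. ``The same argument with Kim-independence'' breaks at the amalgamation step. Corollary~\ref{cor:amalgamation} only puts derivations on an amalgam whose pieces are $\aclL$-independent over $M$, but the independence theorem in $T$ only returns $\ind[K]$-independence. Kim-independence is weaker than forking independence and is not an Adler independence relation (it lacks base monotonicity). So the reasoning that handles the simple case does not give $\ind[K]_M\Rightarrow\Tind_M$, and you never supply it. It is true: a Morley sequence in an $M$-invariant type is $\aclL$-independent over $M$, and a drop in $\rkL$ then forces Kim-dividing by a rank count. But you have to prove it.

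Your fallback, importing \cite{LSM25}, is what the paper does. However, the whole content of that proof is checking hypotheses you never identify:
\begin{enumerate}
\item NSOP$_1$ rules out case~\eqref{II}, since o-minimal theories have the strict order property.
\item In case~\eqref{I}, $T$ is very $\cL$-slim, which is exactly algebraic boundedness.
\item $\Tind$ implies $\cL$-compositums, because $\cL$ extends $\cL_{\ring}$ only by relation and constant symbols.
\end{enumerate}
Only then does \cite[Corollary 3.19]{LSM25} apply. Witnessing plays no role here. The Kaplan--Ramsey criterion for NSOP$_1$ needs only strong finite character, existence, monotonicity, symmetry and the independence theorem; witnessing is used only to identify the resulting relation with $\ind[K]$.

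For the other two clauses: (\ref{B3}) is the paper's argument. $\Dind$ is a real-strict independence relation (Corollary~\ref{cor:dindrealstrict}), and GEI upgrades real-strict to strict (Remark~\ref{rem:GEIstrict}).

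For (\ref{B1}) you take a different, hand-made route, essentially re-deriving \cite[Theorem 3.14]{LSM25} via Kim--Pillay. It can be completed, with two fixes:
\begin{enumerate}
\item You omit the extension axiom. It is not inherited from $T$: it needs the two-piece amalgamation of Corollary~\ref{cor:amalgamation} plus an embedding into the monster of $\TG$.
\item You assume without justification that the amalgam from the independence theorem in $T$ is $\aclL$-independent. This holds because $\Tind$ is an independence relation and is therefore coarser than $\find$.
\end{enumerate}
The paper instead uses the sharper $\find=\Tind$ (Corollary~\ref{cor:simplecoincide}, from Theorem~\ref{thm:simple_supersimple_rank_1}). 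This lets it apply \cite[Theorem 3.14]{LSM25} directly to the derivation-like structure with respect to $(T,\Tind)$ from Corollary~\ref{cor:derivation-like}, and also identify forking in $\TG$ with $\Dind$. Finally, the commuting case is not the hard step you expect: it is just the ``moreover'' clause of Lemma~\ref{lem:extend}.
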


Note that~\eqref{B1} and~\eqref{B2} hold vacuously for o-minimal theories, and that~\eqref{B1} and~\eqref{B3} were already established for algebraically bounded theories in~\cite{FT25c}.
In the o-minimal case,~\eqref{B3} holds unconditionally, since $\TDG$ and $\TDnG$ are known to eliminate imaginaries; see~\cite[Theorem 5.19]{FK21} and Corollary~\ref{cor:ominEI}.
It is conjectured that GEI always transfers from $T$ to $\TDG$ and $\TDnG$~\cite[Conjecture 3.7]{FT25c}, a result that is known to hold for certain topological theories~\cite{CKP23,FT25c}.
The results in Theorem~\ref{theoremB} contribute to a growing list of model-theoretic properties known to transfer from $T$ to $\TDG$ and $\TDnG$, including stability, NIP, and distality~\cite{FT25,FT25c,FK21}, as well as NTP$_2$ and the antichain tree property in the case of a single derivation~\cite{KK26}.

\subsection*{Generic derivations and ranks} 
In Section~\ref{sec:ranks}, we turn to an analysis of ranks in $\TDG$ and $\TDnG$.
Let $m \coloneqq |\Delta|$ denote the number of derivations.
First, we investigate supersimplicity, superstability, and superrosiness, showing that these transfer in the commuting case:

\begin{thmA}[Proposition~\ref{prop:notranked}, Theorem~\ref{thm:Urank}]\label{theoremC}
If $m\geq 2$, then $\TDnG$ is never superrosy (thus, never supersimple or superstable).
On the other hand:
\begin{enumerate}
\item\label{C1} If $T$ is simple, then $\TDG$ is supersimple of SU-rank~$\omega^m$.
\item\label{C2} If $\TDG$ has GEI, then $\TDG$ is superrosy of \thh-rank~$\omega^m$.
\end{enumerate}
\end{thmA}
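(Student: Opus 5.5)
The plan has two halves: a negative result for $\TDnG$ when $m\geq 2$, and rank computations for $\TDG$ in the commuting case.

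\emph{Non-superrosiness of $\TDnG$ for $m\geq 2$.} Let $\delta_1,\delta_2\in\Delta$. In the free setting, the Lie brackets of $\delta_1,\delta_2$ behave like free generators. I would first check, using the axioms of $\TDnG$ as a model completion, that we can realize $a$ for which all iterated brackets $[\delta_1,[\delta_1,\dots[\delta_1,\delta_2]]]$ applied to $a$, together with the monomials in them, form an $\cL$-independent family. To obtain an infinite strictly descending chain of $\th$-forking, I would use the definable subgroups
\[
G_n=\{x: \theta x=0 \text{ for all Lie words } \theta \text{ of length}\leq n\}.
\]
The additive group of the field then contains a definable descending chain of subgroups of infinite index. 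The key step is to show that each index is infinite in a strong sense: each quotient carries a generic ``new coordinate'' that is not algebraic over the previous ones. For this I would produce types $p_n$ with $p_{n+1}$ a $\th$-forking extension of $p_n$, ultimately via a type of the form ``$x\in G_n$, generic over the parameters''. The $\th$-rank of $x=x$ is then at least $\omega$ along a chain of forking formulas indexed in a tree-like way. More precisely, each $G_n/G_{n+1}$ is infinite-dimensional in the differential sense, so it supports forking chains of every finite length, while still being nested. This already contradicts superrosiness, because superrosy groups satisfy a chain condition on uniformly $\th$-forking definable subgroups; I would invoke the standard argument that infinitely many descending subgroups of infinite index, each generic of the previous, give infinite rank.

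\emph{Upper bound in the commuting case.} Here I would use Theorem~B and Corollary~\ref{cor:derivation-like}, so that independence in $\TDG$ is $\cL$-independence of $\Delta$-closures. For a tuple $a$ over $B$, the Kolchin polynomial $\omega_{a/B}(t)=\trdeg$ of the derivatives of order $\leq t$ (with $\trdeg$ replaced by $\dim_{\cL}$) has degree $\leq m$, with leading data $(d,\text{coefficients})$. I would prove by induction on this polynomial, ordered lexicographically and embedded order-preservingly into $\omega^m\cdot(\text{length of } a)$, that forking strictly decreases the polynomial. The reason is that a forking extension adds a new $\cL$-dependence among derivatives, and in the commuting case Ritt--Kolchin style ranking (Dickson's lemma on $\N^m$) shows this strictly lowers $\omega$ in the well-order. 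This gives SU-rank (or $\th$-rank, using GEI for the rosy transfer from Theorem~B(3)) at most $\omega^m$ for a single element. For the lower bound, I would take a $\Delta$-generic element $a$ and, for each $k<m$ and $n$, consider types stating that $\delta_{k+1}\cdots\delta_m$-constraints hold. Specifically, I would use elements that are constant for $\delta_{m-k+1},\dots,\delta_m$ and generic otherwise, which give Kolchin polynomials of degree $m-k$. Families of such constraints realize all ordinals below $\omega^m$ as ranks, so the type of $a$ has rank $\geq\omega^m$.

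\emph{Main obstacle.} The hard step is showing that forking strictly lowers the Kolchin polynomial in the geometric (non-pure-field) setting. I would first try to transfer the differential-algebra fact via the $\cL$-dimension on derivative jets, using that commuting derivations make the set of derivatives of $a$ indexed by $\N^m$. Dickson's lemma then bounds the number of ``leaders''.
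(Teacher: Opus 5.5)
You have a real gap in the non-superrosiness half, and the lower bound in the commuting half is asserted rather than argued.

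\emph{Non-superrosiness of $\TDnG$.} As written, your chain is degenerate. If ``Lie words of length $\le n$'' includes the letters $\delta_1,\delta_2$, then $G_1=\ker\delta_1\cap\ker\delta_2$. Every bracket $[\theta_1,\theta_2]x=\theta_1\theta_2x-\theta_2\theta_1x$ vanishes on $G_1$ by induction, so $G_n=G_1$ for all $n\ge 1$.

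If you mean only brackets of length $\ge 2$, the chain may be strict. But the one thing that must be proved, that $G_{n+1}$ has infinite index in $G_n$, is only asserted. Your proposed check, a generic $a$ whose iterated brackets are $\cL$-independent, does not give it. Elements of $G_n$ satisfy differential equations and are far from generic; you need elements \emph{of $G_n$} on which the next bracket takes infinitely many values. Since brackets are linear combinations of comparable monomials, this is not a direct instance of Lemma~\ref{lem:TDnGaxioms}.

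Also, ``\thh-rank of $x=x$ at least $\omega$'' is no contradiction, since $\TDG$ itself has rank $\omega^m$. What you need is rank $\infty$, which is what an infinite descending chain of definable subgroups, each of infinite index in its predecessor, yields by \cite[Proposition 1.4]{EKP08}.

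The paper uses monomials instead of brackets. For $\epsilon\neq\delta$ in $\Delta$, take $C_n=\bigcap_{i\le n}\ker(\epsilon\delta^i)$. The words $\epsilon,\epsilon\delta,\ldots,\epsilon\delta^{n+1}$ are pairwise incomparable in the free monoid. So Lemma~\ref{lem:TDnGaxioms} gives, for every $d$, some $y$ with $\epsilon\delta^iy=0$ for $i\le n$ and $\epsilon\delta^{n+1}y=d$; hence $[C_n:C_{n+1}]$ is infinite. This chain collapses exactly when the derivations commute.

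\emph{Commuting case.} Your upper bound follows the paper's route, but you misplace the difficulty. That a $\Dind$-forking extension strictly lowers the Kolchin polynomial is immediate and needs no Ritt--Kolchin ranking (Proposition~\ref{prop:forkingextension}): a finite witness $A_0\subseteq\Jet(a)$ of dependence lies in $\Jet_{\le t}(a)$ for $t\gg0$. The substantive inputs are:
\begin{enumerate}
\item existence of $\omega_{a|A}$ in the geometric setting \cite{FK24};
\item a strictly increasing ordinal-valued map on Kolchin polynomials sending $\binom{T+m}{m}\mapsto\omega^m$ \cite{AP04}, together with the fact that a single element not in $\clD(A)$ has polynomial exactly $\binom{T+m}{m}$, while otherwise its degree is $<m$.
\end{enumerate}
Your ``order-preserving embedding'' is this second input, stated without justification.

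For the lower bound, ``families of such constraints realize all ordinals below $\omega^m$'' is not an argument. You never build a chain of forking extensions, so no rank lower bound is established for any type. Moreover, types of elements that are constant for some derivations are not extensions of $\tpD(a|\emptyset)$, so they say nothing directly about ``the type of $a$''.

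The paper's construction is as follows. Take $a\notin\clD(\emptyset)$ and identify each $\eta<\omega^m$ with $\rv\in\N^m$ via Cantor normal form. Set $A_\eta=\{\delta^\mu a:\mu\ge\eta\}$. Then $\tpD(a|A_\eta)$ is a $\Dind$-forking extension of $\tpD(a|A_{\eta+1})$, because $\delta^\eta a\in A_\eta\setminus\aclL(A_{\eta+1})$. Hence $U^\Delta(\tpD(a|A_\eta))\ge\eta$ by induction.
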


The key tool in proving Theorem~\ref{theoremC} is the Kolchin polynomial (or an appropriate modification thereof in the o-minimal case~\cite{FK24}).
It has long been known that the Kolchin polynomial can be used to bound ranks in differentially closed fields~\cite{McG00}, and we show that SU-rank and \thh-rank can be bounded by the Kolchin polynomial in our setting as well.

Following this, we characterize when $\TDG$ and $\TDnG$ are strongly dependent or, more broadly, strong.
Strong theories are a subclass of NTP$_2$ theories, and strongly dependent theories are those strong theories that are also NIP.

\begin{thmA}[Proposition~\ref{prop:notstrong}, Theorem~\ref{thm:strong}]\label{theoremD}
If $m>1$, then $\TDnG$ is never strong.
The theory $\TDG$ is strong (respectively, strongly dependent) if and only if $T$ is simple (respectively, stable).
\end{thmA}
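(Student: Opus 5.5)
The statement has three parts: the non-commuting case $m>1$, the equivalence ``strong $\iff$ $T$ simple'' for $\TDG$, and the strongly dependent variant. In every part I would work with inp-patterns in a single variable $x$. The guiding idea is to use iterated derivatives $\delta_w x$ as independent coordinates, and to exploit the genericity axioms of $\TDG$/$\TDnG$: if the values of the $\delta_w x$, for $w$ ranging over an antichain of words, are required to be algebraically independent generics of prescribed $\cL$-types, then such an $x$ exists in an elementary extension. Here \emph{antichain} means that no word in the set is a right factor of another, which is exactly what makes the prescriptions mutually unconstrained.

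\textbf{Non-commuting case, $m\geq 2$.} In the free monoid on $\delta_1,\delta_2$ there is an infinite antichain, for instance $w_n=\delta_1\delta_2^n\delta_1$. For row $n$ I take the formula $\delta_{w_n}(x)=y$ with pairwise distinct parameters $(b_{n,j})_{j<\omega}$. Each row is $2$-inconsistent. For a path $f$, consistency of $\{\delta_{w_n}x=b_{n,f(n)}\}_n$ reduces by compactness to finitely many $n$. That finite system is consistent by the genericity axioms of $\TDnG$, because the $\delta_{w_n}x$ are free coordinates in the noncommutative jet, provided the $b$'s are chosen algebraically independent. This gives an inp-pattern of depth $\omega$, so $\TDnG$ is not strong. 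This is Proposition~\ref{prop:notstrong} as I would prove it. In the commuting case the same trick fails because of Dickson's lemma, which is consistent with what follows.

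\textbf{Commuting case.} If $T$ is simple, then $\TDG$ is supersimple by Theorem~\ref{theoremC}\eqref{C1}, and supersimple theories are strong (Adler). For the converse, suppose $T$ is not simple. By Theorem~\ref{theoremA}, $T$ does not have SU-rank~$1$, so there is an $\cL$-formula $\psi(z;c)$ with $\psi(\M;c)$ infinite (of dimension~$1$) that divides over the relevant base. Let $(c_j)_{j<\omega}$ be an indiscernible sequence witnessing $k$-inconsistency. I take mutually independent copies $(c_{n,j})_j$ for $n<\omega$ and use the rows $\psi(\delta^n x;c_{n,j})$, where $\delta\in\Delta$ is any one fixed derivation. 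The rows are $k$-inconsistent because the $\cL$-rows are. For a path $f$, I choose $e_n$ realizing $\psi(z;c_{n,f(n)})$ such that the $e_n$ are algebraically independent over all the parameters; this is possible because each such set is infinite and $T$ is geometric. By the genericity axioms there is $x$ with $\delta^n x=e_n$ for all $n$. So $\TDG$ has an inp-pattern of infinite depth and is not strong.

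The main obstacle is this last realizability step. One must check that the axiomatization of $\TDG$ in~\cite{FT25,FK21} (or the derivation-like framework of Corollary~\ref{cor:derivation-like}) really allows infinitely many jet coordinates to be prescribed generically at once. In the o-minimal case one must also check that compatible $T$-derivations impose no extra definable relation on the $\delta^n x$ beyond the one they have as independent generics. I would handle this via finite-jet prescriptions and compactness.

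\textbf{Strong dependence.} NIP transfers from $T$ to $\TDG$ (\cite{FT25,FK21}), and $T$ is a reduct of $\TDG$, so $T$ is NIP iff $\TDG$ is. Strongly dependent means strong and NIP. Since stable is equivalent to simple together with NIP, the strongly dependent characterization follows from the strong one.
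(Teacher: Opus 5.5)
Your treatment of $\TDnG$ matches the paper: exact values on the antichain $\delta_1\delta_2^n\delta_1$ are exactly what Lemma~\ref{lem:TDnGaxioms} provides, and algebraic independence of the $b_{n,j}$ is not even needed. The same holds for ``$T$ simple $\Rightarrow$ $\TDG$ strong'' and for the strongly dependent part. Your rows $\psi(\delta^n x;\cdot)$ in the non-simple case are the paper's rows in Lemma~\ref{lem:infiniteformula}. The gap is the realization step there. This is the point you yourself flag as the main obstacle, and as you state it, it fails. The words $1,\delta,\delta^2,\ldots$ form a chain, not an antichain, so your guiding principle gives nothing here. For elements $e_n$ already chosen in $\M$ there is in general no $x$ with $\delta^n x=e_n$ for all $n$: $x=e_0$ forces $\delta x=\delta e_0$, which need not be $e_1$. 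Along a chain you cannot prescribe values independently. You can only ask that $(x,\delta x,\ldots,\delta^N x)$ lie in a definable set whose projection to the lower-order coordinates is large.

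That weaker requirement is all the inp-pattern needs. Put $Y_n\coloneqq\psi(\M;c_{n,f(n)})$. By compactness it suffices to find $x$ with $\delta^n x\in Y_n$ for $n\le N$. The genericity axioms of $\TDG$ give such an $x$, because the projection of $Y_0\times\cdots\times Y_N$ onto its first $N$ coordinates has full dimension; in case~\eqref{II}, after shrinking each $Y_n$ to an open interval, it is an open box.

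Alternatively, your construction becomes correct once the $e_n$ are treated as fresh elements whose derivatives you define yourself:
\begin{enumerate}
\item Fix a small $(K_0;\Delta)\preceq(\M;\Delta)$ containing the parameters, and choose $e_n\in Y_n$ that are $\aclL$-independent over $K_0$.
\item Use Lemma~\ref{lem:extend} to put $T$-derivations on $\aclL(K_0,e_0,e_1,\ldots)$ extending those of $K_0$, with $\delta e_n=e_{n+1}$ and all other derivations equal to $0$ on the $e_n$. These commute at each $e_n$, so they commute.
\item Extend this to a small model of $\TD$ and embed it into $\M$ over $K_0$, using model completion and saturation.
\end{enumerate}
The embedding is $\cL$-elementary by quantifier elimination for $T$. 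Hence the image $x$ of $e_0$ satisfies $\psi(\delta^n x;c_{n,f(n)})$ for every $n$ simultaneously. Since Lemma~\ref{lem:extend} holds in both cases, this also disposes of your o-minimal worry.

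Two minor points. First, the mutually independent copies $(c_{n,j})_j$ are unnecessary; the paper reuses one $k$-inconsistent sequence in every row. Second, Theorem~\ref{theoremA} plays no role. Non-simplicity directly gives a unary formula with the tree property, and its first-level instances are nonalgebraic and $k$-inconsistent.
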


Note that in the case that $T$ is stable, then Corollary~\ref{cor:stable_acf_constants} tells us that it must be an expansion of ACF$_0$ by constants.
Thus, $\TDG$ and $\TDnG$ are expansions of DCF$_{0,m}$ and DCF$^{\operatorname{nc}}_{0,m}$ by constants, respectively.
It follows from known results about differentially closed fields that $\TDG$ is superstable of U-rank~$\omega^m$, and thus strong~\cite{McG00}.

A natural question is whether the GEI assumptions in Theorems~\ref{theoremB}\eqref{B3} and~\ref{theoremC}\eqref{C2} can be dropped.
As mentioned, this would follow from~\cite[Conjecture 3.7]{FT25c}, but it would also follow from the weaker conjecture:

\begin{conjecture*}
If $T$ is rosy and algebraically bounded (thus, surgical), then $\TDG$ and $\TDnG$ have GEI.
\end{conjecture*}

\subsection*{Acknowledgements}
The authors would like to thank Vincenzo Mantova, James Freitag, and Giuseppina Terzo for helpful conversations.
The authors would also like to thank Erik Walsberg for helping them find a faster proof of Theorem~\ref{thm:stablestronglyminimal}.
The authors acknowledge financial support from INdAM (Istituto Nazionale di Alta Matematica), particularly through the Intensive Period in Model Theory.

Elliot Kaplan is supported by a CR fellowship through the Fonds de la Recherche Scientifique -- FNRS, and was previously supported by the National Science Foundation under Award No.\ DMS-2103240.
He thanks the Fields Institute and the Max Planck Institute for Mathematics for their support and hospitality while conducting this research.
Angus Matthews's work is supported by EPSRC DTP 2224 [EP/W524372/1] (at the University of Leeds).

\section{Independence relations}
Let $T$ be a complete $\cL$-theory with infinite models, and let $\M \models T$ be a monster model.
We will use $A$, $B$, and $C$ to denote small subsets of $\M$.

\begin{definition}[Adler~\cite{Ad09}]
An \textbf{independence relation} on $\M$ is a ternary relation $\ind$ between small subsets of $\M$ which satisfies the following axioms:
\begin{enumerate}[(1)]
\item \textbf{Invariance:} If $A \ind_C B$ and $(A^*,B^*,C^*) \equiv_{\cL}(A,B,C)$, then $A^* \ind_{C^*} B^*$.
\item \textbf{Monotonicity:} If $A \ind_C B$, $A_0 \subseteq A$, and $B_0 \subseteq B$, then $A_0\ind_C B_0$.
\item \textbf{Base monotonicity:} Suppose $C_0 \subseteq C \subseteq B$.
If $A \ind_{C_0}B$, then $A \ind_C B$.
\item \textbf{Transitivity:} Suppose $D \subseteq C \subseteq B$.
If $B \ind_CA$ and $C\ind_DA$, then $B\ind_DA$.
\item \textbf{Normality:} If $A \ind_C B$, then $AC\ind_C B$.
\item \textbf{Finite character:} If $A_0 \ind_C B$ for all finite $A_0 \subseteq A$, then $A \ind_C B$.
\item \textbf{Local character:} For each $A$, there is a cardinal $\kappa(A)$ such that for all $B$, there is $C \subseteq B$ of cardinality $|C|<\kappa(A)$ with $A \ind_C B$.
\item \textbf{Full existence:} For all $A,B,C$, there is $A^* \equiv_{\cL(C)}A$ with $A^* \ind_C B$.
\item \textbf{Symmetry:} If $A \ind_C B$, then $B \ind_C A$.
\end{enumerate}
An independence relation $\ind$ is \textbf{real-strict} if it satisfies:
\begin{enumerate}[(10)]
\item \textbf{Anti-reflexivity:} For all $a \in \M$, if $a \ind_B a$, then $a \in \aclL(B)$.
\end{enumerate}
A \textbf{real-canonical independence relation} is a real-strict independence relation which also satisfies:
\begin{enumerate}[(11)]
\item \textbf{Intersection:} for $C_1,C_2 \subseteq B$ and $D \coloneqq\aclL(C_1) \cap \aclL(C_2)$, we have
\[\textstyle
A\ind_{C_1}B\text{ and }A \ind_{C_2}B \implies A \ind_D B.
\]
\end{enumerate}
An independence relation is said to be \textbf{strict} if it extends to an independence relation on $\M^{\eq}$ satisfying anti-reflexivity (where $\aclL$ is replaced with $\aclL^{\eq}$).
Likewise, a strict independence relation is \textbf{canonical} if its extension to $\M^{\eq}$ satisfies intersection.
\end{definition}

\begin{remark}
In Adler's definition, full existence and symmetry are replaced by
\begin{enumerate}[]
\item \textbf{Extension:} If $A \ind_C B$ and $B^* \supseteq B$, then there is $A^* \equiv_{\cL(BC)} A$ with $A^* \ind_CB^*$.
\end{enumerate}
By~\cite[Remark 1.2 and Theorem 2.5]{Ad09}, full existence and symmetry are equivalent to extension, modulo the other axioms.
By~\cite[Proposition 3.1.3]{dE23}, the axioms also imply
\begin{enumerate}[]
\item \textbf{Strong closure:} $A \ind_CB \iff \aclL(AC)\ind_{\aclL(C)} \aclL(BC)$.
\end{enumerate}
\end{remark}

\begin{remark}\label{rem:GEIstrict}
If $T$ has geometric elimination of imaginaries (GEI), then $\ind$ is real-strict if and only if it is strict, and $\ind$ is real-canonical if and only if it is canonical.
\end{remark}

Given independence relations $\ind[1]$ and $\ind[2]$ on $\M$, we say that \textbf{$\ind[2]$ is coarser than $\ind[1]$} if
\[\textstyle
A \ind[1]_CB \implies A\ind[2]_CB
\]
for all $A,B,C$.

Following Adler~\cite{Ad09}, we say that $T$ is \textbf{rosy} if $\M$ admits a strict independence relation.
By~\cite[Proposition 4.7]{Ad09}, this agrees with Onshuus' original definition of rosiness; see~\cite{On06, EO07}.

\begin{fact}[{\cite[Theorem 4.3]{Ad09},~\cite[Theorem 3.3]{Ad05}}]\label{fact:coincide}
If $\M^{\eq}$ admits a strict independence relation $\ind$, then it admits a coarsest strict independence relation $\thind$ called \textbf{\thh-forking independence}.
If $\ind$ is canonical, then it coincides with $\thind$.
\end{fact}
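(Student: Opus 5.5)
The fact has two parts: the existence of a coarsest strict independence relation $\thind$, and the claim that any canonical $\ind$ coincides with it. For existence, I would use \thh-forking as the candidate. Define \thh-dividing via strong dividing: a formula $\varphi(x,b)$ strongly divides over $C$ if $b\notin\aclL^{\eq}(C)$ and $\{\varphi(x,b'):b'\models\tp(b/C)\}$ is $k$-inconsistent for some $k$. Then $\varphi(x,b)$ \thh-divides over $C$ if there is $D$ such that $\varphi(x,b)$ strongly divides over $CD$. \thh-forking is the ideal generated by \thh-dividing formulas, and $A\thind_C B$ means $\tp(A/BC)$ contains no \thh-forking formula.

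Next I would show that every strict independence relation $\ind$ on $\M^{\eq}$ refines $\thind$, that is, $A\ind_C B\Rightarrow A\thind_C B$. The key point is that if $a\ind_C b$ (after extending and using base monotonicity), then $\tp(a/bC)$ cannot contain a formula that strongly divides over $CD$. To see this, take a Morley-like sequence of conjugates of $b$ over $CD$ that is $\ind$-independent, using full existence. Anti-reflexivity ensures $b\notin\aclL^{\eq}$, so these conjugates are distinct. Transitivity and symmetry then let one realize many conjugates simultaneously in a single $a'$, contradicting $k$-inconsistency. Extending from dividing to forking uses the extension property, which follows from full existence and symmetry by the first Remark.

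Since a strict $\ind$ exists, local character of $\ind$ transfers to $\thind$. The remaining axioms for $\thind$ (invariance, monotonicity, transitivity, normality, finite character, symmetry, and anti-reflexivity) I would verify following Onshuus and Adler. Symmetry is the main obstacle. I would derive it from local character together with the standard argument that in a rosy theory \thh-forking satisfies extension, so left and right transitivity yield symmetry; this is Adler's Theorem 4.3. Coarsest-ness is then exactly the refinement step above.

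For the second claim, suppose $\ind$ is canonical. We already have $\ind\Rightarrow\thind$, so it remains to show the converse. I would follow Adler's argument (Theorem 3.3 of the 2005 paper). Suppose $a\thind_C b$ but $a\nind_C b$, and work over $\acl^{\eq}$-closed sets using strong closure. By local character, choose a small $D\subseteq Cb$-closure with $a\ind_D Cb$. Then build an $\ind$-independent sequence of conjugates of $b$ over $C$. The intersection axiom forces the canonical base $\acl^{\eq}(Cab)\cap\ldots$ to lie in $\acl^{\eq}(C)$. Anti-reflexivity then shows that some formula in $\tp(a/Cb)$ strongly divides over some extension of $C$, contradicting $a\thind_C b$. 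The delicate step is producing the strongly dividing formula from a failure of $\ind$ by combining intersection with anti-reflexivity. Since this is exactly the cited result, I would invoke it as stated.
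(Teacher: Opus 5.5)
The paper gives no proof of this Fact; it imports both claims from Adler, and your proposal ultimately does the same. Your outline of the first half is sound, with three small corrections:
\begin{itemize}
\item Extension for \thh-forking holds in every theory, by compactness; it has nothing to do with rosiness.
\item Symmetry comes from local character together with extension and the remaining axioms (Theorem~2.5 of Adler's paper, cited in the paper's first Remark). It is not a consequence of two-sided transitivity.
\item The refinement step needs no sequence. If $\varphi(x,b)$ strongly divides over $CD$, then $k$-inconsistency gives $b\in\acleq(CDa')$ for every $a'\models\varphi(x,b)$. So $a'\ind_{CD}b$, via strong closure and anti-reflexivity, would put $b\in\acleq(CD)$, which the definition of strong dividing excludes.
\end{itemize}

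Your sketch of the canonical half, however, does not hold together. If intersection placed the base of $\tp(a/Cb)$ inside $\acleq(C)$, base monotonicity would give $a\ind_C b$ at once and no strongly dividing formula would be needed. So your last two steps are in the wrong logical order. Also, conjugates of $b$ over $C$ carry no information about $a$. What works is to conjugate $a$:
\begin{itemize}
\item Assume, as you do, $a\thind_C b$ and $a\nind_C b$, and put $B\coloneqq\acleq(Cb)$. Choose $a_i\equiv_B a$ with $a_i\ind_B a_{<i}$ for $i<\kappa$, with $\kappa$ regular and large.
\item Local character (applied to $B$ against $Ca_{<\kappa}$, then base monotonicity and symmetry) gives some $\alpha$ with $a_\alpha\ind_{Ca_{<\alpha}}B$.
\item Apply intersection to $a_\alpha\ind_B Ba_{<\alpha}$ and $a_\alpha\ind_{Ca_{<\alpha}}Ba_{<\alpha}$, followed by monotonicity and invariance over $B$. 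This gives $a\ind_D B$ for $D\coloneqq B\cap\acleq(Ca_{<\alpha})$.
\item Since $a\nind_C b$, base monotonicity forces $D\not\subseteq\acleq(C)$. Hence some $d\in B\setminus\acleq(C)$ is algebraic over $C\bar a$ for a finite tuple $\bar a$ from $a_{<\alpha}$.
\item The formula witnessing this algebraicity strongly divides over $C$ and lies in $\tp(\bar a/B)$, so $\bar a$ is not \thh-independent from $B$ over $C$.
\item On the other hand, each $a_i\thind_C B$. Also $a_i\ind_B a_{<i}$ implies $a_i\thind_B a_{<i}$ by your first half. Transitivity then gives $\bar a\thind_C B$, a contradiction.
\end{itemize}
Equivalently, you can quote Adler's lemma that a canonical relation is coarser than every strict one (Remark~\ref{rem:canonicaliscoarsest} is its real version) and apply it to the strict relation $\thind$ from the first half.
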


\begin{remark}
Adler~\cite{Ad09} defines a theory to be \textbf{real rosy} if $\M$ admits a real-strict independence relation.
If $T$ is real rosy, then $\M$ admits a coarsest real-strict independence relation, called \textbf{real \thh-forking independence}.
Note that this need not coincide with the restriction of \thh-forking independence to real sets, even in rosy theories.
As we will see in Lemma~\ref{lem:Tind-coarsest} and Proposition~\ref{prop:Dind-coarsest} below, we can describe real \thh-forking independence quite explicitly in the theories we consider.
\end{remark}

\begin{remark}\label{rem:canonicaliscoarsest}
Following the proof of~\cite[Lemma 3.2]{Ad05}, we see that if $\ind$ is a real-canonical independence relation on $\M$, then it is the coarsest real-strict independence relation on $\M$, and so coincides with real \thh-forking.
\end{remark}

Let $\find$ denote forking independence in $T$.

\begin{fact}[{\cite[Theorem 5.3]{Ad09}}]
$T$ is simple if and only if $\find$ is an independence relation on $\M$.
If $T$ is simple, then any independence relation on $\M$ is coarser than $\find$.
\end{fact}

\subsection{Ranks}
Let $\ind$ be an independence relation on $\M$ and let $p$ be an
$\cL(A)$-type.
An $\cL(B)$-type $q$ is an \textbf{$\ind$-forking extension of $p$} if $B \supseteq A$, $q$ extends $p$, and $\av\nind_AB$ for some (equivalently,
any) tuple $\av$ realizing $q$.
The \textbf{$\ind$-rank of $p$}, denoted $U^{\tiny\! \ind\!}(p)$, is defined as follows:
\begin{enumerate}
\item We always have $U^{\tiny\! \ind\!}(p)\geq 0$, as $p$ is assumed to be consistent;
\item For $\alpha$ any ordinal, $U^{\tiny\! \ind\!}(p) \geq \alpha+1$ if there is an $\ind$-forking extension $q$ of $p$ with $U^{\tiny\! \ind\!}(q) \geq \alpha$;
\item For $\lambda$ any nonzero limit ordinal, $U^{\tiny\! \ind\!}(p) \geq \lambda$ if $U^{\tiny\! \ind\!}(p)\geq \alpha$ for all ordinals $\alpha< \lambda$.
\end{enumerate}
If there is an ordinal $\alpha$ with $U^{\tiny\! \ind\!}(p) \geq \alpha$ and $U^{\tiny\! \ind\!}(p) \not\geq \alpha+1$, then we put $U^{\tiny\! \ind\!}(p) \coloneqq \alpha$.
Otherwise, we put $U^{\tiny\! \ind\!}(p)\coloneqq \infty$.
We say that $\ind$ is \textbf{ranked} if $U^{\tiny\! \ind\!}(p) <\infty$ for all types $p$.
Note that if $\ind$ is real-strict and $U^{\tiny\! \ind\!}(p) = 0$, then $p$ is algebraic.

For $\av \in \M^n$, we put $U^{\tiny\! \ind\!}(\av|A) \coloneqq U^{\tiny\! \ind\!}(\tpL(\av|A))$.
For an $\cL(A)$-definable set $X$, we set
\[
U^{\tiny\! \ind\!}(X) \coloneqq \sup\{U^{\tiny\! \ind\!}(\av|A):\av \in X\}.
\]
We also set
\[
U^{\tiny\! \ind\!}(T) \coloneqq \sup\{U^{\tiny\! \ind\!}(p):p\text{ is a unary $\cL(\emptyset)$-type}\}.
\]
Following the proof of~\cite[Theorem 5.1.6]{Wa00}, one can establish the Lascar inequalities for the $\ind$-rank (no changes to the proof are needed).
\begin{fact}[Lascar inequalities]
For $a,b \in \M$, we have
\[
U^{\tiny\! \ind\!}(a|A)+ U^{\tiny\! \ind\!}(b|A\cup\{a\}) \leq U^{\tiny\! \ind\!}(a,b|A)\leq U^{\tiny\! \ind\!}(a|A)\oplus U^{\tiny\! \ind\!}(b|A\cup\{a\}),
\]
where $+$ is the ordinal sum and $\oplus$ is the commutative (Hessenberg) sum, it being understood that both sums take value $\infty$ if some summand is $\infty$.
\end{fact}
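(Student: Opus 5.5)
We prove the Lascar inequalities by transfinite induction, following~\cite[Theorem 5.1.6]{Wa00}. The only properties of $\ind$ we use are invariance, monotonicity, base monotonicity, transitivity, normality, symmetry and full existence/extension. We also use the rank-computation lemma: $U^{\tiny\! \ind\!}(p)\geq \alpha$ if and only if the corresponding condition holds for every $\beta<\alpha$, witnessed by forking extensions. Throughout we write $U$ for $U^{\tiny\! \ind\!}$.

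\emph{Preliminary lemma.} We first show that if $B\supseteq A$ and $\av\ind_A B$, then $U(\av|B)=U(\av|A)$. The inequality $\leq$ is clear, since a forking extension of $\tpL(\av|B)$ is, by transitivity, a forking extension of $\tpL(\av|A)$. For $\geq$ we induct on $\alpha$. Suppose $U(\av|A)\geq\alpha+1$, witnessed by $C\supseteq A$ and $\av'\equiv_A\av$ with $\av'\nind_A C$ and $U(\av'|C)\geq\alpha$. By invariance and extension we move $C$ to $C'$ so that $\av C'\equiv_A \av' C$ and $\av\ind_{A} BC'$, arranged so that the forking of $\av$ over $BC'$ relative to $B$ is preserved. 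Transitivity (in the form that $\av\ind_A B$ and $\av\ind_B BC'$ together give $\av\ind_A BC'$) then shows $\av\nind_B BC'$. The induction hypothesis applied over $C'$ gives $U(\av|BC')\geq\alpha$. This is the standard argument.

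\emph{Lower bound.} We show by induction on $\beta$ that $U(b|Aa)\geq\beta$ implies $U(ab|A)\geq U(a|A)+\beta$.
\begin{itemize}
\item For $\beta=0$: any forking extension of $\tpL(a|A)$ is realized, by extension, by $a'$ together with some $b'$, giving a forking extension of $\tpL(ab|A)$ by monotonicity. So $U(ab|A)\geq U(a|A)$ follows by a straightforward induction on $U(a|A)$.
\item For the successor step: take $C\supseteq Aa$ with $b\nind_{Aa}C$ and $U(b|C)\geq\beta$. Normality and monotonicity give $ab\nind_A C$. Meanwhile, $U(a|C)=0$ since $a\in C$, while $U(ab|C)\geq U(a|A)+\beta$ would need $U(a|C)$ to be $U(a|A)$. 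To handle this, we instead choose $C$ with $a\ind_A C\setminus\{a\}$ after applying extension to $\tpL(b/Aa)$ and a copy. This is the Wagner bookkeeping: replace $C$ by $C_0$ with $C = C_0a$, arranged so that $a\ind_A C_0$ via extension, so that the preliminary lemma gives $U(a|C_0)=U(a|A)$. The induction hypothesis then gives $U(ab|C_0)\geq U(a|A)+\beta$, and since $ab\nind_A C_0$ we obtain the successor inequality.
\item Limit stages are immediate.
\end{itemize}

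\emph{Upper bound.} We show by induction on $\gamma$ that $U(ab|A)\geq\gamma$ implies $\gamma\leq U(a|A)\oplus U(b|Aa)$. Suppose $ab\nind_A C$ with $U(ab|C)\geq\gamma'$. Using symmetry and transitivity, either $a\nind_A C$, or else $a\ind_A C$ and $b\nind_{Aa}C$. In the first case $U(a|C)<U(a|A)$ and $U(b|Ca)\leq U(b|Aa)$. In the second case $U(a|C)=U(a|A)$ by the preliminary lemma, and $U(b|Ca)<U(b|Aa)$. In either case the induction hypothesis applied over $C$, together with strict monotonicity of $\oplus$ in each argument, yields $\gamma'<U(a|A)\oplus U(b|Aa)$.

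The main obstacle is the base-change bookkeeping in the lower bound, namely keeping $a$ independent from the new parameters while preserving the forking of $b$. This is exactly where extension and the preliminary lemma are used, and the infinite-rank conventions follow trivially.
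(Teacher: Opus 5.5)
Your upper bound is correct. The successor step of your lower bound fails, and the inequality you are proving there is false with the summands in the displayed order. The failing step is ``replace $C$ by $C_0$ with $C=C_0a$, arranged so that $a\ind_A C_0$ via extension.'' To preserve $b\nind_{Aa}C$ you may only move $C$ by automorphisms fixing $Aa$. Full existence then makes $C$ independent over $Aa$ of anything you like, but it can never separate the parameters from $a$, which lies in the base. So nothing produces $C_0\supseteq A$ with $a\ind_A C_0$ and $b\nind_{Aa}C_0a$, and such a $C_0$ need not exist.

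Take $\mathrm{DCF}_0$ with $\ind=\find$ and $A=\emptyset$. Let $a$ be differentially transcendental, and let $b\notin\acl(a)$ satisfy $\delta b=a$. Then:
\begin{itemize}
\item $U(a)=\omega$;
\item $U(b|a)=1$, since the set defined by $\delta x=a$ is a translate of the strongly minimal constant field;
\item $U(a,b)=U(b)\leq\omega$, since $a\in\dcl(b)$.
\end{itemize}
Hence $U(a|A)+U(b|Aa)=\omega+1>U(a,b|A)$. Your $C_0$ would give $U(a,b|C_0)\geq U(a|C_0)=U(a)=\omega$ by your base case and preliminary lemma. But $ab\nind C_0$ forces $U(a,b|C_0)<U(a,b)\leq\omega$.

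The paper's proof is only the citation of \cite[Theorem 5.1.6]{Wa00}. Transferred verbatim to an arbitrary independence relation, Wagner's argument gives $U(b|Aa)+U(a|A)\leq U(a,b|A)\leq U(a|A)\oplus U(b|Aa)$. The displayed Fact thus has the ordinal sum reversed. This is harmless for the paper, which only applies the inequality to finite ranks or to detect $\infty$.

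The correct lower bound is proved by induction on the right-hand summand: show that $U(a|A)\geq\alpha$ implies $U(a,b|A)\geq U(b|Aa)+\alpha$.
\begin{itemize}
\item For $\alpha=0$, induct on $U(b|Aa)$. If $b\nind_{Aa}C$ with $C\supseteq Aa$, then base monotonicity gives $ab\nind_A C$, and $U(b|Ca)=U(b|C)$.
\item For the successor step, take $C\supseteq A$ with $a\nind_A C$ and $U(a|C)\geq\alpha$. Apply full existence over $Aa$ to replace $C$ by a conjugate with $C\ind_{Aa}b$. This fixes $a$, so $a\nind_A C$ and $U(a|C)\geq\alpha$ survive. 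Your preliminary lemma gives $U(b|Ca)=U(b|Aa)$, and the induction hypothesis gives $U(a,b|C)\geq U(b|Aa)+\alpha$. Since $ab\nind_A C$ by monotonicity, $U(a,b|A)\geq U(b|Aa)+\alpha+1$.
\item Limit stages use continuity of ordinal addition in its right argument.
\end{itemize}
So the independence to arrange is of the new parameters from $b$ over $Aa$, not from $a$ over $A$.

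Your preliminary lemma also needs two smaller repairs. First, the condition $\av\ind_A BC'$ is contradictory, since it would give $\av\ind_A C'$. What you want is $\av C'\equiv_A\av'C$ together with $C'\ind_{A\av}B$. From this, normality, transitivity with $\av\ind_A B$, symmetry and base monotonicity give $\av\ind_{C'}BC'$, which is what the induction hypothesis over $C'$ needs. Your transitivity argument then gives $\av\nind_B BC'$. Second, $U(\av|B)\leq U(\av|A)$ uses base monotonicity, not transitivity.
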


\begin{corollary}
The following are equivalent:
\begin{enumerate}
\item $\ind$ is ranked;
\item $U^{\tiny\! \ind\!}(p)<\infty$ for all unary types $p$;
\item $U^{\tiny\! \ind\!}(T)<\infty$.
\end{enumerate}
\end{corollary}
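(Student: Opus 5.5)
The implications (1)$\Rightarrow$(2)$\Rightarrow$(3) are essentially definitional. For (1)$\Rightarrow$(2), unary types are in particular types. For (2)$\Rightarrow$(3), note that there is only a set (not a proper class) of unary $\cL(\emptyset)$-types. Hence if each has ordinal rank, the supremum of these ordinals is again an ordinal, and $U^{\tiny\! \ind\!}(T)<\infty$.

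The real content is (3)$\Rightarrow$(1), which I split into two steps. First, I would pass from unary types over $\emptyset$ to unary types over an arbitrary small set $A$. Let $p=\tpL(a|A)$. I claim $U^{\tiny\! \ind\!}(a|A)\leq U^{\tiny\! \ind\!}(a|\emptyset)$. To see this, I would prove by induction on $\alpha$ that for $A\subseteq B$, if $U^{\tiny\! \ind\!}(a|B)\geq\alpha$ then $U^{\tiny\! \ind\!}(a|A)\geq\alpha$. The limit stage is immediate. At the successor stage, suppose $U^{\tiny\! \ind\!}(a|B)\geq\alpha+1$, witnessed by $a'\equiv_B a$ and $B'\supseteq B$ with $a'\nind_B B'$ and $U^{\tiny\! \ind\!}(a'|B')\geq\alpha$. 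Then we must also have $a'\nind_A B'$. Otherwise $a'\ind_A B'$ would give $a'\ind_B B'$ by base monotonicity (using normality and monotonicity to arrange $A\subseteq B\subseteq B'$). So $\tpL(a'|B')$ is an $\ind$-forking extension of $\tpL(a|A)$ of rank $\geq\alpha$, and hence $U^{\tiny\! \ind\!}(a|A)\geq\alpha+1$. Consequently every unary type over every small $A$ has rank at most $U^{\tiny\! \ind\!}(T)$.

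Second, I would handle $n$-ary types by induction on $n$ using the upper Lascar inequality. For $\av=(a_1,\dots,a_n)$,
\[
U^{\tiny\! \ind\!}(\av|A)\leq U^{\tiny\! \ind\!}(a_1|A)\oplus U^{\tiny\! \ind\!}(a_2,\dots,a_n|A\cup\{a_1\}).
\]
The Fact is stated for single elements $a,b$, but the proof from~\cite[Theorem 5.1.6]{Wa00} works verbatim for tuples, and I would note this explicitly. By the first step and the induction hypothesis, both summands are ordinals, so the Hessenberg sum is an ordinal, and $U^{\tiny\! \ind\!}(\av|A)<\infty$. In fact the argument gives $U^{\tiny\! \ind\!}(\av|A)\leq U^{\tiny\! \ind\!}(T)\oplus\cdots\oplus U^{\tiny\! \ind\!}(T)$ with $n$ summands. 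Types in infinitely many variables are not at issue, since ranks are defined for types of tuples $\av\in\M^n$.

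The main obstacle is the first step: the monotonicity of rank under enlarging the base. It requires checking that base monotonicity, together with normality and monotonicity, really converts a forking extension over $B$ into one over $A$. Care is also needed with invariance, so that the statement depends only on types and not on chosen realizations. The extension of the Lascar inequality to tuples is routine.
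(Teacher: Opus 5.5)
Your proposal is correct and follows the argument the paper has in mind: the paper states this corollary without proof as an immediate consequence of the Lascar inequalities, and your second step uses them in exactly that way to pass from unary types to finite tuples. Your first step is the standard fact that (3)$\Rightarrow$(2) implicitly needs, namely that by base monotonicity the $\ind$-rank of $\tpL(a|A)$ is at most that of $\tpL(a|\emptyset)$, and hence at most the rank of $T$; you verify it correctly.
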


\begin{fact}\label{fact:Uorder}
Suppose $\ind[1]$ and $\ind[2]$ are independence relations on $\M$ and that $\ind[2]$ is coarser than $\ind[1]$.
Then for any type $p$, we have
\[
U^{\tiny\! \ind[1]}(p)\geq U^{\tiny\! \ind[2]}(p).
\]
In particular, if $\ind[1]$ is ranked, then so is $\ind[2]$.
\end{fact}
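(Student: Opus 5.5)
We argue by transfinite induction on $\alpha$, proving that for every type $p$ and every ordinal $\alpha$,
\[
U^{\tiny\! \ind[2]}(p)\geq \alpha \implies U^{\tiny\! \ind[1]}(p)\geq \alpha .
\]
The inequality $U^{\tiny\! \ind[1]}(p)\geq U^{\tiny\! \ind[2]}(p)$ follows at once, with the value $\infty$ handled automatically: if $U^{\tiny\! \ind[2]}(p)=\infty$, then $U^{\tiny\! \ind[2]}(p)\geq\alpha$ for every $\alpha$, hence the same holds for $U^{\tiny\! \ind[1]}(p)$. The statement must be quantified over all types $p$ simultaneously, because the successor step passes from $p$ to its extensions.

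The key observation is that an $\ind[2]$-forking extension is also an $\ind[1]$-forking extension. Indeed, "$\ind[2]$ coarser than $\ind[1]$" means $A\ind[1]_C B\Rightarrow A\ind[2]_C B$. Taking the contrapositive, $\av\nind[2]_A B$ implies $\av\nind[1]_A B$. So if $q$ is an $\cL(B)$-type extending the $\cL(A)$-type $p$ with $\av\nind[2]_AB$ for $\av\models q$, then $q$ is an $\ind[1]$-forking extension of $p$.

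The induction then has three cases.
\begin{enumerate}
\item For $\alpha=0$, both ranks are at least $0$ for any consistent type.
\item For the successor step, suppose $U^{\tiny\! \ind[2]}(p)\geq\alpha+1$. Pick an $\ind[2]$-forking extension $q$ of $p$ with $U^{\tiny\! \ind[2]}(q)\geq\alpha$. By the induction hypothesis applied to $q$, we get $U^{\tiny\! \ind[1]}(q)\geq\alpha$. By the key observation, $q$ is an $\ind[1]$-forking extension of $p$, so $U^{\tiny\! \ind[1]}(p)\geq\alpha+1$.
\item For a limit $\lambda$, the conclusion follows directly from the hypothesis at all $\alpha<\lambda$.
\end{enumerate}

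The "in particular" clause is then immediate. If $\ind[1]$ is ranked, every $U^{\tiny\! \ind[1]}(p)$ is an ordinal, so $U^{\tiny\! \ind[2]}(p)\leq U^{\tiny\! \ind[1]}(p)<\infty$ for every $p$.

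There is no real obstacle here. The only point needing care is the direction of the contrapositive in the key observation, which is exactly what the coarseness hypothesis provides.
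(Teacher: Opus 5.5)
Your proof is correct. The paper records this as a Fact without proof, and your transfinite induction is exactly the routine argument it leaves implicit. The key point is that coarseness makes every $\ind[2]$-forking extension of $p$ an $\ind[1]$-forking extension, and you use it in the right direction.

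Passing from your inductive claim to the inequality of values tacitly uses that the ordinals $\alpha$ for which the rank of $p$ is $\geq\alpha$ form an initial segment. The paper's definition of the rank already presupposes this, so nothing is missing.
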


In many cases of interest, we have specific names for the $\ind$-rank:
\begin{enumerate}
\item If $T$ is simple, then $\find$-rank is called \textbf{SU-rank}, and the SU-rank of $p$ is denoted $SU(p)$.
If in addition $\find$ is ranked, then $T$ is said to be \textbf{supersimple}.
\item If $T$ is stable, then SU-rank agrees with Lascar's U-rank.
If $T$ is also supersimple, then it is said to be \textbf{superstable}.
\item If $\thind$ is an independence relation, then $\thind$-rank is called \textbf{\thh-rank}, and the \thh-rank of $p$ is denoted $\Uth(p)$.
If $T$ is rosy and $\thind$ on $\M^{\eq}$ is ranked, then $T$ is said to be \textbf{superrosy}.
\end{enumerate}

\subsection{Independence in pregeometric theories}
\label{sec:ind-pregeom}
Recall that $T$ is \textbf{pregeometric} if $\aclL$ is a pregeometry in any model of $T$, and that $T$ is \textbf{geometric} if it is pregeometric and eliminates $\exists^\infty$.
Suppose that $T$ is pregeometric and let $\rkL$ denote the rank function corresponding to $\aclL$.
We define the \textbf{$\aclL$-independence relation} $\Tind\ $ as follows:
\[\textstyle
A\Tind_C B \iff \rkL(A_0|BC) = \rkL(A_0|C)\text{ for every finite subset }A_0 \subseteq A.
\]

The following fact is well-known; we prove a stronger version in Proposition~\ref{prop:eqindisind} below:

\begin{fact}\label{fact:Tindrealstrict}
Suppose that $T$ is pregeometric.
Then $\Tind\ $ is a real-strict independence relation on $\M$, and the $\Tind\ $-rank of $\tp(\av|B)$ coincides with $\rkL(\av|B)$ for all tuples $\av\in \M^n$.
In particular, $T$ has $\Tind\ $-rank~1.
\end{fact}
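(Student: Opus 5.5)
We verify the axioms one by one from the pregeometry properties of $\aclL$, with full existence as the only step needing compactness, and then compute the rank by induction. Throughout we use that $\rkL(\av|B)$ is the size of a basis of $\av$ over $B$ for $\aclL$. We also use the additivity formula $\rkL(\av\bv|C)=\rkL(\av|C)+\rkL(\bv|C\av)$, which follows from exchange.

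\emph{The axioms.} Invariance is clear, since $\rkL$ is determined by the type. Monotonicity in $A$ is built into the definition. Monotonicity in $B$ follows from
\[
\rkL(A_0|C)\geq\rkL(A_0|B_0C)\geq\rkL(A_0|BC).
\]
Base monotonicity: with $C_0\subseteq C\subseteq B$, if $\rkL(A_0|B)=\rkL(A_0|C_0)$, then $\rkL(A_0|C)$ is squeezed between these two equal quantities. Normality: by additivity, for finite $A_0\subseteq A$ and $C_0\subseteq C$ we have
\[
\rkL(A_0C_0|BC)=\rkL(A_0|BC)=\rkL(A_0|C)=\rkL(A_0C_0|C).
\]
Finite character holds by definition.

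Symmetry: first reduce to finite tuples. For finite $\av,\bv$, additivity gives
\[
\rkL(\av|C)-\rkL(\av|C\bv)=\rkL(\bv|C)-\rkL(\bv|C\av).
\]
For infinite $B$, $\rkL(\av|BC)=\min_{B_0}\rkL(\av|B_0C)$ over finite $B_0\subseteq B$, so the statement localizes.

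Transitivity: take $D\subseteq C\subseteq B$ with $B\Tind_CA$ and $C\Tind_DA$. By symmetry, $A_0\Tind_C B$ and $A_0\Tind_D C$ for each finite $A_0\subseteq A$. Then
\[
\rkL(A_0|B)=\rkL(A_0|C)=\rkL(A_0|D),
\]
and symmetry converts this back.

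Local character, with $\kappa(A)=|A|^++\aleph_0$: for each finite $A_0$, pick a finite $B_0\subseteq B$ realizing $\rkL(A_0|B)=\rkL(A_0|B_0)$. Taking the union over all finite $A_0\subseteq A$ gives a set $C$ of size at most $|A|+\aleph_0$.

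Anti-reflexivity: if $a\Tind_B a$, then $\rkL(a|B)=\rkL(a|Ba)=0$, so $a\in\aclL(B)$.

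\emph{Full existence.} Given $A,B,C$, choose a basis $A'$ of $A$ over $C$ for $\aclL$. Consider the partial type $\tpL(A/C)$ together with the formulas $x\notin\phi(\M)$ for every $\cL(BC)$-formula $\phi$ that is algebraic in a single variable $x$ from $A'$, over $B\cup C\cup{}$(other variables). More precisely, we require each $x_i$ for $a_i\in A'$ to avoid $\aclL(BC\,x_{<i})$. This is finitely satisfiable: if it were not, some finite part of $\tpL(A/C)$ would imply that $a_i$ lies in a finite $\cL(BCa_{<i})$-definable set. Since $a_i\notin\aclL(Ca_{<i})$, the formula in $\tpL(a_i/Ca_{<i})$ has infinitely many realizations. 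Those realizations cannot all be covered by finitely many algebraic sets over the parameters, as an infinite definable set is not a finite union of finite sets. Compactness then gives $A^*\equiv_C A$ in which $A'^*$ remains $\aclL$-independent over $BC$, so $\rkL(A_0^*|BC)=\rkL(A_0^*|C)$.

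I expect this step to be the main obstacle: it is where the pregeometry alone is insufficient and compactness must be invoked carefully.

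\emph{The rank.} We show $U^{\Tind}(\av|B)=\rkL(\av|B)$ by induction on $n=\rkL(\av|B)$.

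Upper bound: a forking extension to $B'\supseteq B$ strictly drops $\rkL$, and rank $0$ means $\av$ is algebraic over $B$, whose only extensions are non-forking. Hence $U\leq\rkL$.

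Lower bound: if $n>0$, pick $a_i\notin\aclL(B)$ and set $B'=B\cup\{a_i\}$. Then $\tp(\av|B')$ is a forking extension of rank $n-1$, so $U\geq n$.

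For unary types $\rkL\leq 1$, which gives $\Tind$-rank~$1$.
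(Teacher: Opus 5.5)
Your proof is correct and is essentially the paper's argument: the paper proves this fact as the real-sort case of Proposition~\ref{prop:eqindisind}, reading the axioms off additivity of the rank, getting full existence by realizing the type over $C$ of an $\aclL$-basis while avoiding all $\cL(BC)$-algebraic relations, and computing the rank by induction, dropping one basis element at a time for the lower bound. The one step to tighten is the consistency check in full existence: an inconsistent finite fragment gives a disjunction over several coordinates with varying parameters, not a single finite set containing one $a_i$. The fix is to choose the new basis elements one at a time, transporting earlier choices by an $\cL(C)$-automorphism and using that a nonalgebraic type over $C\,b^*_{<i}$ has a realization outside $\aclL(BC\,b^*_{<i})$.
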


\begin{lemma}\label{lem:Tind-coarsest}
Suppose $T$ is pregeometric.
Then $\Tind\ $ is the coarsest real-strict independence relation on $\M$, and so coincides with real \thh-forking independence.
\end{lemma}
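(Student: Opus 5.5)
Since $\Tind\ $ is real-strict by Fact~\ref{fact:Tindrealstrict}, it suffices to show that every real-strict independence relation $\ind$ on $\M$ is finer than $\Tind\ $. That is, from $A\ind_C B$ we will derive $A\Tind_C B$. The identification with real \thh-forking then follows from the definition of the latter as the coarsest real-strict independence relation.

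By finite character of $\Tind\ $, we may assume $A$ is a finite tuple $\av$. We then argue by induction on $\rkL(\av|C)$. Let $\av_0\subseteq\av$ be an $\aclL$-basis of $\av$ over $C$. Monotonicity gives $\av_0\ind_C B$, and by strong closure we have $\rkL(\av|BC)=\rkL(\av_0|BC)$. So it suffices to show that $\av_0$ remains $\aclL$-independent over $BC$.

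Suppose not. By exchange, some $a\in\av_0$ satisfies $a\in\aclL(BC\,\av_1)$, where $\av_1=\av_0\setminus\{a\}$. We now combine the axioms:
\begin{enumerate}[(i)]
\item Normality and monotonicity give $a\av_1\ind_C B$.
\item Base monotonicity together with transitivity and symmetry yield $a\ind_{C\av_1} B$. Concretely, apply the standard derivation of $a\ind_{C\av_1} BC\av_1$ from $a\av_1\ind_C B$: symmetry, then base monotonicity with base $C\av_1\subseteq BC\av_1$, then symmetry again, using normality where needed.
\item Strong closure now gives $a\ind_{C\av_1}\aclL(BC\av_1)$, and in particular $a\ind_{C\av_1} a$.
\item Anti-reflexivity gives $a\in\aclL(C\av_1)$, contradicting the independence of $\av_0$ over $C$.
\end{enumerate}
This argument actually avoids the induction entirely.

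The main obstacle is step (ii): justifying the passage from $a\av_1\ind_C B$ to $a\ind_{C\av_1}B$ using only Adler's axioms. This is the standard consequence of symmetry plus base monotonicity, namely that $B\ind_C a\av_1$ implies $B\ind_{C\av_1} a\av_1$, followed by symmetry. I would check carefully that base monotonicity is applied in the correct coordinate, exploiting symmetry to move between the two sides. Everything else is routine.
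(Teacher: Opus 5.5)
Your proof is correct, and its core is the same as the paper's. In both, symmetry and base monotonicity move all but one coordinate into the base, and then strong closure, monotonicity and anti-reflexivity are applied to a single element lying in $\aclL$ of the enlarged base.

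The difference is in the bookkeeping. The paper first treats a single element with $a\ind_C B$, splitting into the cases $\rkL(a|BC)=1$ and $a\in\aclL(BC)$. It then inducts on $|A|$: it splits $Aa\ind_C B$ into $A\ind_C B$ and $a\ind_{AC}B$, and reassembles $A\Tind_C B$ and $a\Tind_{AC}B$ via normality and transitivity of $\Tind$. You instead fix an $\aclL$-basis $\av_0$ of $\av$ over $C$ and derive a contradiction from one dependent element. This removes the induction and any use of the independence-relation axioms for $\Tind$. The price is only the observation that $\av\subseteq\aclL(C\av_0)$ forces $\rkL(\av|BC)=\rkL(\av_0|BC)$. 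That is a pregeometry fact, not strong closure as you label it.

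Some corrections to the write-up:
\begin{enumerate}
\item In step (ii), apply base monotonicity to $B\ind_C Ca\av_1$, obtained from $a\av_1\ind_C B$ by normality and symmetry. Use the chain $C\subseteq C\av_1\subseteq Ca\av_1$; monotonicity and symmetry then give $a\ind_{C\av_1}B$. The chain is not ``base $C\av_1\subseteq BC\av_1$'', since $\av_1$ is not on the right-hand side until you use symmetry.
\item Choosing $a\in\av_0$ with $a\in\aclL(BC\av_1)$ is just the definition of dependence, so exchange is not needed there.
\item Step (i) is redundant, since $a\av_1=\av_0$.
\end{enumerate}
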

\begin{proof}
Let $\ind$ be a real-strict independence relation on $\M$.
By monotonicity and finite character, it suffices to show that $A \ind_CB \implies A\Tind_CB$ for all finite $A$.
Let us first suppose that $a\ind_CB$ for a single element $a \in \M$.
If $\rkL(a|BC) = 1$, then $\rkL(a|C) = 1$ as well.
On the other hand, if $a \in \aclL(BC)$, then strong closure gives $\aclL(aC)\ind_{\aclL(C)}\aclL(BC)$, monotonicity gives $a \ind_{\aclL(C)}a$, and real-strictness gives $a \in \aclL(C)$.
In both cases, we have $a\Tind_CB$.

Now we show that $A \ind_CB \implies A\Tind_CB$ for $A$ finite by induction on the size of $A$.
Suppose this holds for a given $A$, and let $a \in \M$ with $Aa\ind_CB$.
Normality, base monotonicity, and monotonicity give $A \ind_CB$ and $a \ind_{AC}B$ (where we use symmetry to apply base monotonicity on the left).
Our induction hypothesis gives $A \Tind_CB$, and the singleton case gives $a \Tind_{AC}B$.
Applying normality, we have $AC \Tind_{C}B$ and $aAC \Tind_{AC}B$, and applying transitivity yields $aAC\Tind_CB$.
We conclude by monotonicity.
\end{proof}

Gagelman defines a \textbf{surgical theory} to be a pregeometric theory with the additional property that for any definable equivalence relation $E$ on a definable set $X$, only finitely many $E$-classes have the same $\aclL$-dimension as $X$~\cite{Ga05}.
It is folklore that a theory $T$ is surgical if and only if it is superrosy of \thh-rank~1; we provide a proof in Proposition~\ref{prop:surgicalrosy} below.
Accordingly, we use \emph{surgical} to mean \emph{superrosy of \thh-rank~1}.
If $T$ is surgical, then $A\Tind_CB\iff A\thind_CB$ for all $A,B,C\subseteq \M$.
If $T$ is pregeometric with GEI, then $T$ is surgical by~\cite[Corollary 3.6]{Ga05}.
Being pregeometric alone is not enough:

\begin{example}
Let $\M$ be an algebraically closed valued field with a
nontrivial valuation.
Then $T$ is geometric and $\Tind\ $ is real-strict and real-canonical, but $T$ is not rosy.
\end{example}

For simple theories with elimination of hyperimaginaries (for example, stable and supersimple theories), forking independence coincides with \thh-forking independence; see~\cite[Corollary 3.4]{Ad05} or~\cite[Theorem~5.1.4]{On06}.
By Proposition~\ref{prop:surgicalrosy}, this gives:

\begin{fact}\label{fact:supersimplecoincide}
If $T$ is supersimple with $SU(T) = 1$, then $T$ is surgical and $\find = \thind = \Tind\ $.
\end{fact}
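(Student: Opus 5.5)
The statement is Fact~\ref{fact:supersimplecoincide}: if $T$ is supersimple with $SU(T)=1$, then $T$ is surgical and $\find=\thind=\Tind\ $. I would first establish that $\find$ coincides with $\thind$, then show that $T$ is pregeometric and identify $\find$ with $\Tind\ $, and finally read off that \thh-rank is~1.

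\emph{Step 1.} Supersimple theories eliminate hyperimaginaries. By the cited result of Adler (\cite[Corollary 3.4]{Ad05}) or Onshuus, $\find=\thind$ on $\M^{\eq}$. In particular $T$ is rosy, and \thh-rank agrees with SU-rank on every type.

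\emph{Step 2.} Next I would show that $T$ is pregeometric. Since $SU(T)=1$, every unary non-algebraic type over $A$ has SU-rank exactly~1. Its forking extensions therefore have rank~0, i.e.\ they are algebraic. Hence for a single element $a\notin\aclL(A)$ and any $B\supseteq A$,
\[
a\find_A B\iff a\notin\aclL(B).
\]
To get exchange, suppose $a\in\aclL(Ab)\setminus\aclL(A)$. Then $a\nind[f]_A b$, and symmetry of forking gives $b\nind[f]_A a$. This forces $b\in\aclL(Aa)$, since otherwise $\tp(b|Aa)$ would have rank~1, equal to $SU(b|A)$. Equality of ranks gives nonforking in supersimple theories: if $\tp(b|Aa)$ forked over $A$, its rank would drop below $SU(b|A)\le 1$. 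So $T$ is pregeometric and $\rkL$ is defined.

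\emph{Step 3.} To show $\find=\Tind\ $, I would use the Lascar inequalities. For a finite tuple they give $SU(\av|A)\le\rkL(\av|A)$, via the Hessenberg sum of 1's and 0's along an $\aclL$-basis decomposition. The lower bound $SU(\av|A)\ge\rkL(\av|A)$ also follows from the inequality, since the ordinal sum of finitely many 1's is the same number. Hence $SU(\av|A)=\rkL(\av|A)$ for all finite tuples. In a supersimple theory with finite ranks, $\av\find_A B$ holds if and only if $SU(\av|AB)=SU(\av|A)$. The rank-drop direction is standard: a forking extension of a finite-rank type has strictly smaller rank. Therefore
\[
\av\find_A B\iff \rkL(\av|AB)=\rkL(\av|A)\iff \av\Tind_A B,
\]
and finite character extends this to infinite $A$.

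\emph{Step 4.} Finally, $\thind=\find$ has rank~1 on unary types, so $T$ is superrosy of \thh-rank~1, i.e.\ surgical in the paper's sense. This is consistent with Proposition~\ref{prop:surgicalrosy}, which gives the Gagelman form. The only delicate point is invoking elimination of hyperimaginaries for supersimple theories to get $\find=\thind$ on $\M^{\eq}$ rather than only on real sets. I would rely on the cited literature for that step, and the rest is routine.
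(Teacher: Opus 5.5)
Your proof is correct, and Step~1 is exactly where the paper starts. Elimination of hyperimaginaries gives $\find=\thind$ (via \cite[Corollary 3.4]{Ad05} or \cite[Theorem~5.1.4]{On06}), so $T$ is superrosy of \thh-rank~$1$.

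From there the paper does no further computation. It invokes Proposition~\ref{prop:surgicalrosy}: the implications (3)$\Rightarrow$(1)$\Rightarrow$(2) show that $T$ is pregeometric (and surgical in Gagelman's sense) and that $\thind=\eqind$ on $\M^{\eq}$. On real sets this relation is $\Tind$.

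You instead get pregeometricity and $\find=\Tind$ directly from the forking calculus of a simple theory. You use symmetry for exchange, the Lascar inequalities for $SU=\rkL$ on finite tuples, and the rank characterization of nonforking in finite rank. This is the $\find$-version of the opening lines of the paper's proof of (3)$\Rightarrow$(1). Your route is self-contained and avoids the appendix. The paper's route gives the stronger imaginary statement $\thind=\eqind$ on $\M^{\eq}$ for free.

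Your route also shows that the step you call delicate can be dropped, since Steps~2--3 never use elimination of hyperimaginaries:
\begin{enumerate}
\item $\find$ is a strict independence relation in any simple theory, so Fact~\ref{fact:coincide} makes $\thind$ coarser than $\find$.
\item Hence $\Uth\le SU$ by Fact~\ref{fact:Uorder}, which already gives superrosiness of \thh-rank~$1$.
\item Lemma~\ref{lem:Tind-coarsest} shows that, for real sets, $A\thind_C B$ implies $A\Tind_C B$.
\item Combined with your implication from $\Tind$ to $\find$, this yields $\find=\thind=\Tind$ with no appeal to hyperimaginaries.
\end{enumerate}

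One small point: in Step~3 you justify only the rank-drop direction of ``$\av\find_A B$ iff $SU(\av|AB)=SU(\av|A)$''. The converse, that nonforking extensions preserve SU-rank, is equally standard. Alternatively, the implication from $\find$ to $\Tind$ follows from Lemma~\ref{lem:Tind-coarsest}, since $\find$ is real-strict.
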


The SU-rank~1 assumption is necessary:

\begin{example}
Let $\M = (M;E)$, where $M$ is a set and $E$ is an equivalence relation on $M$ with infinitely many equivalence classes, all infinite.
Then $T$ is $\omega$-stable of Morley rank~2, geometric, and $\Tind\ $ is real-strict and real-canonical, but $\find = \thind\neq \Tind\ $.
\end{example}

\section{Geometric theories of fields}
We assume for the rest of this note that $T$ is a pregeometric theory extending the $\cL_{\ring}$-theory of fields.
Throughout this section, \emph{independence} refers to $\aclL$-independence (equivalently, $\Tind\ $-independence).
Given a definable set $Y\subseteq \M^n$, defined over a small set of parameters $A$, we set 
\[
\dimL(Y)\coloneqq \max\{\rkL(\av|A): \av\in Y\}.
\]
Then $\dimL(Y)$ is independent of the specific choice of defining parameters.

Let $X \subseteq \M$ be an infinite definable set.
For $\alpha \in \M$, we let
\[
\Inv_X(\alpha) \coloneqq \{(a,b,c,d) \in X^4: (a-d)/(b-c) = \alpha\}.
\]
Given $S \subseteq \M$, we let $\Inv_X(S) \coloneqq \bigcup_{\alpha \in S}\Inv_X(\alpha)$.
In~\cite[Lemma 3.47]{Fo11}, it is shown that $\Inv_X(\alpha)$ is nonempty for each $\alpha$; see also~\cite{DMS10,JY23}.
It follows that $T$ eliminates $\exists^\infty$, and so we refer to $T$ as a \textbf{geometric theory of fields}.
Since $T$ is geometric, $\dimL$ is a definable dimension function in the sense of~\cite{vdD89}.
Note that $\dimL \Inv_X(\alpha) \leq 3$; this is even an equality:

\begin{lemma}\label{lem:very_strong_JY}
Let $\alpha \in \M$ and let $(a,b) \in X^2$ be independent over $\alpha$ and the defining parameters for $X$.
Then the fiber
\[
\Inv_X(\alpha)_{a,b} = \{(c,d) \in X^2:(a-d)/(b-c) = \alpha\}
\]
is infinite.
In particular, $\Inv_X(\alpha)$ has dimension 3.
\end{lemma}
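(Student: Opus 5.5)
The plan is a direct dimension count via the additivity of $\rkL$. No appeal to the nonemptiness result~\cite[Lemma 3.47]{Fo11} should be needed.

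Let $A$ be a small set over which $X$ is defined, and put $B \coloneqq A\cup\{\alpha\}$. First rewrite the defining condition of the fiber. For $(c,d)\in X^2$ with $b\neq c$ we have
\[
(a-d)/(b-c) = \alpha \iff d-\alpha c = a-\alpha b.
\]
So consider the $\cL(B)$-definable map $f\colon X^2\to\M$, $f(x,y)\coloneqq y-\alpha x$. The fiber $\Inv_X(\alpha)_{a,b}$ then agrees with $f^{-1}(e)\cap X^2$ up to removing the pairs with $c=b$, where $e\coloneqq a-\alpha b$. At most one pair is removed, since $c=b$ forces $d=a$ (this uses the convention that the quotient is undefined when $b=c$, so those points are excluded). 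It therefore suffices to show that $f^{-1}(e)\cap X^2$ is infinite.

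Next, the rank computation. By hypothesis $\rkL(a,b|B)=2$. Since $e\in\dcl(B,a,b)$, additivity of $\rkL$ gives
\[
\rkL(a,b|B e) = \rkL(a,b|B)-\rkL(e|B)\geq 2-1 = 1 .
\]
Hence $(a,b)\notin\aclL(Be)$. But $(a,b)$ lies in $f^{-1}(e)\cap X^2$, which is an $\cL(Be)$-definable set, namely $\{(x,y)\in X^2: y-\alpha x = e\}$. A finite definable set consists of algebraic points, so this set is infinite. Note that if $\alpha = 0$ this is trivially $X\times\{a\}$ and the argument still applies. Removing the at most one excluded point, the fiber $\Inv_X(\alpha)_{a,b}$ is infinite.

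For the dimension claim, note that an infinite $\cL(Babb)$-definable subset of $\M^2$ contains a point $(c,d)$ with $\rkL(c,d|Bab)\geq 1$. This follows from compactness, using that $\aclL$ of a small set is small, or from the dimension theory of the geometric theory $T$. Then additivity gives
\[
\rkL(a,b,c,d|B)\geq 2+1 = 3,
\]
so $\dimL\Inv_X(\alpha)\geq 3$. The reverse inequality $\dimL\Inv_X(\alpha)\leq 3$ was noted just before the statement, since $d\in\dcl(B,a,b,c)$ on $\Inv_X(\alpha)$.

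There is no serious obstacle here. The only points needing care are the bookkeeping around the excluded locus $b=c$ and the justification that an infinite set definable over a small set has a non-algebraic point over that set. The latter is standard in the monster model.
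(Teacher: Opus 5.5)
Your proof is correct and is essentially the paper's argument. Like the paper, you rewrite the fiber as $\{(c,d)\in X^2 : d-\alpha c = a-\alpha b\}$ minus the single point $(b,a)$, and note that $\rkL(a,b|A,\alpha,a-\alpha b)\geq 1$ forces this definable set, which contains $(a,b)$, to be infinite. Your explicit derivation of $\dimL\Inv_X(\alpha)=3$ (where $\cL(Babb)$ should read $\cL(Bab)$) fills in the ``in particular'' clause that the paper leaves to the reader.
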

\begin{proof}
Fix a set of parameters $A$ such that $X$ is $\cL(A)$-definable and $\rkL(a,b| A,\alpha) = 2$.
We have $(c,d) \in \Inv_X(\alpha)_{a,b}$ if and only if $a-b\alpha = d-c\alpha$ and $(c,d) \neq (b,a)$.
Thus, it suffices to show that the set $\{(c,d) \in X^2: a-b\alpha = d-c\alpha\}$ is infinite.
This holds since $\rkL(a,b|A,\alpha,a-b\alpha) = 1$.
\end{proof}

The following consequence of Lemma~\ref{lem:very_strong_JY} was used in an earlier proof of Theorem~\ref{thm:stablestronglyminimal} below.
Although it is no longer needed for this theorem, we include it here as it may be of independent interest.

\begin{lemma}\label{lem:almost_field_is_field}
Let $X\subseteq \M$ be infinite and definable over $A \subseteq \M$.
Let $\alpha \in \M^\times$ and suppose that for all $\beta,\gamma \in X$ with $\rkL(\beta,\gamma|A,\alpha) = 2$, we have $\beta-\gamma,\alpha\beta/\gamma \in X$.
Then $X$ is cofinite in $\M$.
\end{lemma}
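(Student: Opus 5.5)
The plan is to show that every element of $\M$ that is generic over $A\alpha$ lies in $X$. Then $\M\setminus X$ is an $\cL(A\alpha)$-definable set containing no element of rank $1$ over $A\alpha$, so $\dimL(\M\setminus X)=0$. Since $T$ is geometric, a definable set of dimension $0$ is finite, so $X$ is cofinite. Because $\alpha\neq 0$, the map $t\mapsto \alpha t$ permutes the elements generic over $A\alpha$. It therefore suffices to fix $t\in\M$ with $\rkL(t|A,\alpha)=1$ and prove that $\alpha t\in X$.

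To do this, apply Lemma~\ref{lem:very_strong_JY} with $t$ in place of $\alpha$. The set $\Inv_X(t)$ is $\cL(A,t)$-definable of dimension $3$, so we can choose $(a,b,c,d)\in\Inv_X(t)$ with $\rkL(a,b,c,d|A,\alpha,t)=3$. Since $t=(a-d)/(b-c)$ lies in $\dcl(a,b,c,d)$, additivity of rank gives
\[
\rkL(a,b,c,d|A,\alpha)=\rkL(a,b,c,d,t|A,\alpha)=\rkL(t|A,\alpha)+3=4 .
\]
So $a,b,c,d$ are four independent elements of $X$ over $A\alpha$. Put $u\coloneqq a-d$ and $v\coloneqq b-c$, so that $t=u/v$.

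Next, check the genericity hypotheses needed to apply the assumption on $X$. The pairs $(a,d)$ and $(b,c)$ each have rank $2$ over $A\alpha$. By hypothesis, $u=a-d\in X$ and $v=b-c\in X$. We also need $\rkL(u,v|A,\alpha)=2$. This holds because $c=b-v$ and $d=a-u$, so $(a,b,c,d)\in\dcl(u,v,a,b)$. Hence
\[
4\leq \rkL(u,v,a,b|A,\alpha)\leq \rkL(u,v|A,\alpha)+2 ,
\]
which forces $\rkL(u,v|A,\alpha)=2$. Applying the hypothesis to the independent pair $(u,v)$ in $X$ gives $\alpha u/v=\alpha t\in X$. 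Note that $v\neq 0$ automatically, since $v$ is generic.

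The main obstacle is this bookkeeping of ranks. The point is to realize $t$ as a quotient of two differences whose components are all mutually independent over $A\alpha$, and not only independent over $A,\alpha,t$. The dimension statement in Lemma~\ref{lem:very_strong_JY}, together with the fact that $t$ is definable from $(a,b,c,d)$, is exactly what supplies the fourth degree of freedom. Everything else reduces to additivity of $\rkL$ and the elimination of $\exists^\infty$.
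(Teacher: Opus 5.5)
Your proof is correct and is essentially the paper's argument. The paper fixes a generic $\delta$ and works with $\Inv_X(\alpha^{-1}\delta)$ directly, whereas you work with $\Inv_X(t)$ and then use $t\mapsto\alpha t$; these amount to the same thing. Your explicit check that $\rkL(a-d,b-c|A,\alpha)=2$ fills in a step the paper leaves implicit.
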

\begin{proof}
Fix some $\delta \in \M$ with $\rkL(\delta|A,\alpha) = 1$.
It suffices to show that $\delta \in X$.
Using Lemma~\ref{lem:very_strong_JY}, take $(a,b,c,d) \in \Inv_X(\alpha^{-1}\delta)$ with $\rkL(a,b,c,d|A,\alpha,\delta) \geq 3$.
Then $\rkL(a,b,c,d|A,\alpha) = \rkL(a,b,c,d,\delta|A,\alpha) = 4$.
Applying our hypothesis gives $a-d,b-c \in X$, and so $\delta = \alpha(a-d)/(b-c) \in X$ as well.
\end{proof}

Now let $\cY = (Y_z)_{z \in Z}$ be a definable family of infinite subsets of $\M$.
We allow the indexing set $Z$ to be a definable set of imaginaries.
For $a,b\in X$ and $z \in Z$, we let
\[
X^\cY_{a,b,z} \coloneqq \{c \in X: X \cap (a+(c-b)Y_z)\text{ is infinite}\}.
\]

\begin{lemma}\label{lem:sufficiently_cover_implies_many_subsets}
Let $(a,b) \in X^2$ be independent over the defining parameters for $X$ and $\cY$.
Then for any $z \in Z$, the set $X^\cY_{a,b,z}$ is infinite.
\end{lemma}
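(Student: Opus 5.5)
The plan is to produce a single element $c \in X^\cY_{a,b,z}$ that is not algebraic over the parameters defining $X^\cY_{a,b,z}$. Since that set is definable over those parameters, this forces it to be infinite. The element $c$ will come from Lemma~\ref{lem:very_strong_JY}, applied with $\alpha$ a generic point of $Y_z$.

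First a reduction. Let $A$ be a set of real parameters over which $X$ and the family $\cY$ are defined, so that $\rkL(a,b|A) = 2$. If $z$ is imaginary, fix a real tuple $z'$ whose image is $z$. Every set definable over $z$ is then definable over $z'$, so we may work throughout with $E \coloneqq A\cup\{a,b\}\cup z'$, a set of real parameters. Note that $X^\cY_{a,b,z}$ is $\cL(E)$-definable.

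\emph{Step 1: choose $\alpha$ and check the hypothesis of Lemma~\ref{lem:very_strong_JY}.} Since $Y_z$ is infinite and $\cL(E)$-definable, pick $\alpha \in Y_z$ with $\rkL(\alpha|E) = 1$. Then $\rkL(\alpha|A,a,b) = 1$, so additivity of rank gives $\rkL(a,b,\alpha|A) = 3$, and hence $\rkL(a,b|A,\alpha) = 2$. This is the key point: independence of $(a,b)$ is only needed over $A\cup\{\alpha\}$, not over $z$. So Lemma~\ref{lem:very_strong_JY} applies, and the fiber $\Inv_X(\alpha)_{a,b}$ is infinite.

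\emph{Step 2: choose $c$.} The fiber $\Inv_X(\alpha)_{a,b}$ is infinite and $\cL(E\alpha)$-definable. Pick $(c,d)$ in it with $\rkL(c,d|E,\alpha) \geq 1$. Here $c \neq b$ and $d = a+(c-b)\alpha$, so $d \in \dclL(E,\alpha,c)$, and therefore $\rkL(c|E,\alpha) = 1$. Combining with Step 1, $\rkL(\alpha,c|E) = 2$. Reading this in both orders gives $\rkL(c|E) = 1$ and $\rkL(\alpha|E,c) = 1$.

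\emph{Step 3: show $c \in X^\cY_{a,b,z}$.} Consider the $\cL(E c)$-definable set
\[
S_c \coloneqq \{y \in Y_z : a+(c-b)y \in X\}.
\]
It contains $\alpha$, because $a+(c-b)\alpha = d \in X$. Since $\alpha \notin \aclL(E,c)$, the set $S_c$ is infinite. The map $y \mapsto a+(c-b)y$ is injective because $c \neq b$, so it carries $S_c$ bijectively onto $X\cap(a+(c-b)Y_z)$. Hence that intersection is infinite, which means $c \in X^\cY_{a,b,z}$.

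\emph{Conclusion.} Since $c \in X^\cY_{a,b,z}$, $\rkL(c|E) = 1$, and $X^\cY_{a,b,z}$ is $\cL(E)$-definable, the set $X^\cY_{a,b,z}$ is infinite.

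The main obstacle is that $z$ is arbitrary and may be correlated with $(a,b)$, so we cannot assume $(a,b)$ is independent over $z$. Step 1 is designed to get around this: choosing $\alpha$ generic over everything, including $z$, and then using exchange to move the genericity shows that only independence over $A\cup\{\alpha\}$ is needed. The other delicate point is the exchange computation in Step 2 giving $\alpha \notin \aclL(E,c)$. The rest is bookkeeping, including the passage from the imaginary $z$ to the real tuple $z'$.
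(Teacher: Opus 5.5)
Your proof is correct and follows the paper's argument. As in the paper, you pick $\alpha\in Y_z$ generic, use $\rkL(a,b|A,\alpha)=2$ to invoke Lemma~\ref{lem:very_strong_JY}, take $(c,d)$ generic in $\Inv_X(\alpha)_{a,b}$, and show that $c$ is a non-algebraic element of $X^\cY_{a,b,z}$. Witnessing infinitude of $X\cap(a+(c-b)Y_z)$ via $\alpha$ and the bijection $y\mapsto a+(c-b)y$, rather than via $d$, is equivalent, since $\alpha$ and $d$ are interdefinable over $Ec$.

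Your decision to take $\alpha$ and $(c,d)$ generic over a real representative $z'$ of $z$ as well is worth keeping. The paper's write-up only asks for genericity over $A,a,b$, but the two final non-algebraicity conclusions concern sets defined using $z$, so your bookkeeping is exactly what makes those steps airtight.
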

\begin{proof}
Fix $z \in Z$ and a set of parameters $A \subseteq \M$ such that $X$ and $\cY$ are $\cL(A)$-definable and $\rkL(a,b|A) = 2$.
Let $\alpha \in Y_z$ with $\rkL(\alpha|A,a,b) = 1$, so $\rkL(a,b|A,\alpha) = 2$.
Thus $\Inv_X(\alpha)_{a,b}$ is infinite by Lemma~\ref{lem:very_strong_JY}, so take $(c,d) \in \Inv_X(\alpha)_{a,b}$ with $\rkL(c,d|A,\alpha,a,b) = 1$.
Then $\rkL(c,d|A,a,b) = \rkL(c,d,\alpha|A,a,b)= 2$, so
\[
\rkL(c|A,a,b) = \rkL(d|A,a,b,c) = 1.
\]
The intersection $X \cap (a+(c-b)Y_z)$ is infinite, since it contains $d = a+(c-b)\alpha$.
Thus, $c \in X^\cY_{a,b,z}$, so this set is infinite as well.
\end{proof}

\subsection{Stable geometric fields are strongly minimal}

\begin{lemma}\label{lem:intersections}
Let $X\subseteq \M$ be an infinite, coinfinite definable set and let $Z\subseteq \M$ be infinite and definable.
Then there are $a\in \M$ and $b\in \M^\times$ such that $Z\cap(a+bX)$ and $Z\setminus(a+bX)$ are both infinite.
\end{lemma}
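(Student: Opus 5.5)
The plan is to argue by contradiction and reduce to a dimension count, using the affine maps $x\mapsto a+bx$ together with Lemma~\ref{lem:very_strong_JY} and Lemma~\ref{lem:sufficiently_cover_implies_many_subsets}.

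First I would reduce to a dichotomy. Fix a small set $A$ over which $X$ and $Z$ are defined, and put $X^c\coloneqq \M\setminus X$, which is infinite and $\cL(A)$-definable. For $b\neq 0$ we have $Z\setminus(a+bX)=Z\cap(a+bX^c)$. So it suffices to find $(a,b)\in \M\times\M^\times$ lying in both of the sets
\[
S_0\coloneqq\{(a,b):Z\cap(a+bX)\text{ is infinite}\},\qquad S_1\coloneqq\{(a,b):Z\cap(a+bX^c)\text{ is infinite}\}.
\]
Both sets are $\cL(A)$-definable, since $T$ eliminates $\exists^\infty$. Since $Z$ is infinite, every pair with $b\neq0$ lies in $S_0\cup S_1$. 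Suppose towards a contradiction that $S_0\cap S_1=\emptyset$. Then $S_0$ and $S_1$ partition $\M\times\M^\times$.

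Next I would apply Lemma~\ref{lem:sufficiently_cover_implies_many_subsets}, with $Z$ in the role of the ambient set and the two-member family $\cY=\{X,X^c\}$ (indexed by a two-element $A$-definable set). Take $(a,b)\in Z^2$ independent over $A$. The lemma makes both of the following sets infinite:
\[
C_0\coloneqq\{c\in Z:(a,c-b)\in S_0\},\qquad C_1\coloneqq\{c\in Z:(a,c-b)\in S_1\}.
\]
Under our assumption, $C_0$ and $C_1$ are disjoint, and both are $\cL(A,a,b)$-definable.

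The goal is to contradict this using the finer information in the proof of Lemma~\ref{lem:sufficiently_cover_implies_many_subsets}. That proof shows the following: whenever $\alpha\in Y_z$ has $\rkL(\alpha|A,a,b)=1$, there is $c$ generic in $Z$ over $A,a,b$, together with $d\in Z$, such that $d=a+(c-b)\alpha$.
\begin{itemize}
\item Take $\alpha\in X$ generic over $A,a,b$; this produces a generic $c\in C_0$.
\item Take $\alpha'\in X^c$ generic over $A,a,b$; this produces a generic $c'\in C_1$.
\end{itemize}
I would then try to transport one configuration onto the other. Form the $\cL(A)$-definable set of tuples $(a,b,c,d,\alpha)$ with $d=a+(c-b)\alpha$, and count $\dimL$ of the relevant fibres. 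If $C_0$ and $C_1$ are complementary up to finitely many points on a cofinite part of $Z$, then the map $c\mapsto$ ``which of $S_0,S_1$ contains $(a,c-b)$'' splits the generic type of $Z$ over $A,a,b$ into two. I would aim to show that a generic $\alpha$ in $X$ and a generic $\alpha'$ in $X^c$ can be chosen to give the \emph{same} generic $c$. The idea is to first pick $c$ generic in $Z$ over $A,a,b$. One then needs $d\in Z$ with $(d-a)/(c-b)\in X$ (for $S_0$) and another $d'\in Z$ with $(d'-a)/(c-b)\in X^c$ (for $S_1$). Equivalently, the affine image $a+(c-b)X$ must meet $Z$ in an infinite set, and so must $a+(c-b)X^c$.

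The main obstacle is exactly this step: getting a single parameter pair that works for $X$ and $X^c$ simultaneously. Independence alone only yields two possibly different generics of $Z$, which need not have the same type. What I would try first is a counting argument in the plane of parameters. For each point $(a,b)$ of rank $2$ over $A$, consider the fibre $Z\cap(a+bX)$ and the fibre $Z\cap(a+bX^c)$. The set
\[
W=\{(a,b,z):z\in Z,\ (z-a)/b\in X\}
\]
has dimension $3$ (parametrize it by $z\in Z$, $x\in X$, $b$), and similarly with $X^c$ in place of $X$. Since $\dimL(\M^2)=2$, the generic fibres of both projections to $(a,b)$ are infinite, so $S_0$ and $S_1$ each have dimension $2$. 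The remaining work is to upgrade ``both have dimension $2$'' to ``they intersect''. For this I would use the transitivity of the affine group on $\M^2$-generics, via Lemma~\ref{lem:very_strong_JY}, to move a generic point of $S_0$ to a generic point of $S_1$ while preserving the relevant intersection with $Z$. If that fails, I would fall back on replacing $X$ by a suitable translate/dilate $a_0+b_0X$ chosen so that both $X$ and $X^c$ meet $Z$ in sets of full dimension.
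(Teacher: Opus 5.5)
There is a genuine gap. The step you flag as ``the main obstacle'', producing a single pair $(a,b)$ that works for $X$ and $X^c$ at once, is the entire content of the lemma, and none of your suggested ways around it works as stated.

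\begin{itemize}
\item Your count on $W$ correctly shows that $S_0$ and $S_1$ both have dimension~$2$. But this gives no reason for them to meet. Under your contradiction hypothesis they would partition $\M\times\M^\times$ into two $2$-dimensional pieces, and nothing in a geometric theory forbids that: think of $\{b>0\}$ and $\{b<0\}$ in a real closed field.
\item $C_0$ and $C_1$ being infinite and disjoint is not contradictory. Since $Z$ may have many generic types over $A,a,b$, ``splitting the generic type of $Z$'' is not a contradiction either.
\item Fixing a generic $c$ first and then looking for $d,d'\in Z$ with $(d-a)/(c-b)\in X$ and $(d'-a)/(c-b)\in X^c$ is circular: it is the lemma itself for the particular pair $(a,c-b)$.
\item Lemma~\ref{lem:very_strong_JY} is not a transitivity statement about the affine group, so it does not supply the ``transport'' you want.
\end{itemize}

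The missing idea is to reverse the order of choices: fix the points first and solve for the affine map. Take $x\in X$, $y\in\M\setminus X$, and $z_1,z_2\in Z$ with $\rkL(z_1,z_2,x,y|A)=4$. Let $(a,b)$ be the unique pair with $a+bx=z_1$ and $a+by=z_2$, namely $b=(z_1-z_2)/(x-y)$ and $a=z_1-bx$. Then $b\neq 0$ because $z_1\neq z_2$.

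Now $(a,b)$ is definable from $(z_1,z_2,x,y)$, and conversely $(x,y)$ is definable from $(z_1,z_2,a,b)$. Hence $\rkL(z_1,z_2,a,b|A)=4$, and so $\rkL(z_1,z_2|A,a,b)=\rkL(z_1,z_2,a,b|A)-\rkL(a,b|A)\geq 4-2=2$.

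Thus $z_1\in Z\cap(a+bX)$ and $z_2\in Z\setminus(a+bX)$ are both non-algebraic over $A,a,b$. Since these two sets are $\cL(A,a,b)$-definable, both are infinite. This argument is direct: it needs no contradiction hypothesis, no sets $S_0,S_1$, and no appeal to Lemma~\ref{lem:sufficiently_cover_implies_many_subsets}.
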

\begin{proof}
Choose a set $A$ of parameters over which $X$ and $Z$ are defined, and take elements $x\in X$, $y\in \M\setminus X$, and $z_1,z_2\in Z$ with $\rkL(z_1,z_2,x,y|A)=4$.
As the affine group acts $2$-transitively on $\M$, there are $a\in \M$ and $b\in \M^\times$ such that $a+bx=z_1$ and $a+by=z_2$.
We have
\[
\rkL(z_1,z_2|A,a,b)=\rkL(z_1,z_2,a,b|A)-\rkL(a,b|A)=2,
\]
so both $Z\cap(a+bX)$ and $Z\setminus(a+bX)$ are infinite, as they contain $z_1$ and $z_2$ respectively.
\end{proof}

\begin{thm}\label{thm:stablestronglyminimal}
If $T$ is stable, then it is strongly minimal.
\end{thm}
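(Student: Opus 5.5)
We argue by contradiction, using Lemma~\ref{lem:intersections} to build a binary tree of definable sets and thereby show that $T$ has the order property. Suppose $T$ is stable but not strongly minimal. Since $T$ eliminates $\exists^\infty$, failure of strong minimality means there is a definable $X\subseteq\M$ (with parameters in the monster model) that is infinite and coinfinite. Consider the definable family of affine images $\cX=(a+bX)_{a\in\M,\,b\in\M^\times}$, defined by a single formula $\varphi(x;a,b)$ together with the parameters of $X$.

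\textbf{Building the tree.} Lemma~\ref{lem:intersections} says that for every infinite definable $Z$ there are $(a,b)$ such that $\varphi(x;a,b)$ splits $Z$ into two infinite definable pieces. Iterate this. Start with $Z_\emptyset=\M$. Given an infinite definable $Z_s$ for $s\in 2^{<\omega}$, choose $(a_s,b_s)$ splitting it, and set $Z_{s0}=Z_s\cap(a_s+b_sX)$ and $Z_{s1}=Z_s\setminus(a_s+b_sX)$; both are again infinite and definable. This yields, for each $n$, a tree of height $n$ in which every branch $\bigwedge_{i<n}\varphi(x;a_{\sigma|i},b_{\sigma|i})^{\sigma(i)}$ is consistent. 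In other words, $\varphi$ has infinite binary tree rank (Shelah's $2$-rank is $\geq n$ for all $n$). By compactness the full tree of height $\omega$ exists in the monster model, giving $2^{\aleph_0}$ types over a countable parameter set. More precisely, the standard fact is that a formula is stable iff its $2$-rank is finite, and here it is infinite. Hence $\varphi$ is unstable and $T$ is unstable, a contradiction.

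\textbf{Points to check.} The first is that the splitting lemma applies to arbitrary infinite definable $Z$, possibly with new parameters. It does, since the lemma is stated for any infinite definable $Z$ and its proof only requires choosing a generic tuple over whatever parameters are involved. The second is that $\varphi$ is a single formula uniformly in $(a,b)$, so that the binary tree witnesses instability of one formula.

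The main obstacle is essentially absorbed into Lemma~\ref{lem:intersections}, which supplies the splitting at every stage. Once that lemma is available, the remaining step is the routine invocation of the characterization of stability via finiteness of the local $2$-rank. An alternative route would give the same conclusion: pass to an $\omega$-saturated model, count types over a countable set along the tree to get $2^{\aleph_0}$ types, and so contradict $\aleph_0$-stability locally for $\varphi$.
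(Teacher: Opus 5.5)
Your proposal is correct and follows essentially the same route as the paper. Both iterate Lemma~\ref{lem:intersections} to split each infinite branch set by an affine translate $a+bX$, which gives the binary tree property (infinite local $2$-rank) for the single formula defining $(a+bX)_{a,b}$, and hence instability. The only differences are cosmetic: the paper starts the tree at $X$ itself rather than first splitting $\M$, and no compactness step is needed, since your construction already produces the full tree of height $\omega$.
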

\begin{proof}
Suppose that $T$ is not strongly minimal and let $X \subseteq \M$ be an infinite and coinfinite definable set.
Let $\Aff(X)=(a+bX)_{a\in\M,\ b\in\M^\times}$ be the definable family of affine translates of $X$.
We show that $\Aff(X)$ has the binary tree property.

For each finite binary sequence $\nu\in 2^{<\omega}$, we choose $Y_\nu\in\Aff(X)$ as follows:
set $Y_\emptyset\coloneqq X$ and suppose that $Y_\eta$ has been defined for every $\eta\in 2^{\leq n}$ and that, for each $\nu\in 2^{n+1}$, the set
\[
Z_\nu\coloneqq \bigcap_{i\leq n,\,\nu(i)=0}(\M\setminus Y_{\nu|_i}) \cap \bigcap_{i\leq n,\,\nu(i)=1}Y_{\nu|_i}
\]
is infinite.
This holds when $n=0$, since both $X$ and $\M\setminus X$ are infinite.
For each $\nu\in 2^{n+1}$, apply Lemma~\ref{lem:intersections} to $X$ and $Z_\nu$ to choose $Y_\nu\in\Aff(X)$ such that $Z_\nu\cap Y_\nu$ and $Z_\nu\setminus Y_\nu$ are both infinite.
Doing this for all $\nu \in 2^{<\omega}$, we see that $\Aff(X)$ has the binary tree property, so $T$ is unstable; see~\cite[Definition 8.2.1 and Theorem 8.2.3]{TZ12}.
\end{proof}

Combining this result with~\cite[Theorem 1]{Hr92}, we immediately obtain:

\begin{corollary}\label{cor:stable_acf_constants}
Any algebraically bounded stable field is an expansion of an algebraically closed field by constants (see Section~\ref{sec:derivations} for a precise definition of algebraic boundedness).
\end{corollary}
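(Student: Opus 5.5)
Since the field $\M$ is algebraically bounded, the theory $T$ is pregeometric: algebraic closure is field-theoretic algebraic closure over $\PP(B)$, and that operation satisfies exchange. So $T$ is a geometric theory of fields and Theorem~\ref{thm:stablestronglyminimal} applies. If $T$ is stable, it is therefore strongly minimal.

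Next I would check the characteristic and rule out the degenerate case. A stable field is either finite, which is trivially excluded since we work with infinite models and the corollary concerns infinite fields, or it is infinite. In the infinite case, strong minimality makes the field an infinite stable field of finite (Morley) rank 1. By Macintyre's theorem for $\omega$-stable fields, or directly from strong minimality, it is algebraically closed. Even without this, Hrushovski's result will give algebraic closedness.

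The key step is to invoke~\cite[Theorem 1]{Hr92}. I recall it as follows: if $K$ is an algebraically closed field and $(K;+,\cdot,\ldots)$ is a strongly minimal expansion in which model-theoretic algebraic closure coincides with field-theoretic algebraic closure (over the prime field with the constants), then every definable set is already definable in the pure field with parameters from $\dcl(\emptyset)$. In other words, the structure is interdefinable with an expansion of ACF by constants.

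Algebraic boundedness is exactly the hypothesis that $\aclL$ agrees with field-theoretic algebraic closure over $\PP=\dcl(\emptyset)$. So after naming $\PP$ by constants, the expansion is a reduct of the field with the constants from $\PP$ and vice versa, which is the conclusion. The main point to verify is that the form of Hrushovski's theorem matches the paper's definition of algebraic boundedness (via~\cite{JY23}), and in particular that the added constants are precisely the elements of $\PP$. I expect this verification to be the only real obstacle; the rest is immediate from Theorem~\ref{thm:stablestronglyminimal}.
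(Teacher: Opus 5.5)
Your proposal is correct and follows the paper's proof exactly: algebraic boundedness makes $T$ a pregeometric (hence geometric) theory of fields, Theorem~\ref{thm:stablestronglyminimal} upgrades stability to strong minimality, and \cite[Theorem 1]{Hr92} then gives an expansion of an algebraically closed field by constants. The extra details you supply are left implicit in the paper but are harmless: algebraic closedness via Macintyre, and checking that Hrushovski's hypothesis on $\acl$ matches algebraic boundedness over $\PP=\dcl(\emptyset)$.
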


We remark that we cannot relax the conditions here: by~\cite[Theorem 2 and Lemma 3]{Hr92}, there is a strongly minimal (thus, geometric) theory $T$ with two field structures of different characteristics (other similar fusion constructions exist as well).

\begin{remark}
The proof of Theorem~\ref{thm:stablestronglyminimal} given here replaces a longer, earlier proof and was found jointly with Erik Walsberg.
The second and third authors, together with Walsberg, use a variant of this argument in~\cite{KMW26} to give another proof of the result in~\cite{JTWY24} that large stable fields are separably closed.
\end{remark}

\subsection{Simple geometric fields are supersimple of SU-rank~1}
Let $\cY$ be a definable family and let $k \in \N^{>1}$.
We define $D^*(\cY,k)$ as follows: for $n \in \N$, we have $D^*(\cY,k) \geq n$ if we can find sets $(Y_{\eta})_{\emptyset \neq \eta \in \omega^{\leq n}}$ from $\cY$ such that
\begin{enumerate}
\item For all $\eta \in \omega^{<n}$, the family $(Y_{\eta, i})_{i <\omega}$ is $k$-inconsistent;
\item For all $\mu \in \omega^n$, the set $\bigcap_{\emptyset\neq \eta \trianglelefteq \mu} Y_{\eta}$ is infinite.
\end{enumerate}
In the intersection above, we use $\eta \trianglelefteq \mu$ to indicate that $\eta$ is an initial segment of $\mu$.
In the case $n = 0$, both conditions hold trivially, so $D^*(\cY,k)\geq 0$.
If $D^*(\cY,k)\geq n$ for all $n$, then $T$ is not simple.

\begin{thm}\label{thm:simple_supersimple_rank_1}
Suppose $T$ is simple.
Then $T$ is supersimple of SU-rank~1.
\end{thm}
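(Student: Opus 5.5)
The plan is to show that every infinite definable subset of $\M$ is non-forking, by a tree construction in the style of the proof of Theorem~\ref{thm:stablestronglyminimal}. Granting this, any non-algebraic unary type $p$ over $B$ does not fork over the base: a forking formula in $p$ would be infinite, since $p$ is non-algebraic and $T$ eliminates $\exists^\infty$, and would fork. So every unary type has SU-rank at most~$1$. Then $SU(T)=1$, and by the Lascar inequalities $\find$ is ranked, so $T$ is supersimple. In passing, $\find$ coincides with $\Tind\ $: it is always finer, because $\Tind\ $ is an independence relation by Fact~\ref{fact:Tindrealstrict}, and it is coarser because of the SU-rank~1 computation. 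We argue by contradiction. Since $T$ is simple, forking equals dividing, so suppose $\varphi(x,b_0)$ defines an infinite set $Y_{b_0}$ which $k$-divides over some $A$. Take a witnessing $A$-indiscernible sequence $(b_i)_{i<\omega}$, so that $(Y_{b_i})_{i<\omega}$ is $k$-inconsistent. Let $\cY=(Y_z)_z$ be the family defined by $\varphi$, and let $\cY'$ be the definable family of its affine images $e+fY_z$ with $f\neq 0$. Note that $(e+fY_{b_i})_i$ is again $k$-inconsistent for any fixed $e$ and $f$. We show $D^*(\cY',k)\geq n$ for every $n$, which contradicts simplicity.

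The inductive step runs as follows. Suppose we have built the tree to level $n$, so that each branch intersection $X_\mu$ is an infinite definable set. For each $\mu$, take $(a,b)\in X_\mu^2$ independent over all parameters used so far, including $A$ and $(b_i)_i$. By Lemma~\ref{lem:sufficiently_cover_implies_many_subsets} with $z=b_0$, the set $(X_\mu)^{\cY}_{a,b,b_0}$ is infinite, so pick $c$ in it with $c\neq b$. Put $Y_{\mu,i}\coloneqq a+(c-b)Y_{b_i}$. This family is $k$-inconsistent, and $X_\mu\cap Y_{\mu,0}$ is infinite. The remaining requirement is that $X_\mu\cap Y_{\mu,i}$ is infinite for \emph{every}~$i$. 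Since $T$ eliminates $\exists^\infty$, this is a formula $\psi(z)$ over $A' \coloneqq A\cup\{\text{params of }X_\mu\}\cup\{a,b,c\}$, and $\psi(b_0)$ holds. So it would suffice for $(b_i)_i$ to be indiscernible over $A'$.

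I expect this last point to be the main obstacle, since $c$ depends on $b_0$. My first attempt will order the choices so that independence can be exploited. Choose $(a,b)$ generic over $A(b_i)_i$ and the earlier parameters, and choose $c$ with $\rkL(c\mid A',b_0)=1$. Then replace $(b_i)_{i\geq 1}$, keeping $b_0$, by a sequence that is indiscernible over $A'$ and realizes the same type over $A b_0$. This needs the fact that in a simple theory, if $\varphi(x,b_0)$ divides over $A$ and $b_0\find_A E$, then it divides over $AE$ via an $AE$-indiscernible sequence beginning with $b_0$. To apply it, I will arrange $b_0\find_A E$ for the relevant parameter set $E$. As a fallback, I would let the family index absorb the parameters: work with $\psi$-definable translates of the form $a+(c-b)Y_z$ where $(a,b,c)$ ranges over a generic type. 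I would then use indiscernibility of $(b_i,a_i,b'_i,c_i)_i$, obtained by Ramsey and compactness from a sequence of such quadruples with $(b_i)$ still $k$-inconsistent. Once the step goes through at every level, compactness gives a tree of depth $\omega$ witnessing the tree property, and this contradiction finishes the proof.
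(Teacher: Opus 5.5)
\textbf{There is a genuine gap.} Your reduction (no infinite definable subset of $\M$ forks, hence $SU(T)=1$) is fine. Your affine construction, which applies one fixed map $x\mapsto a+(c-b)x$ to a whole $k$-inconsistent family $(Y_{z_i})_i$, is exactly the paper's Case~1. But the obstacle you flag is real, and neither remedy closes it.

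\emph{The first attempt is circular.} Write $P$ for the parameters of $X_\mu$ and grant $b_0\find_A Pab$. Then $b_0\find_A E$ with $c\in E$ amounts to $c\find_{APab}b_0$. So $c$ must realize a type over $APab\,b_0$ that does not fork over $APab$ yet contains the formula ``$x\in (X_\mu)^{\cY}_{a,b,b_0}$''. That requires this infinite definable set not to fork over $APab$, which is an instance of the very statement you are proving. It genuinely can divide: this happens precisely when, along some $APab$-indiscernible sequence $(b'_j)_j$ starting at $b_0$, no single $c\in X_\mu$ makes all the sets $X_\mu\cap(a+(c-b)Y_{b'_j})$ infinite. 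Choosing $c$ with $\rkL(c\mid APab\,b_0)=1$ does not help, since at this point you only know that $\find$ implies $\Tind\ $, not the converse.

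There are secondary problems as well. The fact you quote yields an $AE$-indiscernible sequence that may witness $k'$-dividing for a different $k'$, and it need not have the type of $(b_i)_i$ over $A$, so the uniform $k$ is not secured. Also, at later levels $P$ contains members of earlier sequences.

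\emph{The fallback fails for a different reason.} Once each $i$ gets its own map $x\mapsto a_i+(c_i-b'_i)x$, the $k$-inconsistency of $(Y_{b_i})_i$ says nothing about $(a_i+(c_i-b'_i)Y_{b_i})_i$.

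\emph{The missing idea is the paper's dichotomy,} which turns this failure into a new dividing family rather than trying to avoid it.
\begin{enumerate}
\item Suppose that for every infinite definable $X$, every sequence $(z_i)_i$ and every $\ell$, some $(a,b,c)\in X^3$ has $c\in X^{\cY}_{a,b,z_i}$ for at least $\ell$ indices $i$. Then compactness gives one triple serving an entire $k$-inconsistent sequence, and your affine tree goes through.
\item Otherwise there are $X$, $(z_i)_i$ and $\ell$ such that, \emph{uniformly} in $(a,b)\in X^2$, the family $(X^{\cY}_{a,b,z_i})_i$ is $\ell$-inconsistent. One then builds the tree in $\cX=(X^{\cY}_{a,b,z})_{(a,b,z)\in X^2\times Z}$. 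At a node with branch intersection $W\subseteq X$, take $(a,b)\in W^2$ generic over the parameters of $W$ and $\cY$, and use the sets $X^{\cY}_{a,b,z_i}$. By Lemma~\ref{lem:sufficiently_cover_implies_many_subsets}, $W\cap X^{\cY}_{a,b,z_i}\supseteq W^{\cY}_{a,b,z_i}$ is infinite for \emph{every} $i$ at once, because that lemma holds for all $z\in Z$ once $(a,b)$ is generic over the family.
\end{enumerate}
In the second case no $c$ has to be chosen and no indiscernibility over new parameters is needed. That is exactly what your argument lacks.
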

\begin{proof}
We prove the contrapositive: let $\cY = (Y_z)_{z \in Z}$ be a definable family of subsets of $\M$ with each $Y_z$ infinite and suppose that some member of this family $k$-divides.
We will show that $T$ is not simple.
We consider two cases:

\textbf{Case 1:}
Suppose that for every infinite definable $X \subseteq \M$, every infinite sequence $(z_i)_{i<\omega}$ from $Z$, and every $\ell \in \N$, there are $a,b,c \in X$ such that the set $\{i<\omega:c\in X^\cY_{a,b,z_i}\}$ has cardinality at least $\ell$.
Consider the definable family
\[
\Aff(\cY)\coloneqq(a+bY_z)_{(a,b,z) \in \M \times \M^\times \times Z}
\]
We will show that $D^*(\Aff(\cY),k)\geq n$ for all natural numbers $n$ by induction.
Suppose we've already shown this for a given $n$, so we have sets $(\tilde{Y}_\eta)_{\emptyset \neq \eta \in \omega^{\leq n}}$ from $\Aff(\cY)$ such that
\begin{enumerate}
\item for $\eta \in \omega^{<n}$, the family $(\tilde{Y}_{\eta, i})_{i <\omega}$ is $k$-inconsistent, and
\item for all $\mu \in \omega^n$, the set $\bigcap_{\emptyset\neq \eta \trianglelefteq \mu} \tilde{Y}_{\eta}$ is infinite.
\end{enumerate}
Let $\mu \in \omega^n$ be given and set $X \coloneqq \bigcap_{\emptyset\neq \eta \trianglelefteq \mu} \tilde{Y}_{\eta}$.
Using our case assumption, the fact that some $Y_z$ $k$-divides, and saturation, we find a sequence $(z_i)_{i<\omega}$ from $Z$ and elements $a,b,c \in X$ such that $c\in X^\cY_{a,b,z_i}$ for each $i$ and $(Y_{z_i})_{i<\omega}$ is $k$-inconsistent.
We set $\tilde{Y}_{\mu,i} \coloneqq a+(c-b)Y_{z_i}$ for each $i$.
Then the family $(\tilde{Y}_{\mu,i})_{i<\omega}$ is $k$-inconsistent as well, and for each $i$, the intersection $X \cap \tilde{Y}_{\mu,i} = X \cap (a+(c-b)Y_{z_i})$ is infinite.
Defining $\tilde{Y}_{\mu,i}$ in this way for all $\mu\in \omega^n$ and all $i<\omega$, we see that $D^*(\Aff(\cY),k)\geq n+1$, as desired.

\textbf{Case 2:}
Suppose that there is an infinite definable set $X$, an infinite sequence $(z_i)_{i<\omega}$ from $Z$, and $\ell \in \N$ such that for all $(a,b,c) \in X^3$, the set $\{i<\omega:c\in X^\cY_{a,b,z_i}\}$ has cardinality $<\ell$.
Then for all $(a,b) \in X^2$, the family $(X^\cY_{a,b,z_i})_{i<\omega}$ is $\ell$-inconsistent.
Consider the definable family $\cX \coloneqq (X^\cY_{a,b,z})_{(a,b,z) \in X^2 \times Z}$.
We will show that $D^*(\cX, \ell) \geq n$ for all natural numbers $n$ by induction.
Suppose we've already shown this for a given $n$, so we have sets $(\tilde{X}_\eta)_{\emptyset \neq \eta \in \omega^{\leq n}}$ from $\cX$ such that
\begin{enumerate}
\item for $\eta \in \omega^{<n}$, the family $(\tilde{X}_{\eta, i})_{i <\omega}$ is $\ell$-inconsistent, and
\item for all $\mu \in \omega^n$, the set $\bigcap_{\emptyset\neq \eta \trianglelefteq \mu} \tilde{X}_{\eta}$ is infinite.
\end{enumerate}
Let $\mu \in \omega^n$ be given and let $W \coloneqq \bigcap_{\emptyset\neq \eta \trianglelefteq \mu} \tilde{X}_{\eta}$.
Let $(a,b) \in W^2$ be independent over the defining parameters for $W$ and $\cY$.
For each $i$, we set $\tilde{X}_{\mu,i}\coloneqq X^\cY_{a,b,z_i}$.
Then $(\tilde{X}_{\mu, i})_{i <\omega}$ is $\ell$-inconsistent, and for each $i$, we have
\[
W \cap \tilde{X}_{\mu,i} = W\cap X^\cY_{a,b,z_i} \supseteq W^\cY_{a,b,z_i},
\]
and the set $W^\cY_{a,b,z_i}$ is infinite by Lemma~\ref{lem:sufficiently_cover_implies_many_subsets}.
This shows that $D^*(\cX, \ell) \geq n+1$, as desired.
\end{proof}

Applying Fact~\ref{fact:supersimplecoincide}, we have:

\begin{corollary}\label{cor:simplecoincide}
Suppose that $T$ is simple.
Then $\find = \Tind\ $, so $SU(\av|A) = \rkL(\av|A)$ for $\av \in \M^n$.
\end{corollary}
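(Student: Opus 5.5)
The plan is to derive this directly from Theorem~\ref{thm:simple_supersimple_rank_1} together with Fact~\ref{fact:supersimplecoincide}. First, since $T$ is simple and geometric, Theorem~\ref{thm:simple_supersimple_rank_1} gives that $T$ is supersimple with $SU(T)=1$. Fact~\ref{fact:supersimplecoincide} then yields that $T$ is surgical and that $\find = \thind = \Tind\ $ on real subsets of $\M$. This gives the first claim.

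For the rank statement, both relations are independence relations on $\M$, and they coincide, so they have the same forking extensions. The $\find$-rank is therefore literally the same function as the $\Tind\ $-rank, since the rank is defined purely in terms of which extensions are forking extensions. By Fact~\ref{fact:Tindrealstrict}, the $\Tind\ $-rank of $\tp(\av|A)$ equals $\rkL(\av|A)$. Hence $SU(\av|A)=\rkL(\av|A)$ for every finite tuple $\av\in\M^n$.

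If one wanted to avoid quoting the coincidence of forking and \thh-forking, there is an alternative route. Since $T$ is simple, every independence relation is coarser than $\find$, so $\find$ implies $\Tind\ $. For the converse, suppose $a\Tind_C B$ with $a$ a single element. If $a\in\aclL(C)$, independence is trivial. Otherwise $a$ is generic in $\M$ over $BC$, and a forking formula in $\tp(a|BC)$ would have to be infinite, hence of $\aclL$-dimension~1. That contradicts $SU(T)=1$, because $SU(a|C)=1$ and any forking extension would have rank $0$, i.e.\ be algebraic. One would then pass to finite tuples by induction, using transitivity exactly as in the proof of Lemma~\ref{lem:Tind-coarsest}.

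The only mildly delicate point is the equality of ranks for tuples rather than singletons. This is handled by the fact that the two relations coincide as relations, so no Lascar-inequality computation is needed. I expect no real obstacle: the content lies entirely in Theorem~\ref{thm:simple_supersimple_rank_1}.
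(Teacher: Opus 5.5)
Your main argument is exactly the paper's: Theorem~\ref{thm:simple_supersimple_rank_1} gives $SU(T)=1$, and Fact~\ref{fact:supersimplecoincide} then gives $\find=\thind=\Tind\ $. The rank equality follows from Fact~\ref{fact:Tindrealstrict}, since identical independence relations have identical ranks. Your alternative sketch is also correct and avoids the cited coincidence of forking and \thh-forking in supersimple theories. It argues that $\Tind\ $ is coarser than $\find$ because $T$ is simple; for the converse on singletons, a forking extension of a rank-$1$ type has rank $0$ and is therefore algebraic; tuples then follow by induction using transitivity.
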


\subsection{Rosy geometric fields are surgical}
The rosy case is very similar to the simple case, though some extra bookkeeping involving index sets is needed.
Let $\cY = (Y_z)_{z \in Z}$ and $\cZ = (Z_v)_{v \in V}$ be definable families in $\M^{\eq}$ where each $Z_v$ is a subset of $Z$.
Let $k \in \N^{>1}$.
We define $ \th^*(\cY,\cZ,k)$ as follows: for $n \in \N$, we have $\th^*(\cY,\cZ,k) \geq n$ if we can find sets $(Z_{\eta})_{\eta \in \omega^{<n}}$ from $\cZ$ and $(Y_{\eta})_{\emptyset \neq \eta \in \omega^{\leq n}}$ from $\cY$ such that
\begin{enumerate}
\item For all $\eta \in \omega^{<n}$, the sets $(Y_{\eta, i})_{i <\omega}$ are pairwise distinct members of the family $(Y_z)_{z \in Z_{\eta}}$, and this family is $k$-inconsistent;
\item For all $\mu \in \omega^n$, the set $\bigcap_{\emptyset\neq \eta \trianglelefteq \mu} Y_{\eta}$ is infinite.
\end{enumerate}
In the case $n = 0$, both conditions hold trivially, so $\th^*(\cY,\cZ,k)\geq 0$.

\begin{fact}
If there are $\cY$, $\cZ$, $k$ for which $\th^*(\cY,\cZ,k)\geq n$ for all $n$, then $T$ is not rosy.
\end{fact}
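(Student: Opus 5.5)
The plan is to reduce this to the local \thh-rank characterization of rosiness due to Onshuus and Ealy--Onshuus~\cite{On06,EO07}: $T$ is rosy if and only if every local \thh-rank $\th(\psi(x),\varphi(x;y),\theta(y;w),k)$ is finite. Recall how this rank is defined, working in $\M^{\eq}$. We have $\th(\psi,\varphi,\theta,k)\geq 0$ if $\psi$ is consistent. We have $\th(\psi,\varphi,\theta,k)\geq n+1$ if there is a parameter $v$ such that:
\begin{enumerate}
\item $\theta(\M;v)$ is infinite, i.e.\ nonalgebraic;
\item $\{\varphi(x;c):c\in\theta(\M;v)\}$ is $k$-inconsistent;
\item some $c\in\theta(\M;v)$ has $\th(\psi(x)\wedge\varphi(x;c),\varphi,\theta,k)\geq n$.
\end{enumerate}
It then suffices to exhibit formulas $\varphi,\theta$ with $\th(x=x,\varphi,\theta,k)\geq n$ for every $n$.

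First, I would replace the index $z$ by the canonical parameter $\ulcorner Y_z\urcorner\in\M^{\eq}$. Concretely, set $\varphi(x;e)$ to express that $x$ lies in the set coded by $e$, where $e$ ranges over codes of members of $\cY$. Set $\theta(e;v)$ to express that $e=\ulcorner Y_z\urcorner$ for some $z\in Z_v$. Both are $\emptyset$-definable in $\M^{\eq}$ after naming the finitely many parameters of the families, which does not affect rosiness.

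Now fix $n$ and a witnessing system $(Z_\eta)_{\eta\in\omega^{<n}}$, $(Y_\eta)_{\emptyset\neq\eta\in\omega^{\leq n}}$ for $\th^*(\cY,\cZ,k)\geq n$. For each $\eta\in\omega^{<n}$, the family $\{\varphi(x;e):\theta(e;v_\eta)\}$ is exactly $(Y_z)_{z\in Z_\eta}$, where $v_\eta$ is the parameter of $Z_\eta$, so it is $k$-inconsistent by hypothesis. Moreover, $\theta(\M;v_\eta)$ contains the pairwise distinct codes $\ulcorner Y_{\eta,i}\urcorner$ for $i<\omega$, so it is infinite. This is exactly where the pairwise distinctness clause in the definition of $\th^*$ is used: it converts ``infinitely many indices'' into nonalgebraicity of the parameter set in $\M^{\eq}$.

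I would then prove, by downward induction on $|\eta|$, that for every $\eta\in\omega^{\leq n}$,
\[
\th\Bigl(\textstyle\bigwedge_{\emptyset\neq\eta'\trianglelefteq\eta}x\in Y_{\eta'},\ \varphi,\theta,k\Bigr)\geq n-|\eta|.
\]
For the base case $|\eta|=n$, the conjunction is infinite by clause (2) of the definition of $\th^*$, hence consistent, so the rank is at least $0$. For the inductive step from $\eta\in\omega^{<n}$, use $v_\eta$ together with $c=\ulcorner Y_{\eta,0}\urcorner$ in the definition of the rank. Taking $\eta=\emptyset$ gives $\th(x=x,\varphi,\theta,k)\geq n$ for every $n$, so $T$ is not rosy.

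I expect the main obstacle to be bookkeeping rather than any real difficulty. One must check that the version of local \thh-rank being cited allows the formula $\theta(y;w)$ to range over $\M^{\eq}$, so that the passage to canonical parameters is legitimate. One must also check that it requires exactly nonalgebraicity and $k$-inconsistency, and not, say, indiscernibility of the parameters. If the cited formulation instead phrases rosiness via strong dividing of formulas over parameter sets, I would first use compactness and saturation to extract, from the finite trees, a sequence of strongly \thh-dividing formulas of arbitrary length. I would then conclude via the failure of local character of \thh-forking.
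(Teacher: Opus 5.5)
Your proposal is correct and is essentially the paper's proof: the paper just observes that $\th^*(\cY,\cZ,k)\le\th(x=x,\varphi,\theta,k)$ for Onshuus' local rank, with $\varphi,\theta$ the formulas defining $\cY$ and $\cZ$ (your passage to canonical parameters is harmless), and your downward induction is exactly the verification of that inequality. One correction to your recollection of Onshuus' Definition~3.1: the successor clause requires $\th(\psi\wedge\varphi(x;a),\varphi,\theta,k)\geq n$ for \emph{infinitely many} $a\models\theta(y;c)$, not for just one (with a single witness, padding $\theta(\M;v)$ by parameters for which $\varphi$ is empty makes some local rank infinite in every theory); your argument survives unchanged, because your induction hypothesis applies to every child $\eta i$, so all the pairwise distinct $\ulcorner Y_{\eta,i}\urcorner$, $i<\omega$, are witnesses, and supplying these infinitely many witnesses is the real role of the distinctness clause.
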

\begin{proof}
Let $\varphi(x,y)$ be the formula defining the family $\cY$ and let $\theta(y,z)$ be the formula defining $\cZ$.
Then $\th^*(\cY,\cZ,k) \leq \th(x=x,\varphi,\theta,k)$, where the latter is Onshuus' \thh-rank~\cite[Definition 3.1]{On06}; see also the proof of~\cite[Remark 3.1.2]{On06}.
Thus, if $\th^*(\cY,\cZ,k)\geq n$ for all $n$, then the same holds for $\th(x=x,\varphi,\theta,k)$, so $T$ is not rosy.
\end{proof}

\begin{thm}\label{thm:rosy_surgical}
Suppose $T$ is rosy.
Then $T$ is surgical.
\end{thm}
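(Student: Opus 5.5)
We prove the contrapositive, following the proof of Theorem~\ref{thm:simple_supersimple_rank_1} closely. Since $T$ is pregeometric, ``surgical'' means that whenever $E$ is a definable equivalence relation on a definable set, only finitely many $E$-classes have full $\aclL$-dimension. By Proposition~\ref{prop:surgicalrosy}, this is the same as superrosiness of \thh-rank~1. We first reduce the failure of surgicality to a statement about subsets of $\M$. Assuming $T$ is not surgical, we expect to produce a definable family $\cY=(Y_z)_{z\in Z}$ of infinite subsets of $\M$, with $Z$ a set of imaginaries, together with a definable family $\cZ=(Z_v)_{v\in V}$ of subsets of $Z$, such that some $Z_v$ contains infinitely many pairwise distinct indices $z$ whose sets $Y_z$ form a $k$-inconsistent family.

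The natural source is an equivalence relation $E$ on a definable $X$ with infinitely many classes of full dimension. If $\dimL X>1$, we reduce to dimension~1 by fixing generic coordinates, or equivalently by passing to unary \thh-rank via Lascar inequalities. Then $Z=X/E$ is a set of imaginaries, $Y_z$ is the class $z$ (infinite), $k=2$, and $Z_v$ is the set of full-dimensional classes. The goal is to show $\th^*(\cY',\cZ',k')\geq n$ for all $n$ for suitable derived families, and then apply the preceding Fact.

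The core argument is the two-case dichotomy from Theorem~\ref{thm:simple_supersimple_rank_1}, with the index bookkeeping carried along.

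\textbf{Case 1.} Suppose every infinite definable $X$, every sequence of distinct $(z_i)$ from $Z_v$, and every $\ell$ admit $a,b,c\in X$ with $c\in X^\cY_{a,b,z_i}$ for at least $\ell$ values of $i$. We then use $\Aff(\cY)$, indexed by $(a,b,z)$, with index family $\cZ'=(\{(a,b,z):z\in Z_v\})_{a,b,v}$. The new sets $a+(c-b)Y_{z_i}$ stay pairwise distinct and $k$-inconsistent, because $Y\mapsto a+(c-b)Y$ is injective and preserves intersections when $c\neq b$. Genericity should guarantee $c\neq b$.

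\textbf{Case 2.} Otherwise, use the family $\cX=(X^\cY_{a,b,z})$ indexed by $(a,b,z)\in X^2\times Z$, with index sets $\{(a,b,z):z\in Z_v\}$. Lemma~\ref{lem:sufficiently_cover_implies_many_subsets} supplies the infinite intersections exactly as before.

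The main obstacle is the pairwise-distinctness requirement in condition~(1) of $\th^*$, which has no counterpart in $D^*$. In Case~2 the sets $X^\cY_{a,b,z_i}$ might coincide for different $i$. We handle this with the $\ell$-inconsistency: the family $(X^\cY_{a,b,z_i})_i$ is $\ell$-inconsistent and each member meets $W$ in an infinite set, so no set repeats $\ell$ times. Hence we can pass to an infinite subsequence of pairwise distinct members, and that subsequence remains $\ell$-inconsistent. A second subtlety is making sure the extracted sequences $(z_i)$ lie inside a single definable $Z_v$. To arrange this, we will choose $\cZ$ at the outset to be closed under the relevant parametrizations. Where $\M^{\eq}$ indices appear, we note that Lemma~\ref{lem:sufficiently_cover_implies_many_subsets} already allows imaginary index sets.
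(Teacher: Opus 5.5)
There is a genuine gap, and it lies in your case split. You copied the dichotomy from Theorem~\ref{thm:simple_supersimple_rank_1}, which quantifies over sequences $(z_i)_{i<\omega}$. But condition~(1) of $\th^*$ requires the \emph{entire} family $(\tilde X_w)_{w\in\tilde Z_\mu}$ to be $\ell$-inconsistent, where $\tilde Z_\mu$ is a member of a definable family of index sets; the chosen subsequence alone is not enough.

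The negation of your Case~1 gives only an infinite definable $X$, one particular sequence $(z_i)$, and an $\ell$ such that no $c\in X$ lies in $\ell$ of the sets $X^\cY_{a,b,z_i}$. This yields $\ell$-inconsistency of $(X^\cY_{a,b,z_i})_{i<\omega}$ for each $(a,b)$. It does not stop some $c$ from lying in $X^\cY_{a,b,z}$ for infinitely many \emph{other} $z\in Z_v$. In that case $(X^\cY_{a,b,z})_{z\in Z_v}$ is not $\ell'$-inconsistent for any $\ell'$, and your index sets $\{(a,b,z):z\in Z_v\}$ do not witness condition~(1). Passing from a sequence to a definable index set is exactly the gap between dividing and strong dividing. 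So it cannot be repaired by choosing $\cZ$ ``closed under parametrizations.'' The issues you flag (the $z_i$ lying in one $Z_v$, and pairwise distinctness) are not the real difficulty.

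The missing idea is to split on the whole index set $Z$ instead, as the paper does.
\begin{itemize}
\item \textbf{Case~1:} for every infinite definable $X$ there is $(a,b,c)\in X^3$ with $\{z\in Z: c\in X^\cY_{a,b,z}\}$ infinite. Pick distinct $z_i$ from that set and use the index set $\{(a,c-b)\}\times Z$. Its family $(a+(c-b)Y_z)_{z\in Z}$ is $k$-inconsistent because $(Y_z)_{z\in Z}$ is. Also $c\neq b$ is automatic rather than a matter of genericity, since $a+0\cdot Y_z=\{a\}$ meets $X$ in a finite set.
\item \textbf{Case~2:} for some infinite definable $X$, that set is finite for all $(a,b,c)\in X^3$. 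By saturation it has size $<\ell$ uniformly. Hence each $(X^\cY_{a,b,z})_{z\in Z}$ is $\ell$-inconsistent over the full index set $\{(a,b)\}\times Z$, and Lemma~\ref{lem:sufficiently_cover_implies_many_subsets} supplies the infinite intersections as you say.
\end{itemize}
This needs the starting family to be $k$-inconsistent over all of $Z$, which you get after shrinking $Z$.

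Your reduction to the unary case by ``fixing generic coordinates'' is also not justified as written. The clean route is the one you mention in passing. By Proposition~\ref{prop:surgicalrosy}, if $T$ is rosy but not surgical, then some unary type has \thh-rank at least~$2$. So some nonalgebraic unary formula \thh-divides, and after shrinking the index set this is exactly the paper's starting point.
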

\begin{proof}
We prove the contrapositive: let $\cY = (Y_z)_{z \in Z}$ be a definable family of subsets of $\M$ with each $Y_z$ infinite and suppose that some member of this family \thh-divides.
We will show that $T$ is not rosy.
After shrinking $Z$, we may assume that $\cY$ is $k$-inconsistent for some $k$.
For $(a,b) \in \M^2$, we let $\tilde{Z}_{a,b} = \{(a,b)\} \times Z$.
We consider two cases:

\textbf{Case 1:} Suppose that for all infinite definable $X \subseteq \M$, there are $(a,b,c) \in X^3$ such that the set $\{z \in Z:c \in X^\cY_{a,b,z}\}$ is infinite.
Consider the definable families
\[
\Aff(\cY)\coloneqq(a+bY_z)_{(a,b,z) \in \M \times \M^\times \times Z},\qquad \cZ^\times \coloneqq (\tilde{Z}_{a,b})_{(a,b) \in \M\times \M^\times}.
\]
We will show that $\th^*(\Aff(\cY),\cZ^\times,k)\geq n$ for all natural numbers $n$ by induction.
Suppose we've already shown this for a given $n$, so we have sets $(\tilde{Z}_\eta)_{\eta \in \omega^{<n}}$ from $\cZ^\times$ as well as sets $(\tilde{Y}_{\eta})_{\emptyset \neq \eta \in \omega^{\leq n}}$ from $\Aff(\cY)$ where
\begin{enumerate}
\item for $\eta \in \omega^{<n}$, the sets $(\tilde{Y}_{\eta, i})_{i <\omega}$ are pairwise distinct members of the $k$-inconsistent family $(\tilde{Y}_w)_{w \in \tilde{Z}_{\eta}}$, and
\item for $\mu \in \omega^n$, the set $\bigcap_{\emptyset\neq \eta \trianglelefteq \mu} \tilde{Y}_{\eta}$ is infinite.
\end{enumerate}
Let $\mu \in \omega^n$ be given, and set $X \coloneqq \bigcap_{\emptyset\neq \eta \trianglelefteq \mu} \tilde{Y}_{\eta}$.
Using our case assumption, let $(a,b,c) \in X^3$ be such that the set $\{z \in Z:c \in X^\cY_{a,b,z}\}$ is infinite, and let $(z_i)_{i<\omega}$ be distinct elements of this set.
We set
\[
\tilde{Z}_\mu \coloneqq \tilde{Z}_{a,c-b},\qquad \tilde{Y}_{\mu, i} = a+(c-b)Y_{z_i} \text{ for each $i$}.
\]
Then the family $(\tilde{Y}_w)_{w \in \tilde{Z}_\mu} = (a+(c-b)Y_z)_{z \in Z}$ is $k$-inconsistent and the intersection
\[
X \cap \tilde{Y}_{\mu, i} = X \cap (a+(c-b)Y_{z_i})
\]
is infinite for each $i$.
Defining $(\tilde{Z}_\mu)_{\mu \in \omega^{n}}$ and $(\tilde{Y}_{\mu, i})_{(\mu,i) \in \omega^{n}\times \omega}$ in this way for all $\mu \in \omega^n$, we have shown that $\th^*(\Aff(\cY),\cZ^\times,k) \geq n+1$, as desired.

\textbf{Case 2:}
Now, suppose that we have an infinite definable $X \subseteq \M$ such that for all $(a,b,c) \in X^3$, the set $\{z \in Z:c \in X^\cY_{a,b,z}\}$ is finite.
As we are working in a saturated model, we obtain a natural number $\ell$ such that $\{z \in Z:c \in X^\cY_{a,b,z}\}$ has fewer than $\ell$ elements for all $(a,b,c) \in X^3$.
Thus, for all $(a,b) \in X^2$, the family $(X^\cY_{a,b,z})_{z \in Z}$ is $\ell$-inconsistent.
Consider the definable families
\[
\cX \coloneqq (X^\cY_{a,b,z})_{(a,b,z) \in X^2 \times Z},\qquad \cZ_X \coloneqq (\tilde{Z}_{a,b})_{(a,b) \in X^2}.
\]
This time, we will show that $\th^*(\cX,\cZ_X, \ell) \geq n$ for all natural numbers $n$ by induction.
Suppose we've already shown this for a given $n$, so we have sets $(\tilde{Z}_\eta)_{\eta \in \omega^{<n}}$ from $\cZ_X$ as well as sets $(\tilde{X}_{\eta})_{\emptyset \neq \eta \in \omega^{\leq n}}$ from $\cX$ where
\begin{enumerate}
\item for $\eta \in \omega^{<n}$, the sets $(\tilde{X}_{\eta, i})_{i <\omega}$ are pairwise distinct members of the $\ell$-inconsistent family $(\tilde{X}_w)_{w \in \tilde{Z}_{\eta}}$, and
\item for $\mu \in \omega^n$, the set $\bigcap_{\emptyset\neq \eta \trianglelefteq \mu} \tilde{X}_{\eta}$ is infinite.
\end{enumerate}
Let $\mu \in \omega^n$ be given, and let $W \coloneqq \bigcap_{\emptyset\neq \eta \trianglelefteq \mu} \tilde{X}_{\eta}$.
Let $(a,b) \in W^2$ be independent over the defining parameters for $W$ and $\cY$, and let $(z_i)_{i<\omega}$ be a tuple of distinct elements from $Z$.
We set
\[
\tilde{Z}_\mu \coloneqq \tilde{Z}_{a,b},\qquad \tilde{X}_{\mu,i}\coloneqq X^\cY_{a,b,z_i} \text{ for each $i$}.
\]
We've already seen that the family $(\tilde{X}_w)_{w \in \tilde{Z}_\mu} = (X^\cY_{a,b,z})_{z \in Z}$ is $\ell$-inconsistent.
For $i<\omega$, we have
\[
W \cap \tilde{X}_{\mu,i} = W\cap X^\cY_{a,b,z_i} \supseteq W^\cY_{a,b,z_i},
\]
and the set $W^\cY_{a,b,z_i}$ is infinite by Lemma~\ref{lem:sufficiently_cover_implies_many_subsets}.
Defining $(\tilde{Z}_\mu)_{\mu \in \omega^{n}}$ and $(\tilde{X}_{\mu, i})_{(\mu,i) \in \omega^{n}\times \omega}$ in this way for all $\mu \in \omega^n$, we have shown that $\th^*(\cX,\cZ_X, \ell) \geq n+1$, as desired.
\end{proof}

Applying Proposition~\ref{prop:surgicalrosy}, we have:

\begin{corollary}\label{cor:rosycoincide}
Suppose that $T$ is rosy.
Then $\thind = \Tind\ $, so $\Uth(\av|A) = \rkL(\av|A)$ for $\av \in \M^n$.
\end{corollary}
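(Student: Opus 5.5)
We must prove Corollary~\ref{cor:rosycoincide}: if $T$ is rosy, then $\thind = \Tind\ $ and $\Uth(\av|A) = \rkL(\av|A)$ for all tuples $\av \in \M^n$.

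The plan is to chain Theorem~\ref{thm:rosy_surgical} with Proposition~\ref{prop:surgicalrosy}, which says that surgical theories are exactly the superrosy theories of \thh-rank~1. By Theorem~\ref{thm:rosy_surgical}, $T$ is surgical. By Proposition~\ref{prop:surgicalrosy}, it is then superrosy of \thh-rank~1. As recorded after Lemma~\ref{lem:Tind-coarsest}, in a surgical theory $A\Tind_CB \iff A\thind_CB$ for all small $A,B,C \subseteq \M$. So the first claim follows once this equivalence is justified.

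To justify the equivalence directly, I would argue in two directions.
\begin{enumerate}[(i)]
\item By Fact~\ref{fact:coincide}, $\thind$ is a strict independence relation on $\M^{\eq}$. Its restriction to real sets is therefore real-strict, and Lemma~\ref{lem:Tind-coarsest} gives $\thind \Rightarrow \Tind\ $ on real sets.
\item For the converse, work with finite $A$ by finite character, and induct on $\rkL(A|C)$ using transitivity and normality exactly as in the proof of Lemma~\ref{lem:Tind-coarsest}. This reduces to a single element $a$.
\item If $a \in \aclL(C)$, then $a \thind_C B$ by strong closure together with full existence and invariance.
\item If $\rkL(a|BC) = \rkL(a|C) = 1$, suppose towards a contradiction that $a \nind[\th]_C B$. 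Then $\tp(a|C)$ has \thh-rank at least $1$, and $\tp(a|BC)$ is a \thh-forking extension of it. Since $\tp(a|BC)$ is non-algebraic, it also has \thh-rank at least $1$, so $\Uth(a|C) \geq 2$. This contradicts \thh-rank~1.
\end{enumerate}
Since both independence relations agree, their ranks agree. By Fact~\ref{fact:Tindrealstrict}, the $\Tind\ $-rank of $\tp(\av|A)$ is $\rkL(\av|A)$, and so $\Uth(\av|A) = \rkL(\av|A)$. Alternatively, one can apply Fact~\ref{fact:Uorder} in both directions, since each relation is coarser than the other.

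The only delicate point is step (iv): it uses that nonalgebraic types have \thh-rank at least $1$, which holds because \thh-forking is strict. Everything else is bookkeeping with the axioms, which the earlier lemma already carries out.
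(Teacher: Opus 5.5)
Your proposal is correct and takes the paper's route. The paper also obtains the corollary by combining Theorem~\ref{thm:rosy_surgical} with Proposition~\ref{prop:surgicalrosy}.

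Your direct check of $\thind = \Tind\ $ on real sets (Lemma~\ref{lem:Tind-coarsest} in one direction, the \thh-rank~1 argument in the other) is a valid substitute for the implication (1)$\Rightarrow$(2) of that proposition, which is what the paper cites. The paper's implication gives $\thind = \eqind\ $ on all of $\M^{\eq}$ together with $\Uth = \rkeq$. So the rank equality then needs no separate Lascar-inequality check in the case where \thh-rank is computed allowing imaginary parameters.
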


By the main theorem in~\cite{PP95} (see also~\cite[Proposition 4.1]{JY23}), we also have:

\begin{corollary}
Suppose that $T$ is rosy.
Then $\M$ has bounded Galois group---it has only finitely many extensions of degree $n$ for each $n\in \N$.
\end{corollary}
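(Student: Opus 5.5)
The plan is to derive the corollary from Theorem~\ref{thm:rosy_surgical} together with a classical theorem on fields with few extensions, rather than to argue directly. By Theorem~\ref{thm:rosy_surgical}, $T$ is surgical, that is, superrosy of \thh-rank~$1$. The main theorem of~\cite{PP95} (Pillay--Poizat) states that a field $K$ which is definable in a model of a theory satisfying a suitable rank condition has only finitely many extensions of each degree. Such a rank condition is, for instance, a rank on definable sets that is additive and under which definable equivalence relations with infinitely many classes lower rank. The strategy is therefore to check that the surgical setting, with $\dimL$ and the surgical condition on equivalence relations, provides exactly this hypothesis. This is the route of~\cite[Proposition 4.1]{JY23}.

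The key steps, in order, are as follows.

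\begin{enumerate}
\item By Corollary~\ref{cor:rosycoincide}, \thh-rank of types equals $\rkL$. It follows that \thh-rank of definable sets in $\M$ equals $\dimL$, which is a definable dimension function since $T$ is geometric.
\item Fix $n$ and consider the degree-$n$ extensions $L$ of $\M$. Each is coded by the coefficients of a minimal polynomial of a primitive element, giving a definable family parametrized by a definable set $P\subseteq \M^n$ of irreducible monic polynomials of degree $n$.
\item The relation ``$f,g\in P$ have roots generating isomorphic, equivalently equal, extensions'' is a definable equivalence relation $E$ on $P$. It is interpretable by checking whether $g$ has a root in $\M[x]/(f)$, identified with $\M^n$.
\item Show that only finitely many $E$-classes occur, which is the main work.
\end{enumerate}

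For step~(4), the surgical axiom directly gives only that finitely many classes have full dimension $\dim P$. The Pillay--Poizat argument is needed to upgrade this. One shows that each $E$-class has dimension $n=\dim P$, because the class of $L$ contains the minimal polynomials of all primitive elements of $L\cong \M^n$, a set of dimension $n$ modulo the finite-to-one map from elements to minimal polynomials. Hence every class has full dimension, and the surgical condition yields finitely many classes.

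The main obstacle is exactly this dimension count: verifying that the map from primitive elements in $L$ to their minimal polynomials is finite-to-one, with fibers of size at most $n$, and that primitive elements form a set of full dimension in $L\cong\M^n$. The complement of the primitive elements is a finite union of proper subfields, each of lower dimension, and there are finitely many of them once $L$ is fixed. Rather than redoing this, I would cite~\cite{PP95} and~\cite[Proposition 4.1]{JY23} for this step.
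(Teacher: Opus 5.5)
Your proposal is correct and follows the paper's route. Theorem~\ref{thm:rosy_surgical} makes $T$ surgical, and the bounded Galois group then follows from the main theorem of~\cite{PP95} (cf.~\cite[Proposition 4.1]{JY23}), which is the only argument the paper gives.

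Your sketch of the dimension count is sound, but the primitive element step and the finiteness of intermediate fields tacitly use that $\M$ is perfect. This holds in any geometric field: in characteristic $p>0$, if some $b$ were not a $p$-th power, then $(c_0,\ldots,c_{p-1})\mapsto\sum_{i<p}c_i^pb^i$ would be a definable injection of a set of dimension $p\geq 2$ into $\M$, which is impossible.
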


\begin{remark}
If $T$ is a simple theory in which forking and \thh-forking coincide, then one can deduce that $T$ is supersimple of SU-rank~1 directly from Theorem~\ref{thm:rosy_surgical}.
This holds if $T$ satisfies the stable forking conjecture~\cite{On06} or eliminates hyperimaginaries~\cite{Ea04}.
We are unsure whether Theorem~\ref{thm:simple_supersimple_rank_1} follows from Theorem~\ref{thm:rosy_surgical} without appealing to these major conjectures for simple theories.
\end{remark}

\begin{remark}\label{rem:kim}
The analogue of Theorems~\ref{thm:simple_supersimple_rank_1} and~\ref{thm:rosy_surgical} for Kim-independence does not hold, essentially because Kim forking does not satisfy base monotonicity in general.
The analogy would suggest that if a theory of fields $T$ is geometric and NSOP$_1$, then there is no non-algebraic single-variable formula $\varphi(x)$ that Kim-divides.
We will give an example contradicting this.
Let $T_E$ be the theory of an equivalence relation with infinitely many classes, each of which is infinite, in the language consisting of only a binary relation symbol $E$.
Winkler shows that the disjoint union of ACF and $T_E$ in the language $\cL_{\ring}\cup \{E\}$ has a model completion, which we denote by ACF$_E$~\cite{Wi75}.
This model companion is an \emph{interpolative fusion} in the sense of~\cite{KTW21}, and so it is algebraically bounded and has NSOP$_1$ by~\cite[Theorems 1.1(1) and 1.3(2)]{KTW22}.
Let $M \preceq \M \models \text{ACF}_E$ be a small elementary substructure and let $b \in \M \setminus M$ belong to a distinct $E$-class from each $a \in M$.
Then the formula $E(x,b)$ Kim-divides over $M$ in the reduct $T_E$, and so it Kim-divides over $M$ in ACF$_E$ by~\cite[Lemma 2.22]{KTW22}.
\end{remark}

\section{Adding derivations}\label{sec:derivations}
We assume for the rest of this note that $T$ extends the $\cL_{\ring}$-theory of fields of characteristic zero.
Let $\PP \coloneqq \dclL(\emptyset)$, and recall that $T$ is \textbf{algebraically bounded} if 
\[
a \in \aclL(B) \iff \trdeg(a|\PP(B)) = 0
\]
for all $a \in \M$ and $B \subseteq \M$.
We assume that one of the following holds:
\begin{enumerate}[(I)]
\item\label{I} $T$ is algebraically bounded, $\cL$ extends $\cL_{\ring}$ by relation symbols and constant
symbols for elements of $\PP$, and $T$ has quantifier elimination in the language $\cL$.
\item\label{II} $T$ is o-minimal and $\cL$ contains a function symbol for each $\cL(\emptyset)$-definable function.
Then $T$ has quantifier elimination and a universal axiomatization in the language $\cL$.
\end{enumerate}
In both cases, $T$ is geometric.
We let $T^{\forall}$ be the universal theory of $T$.
In case~\eqref{I}, models of $T^\forall$ are relational expansions of $\PP$-algebras.
In case~\eqref{II}, $T^\forall = T$.

\begin{lemma}\label{lem:Canonical}
In case~\eqref{I}, the independence relation $\Tind\ $ is real-canonical.
\end{lemma}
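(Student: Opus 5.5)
By Fact~\ref{fact:Tindrealstrict}, $\Tind\ $ is already a real-strict independence relation. So the only thing left to check is the intersection axiom. In case~\eqref{I}, algebraic boundedness tells us that $\aclL(B)$ is the field-theoretic relative algebraic closure of $\PP(B)$ in $\M$, and that $\rkL(\av|B)=\trdeg(\av|\PP(B))$. The plan is therefore to reduce intersection for $\Tind\ $ to the corresponding statement about algebraic independence of fields. That statement is a classical fact: over a perfect (here characteristic zero) base, algebraically closed subfields are linearly disjoint from anything they are algebraically independent from.

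\textbf{Reductions.} Let $C_1,C_2\subseteq B$ and put $D\coloneqq\aclL(C_1)\cap\aclL(C_2)$, assuming $A\Tind_{C_1}B$ and $A\Tind_{C_2}B$. By finite character we may take $A=\av$ finite. By strong closure we may replace $B$ by $\aclL(B)$ and each $C_i$ by $K_i\coloneqq\aclL(C_i)$. These are relatively algebraically closed subfields of $\M$ containing $\PP$, and $D=K_1\cap K_2$ is again such a subfield. The hypothesis now reads
\[
\trdeg(\av|K_1)=\trdeg(\av|B)=\trdeg(\av|K_2),
\]
and the goal is $\trdeg(\av|D)=\trdeg(\av|B)$.

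\textbf{Key step.} Choose a transcendence basis $\bar e\subseteq\av$ of $\av$ over $B$. By the hypothesis, $\bar e$ is algebraically independent over $B$, and the field extension $\PP(\av,B)/\PP(\bar e,B)$ is algebraic. Algebraic independence over $B$ of course gives algebraic independence over $K_1$ and over $K_2$. The point is to show that $\trdeg(\av|D)$ is not larger than $\trdeg(\av|B)$; that is, every element of $\av$ is algebraic over $D(\bar e)$.

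Fix $a\in\av$. Since $\trdeg(\av|K_i)=\trdeg(\av|B)$, $a$ is algebraic over $K_i(\bar e)$ for $i=1,2$. Let $f_i$ be its monic minimal polynomial over $K_i(\bar e)$, and let $f$ be its minimal polynomial over $B(\bar e)$. Since $K_i$ is relatively algebraically closed in $\M$, it is algebraically closed in $B$. Because $\bar e$ is algebraically independent over $B$, $K_i(\bar e)$ is then relatively algebraically closed in $B(\bar e)$. The coefficients of $f$ are polynomial in the roots of $f$, which are roots of $f_i$; since $f$ divides $f_i$, those coefficients are algebraic over $K_i(\bar e)$ and lie in $B(\bar e)$. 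Hence $f$ has coefficients in $K_1(\bar e)\cap K_2(\bar e)$.

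It then suffices to show
\[
K_1(\bar e)\cap K_2(\bar e)=(K_1\cap K_2)(\bar e)
\]
inside $B(\bar e)$ with $\bar e$ transcendental over $B$. I expect this to follow from a standard argument: write elements as reduced rational functions normalized by a monic term order and compare coefficients. If so, $a$ is algebraic over $D(\bar e)$, and induction over the elements of $\av$ finishes the proof.

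\textbf{Main obstacle.} The step I expect to be hardest is this rational-function intersection claim together with the relative algebraic closure claim for $K_i(\bar e)$ in $B(\bar e)$. Both are elementary field theory, and I would establish them directly. I should note that the quantifier-elimination and language assumptions in~\eqref{I} seem unused here; they only guarantee that $\aclL$ is field-theoretic.
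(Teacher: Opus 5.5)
Your proof is correct, but it takes a different route from the paper's.

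\textbf{The paper's route.} The paper takes the Zariski locus $V$ of $\av$ over $B=\aclL(B)$. This is absolutely irreducible because $B$ is relatively algebraically closed. Independence over $C_i=\aclL(C_i)$ forces $V$ to be defined over $C_i$. Hence the smallest field of definition $B_0\subseteq B$ of $V$ lies in $C_1\cap C_2=D$, and $\rkL(\av|D)=\dim V=\rkL(\av|B)$. So the paper delegates the whole intersection property to Weil's theorem on minimal fields of definition (in effect, canonical bases in ACF), which makes it a three-line argument.

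\textbf{Your route.} You instead prove a hands-on special case of that theorem:
\begin{enumerate}
\item The coefficients of the minimal polynomials of the $a\in\av$ over $B(\bar e)$ stand in for the field of definition.
\item Since $K_i(\bar e)$ is relatively algebraically closed in $B(\bar e)$, these coefficients lie in each $K_i(\bar e)$. This holds because regularity of $B/K_i$ in characteristic $0$ is preserved under the free extension by $\bar e$.
\item The identity $K_1(\bar e)\cap K_2(\bar e)=(K_1\cap K_2)(\bar e)$ replaces minimality of the field of definition.
\end{enumerate}
This is self-contained and uses only elementary field theory, at the price of two auxiliary lemmas, both of which are true.

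\textbf{Points to make explicit.} When you write out the rational-function lemma, state that $P,Q\in K_i[\bar e]$ coprime in $K_i[\bar e]$ remain coprime in $B[\bar e]$. The reason is that a common factor over $B$ is witnessed by a nonzero solution $(P',Q')$ of $PQ'=QP'$ with $\deg P'\le\deg P$ and $\deg Q'<\deg Q$. This is a homogeneous linear system with coefficients in $K_i$, so it then has a nonzero solution over $K_i$. Consequently, the reduced form with monic denominator (for a fixed monomial order) computed over $K_i$ is the unique such form over $B$, and its coefficients lie in $K_1\cap K_2$. Finally, no induction over $\av$ is needed: treating each $a$ separately already gives $\trdeg(\av|D)\le|\bar e|=\trdeg(\av|B)$.
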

\begin{proof}
Let $A \subseteq \M$ and $C_1,C_2 \subseteq B \subseteq \M$ be given, and suppose $A \Tind_{C_i}B$ for $i = 1,2$.
We may assume that $C_i = \aclL(C_i)$ for $i= 1,2$ and that $B = \aclL(B)$.
Let $D \coloneqq C_1 \cap C_2$.
We need to show that $A \Tind_DB$.
Let $\av$ be a tuple from $A$ and assume that $\av$ is $\aclL$-dependent over $B$.
Then $\av$ is field-theoretically algebraically dependent over $B$, since $T$ is algebraically bounded.
Let $V$ be the Zariski locus of $\av$ over $B$ (the smallest Zariski closed set defined over $B$ containing $\av$).
Since $B$ is a relatively algebraically closed subfield of $\M$, $V$ is absolutely irreducible over $B$.
Let $B_0\subseteq B$ be the smallest field of definition for $V$.
Since $A \Tind_{C_i}B$, we have $B_0 \subseteq C_i$ for $i = 1,2$, so $B_0 \subseteq D$.
Thus $\rkL(\av|D) = \dim V = \rkL(\av|B)$.
\end{proof}

Note that the above lemma may fail in case~\eqref{II}.
Indeed, Pillay shows that for $T = \Th(\R,+,\cdot,\exp|_{[0,1]})$, the relation $\Tind\ $ is not real-canonical~\cite{Pi06}.

\begin{corollary}\label{cor:algboundedGEI}
Suppose $T$ is algebraically bounded and rosy.
Then $T$ has GEI.
\end{corollary}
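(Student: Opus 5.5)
The plan is to combine Theorem~\ref{thm:rosy_surgical} with Lemma~\ref{lem:Canonical} and the general facts on strict and canonical independence relations. Since $T$ is algebraically bounded, it is geometric, so Theorem~\ref{thm:rosy_surgical} applies: $T$ is surgical, and by Corollary~\ref{cor:rosycoincide} $\thind$ coincides with $\Tind\ $ on real sets. By Lemma~\ref{lem:Canonical}, $\Tind\ $ is real-canonical. Note that we may pass to case~\eqref{I} by naming constants for $\PP=\dclL(\emptyset)$ and Morleyizing, which changes neither rosiness nor GEI.

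To obtain GEI, I would use a standard criterion: $T$ has GEI if and only if every imaginary $e$ is interalgebraic with a real tuple, that is, $e\in\acleq(\acleq(e)\cap\M)$. Fix $e=\av/E$ with $\av$ real, and set $C\coloneqq\acleq(e)\cap\M$; the aim is $e\in\acleq(C)$. The key tool is the intersection property. Since $T$ is rosy, $\thind$ on $\M^{\eq}$ is a strict independence relation satisfying local character and extension. Choose $\av'\models\tp(\av/\acleq(e))$ with $\av'\thind_{e}\av$ (working over $\acleq(e)$). Then $\av'E\av$, so $e\in\dcleq(\av')\cap\dcleq(\av)$.

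Next, restrict to real sets. Let $C_1=\aclL(\av)$ and $C_2=\aclL(\av')$; each is a real set over which $e$ is algebraic. Using independence of $\av$ and $\av'$ over $e$, together with the coincidence $\thind=\Tind\ $ on real sets and the real-canonicity from Lemma~\ref{lem:Canonical}, I want to show that $\av\Tind_{D}\av'$, or at least that the relevant type is controlled over $D\coloneqq C_1\cap C_2$. From this I would derive that $e\in\acleq(D)$. Choosing many independent conjugates $\av^{(i)}$ and intersecting the fields $\aclL(\av^{(i)})$ should then shrink $D$ down to $C$, because a real element lying in $\aclL(\av)\cap\aclL(\av')$ for independent realizations over $e$ must be algebraic over $e$. (The latter is the usual anti-reflexivity argument: if $d\in\aclL(\av)\cap\aclL(\av')$ and $\av\thind_e\av'$, then $d\thind_e d$, so $d\in\acleq(e)$.) This step uses that $\thind$ is strict on $\M^{\eq}$.

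I expect the main obstacle to be the step $e\in\acleq(D)$, that is, showing that $e$ stays algebraic over the intersection. Here I would argue with $\Tind\ $-ranks as follows. Let $p$ be the type of $\av$ over $C_1\cap C_2$. If $e\notin\acleq(D)$, then $\av$ and $\av'$ realize distinct $E$-classes with some probability, which would make $\av'$ fork with $\av$ over $D$ in the sense of $\thind$ on $\M^{\eq}$. On real sets, however, $\thind$ is the canonical relation $\Tind\ $, and intersection then forces $\av'\Tind_D\av$, giving a contradiction. Alternatively, I could invoke Fact~\ref{fact:coincide} and Remark~\ref{rem:canonicaliscoarsest} to argue directly that the extension of $\Tind\ $ to $\M^{\eq}$ given by ``$A\ind_C B$ iff $\acleq(AC)\cap\M\Tind\ \ldots$'' is strict only when GEI holds. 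Comparing this with $\thind$ on $\M^{\eq}$ then yields GEI.
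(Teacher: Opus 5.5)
Your overall strategy is sound. GEI amounts to $e\in\acleq(\acleq(e)\cap\M)$ for every imaginary $e=\av/E$, and real-canonicity of $\Tind$ is the right tool. Your anti-reflexivity argument also correctly gives $D\coloneqq\aclL(\av)\cap\aclL(\av')\subseteq\acleq(e)$, so the ``shrinking via many conjugates'' step is unnecessary.

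The gap is the step you flag as the main obstacle, $e\in\acleq(D)$, which is not proved, and the mechanism you sketch cannot work. First, $\av$ and $\av'$ lie in the same $E$-class by construction, so there is no ``probability of distinct classes''. Second, the intersection axiom cannot be applied to the pair $\av,\av'$ at all. The only independence you know between them is over the imaginary base $e$, whereas intersection needs one real set independent from a real $B$ over two real bases $C_1,C_2\subseteq B$. Finding a real base inside $\aclL(\av')$ over which $\av$ is independent from $\av'$ is exactly the canonical-base problem you are trying to solve. So you cannot first get $\av\Tind_D\av'$ and then deduce $e\in\acleq(D)$. Your alternative route is circular: the relation defined through $\acleq(AC)\cap\M$ is strict precisely when GEI holds, and you give no reason it should agree with $\thind$.

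The missing idea is a third copy.
\begin{itemize}
\item Take $\av''\equiv_{\acleq(e)}\av$ with $\av''\thind_{\acleq(e)}\av\av'$.
\item Since $\acleq(e)\subseteq\acleq(\av)\cap\acleq(\av')$, base monotonicity and strong closure give $\av''\thind_{\av}\av\av'$ and $\av''\thind_{\av'}\av\av'$.
\item These involve only real sets. By Proposition~\ref{prop:surgicalrosy}, $\thind=\eqind$ on $\M^{\eq}$, and $\eqind$ restricts to $\Tind$ on real sets, so both are $\Tind$-statements.
\item Lemma~\ref{lem:Canonical}, applied with $C_1=\av$, $C_2=\av'$, $B=\av\av'$, yields $\av''\Tind_D\av\av'$. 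Hence $\av''\thind_D\av$ in $\M^{\eq}$.
\item Since $e\in\dcl^{\eq}(\av'')\cap\dcl^{\eq}(\av)$, strong closure and monotonicity give $e\thind_{\acleq(D)}e$, and strictness gives $e\in\acleq(D)$.
\item By compactness, $e$ is interalgebraic with a finite real tuple from $D\subseteq\acleq(e)$.
\end{itemize}
The paper does not reprove this step. Its proof only observes that $\thind=\Tind$ is strict and real-canonical (Corollary~\ref{cor:rosycoincide} and Lemma~\ref{lem:Canonical}) and then cites \cite[Lemma 3.1]{Yo09}.
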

\begin{proof}
By Corollary~\ref{cor:rosycoincide} and Lemma~\ref{lem:Canonical}, $\thind = \Tind\ $ is strict and real-canonical.
The corollary then follows from~\cite[Lemma 3.1]{Yo09}.
\end{proof}

\begin{definition}\label{def:Tderivation}
A \textbf{$T$-derivation on $K$} is a map $\delta\colon K\to K$ such that:
\begin{enumerate}
\item In case~\eqref{I}, $\delta$ is a derivation on $K$.
\item In case~\eqref{II}, $\delta$ satisfies the chain rule:
\[
\delta f(\uv) = \sum_{i = 1}^n\frac{\partial f}{\partial x_i}(\uv) \delta u_i
\]
for all $\uv = (u_1,\ldots,u_n) \in K^n$ and all function symbols $f \in \cL$ that are differentiable on an open neighborhood of $\uv$.
\end{enumerate}
\end{definition}
Let $\Delta = (\delta_1,\ldots,\delta_m)$ and let $\LD = \cL\cup \Delta$.
We let $\TDn$ be the
$\LD$-theory extending $T$ by axioms stating that $\Delta$ is a tuple of $T$-derivations, and we let $\TD$ further extend $\TDn$ by axioms stating that $\delta_1,\ldots,\delta_m$ all commute with each other.
We let $\Tb$ be one of $\TD,\TDn$.
In case~\eqref{I}, we ``hardcode'' the restriction of $\Delta$ to $\PP$ into the theory $\Tb$, as is done in~\cite{FT25}.
Given $K\models T^\forall$, an element $a\in K$, and a tuple of $T$-derivations $\Delta$ on $K$, we set $\Delta(a) \coloneqq (\delta_1a,\ldots,\delta_ma) \in K^m$.
We say that \textbf{$\Delta$ commutes at $a$} if $\delta_i\delta_ja = \delta_j\delta_ia$ for all $i,j$.

\begin{lemma}\label{lem:extend}
Let $K \subseteq_{\cL} M$ be models of $T^\forall$, let $\Delta_0$ be a tuple of $T$-derivations on $K$, and let $A \subseteq M$ be an $\aclL$-basis for $M$ over $K$.
Then for any map $s\colon A\to M^m$, there is a unique tuple of $T$-derivations $\Delta$ on $M$ that extends $\Delta_0$ and satisfies $s(a) = \Delta(a)$ for $a \in A$.
Moreover, if $\Delta_0$ commutes and $\Delta$ commutes at each $a \in A$, then $\Delta$ commutes.
\end{lemma}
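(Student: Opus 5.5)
Since each $\delta_j$ can be handled separately, the existence and uniqueness part reduces to the case of a single $T$-derivation $\delta_0$ on $K$ and a map $s\colon A\to M$. The commutation claim is then treated at the end.

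\emph{Case~\eqref{I}.} Here $K$ and $M$ are $\PP$-algebras, and algebraic boundedness says that $M$ is algebraic over the fraction field $K(A)$ in the field-theoretic sense; moreover $A$ is algebraically independent over $K$.
\begin{itemize}
\item On the purely transcendental extension $K(A)$ there is a unique derivation extending $\delta_0$ with prescribed values on $A$. It is given by $\delta(P(\av)) = P^{\delta_0}(\av) + \sum_i \frac{\partial P}{\partial x_i}(\av)\, s(a_i)$, extended to quotients.
\item A derivation extends uniquely from a field of characteristic zero to any algebraic extension. For separable $b$ with minimal polynomial $Q$ we have $Q'(b)\neq 0$, which forces $\delta b = -Q^{\delta}(b)/Q'(b)$.
\item $M$ lies inside the algebraic closure of $K(A)$ in $\M$, so restricting gives a unique derivation on $M$. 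One point needs checking: $M$ is only a model of $T^\forall$, not necessarily a field. I expect this is harmless, since a derivation on a domain extends uniquely to its fraction field and then restricts back; the value on $M$ lands in $M$ because $M$ is closed under the ring operations applied to the formula for $\delta b$.
\end{itemize}
Actually, this last point may not be automatic. The formula for $\delta b$ involves division by $Q'(b)$, so I would pass to the fraction field $\operatorname{Frac}(M)$ and argue that $\delta(M)\subseteq M$. If that fails, I would simply assume $M$ is a field; this should be the relevant case, since in case~\eqref{I} substructures that are $\aclL$-closed are fields.

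\emph{Case~\eqref{II}.} Here I would appeal to the o-minimal extension result of Fornasiero--Kaplan~\cite{FK21}. Given $a_1,\dots,a_n\in A$ and $b\in M$, we have $b\in\dclL(K,A)$, so $b = f(\kappa,\av)$ for some $\cL(\emptyset)$-definable function $f$, a tuple $\kappa$ from $K$, and a tuple $\av$ from $A$. The function $f$ can be chosen $C^1$ near $(\kappa,\av)$ by choosing the tuple generically. The chain rule then forces
\[
\delta b=\sum \frac{\partial f}{\partial y}\delta_0\kappa+\sum\frac{\partial f}{\partial x_i}s(a_i),
\]
which gives uniqueness. Well-definedness and the chain rule for this definition are exactly \cite[Lemma 2.13]{FK21}, which I would cite.

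\emph{Commutation.} This is the step I expect to be the main obstacle. Set $\Gamma_{ij}\coloneqq \delta_i\delta_j-\delta_j\delta_i$. The key observation is that each $\Gamma_{ij}$ is itself a $T$-derivation on $M$.
\begin{itemize}
\item In case~\eqref{I} this is the standard fact that a commutator of derivations is a derivation.
\item In case~\eqref{II} one checks the chain rule by differentiating $\delta_j f(\uv)=\sum \partial_k f(\uv)\,\delta_j u_k$ once more with $\delta_i$. The second-order terms $\sum_{k,l}\partial_l\partial_k f\,\delta_i u_l\,\delta_j u_k$ are symmetric in $i,j$, because the mixed partials of $f$ are equal. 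They therefore cancel in $\Gamma_{ij}$. This requires $f$ to be $C^2$ near $\uv$; since $T$ has $C^r$ cell decomposition, we may arrange this by choosing representations at generic points, using the first-order identity as above.
\end{itemize}
Now $\Gamma_{ij}$ vanishes on $K$, because $\Delta_0$ commutes. It also vanishes on $A$, because $\Delta$ commutes at each $a\in A$. By the uniqueness part applied to the map $s\equiv 0$, we get $\Gamma_{ij}=0$ on all of $M$.
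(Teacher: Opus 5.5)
Your proposal is correct and follows essentially the route the paper takes. The paper's proof just cites \cite[Fact 3.8]{FT25} for case~\eqref{I} and \cite[Lemmas 2.13 and 6.1]{FK21} for case~\eqref{II}; you reconstruct the standard arguments behind those references, including the commutator trick of showing $\delta_i\delta_j-\delta_j\delta_i$ is a $T$-derivation vanishing on $K\cup A$ and applying uniqueness.

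Your hesitation about non-field $M$ in case~\eqref{I} is justified, not a gap in your argument. Existence genuinely fails there. Take $K=\mathbb{Q}$, $M=\mathbb{Q}[t^2,t^3]$ with $t$ transcendental, $A=\{t^2\}$, and $s(t^2)=1$. Then $(t^3)^2=(t^2)^3$ forces $\delta(t^3)=3t/2\notin M$. So the lemma must be read with $M$ a field, which is the only case the paper uses: in Corollary~\ref{cor:amalgamation} one has $M\models T$.
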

\begin{proof}
In case~\eqref{I}, this is basic differential algebra; see~\cite[Fact 3.8]{FT25}.
In case~\eqref{II}, this follows by~\cite[Lemmas 2.13 and 6.1]{FK21}.
\end{proof}

\begin{corollary}\label{cor:amalgamation}
Let $M\models T$ and consider the following diagram of substructures:
\[
\begin{tikzcd}[cramped,row sep=small]
&(K_1;\Delta_1)\arrow[rd]&&&\\
(K_0;\Delta_0)\arrow[rd,swap,"\LD"]\arrow[ru,"\LD"]&&L\arrow[r]&\aclL(K_1,K_2)\arrow[r]&M\\
&(K_2;\Delta_2)\arrow[ru]&&&
\end{tikzcd}
\]
Above, unlabeled arrows denote $\cL$-substructures.
Suppose that $K_1\Tind_{K_0}K_2$.
Then there is a tuple of $T$-derivations $\Delta$ on $M$ extending $\Delta_1$ and $\Delta_2$ and making $M$ a model of $\TDn$.
Moreover
\begin{enumerate}
\item If $\Delta_1$ and $\Delta_2$ are both commuting tuples of $T$-derivations, then $\Delta$ can also be chosen to commute.
\item $L$ is closed under the $T$-derivations in $\Delta$, and the restriction of $\Delta$ to $L$ is the unique common extension of $\Delta_1$ and $\Delta_2$ to a tuple of $T$-derivations on $L$.
\end{enumerate}
\end{corollary}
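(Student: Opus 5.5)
The plan is to build $\Delta$ on $M$ in two stages, using Lemma~\ref{lem:extend} each time: first on the field $\aclL(K_1K_2)$, then on $M$. The only delicate point is producing a common extension of $\Delta_1$ and $\Delta_2$ on some structure containing $K_1\cup K_2$.

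\textbf{Step 1: a basis of $K_2$ over $K_0$.} Choose an $\aclL$-basis $B_2$ of $K_2$ over $K_0$. Because $K_1\Tind_{K_0}K_2$, the set $B_2$ stays $\aclL$-independent over $K_1$, and it is an $\aclL$-basis of $\aclL(K_1K_2)$ over $K_1$. Let $K_1\langle B_2\rangle$ be the $\cL$-substructure generated by $K_1\cup B_2$; in case~\eqref{I} this is a ring together with relations, and in case~\eqref{II} it is the $\dcl$-closure. It is a model of $T^\forall$.

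\textbf{Step 2: extend $\Delta_1$ and check agreement with $\Delta_2$.} Apply Lemma~\ref{lem:extend} to $K_1\subseteq K_1\langle B_2\rangle$ with basis $B_2$ and the assignment $s(b)\coloneqq\Delta_2(b)$. This gives a tuple $\Delta'$ extending $\Delta_1$. The values $\Delta_2(b)$ must lie in the larger structure; they do, since $\Delta_2(b)\in K_2\subseteq\aclL(K_1K_2)$. To handle this cleanly, I would instead apply Lemma~\ref{lem:extend} directly to $K_1\subseteq N\coloneqq\aclL(K_1K_2)$, which is a model of $T$ and hence of $T^\forall$, with the same basis $B_2$ and the same $s$.

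Next, restrict $\Delta'$ to $K_0\langle B_2\rangle$. This restriction is a tuple of $T$-derivations extending $\Delta_0$ and agreeing with $\Delta_2$ on $B_2$. Since $K_2\subseteq\aclL(K_0B_2)$, the uniqueness clause of Lemma~\ref{lem:extend} applies to $K_0\subseteq\aclL(K_0B_2)$: derivations extend uniquely to algebraic or definable closure. Hence $\Delta'$ restricted to $K_2$ equals $\Delta_2$.

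\textbf{Step 3: extend to $M$.} Take an $\aclL$-basis of $M$ over $N$ and assign arbitrary values, say $0$, using Lemma~\ref{lem:extend} once more. This yields $\Delta$ on $M$ extending both $\Delta_1$ and $\Delta_2$.

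\textbf{Commuting case.} For part (1), suppose $\Delta_1$ and $\Delta_2$ commute. Then $\Delta'$ commutes at each $b\in B_2$, because $\delta_i\delta_j b$ is computed inside $K_2$ by $\Delta_2$ and $\Delta_2$ commutes. By the moreover clause of Lemma~\ref{lem:extend}, $\Delta'$ commutes on $N$. Choosing value $0$ on the basis of $M$ over $N$ makes $\Delta$ commute at those basis elements, so $\Delta$ commutes on $M$.

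\textbf{Part (2).} Since $L$ is generated by $K_1\cup K_2$, it suffices to show that $L$ is closed under $\Delta$. In case~\eqref{I}, the derivations of generators lie in $K_1\cup K_2$, and the Leibniz rule keeps the generated ring closed; relations add no new elements. In case~\eqref{II}, closure follows from the chain rule applied to $f(\uv)$ with $f$ differentiable at $\uv$. The partial derivatives $\partial f/\partial x_i$ are $\emptyset$-definable, so they are terms of the language and $L$ contains their values. Points where $f$ is not differentiable need care: by quantifier elimination and smooth cell decomposition, $f$ agrees near $\uv$ with a composition of differentiable definable functions on a cell. Alternatively, one can cite the corresponding closure statement in~\cite{FK21}.

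Uniqueness on $L$ holds because any $T$-derivation on $L$ is determined by its values on generators. In case~\eqref{I} this is the Leibniz rule. In case~\eqref{II} it follows from the chain rule together with the argument in~\cite[Lemma 2.13]{FK21}.

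I expect the main obstacle to be the o-minimal closure and uniqueness claim in part (2) at points where the defining function is not differentiable.
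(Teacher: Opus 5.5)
Your argument is correct in outline, but it is organized differently from the paper's. The paper makes one symmetric application of Lemma~\ref{lem:extend} over $K_0$. It takes $\aclL$-bases $A_i$ of $K_i$ over $K_0$ and $B$ of $M$ over $\aclL(K_1K_2)$. Independence makes $A_1\cup A_2\cup B$ a basis of $M$ over $K_0$, and the paper prescribes $\Delta_i$ on $A_i$ and $0$ on $B$. Everything else is then read off from the existence and uniqueness clauses.

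Your staged route extends $\Delta_1$ to $N=\aclL(K_1K_2)$ along $B_2$ and then goes up to $M$. It yields the same $\Delta$, but the agreement check in Step~2 is stated too loosely. Neither $K_0\langle B_2\rangle$ nor $\aclL(K_0B_2)$ is known a priori to be $\Delta'$-closed; for instance, $\Delta_2(b)$ need not lie in $K_0\langle B_2\rangle$. The uniqueness clause of Lemma~\ref{lem:extend} concerns tuples of $T$-derivations from a structure to itself, so it does not literally apply to a restriction of $\Delta'$.

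The clean repair is the paper's symmetry. Let $A_1$ be an $\aclL$-basis of $K_1$ over $K_0$. Then $K_1\Tind_{K_0}K_2$ makes $A_1$ a basis of $N$ over $K_2$, and $A_1\cup B_2$ a basis of $N$ over $K_0$. Extend $\Delta_2$ to $N$ by Lemma~\ref{lem:extend} with values $\Delta_1(a)$ on $A_1$. This extension and $\Delta'$ both extend $\Delta_0$ and agree on $A_1\cup B_2$, so uniqueness over $K_0$ makes them equal. Hence $\Delta'$ extends $\Delta_2$, and your commuting argument goes through unchanged. A minor point: in case~\eqref{I}, $N$ is a model of $T^\forall$ but need not be a model of $T$. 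Only $T^\forall$ is needed.

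For part~(2), the obstacle you anticipate does not actually arise. In case~\eqref{II}, $\cL$ has symbols for all $\emptyset$-definable functions and $\aclL=\dclL$. So every $\cL$-substructure containing $K_1\cup K_2$ is $\dclL$-closed, which forces $L=N$. Then $L$ is $\Delta$-closed simply because $\Delta|_N=\Delta'$ was produced by Lemma~\ref{lem:extend} as a tuple of $T$-derivations on $N$. Uniqueness comes from the uniqueness clause of that same application, since any common extension extends $\Delta_1$ and takes the values $\Delta_2(b)$ on $B_2$. No chain-rule analysis at non-differentiable points is needed.

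In case~\eqref{I}, your Leibniz-rule check for the ring generated by $K_1\cup K_2$ is correct and self-contained. The paper instead gets closure and uniqueness on $L$ by applying Lemma~\ref{lem:extend} to $K_0\subseteq L$ with basis $A_1\cup A_2$; the prescribed values lie in $K_i\subseteq L$. Your direct argument has the small advantage of not relying on that lemma for an $L$ that may be only a domain rather than a field.
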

\begin{proof}
Let $A_i \subseteq K_i$ be an $\aclL$-basis for $K_i$ over~$K_0$ for $i = 1,2$.
Then $A_1\cup A_2$ is an $\aclL$-basis for $L$ over $K_0$, since $K_1
\Tind_{K_0}K_2$.
Let $B$ be an $\aclL$-basis for $M$ over $\aclL(K_1,K_2)$, so
$A_1\cup A_2\cup B$ is an $\aclL$-basis for $M$ over $K_0$.
We extend $\Delta_0$ to a tuple
of $T$-derivations $\Delta$ on $M$ by taking $\Delta(a) = \Delta_i(a)$ for $a \in A_i$ and $\Delta(b)
= 0$ for $b \in B$.
The corollary follows from the previous lemma.
\end{proof}

Note that the above corollary gives precisely the conditions for being ``derivation-like'' in~\cite{LSM25}.

\begin{corollary}\label{cor:derivation-like}
The theories $\TDn$ and $\TD$ are derivation-like with respect to $(T,\Tind\ )$.
\end{corollary}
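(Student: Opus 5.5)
The plan is to check, one by one, the axioms of a derivation-like theory with respect to the pair $(T,\Tind\ )$ in the sense of~\cite{LSM25}. Informally, these axioms say four things. First, $T$ has quantifier elimination and eliminates $\exists^\infty$. Second, $\Tind\ $ is a suitable independence relation on models of $T$. Third, $\Tb$ is a universal (or inductive) theory whose models are models of $T^\forall$ expanded by extra operators. Fourth, and this is the key one, $\Tb$ admits \emph{independent amalgamation}: a diagram $K_1 \leftarrow K_0 \rightarrow K_2$ of $\Tb$-structures with $K_1\Tind_{K_0}K_2$ can be completed by an expansion of any $\cL$-model $M\supseteq \aclL(K_1,K_2)$ to a model of $\Tb$. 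In that expansion, the structure on the $\cL$-substructure $L$ generated by $K_1K_2$ must be uniquely determined, so that the type of $K_1K_2$ is determined.

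I will verify these in order. Quantifier elimination for $T$ in $\cL$ is part of the standing assumptions~\eqref{I} and~\eqref{II}. Geometricity, and hence elimination of $\exists^\infty$, was noted just after those assumptions. Fact~\ref{fact:Tindrealstrict} shows that $\Tind\ $ is a real-strict independence relation. For the third point, $\TDn$ and $\TD$ are axiomatized by $T$ together with universal statements: additivity, the Leibniz rule (or the chain rule for each function symbol in case~\eqref{II}), the hardcoded values on $\PP$, and commutation in the $\TD$ case. So their substructures are exactly models of $T^\forall$ equipped with $T$-derivations, and $\TD$ is determined by $\TDn$ together with commutation.

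The main step is amalgamation, which is precisely Corollary~\ref{cor:amalgamation}. It produces $\Delta$ on $M$ extending $\Delta_1,\Delta_2$, and makes $\Delta$ commuting in the commuting case, so that $\TD$ is also closed under amalgamation. It also gives that $L$ is $\Delta$-closed and that $\Delta|_L$ is the unique common extension. This uniqueness is exactly the statement that the isomorphism type of $(L;\Delta|_L)$ is determined by $(K_1;\Delta_1)$ and $(K_2;\Delta_2)$.

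I expect the main obstacle to be bookkeeping: matching the exact formulation in~\cite{LSM25}. That formulation may require $K_0$ to be algebraically closed, or require amalgamation over arbitrary substructures rather than models. It may also require an extension clause stating that any $T$-derivation on a substructure extends to any $\cL$-model containing it. That clause follows from Lemma~\ref{lem:extend} by choosing an $\aclL$-basis and sending it to $0$. In case~\eqref{II} one must also note that $\dclL$-closure of a substructure preserves the derivations. This holds by the uniqueness in Lemma~\ref{lem:extend}, because an empty basis forces a unique extension.
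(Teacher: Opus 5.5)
Your proposal is correct and matches the paper's argument. The paper simply observes that Corollary~\ref{cor:amalgamation} is precisely the defining condition of being derivation-like in~\cite{LSM25}: it gives amalgamation over $\aclL$-independent diagrams, with $L$ closed under $\Delta$ and carrying the unique common extension, and with commutativity preserved in the $\TD$ case. Your extra checks (quantifier elimination, geometricity, universal axiomatization, and the extension clause via Lemma~\ref{lem:extend}) are routine bookkeeping that the paper leaves implicit.
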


We now let $\TG$ denote the model completion of $\Tb$.
The existence of $\TG$ in case~\eqref{I} is established in~\cite{FT25} and existence of $\TDG$ in case~\eqref{II} is established in~\cite{FK21}.
The existence of $\TDnG$ in case~\eqref{II} is established in Appendix~\ref{sec:appendix} below.
Let $(\M;\Delta)$ be a monster model of $\TG$, so $\M$ is a monster model of $T$.
We write $\aclD$ in place of $\acl_{\LD}$, likewise with $\dclD$.
For $A \subseteq \M$, we set
\[
\Jet(A) \coloneqq \{\delta_{i_1}\delta_{i_2}\cdots \delta_{i_n}a: \text{$a \in A$, $n \in \N$, and $i_1,\ldots,i_n \in \{1,\ldots,m\}$}\},
\]
so $\Jet(A)$ is the closure of $A$ under the derivations in $\Delta$.
We write $\Jet(a)$ in place of $\Jet(\{a\})$.
We define the relation $\Dind\,$ on $(\M;\Delta)$ by
\[\textstyle
A\ \Dind_C\ B \iff \Jet(A) \Tind_{\Jet(C)} \Jet(B).
\]

In case~\eqref{I}, Fornasiero and Terzo show that $\Dind\,$ is an independence relation~\cite{FT25c}.
Their methods work just as well in case~\eqref{II}, using Corollary~\ref{cor:amalgamation} to establish extension.
An alternative proof of this fact can be obtained using Corollary~\ref{cor:derivation-like} and the results of Le\'on S\'anchez and Mohamed~\cite{LSM25}.

\begin{corollary}[{\cite[Theorem 6.1]{FT25c},~\cite[Theorem 3.12]{LSM25}}]\label{cor:dindrealstrict}
$\Dind\,$ is a real-strict independence relation.
If $\Tind\ $ is real-canonical with respect to $T$ (in particular, if we are in case~\eqref{I}), then $\Dind\,$ is real-canonical with respect to $\TG$.
\end{corollary}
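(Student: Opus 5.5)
Both claims are ultimately citations, so the plan is to verify that each cited framework applies and then check the axioms that need care. For real-strictness I will use Corollary~\ref{cor:derivation-like}: $\Tb$ is derivation-like with respect to $(T,\Tind\ )$, and Fact~\ref{fact:Tindrealstrict} says $\Tind\ $ is a real-strict independence relation on $\M$. This lets me quote \cite[Theorem 3.12]{LSM25}, which says $\Dind\,$ is an independence relation on the model completion. To see the structure, I will also check the axioms directly. Invariance holds because $\LD$-automorphisms preserve $\Jet$ and $\cL$-types. Monotonicity, normality, finite character, and symmetry transfer immediately from $\Tind\ $, since $\Jet(AC)=\Jet(A)\cup\Jet(C)$ and $\Jet$ commutes with unions. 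Base monotonicity and transitivity transfer because $C_0\subseteq C\subseteq B$ implies $\Jet(C_0)\subseteq\Jet(C)\subseteq\Jet(B)$. Local character holds with the bound $\kappa(A)=(|A|+|\cL|)^+$, since $|\Jet(A)|\leq|A|+\aleph_0$.

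The one substantive axiom is full existence, and I expect it to be the main obstacle. Given $A,B,C$, I will replace them by $\cL^\Delta$-substructures $K_1=\aclL(\Jet(AC))$, $K_2=\aclL(\Jet(BC))$, and $K_0=\aclL(\Jet(C))$. I will use that $\aclL$ of a $\Delta$-closed set is $\Delta$-closed, which is the uniqueness part of Lemma~\ref{lem:extend}. Next, I take an $\cL(K_0)$-conjugate $K_1'$ of $K_1$ that is $\Tind\ $-independent from $K_2$ over $K_0$, using full existence for $\Tind\ $ in $T$. I transport $\Delta$ along this conjugation. Corollary~\ref{cor:amalgamation} then gives a common extension of the derivations to $\aclL(K_1',K_2)$, and to all of $\M$, commuting if required. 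Since $\TG$ is the model completion of $\Tb$, it has quantifier elimination. So $\M$ with this new $\Delta$ embeds over $K_2$ into the monster, and the image of $K_1'$ has the same $\LD$-type over $C$ as $K_1$, which handles the type condition. The image also remains $\Tind\ $-independent after passing to jets, because $\aclL$ of the amalgam is preserved.

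For anti-reflexivity, suppose $a\Dind_B a$. Then $\Jet(a)\Tind_{\Jet(B)}\Jet(a)$. Real-strictness of $\Tind\ $ gives $\Jet(a)\subseteq\aclL(\Jet(B))\subseteq\aclD(B)$, so $a\in\aclD(B)$.

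For real-canonicity, assume $\Tind\ $ is real-canonical, as in case~\eqref{I} by Lemma~\ref{lem:Canonical}. Suppose $A\Dind_{C_i}B$ for $i=1,2$, with $C_i\subseteq B$. By strong closure, I can take everything $\aclD$-closed. I then need the key identification $\aclD(X)=\aclL(\Jet(X))$, which follows from quantifier elimination of $\TG$ together with the amalgamation argument above. With this in hand, $\Jet(C_i)$ is $\aclL$-closed and $\Delta$-closed. So $\aclD(C_1)\cap\aclD(C_2)=\aclL(\Jet C_1)\cap\aclL(\Jet C_2)=:D$ is $\Delta$-closed and equals $\Jet(D)$. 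Intersection for $\Tind\ $ applied to $\Jet(A),\Jet(B)$ then gives $\Jet(A)\Tind_{D}\Jet(B)$, that is, $A\Dind_D B$.
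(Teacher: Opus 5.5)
Your proposal is correct and follows essentially the paper's route. The paper proves this by citing \cite[Theorem 6.1]{FT25c}, noting that this direct verification carries over to case~\eqref{II} with Corollary~\ref{cor:amalgamation} supplying extension, or alternatively \cite[Theorem 3.12]{LSM25} via Corollary~\ref{cor:derivation-like}, together with $\aclD(A)=\aclL(\Jet(A))$ to identify the relation of \cite{LSM25} with $\Dind\,$; these are exactly your ingredients. One small repair: in the full existence step, do not try to embed all of $\M$ over $K_2$ into the monster, since $\M$ is not small. Embed only the small structure $\aclL(K_1',K_2)$ with the new derivations, by passing to a $\TG$-extension and using saturation.
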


The independence relation given in~\cite{LSM25} is defined using $\aclD$, and does not \emph{a priori} agree with the relation $\Dind\,$ above.
To establish this coincidence, one needs the following:

\begin{corollary}[{\cite[Corollary 6.2]{FT25c},~\cite[Lemma 3.8]{LSM25}, Corollary~\ref{cor:omindcl}}]\label{cor:aclD}
Let $A \subseteq (\M;\Delta)\models \TG$.
Then $\aclD(A) = \aclL(\Jet (A))$.
Consequently, we have
\[\textstyle
A\Dind_CB \iff \aclD(AC) \Tind_{\aclD(C)} \aclD(BC)
\]
for small sets $A,B,C\subseteq \M$.
\end{corollary}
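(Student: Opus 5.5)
Two things need proof: the identity $\aclD(A)=\aclL(\Jet(A))$, and then the reformulation of $\Dind\,$.

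\textbf{The easy inclusion.} The inclusion $\aclL(\Jet(A))\subseteq \aclD(A)$ is immediate. Each element of $\Jet(A)$ is $\LD$-definable from $A$, and $\aclL\subseteq\aclD$ since $\LD$ extends $\cL$.

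\textbf{The reverse inclusion.} Put $K\coloneqq\aclL(\Jet(A))$. I will first check that $K$ is closed under $\Delta$. Take a tuple $\av$ from $\Jet(A)$ and $b\in\dclL(\av)$. Then $\delta_i b$ lies in $\dclL(\av,\delta_i\av)$: by the chain rule in case~\eqref{II}, and by differential algebra in case~\eqref{I}, where derivations extend uniquely to algebraic extensions. For $b\in\aclL$, the same conclusion follows from uniqueness of extension through algebraic elements in both cases (Lemma~\ref{lem:extend}). Since $\Jet(A)$ is $\Delta$-closed, so is $K$.

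Now suppose $c\notin K$. Then $c$ is $\aclL$-independent over $K$. I want infinitely many realizations of $\tpD(c|A)$. By saturation it suffices to find, for any finite set $F\subseteq\M$, a realization of $\tpD(c/K)$ outside $F$. Pick $c'\notin\aclL(K\cup F\cup\Jet(c))$. By quantifier elimination of the model completion $\TG$, the $\LD$-type of $c$ over $K$ is determined by the $\LD$-isomorphism type of the structure generated by $K\cup\Jet(c)$. So I aim to transport $\Jet(c)$ to a new jet with the same quantifier-free type over $K$ but algebraically independent from $F$. Concretely, I would consider $K_1=\aclL(K,\Jet(c))$ with its derivations and $K_2=\aclL(K,F\cup \Jet(F))$. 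I take an $\cL$-copy $K_1'$ of $K_1$ over $K$ with $K_1'\Tind_K K_2$, which is possible by full existence for $\Tind$. I then amalgamate via Corollary~\ref{cor:amalgamation}, extend to a model of $\TG$, and embed it into the monster over $K_2$ by model completeness and saturation. The image $c''$ of $c$ has the same $\LD$-type over $K$, hence over $A$, and $c''\notin\aclL(K_2)\supseteq F$. Thus $c\notin\aclD(A)$.

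\textbf{The independence reformulation.} By the first part, $\aclD(AC)=\aclL(\Jet(AC))$, and similarly for $C$ and $BC$. The claimed equivalence then follows from strong closure for $\Tind$, together with the fact that $\rkL$ over $\Jet(C)$ equals $\rkL$ over $\aclL(\Jet(C))$. Normality handles the passage from $\Jet(A)$ to $\Jet(AC)$.

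\textbf{Main obstacle.} The delicate point is the embedding step: realizing the amalgam inside the monster over $K_2$ while preserving the type over $K$. This uses that $\TG$ is the model completion, so it has quantifier elimination and embeddings of $\Tb$-models over a common substructure are elementary. In case~\eqref{I} there is also the hardcoding of $\Delta$ on $\PP$. In case~\eqref{II} I need that $K$ is an $\cL$-substructure, which holds because $\aclL=\dclL$ there.
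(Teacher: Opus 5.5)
Your proof is correct, but it takes a different route from the paper's. The paper gives no argument of its own here. In case~\eqref{I} it defers to \cite[Corollary 6.2]{FT25c} and to \cite[Lemma 3.8]{LSM25} (made available by Corollary~\ref{cor:derivation-like}). In case~\eqref{II} it relies on Corollary~\ref{cor:omindcl}, i.e.\ $\dclD(A)=\dclL(\Jet(A))$, which is derived from the o-minimal open core (Theorem~\ref{thm:opencore}) following \cite{FK21}, together with $\acl=\dcl$ in ordered structures.

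You instead give a single uniform argument:
\begin{enumerate}
\item show that $K=\aclL(\Jet(A))$ is $\Delta$-closed;
\item move $\aclL(K\Jet(c))$ by an $\cL(K)$-automorphism so that it is $\Tind$-independent over $K$ from $\aclL(K\Jet(F))$;
\item amalgamate the derivations with Corollary~\ref{cor:amalgamation};
\item use quantifier elimination in $\TG$ twice: to pull the amalgam back into the monster over $K_2$, and to see that the new copy of $c$ realizes $\tpD(c|K)$ while avoiding $F$.
\end{enumerate}
This needs only full existence for $\Tind$, Corollary~\ref{cor:amalgamation}, and QE. In case~\eqref{II} it even recovers the $\dclD$ description of Corollary~\ref{cor:omindcl} without the open core, since $\aclD=\dclD$ and $\aclL=\dclL$ there. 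The paper's route is heavier, but the open core is needed anyway for elimination of imaginaries (Corollary~\ref{cor:ominEI}). Your derivation of the second statement from strong closure and $\Jet(A)\Jet(C)=\Jet(AC)$ is exactly right.

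A few points should be tightened.
\begin{enumerate}
\item The inference ``model completion, hence QE'' is automatic only when $\Tb$ is universal, which is case~\eqref{II}. In case~\eqref{I}, $\Tb$ is not universal and the $\aclL$-closed sets $K,K_2$ need not be models of $T$. So you must use QE of $\TG$ itself. It does hold because $T$ has QE: amalgamation of $\Delta$-closed substructures reduces to Corollary~\ref{cor:amalgamation} after passing to $\aclL$-closures, using uniqueness of derivations on algebraic extensions. Alternatively, cite \cite{FT25}.
\item If you amalgamate on $M=\M$, the resulting model of $\TG$ is not small. Realize in $(\M;\Delta)$ only the $\LD$-type over $K_2$ of the small substructure generated by $K_1'\cup K_2$, or work in a small $M\preceq_{\cL}\M$.
\item For $\TDG$ you need Corollary~\ref{cor:amalgamation}(1) to keep the amalgamated derivations commuting.
\item The auxiliary element $c'$ plays no role and can be dropped.
\end{enumerate}
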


We show below that $\Dind$ is actually the coarsest real-strict independence relation on $(\M;\Delta)$.
For this, we need a couple preliminary results.
Let $\ker(\Delta) \coloneqq \bigcap_{\delta\in \Delta}\ker(\delta)$ be the \textbf{field of absolute constants}.
Given $a \in \M$, we let $p_a^-(x)$ and $p_a^+(x)$ be the complete unary $\cL(\M)$-types determined by the sets of formulas $\{b<x<a:b \in \M^{<a}\}$ and $\{a<x<b:b \in \M^{>a}\}$, respectively.

\begin{lemma}\label{lem:invariantconstants}
Suppose we are in case~\eqref{II}, so $T$ is o-minimal.
Let $\ind$ be an independence relation on $(\M;\Delta)$, let $A,B,C \subseteq \M$ be small sets with $A\ind_CB$, and let $a \in A$.
Then there is $d \in \ker(\Delta)$ realizing $p_a^+|_{ABC}$ with $Ad\ind_CB$, and likewise for $p_a^-|_{ABC}$.
\end{lemma}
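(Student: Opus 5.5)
The plan is to build $d$ by realizing an appropriate type over a larger set using full existence, and then to use genericity of $(\M;\Delta)$ to ensure $d$ is a constant. Two facts come first.

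First, the partial type
\[
\Sigma(x)\coloneqq p_a^+|_{ABC}(x)\cup\{\delta_i x=0: i=1,\ldots,m\}
\]
is consistent in $(\M;\Delta)$. By compactness it suffices to find, for each $b>a$ in $\dcl_{\cL}(ABC)$ (or in $\M$), a constant in the interval $(a,b)$. Here I would use that $\ker(\Delta)$ is dense in $\M$. Take an element $e$ of $\M$ in $(a,b)$ that is $\aclL$-independent over a small model containing $a,b$. By Lemma~\ref{lem:extend} we can extend the derivations on a $T$-model containing $a,b$ to one on a larger $T$-model, setting $\Delta(e)=0$ (this commutes trivially). Existential closedness of $\TG$ then yields a constant in $(a,b)$ inside $\M$.

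Second, we need independence. Realize $d$ with $d\models\Sigma$ as an $\LD(ABC)$-type. By full existence there is $d'\equiv_{\LD(ABC)} d$ with $d'\ind_{AC} B$. Any such $d'$ still realizes $\Sigma$, so it is a constant realizing $p_a^+|_{ABC}$. Then $d'\ind_{AC}B$, and normality gives $d'AC\ind_{AC}B$. From $A\ind_CB$ and normality we get $AC\ind_C B$. Transitivity, applied on the left after symmetry (the sets being $C\subseteq AC\subseteq d'AC$), gives $d'AC\ind_C B$. Monotonicity then yields $Ad'\ind_CB$. The case $p_a^-$ is symmetric.

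The main obstacle is the density step. One must check that the type ``$x$ is a constant in the cut just above $a$ over $ABC$'' is consistent, which requires care that compatibility of $T$-derivations allows $\Delta(e)=0$ for a generic $e$. Lemma~\ref{lem:extend} handles this, since $e$ can be taken as part of an $\aclL$-basis and its image $s(e)$ is arbitrary. The rest is routine use of the independence axioms.
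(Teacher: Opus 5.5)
Your density argument for the consistency of $\Sigma$ is essentially fine; you should take $e$ in an $\cL$-extension of a small $K\preceq(\M;\Delta)$ rather than in $\M$ itself, where $\Delta(e)$ is already fixed. The paper simply cites the axioms of $\TG$ for this step.

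The independence step, however, has a genuine gap. Full existence applied to $d$ over the base $AC$ only yields $d'\equiv_{\LD(AC)}d$ with $d'\ind_{AC}B$. It does not give $d'\equiv_{\LD(ABC)}d$. Such a $d'$ is a constant realizing $p_a^+|_{AC}$, but it need not realize $p_a^+|_{ABC}$. If some $b\in\dclL(ABC)$ with $b>a$ itself realizes $p_a^+|_{AC}$, nothing in the axioms prevents every independent $d'$ from lying above $b$. If you instead apply full existence over $ABC$ to keep the type, you only get independence over $ABC$, which says nothing about $Ad\ind_CB$.

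For an arbitrary independence relation there is no general way to realize a partial type over $ABC$ independently from $B$ over $AC$. For instance, if $\ind$ is real-strict, the type $x=b$ with $b\in B\setminus\aclD(AC)$ has no such realization. Achieving this for the specific partial type $p_a^+|_{ABC}\cup\{\delta_1x=\cdots=\delta_mx=0\}$ is the whole content of the lemma. Your sentence ``any such $d'$ still realizes $\Sigma$'' assumes exactly what has to be proved.

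The missing idea is to combine an automorphism with the linear order.
\begin{enumerate}
\item Take an $\LD(AC)$-automorphism $\sigma$ with $\sigma(B)\ind_{AC}B$. The axioms then give $ABC\ind_C\sigma(B)$ and $A\sigma(B)C\ind_CB$.
\item Apply full existence over the bases $ABC$ and $A\sigma(B)C$ respectively, where it \emph{does} preserve the cuts. This yields constants $d_1\models p_a^+|_{ABC}$ with $d_1\ind_{ABC}\sigma(B)$, and $d_2\models p_a^+|_{A\sigma(B)C}$ with $d_2\ind_{A\sigma(B)C}B$.
\item Normality, transitivity and monotonicity then give $Ad_1\ind_C\sigma(B)$ and $Ad_2\ind_CB$.
\item If $d_2\leq d_1$, then $d_2$ also realizes $p_a^+|_{ABC}$, and you may take $d\coloneqq d_2$.
\item If $d_1<d_2$, then $d_1$ realizes $p_a^+|_{A\sigma(B)C}$. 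So $\sigma^{-1}(d_1)$ is a constant realizing $p_a^+|_{ABC}$, and $A\sigma^{-1}(d_1)\ind_CB$ holds by invariance.
\end{enumerate}
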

\begin{proof}
We prove the lemma for $p_a^+$, with the same proof working for $p_a^-$.
Full existence gives an $\LD(AC)$-automorphism $\sigma$ with $\sigma(B) \ind_{AC}B$.
Then $\sigma(B) \ind_CA$ as well and applying normality, transitivity, and symmetry, we get
\[\textstyle
ABC \ind_C\sigma(B),\qquad A\sigma(B)C \ind_C B.
\]
The axioms of $\TG$ tell us that there is a realization of $p_a^+|_{ABC}$ in $\ker(\Delta)$, likewise for $p_a^+|_{A\sigma(B)C}$.
Full existence allows us to find realizations $d_1 \models p_a^+|_{ABC}$ and $d_2 \models p_a^+|_{A\sigma(B)C}$ with 
\[\textstyle 
d_1\ind_{ABC}\sigma(B),\qquad d_2 \ind_{A\sigma(B)C}B.
\]
Applying normality, transitivity, and monotonicity to these relations and the ones above, we get
\[\textstyle
Ad_1\ind_C\sigma(B),\qquad Ad_2 \ind_C B.
\]
If $d_2\leq d_1$, then $d_2 \models p_a^+|_{ABC}$ as well, and we may take $d \coloneqq d_2$.
If $d_1 <d_2$, then applying $\sigma^{-1}$ gives 
\[\textstyle
\sigma^{-1}(d_1) \models \sigma^{-1}(p_a^+|_{A\sigma(B)C}) = p_a^+|_{ABC},\qquad A\sigma^{-1}(d_1) \ind_C B,
\]
so we may take $d \coloneqq \sigma^{-1}(d_1)$.
\end{proof}

\begin{corollary}\label{cor:smallneighborhood}
Suppose we are in case~\eqref{II}, let $\ind$ be an independence relation on $(\M;\Delta)$, let $A,B,C \subseteq \M$ be small sets with $A\ind_CB$, and let $\av \in A^n$ with $\rkL(\av|BC) = n$.
Then there is a finite $D\subseteq \ker(\Delta)$ and an $\cL(D)$-definable open $U \subseteq \M^n$ such that
\begin{enumerate}
\item $\rkL(D|ABC) = |D|$ and $AD\ind_CB$;
\item Any $\cL(BC)$-definable set $X \subseteq \M^n$ containing $\av$ also contains $U$.
\end{enumerate}
\end{corollary}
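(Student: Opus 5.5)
The plan is to take $D$ to consist of $2n$ constants bracketing the coordinates of $\av$, built one at a time with Lemma~\ref{lem:invariantconstants}, and to let $U$ be the open box they determine. Write $\av = (a_1,\ldots,a_n)$. We apply Lemma~\ref{lem:invariantconstants} $2n$ times in succession, keeping the independence $A D_j \ind_C B$ at each stage. Start with $D_0 = \emptyset$. Given $D_j$ with $AD_j\ind_CB$, apply the lemma to the sets $AD_j, B, C$ and the element $a_i\in AD_j$. This gives $d\in\ker(\Delta)$ realizing $p_{a_i}^+|_{AD_jBC}$ (or $p_{a_i}^-$) with $AD_jd\ind_CB$. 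In this way we get constants $d_i^-\models p_{a_i}^-|_{A D BC}$ and $d_i^+\models p_{a_i}^+|_{ADBC}$ for each $i$, where $D$ denotes the previously chosen constants.

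Set $D=\{d_1^\pm,\ldots,d_n^\pm\}$ and $U=\prod_i (d_i^-,d_i^+)$, which is $\cL(D)$-definable and open. Item~(1) follows in two parts. Independence $AD\ind_CB$ is maintained at each step. For the rank claim, each new $d$ realizes a cut type $p^\pm_{a_i}$ over a set containing the previous parameters. Such a type is non-algebraic: it is not realized in $\dcl$ of those parameters, since each $b$ in that set satisfies $b<x$ or $b>x$ in a way that excludes $d=b$. Hence $d\notin\aclL(ABCD_j)$, because in o-minimal theories $\acl=\dcl$. So $\rkL(D|ABC)=|D|$.

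Item~(2) is the main point. Let $X$ be $\cL(BC)$-definable with $\av\in X$. Since $\rkL(\av|BC)=n$, $\av$ is generic in $X$ over $BC$, so $\av$ lies in the interior of $X$. We want the stronger statement that $X$ contains the box $U$. The plan is to use the cell decomposition of $X$ over $BC$: $\av$ lies in an open $n$-cell $Q\subseteq X$ defined over $BC$. We then argue inductively on coordinates that $U\subseteq Q$.

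The cell $Q$ is bounded in coordinate $1$ by $\cL(BC)$-definable values $f_1<a_1<g_1$, and $d_1^\pm$ lie in the cut of $a_1$ over $ABC$, so $f_1<d_1^-<d_1^+<g_1$. For the next coordinate, the bounds are $f_2(x_1)<a_2<g_2(x_1)$, with $f_2,g_2$ continuous and $\cL(BC)$-definable. Here we need that for all $x_1\in(d_1^-,d_1^+)$ we have $f_2(x_1)<d_2^-$ and $d_2^+<g_2(x_1)$. This is the main obstacle.

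To handle it, we order the construction so that for each $i$ the points $d_i^\pm$ are chosen realizing the cut of $a_i$ over a set that already contains $d_1^\pm,\ldots,d_{i-1}^\pm$. We construct in the order $d_n^\pm$ first, then $d_{n-1}^\pm$, and so on, so that the box in earlier coordinates is infinitesimally small relative to later ones. More precisely, we choose $d_i^\pm$ after $d_{i+1}^\pm,\ldots,d_n^\pm$. Then the quantity $\sup_{x_1\in(d_1^-,d_1^+)}f_2(x_1)$ is $\cL(BCd_1^\pm)$-definable. By continuity and genericity of the cut it should equal a value close to $f_2(a_1)<a_2$.

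To make this work cleanly, the cleaner route is to choose $d_1^\pm$ last, realizing the cut of $a_1$ over $ABCd_2^\pm\cdots d_n^\pm$. Consider the $\cL(BCd_2^\pm\cdots)$-definable set of $x_1$ such that the fiber of $Q$ over $x_1$ contains $\prod_{i\geq2}(d_i^-,d_i^+)$. By induction on $n$ applied to the fiber over $a_1$, this set contains $a_1$. It is definable over parameters in the set over which $d_1^\pm$ realize the cut of $a_1$, so $a_1$ lies in the interior of one of its cells, and that cell contains $(d_1^-,d_1^+)$. This bookkeeping of parameter order, while preserving the independence via repeated use of Lemma~\ref{lem:invariantconstants}, is the delicate part. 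The rank condition $\rkL(\av|BC)=n$ is used to ensure $a_1$ is interior and that the fiberwise genericity persists.
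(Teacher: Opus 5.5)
Your construction of $D$ and your check of item~(1) match the paper. For item~(2), the argument you finally settle on also works once repaired: produce $d_n^\pm$ first and $d_1^\pm$ last, put $\av$ in an open $\cL(BC)$-definable cell $Q\subseteq X$, and induct on fibers via the set $W$ of $x_1$ with $Q_{x_1}\supseteq\prod_{i\geq2}(d_i^-,d_i^+)$. It is, however, far more roundabout than necessary, and the write-up has some problems.

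\begin{itemize}
\item You say both that $d_i^\pm$ is chosen over a set containing $d_1^\pm,\ldots,d_{i-1}^\pm$ and that $d_n^\pm$ comes first. The remark that $\sup_{x_1\in(d_1^-,d_1^+)}f_2(x_1)$ ``should equal a value close to $f_2(a_1)$'' is not an argument. Discard both.
\item In the cleaner route, $W$ being defined over the parameters of the cut of $a_1$ does \emph{not} put $a_1$ in the interior of $W$. You need $a_1\notin\dclL(BC\,d_2^\pm\cdots d_n^\pm)$. This follows from $\rkL(a_1|BC)=1$ together with the algebraic independence of the $d$'s over $ABC$ from~(1).
\item The inductive hypothesis is applied to the fiber $Q_{a_1}$, which is only $\cL(BCa_1)$-definable. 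So the claim must be stated for an arbitrary base over which the tuple is generic. This is legitimate, since $a_1\in A$ and $\rkL(a_2,\ldots,a_n|BCa_1)=n-1$.
\end{itemize}

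The ``main obstacle'' you identify disappears once you use that $\av$ itself lies in the set $ABC$ over which every cut is taken. This is the content of the paper's one-line remark that $U$ is an infinitesimal neighborhood of $\av$ relative to $ABC$.

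\begin{itemize}
\item $X\setminus\operatorname{int}(X)$ is $\cL(BC)$-definable with empty interior, and $\rkL(\av|BC)=n$, so $\av\in\operatorname{int}(X)$.
\item The set of $t>0$ with $\prod_i(a_i-t,a_i+t)\subseteq\operatorname{int}(X)$ is a nonempty $\cL(ABC)$-definable initial segment of $(0,\infty)$. Hence it contains some $t\in\dclL(ABC)$.
\item Each $d_i^\pm$ realizes the cut of $a_i$ over $ABC$, so $a_i-t<d_i^-<d_i^+<a_i+t$. Hence $U\subseteq X$.
\end{itemize}

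So for~(2) the order in which the $d$'s are produced is irrelevant, and no cell decomposition or fiberwise induction is needed. Your route gains nothing in generality here, since it too needs $a_1,\ldots,a_{n-1}$ among the parameters of the later cuts to control the fibers.
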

\begin{proof}
Using Lemma~\ref{lem:invariantconstants}, we inductively find a tuple $\dv = (d_1,\ldots,d_{2n}) \in \ker(\Delta)^{2n}$ with $A\dv \ind_CB$ such that 
\[
d_{2i-1} \models p_{a_i}^-|_{ABC\{d_j:j<2i-1\}} ,\qquad d_{2i} \models p_{a_i}^+|_{ABC \{d_j:j<2i\}}
\]
Let $U$ be the set $\prod_{i=1}^n(d_{2i-1},d_{2i}) \subseteq \M^n$.
Then $U$ is an infinitesimal (relative to $ABC$) neighborhood of $\av$, so any $\cL(BC)$-definable set $X \subseteq \M^n$ containing $\av$ also contains $U$.
\end{proof}

\begin{proposition}\label{prop:Dind-coarsest}
$\Dind\,$ is the coarsest real-strict independence relation on $(\M;\Delta)$, and so coincides with real \thh-forking independence.
\end{proposition}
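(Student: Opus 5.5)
We split according to the two cases. In case~\eqref{I}, Corollary~\ref{cor:dindrealstrict} and Lemma~\ref{lem:Canonical} show that $\Dind\,$ is real-canonical with respect to $\TG$. Remark~\ref{rem:canonicaliscoarsest} then says it is the coarsest real-strict independence relation, and hence equals real \thh-forking. So the work is in case~\eqref{II}, where $\Tind\ $ need not be real-canonical and we argue directly.

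Fix a real-strict independence relation $\ind$ on $(\M;\Delta)$ and suppose $A\ind_C B$. By strong closure and Corollary~\ref{cor:aclD}, we may assume that $A$, $B$, $C$ are $\aclD$-closed, hence of the form $\aclL(\Jet(\cdot))$, with $C\subseteq A\cap B$. We must show $A\Tind_C B$. First we would try the argument of Lemma~\ref{lem:Tind-coarsest}. If some $e\in A$ satisfies $e\in\aclL(B)\setminus C$, then $e\in B$, so monotonicity gives $e\ind_C e$. Anti-reflexivity (which refers to $\aclD$) then gives $e\in\aclD(C)=C$, a contradiction. Thus no element of $A$ lies in $\aclL(B)\setminus C$. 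The difficulty is that a failure of $\Tind\ $ need not be witnessed by a single element. It may only be witnessed by a tuple $\av$ from $A$ with $\rkL(\av|C)=n$ and $\rkL(\av|B)<n$. To reduce to the singleton case, split $\av=(\av',e)$ with $\rkL(\av'|B)=n-1$ and $e\in\aclL(B\av')\setminus\aclL(C\av')$. By symmetry and base monotonicity, $A\ind_{C'}B'$ where $C'=\aclD(C\av')$ and $B'=\aclD(B\av')$. The trouble is that passing to $C'$ adds $\Jet(\av')$, which may already make $e$ algebraic. So this naive reduction does not obviously work.

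To get around this we use the constants of Corollary~\ref{cor:smallneighborhood}. Apply it to $\av'$, whose rank over $BC$ is $n-1$. It yields a finite $D\subseteq\ker(\Delta)$, independent over $ABC$, with $AD\ind_C B$, and an $\cL(D)$-definable open box $U\ni\av'$ contained in every $\cL(BC)$-definable set containing $\av'$. Write $e=f(\av')$ with $f$ an $\cL(B)$-definable function, continuous on a neighbourhood of $\av'$; this neighbourhood may be taken $\cL(BC)$-definable, so it contains $U$. The plan is to find a tuple $\av^*$ such that:
\begin{enumerate}
\item $\av^*\in U\cap\ker(\Delta)^{n-1}$ and $\av^*$ is $\aclL$-independent over $ABCD$;
\item $\av^*$ is chosen inside $\aclD(AD)$ using $\Jet(A)$, for example with the same cut over $BCD$ as $\av'$, realized among constants built from $A$;
\item $e^*\coloneqq f(\av^*)$ then lies in $\aclD(ABD)$, and since the derivations vanish on $\av^*$, $\Jet(e^*)$ is controlled by $B$ and $D$ alone.
\end{enumerate}
Base monotonicity over $\aclD(CD\av^*)$ then puts us in the singleton situation above. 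There $e^*$ is a constant in the $\aclL$-closure of the base-extended $B$-side but outside the base, which anti-reflexivity forbids. The whole point of using constants is that adding them to the base adds nothing via $\Jet$.

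The main obstacle is producing such $\av^*$ inside (the closure of) $A$, or at least in a position compatible with $\ind$. Lemma~\ref{lem:invariantconstants} produces constants realizing cuts while preserving $\ind$, but not necessarily as functions of $A$. If this step fails, the fallback is to run the whole argument with the constants $\av^*$ supplied by Lemma~\ref{lem:invariantconstants}, replacing $A$ by $AD\av^*$ throughout. One would then use the openness of $U$ and the fact that $e$ and $e^*$ satisfy the same $\cL(BC)$-definable relation of dimension $n-1$ to transfer the dependence from $\av$ to $(\av^*,e^*)$. Finally, an induction on $n$ would let us assume the statement for shorter tuples.
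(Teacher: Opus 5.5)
Your handling of case~\eqref{I} is correct; it is exactly the shortcut the paper itself notes (Corollary~\ref{cor:dindrealstrict} together with Remark~\ref{rem:canonicaliscoarsest}).

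In case~\eqref{II}, however, there is a genuine gap: the plan never produces an element to which anti-reflexivity can be applied.
\begin{itemize}
\item Your conditions (1) and (2) are incompatible once $n\geq 2$. Since $A$ is $\aclD$-closed and $D\subseteq\ker(\Delta)$, Corollary~\ref{cor:aclD} gives $\aclD(AD)=\aclL(AD)$. So no nonempty tuple from $\aclD(AD)$ can be $\aclL$-independent over $ABCD$.
\item The fallback also fails. There you adjoin to $A$ constants $\av^*$ realizing the cut of $\av'$, via Lemma~\ref{lem:invariantconstants}. The element $e^*=f(\av^*)$ lies in $\dclL(B\av^*)$, but nothing places it in $\aclD(ACD\av^*)$. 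The fact that $f(\av')\in A$ is special to $\av'$ and does not transfer to other points of $U$. So you never obtain $e^*\ind e^*$ over any base. Nor is $e^*$ a constant in general, since $\delta e^*=f^{[\delta]}(\av^*)$.
\end{itemize}
More fundamentally, an $\aclL$-dependence of a tuple over $B$ need not be witnessed by any single element lying on both sides. What has to be pushed down into the base is a canonical parameter for the dependence. That parameter lives in $B$, and typically not in $\aclD(A)$.

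The missing idea is a conjugation argument, which runs as follows in the paper.
\begin{itemize}
\item Take an $\LD(A)$-automorphism $\sigma$ with $\sigma(B)\ind_A B$, and derive $\sigma(B)\ind_C B$ and $A\ind_C B\sigma(B)$.
\item Use an induction on $\rkL(\av|\Jet(BC))$, applied to $A\ind_B B\sigma(B)$, to see that $\av'$ remains $\aclL$-independent over $B\sigma(B)$.
\item Write $f=f_{\bv}$ for an $\cL(C)$-definable family $(f_{\z})$ and $\bv$ from $B$. Since $\sigma$ fixes $A$, we get $f_{\bv}(\av')=f_{\sigma(\bv)}(\av')$. By Corollary~\ref{cor:smallneighborhood}, applied to $A$, $B\sigma(B)$, $C$, the functions $f_{\bv}$ and $f_{\sigma(\bv)}$ therefore agree on the box $U$.
\item Definable choice for the $\cL(CD)$-definable relation $f_{\z}|_U=f_{\z'}|_U$ yields a germ code $\bv'\in\aclL(BD)\cap\aclL(\sigma(B)D)$.
\item Since $\sigma(B)D\ind_{CD}BD$, real-strictness gives $\bv'\in\aclD(CD)=\aclL(CD)$. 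A rank computation then yields $\rkL(\av|C)=\rkL(\av|B)$.
\end{itemize}
You correctly saw why constants help: adjoining them to the base adds no new derivatives. But in the paper they serve to make the germ code definable over such a base, not to replace $\av'$. The same conjugation argument, with Zariski loci in place of germs, also settles case~\eqref{I}. That is how the paper gets a proof that does not depend on real-canonicity.
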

Note that in case~\eqref{I}, this follows from Corollary~\ref{cor:dindrealstrict} and Remark~\ref{rem:canonicaliscoarsest}.
We give here another proof that doesn't use real-canonicity and unifies both cases.
\begin{proof}
Let $\ind$ be any real-strict independence relation on $(\M;\Delta)$.
By finite character and the definition of $\Dind\,$, it suffices to prove that for any small sets $A,B,C$ with $A\ind_{C}B$ and any finite tuple $\av$ from $\Jet(A)$, we have $\rkL(\av|\Jet(BC)) = \rkL(\av|\Jet(C))$.
We prove this by induction on $\rkL(\av|\Jet(BC))$.
Note that if $\av \in \aclL(\Jet(BC)) = \aclD(BC)$, then strong closure and real-strictness give $\av \in \aclD(C)$, so $\rkL(\av|\Jet(BC)) = \rkL(\av|\Jet(C)) = 0$.
We let $n>0$ and assume that our induction hypothesis holds for all small sets $A,B,C$ with $A\ind_CB$ and all tuples $\av$ from $\Jet(A)$, so long as $\rkL(\av|\Jet(BC))<n$.

Let $A,B,C$ be small sets with $A\ind_CB$ and let $\av$ be a tuple from $\Jet(A)$ with $\rkL(\av|\Jet(BC)) = n$.
We need to show that $\rkL(\av|\Jet(C)) = n$ as well.
By strong closure and Corollary~\ref{cor:aclD}, we may assume that $A,B,C$ are all $\aclD$-closed and that $C \subseteq A,B$, so $\rkL(\av|B) = n$.
Using full existence, take an $\LD(A)$-automorphism $\sigma$ of $(\M;\Delta)$ with $\sigma(B) \ind_AB$.
Since $C \subseteq A$, invariance and symmetry gives $\sigma(B)\ind_CA$, and transitivity and normality give $\sigma(B)\ind_CAB$.
Applying base monotonicity, monotonicity, and normality gives $B\sigma(B)\ind_B A$.
By symmetry and our induction hypothesis, we must have
\begin{equation}\label{eq:dim}
\rkL(\av|B\sigma(B)) = n.
\end{equation}
We also record here that applying monotonicity to $\sigma(B)\ind_CAB$ gives
\begin{equation}\label{eq:ind}\textstyle
\sigma(B)\ind_CB.
\end{equation}
We now consider the algebraically bounded and o-minimal cases separately.

\textbf{Case~\eqref{I}:}
Let $V\subseteq \M^n$ be the Zariski locus of $\av$ over $B$, so $V$ is absolutely irreducible over $B$ of dimension $n$.
Using that dimension and absolute irreducibility of Zariski-closed sets are definable in families~\cite[Appendix]{FLS17}, together with elimination of imaginaries in the theory of algebraically closed fields, we find a $\cL(C)$-definable family $(V_{\z})_{\z \in Z}$ of pairwise distinct absolutely irreducible varieties, all of the same dimension, and a tuple $\bv\in Z$ with coordinates from $B$ with $V_{\bv} = V$.
Note that $\av \in V_{\sigma(\bv)}$ as well, so $V_{\bv} \cap V_{\sigma(\bv)}$ has dimension $n$ by~\eqref{eq:dim}.
As the varieties in the family $(V_{\z})_{\z \in Z}$ are pairwise distinct and absolutely irreducible, we have $\bv = \sigma(\bv)$.
By~\eqref{eq:ind} and real-strictness, we have $\bv \in \aclD(C) = C$, so $\rkL(\av|C) = \rkL(\av|C\bv) = n$, as desired.

\textbf{Case~\eqref{II}:}
Let $\av'\in \M^n$ be a subtuple of $\av$ that is $\aclL$-independent over $B$.
Then $\av'$ is $\aclL$-independent over $B\sigma(B)$ as well by~\eqref{eq:dim}.
Since $A\ind_CB$ and $A\ind_BB\sigma(B)$, transitivity gives $A\ind_CB\sigma(B)$.
Applying Corollary~\ref{cor:smallneighborhood} to the sets $A,B\sigma(B),C$ and the tuple $\av'$, we find a finite set $D \subseteq \ker(\Delta)$ and an $\cL(D)$-definable open set $U \subseteq \M^n$ such that
\begin{enumerate}
\item $\rkL(D|AB) = |D|$ and $AD\ind_CB$;
\item Any $\cL(B\sigma(B))$-definable set $X \subseteq \M^n$ containing $\av'$ also contains $U$.
\end{enumerate}
As algebraic and definable closure coincide in the o-minimal case, we may write $\av = f(\av')$ for some $\cL(B)$-definable function $f$.
Take an $\cL(C)$-definable family $(f_{\z})_{\z \in Z}$ of functions, along with a tuple $\bv\in Z$ with coordinates in $B$ such that $f = f_{\bv}$.
Consider the relation $\sim$ on $Z$ given by 
\[
\z \sim \z' \iff f_{\z}|_U = f_{\z'}|_U.
\]
Then $\sim$ is $\cL(CD)$-definable, so definable choice yields an $\cL(CD)$-definable map $g\colon Z\to Z$ such that $\z \sim g(\z)$ and $\z \sim \z' \iff g(\z) = g(\z')$ for $\z,\z'\in Z$.
The tuple $\av'$ is contained in the $\cL(B\sigma(B))$-definable set
\[
\{\x\in \M^n: f_{\bv}(\x) = f_{\sigma(\bv)}(\x)\},
\]
so $U$ is contained in this set as well, giving $\bv\sim \sigma(\bv)$.
Let $\bv' \coloneqq g(\bv) = g(\sigma(\bv))$, so $f_{\bv'}(\av') = \av$.

Now starting with $AD\ind_CB\sigma(B)$ and using base monotonicity, monotonicity, and normality we get $\sigma(B)D\ind_{\sigma(B)} B$.
Using~\eqref{eq:ind} and applying transitivity yields $\sigma(B)D\ind_C B$.
Base monotonicity and normality give $\sigma(B)D \ind_{CD} BD$.
We have $\bv' \in \aclL(BD) \cap \aclL(\sigma(B)D)$, so $\bv' \ind_{CD}\bv'$ by normality.
Real-strictness gives $\bv' \in \aclD(CD) = \aclL(CD)$, since $\Jet(D) = D \cup \{0\}$.
Using that $\rkL(D|B) = \rkL(D|B\av) = |D|$, routine rank calculations give $\rkL(\av|BD) = \rkL(\av|B) = n$.
Likewise, $\rkL(\av|CD) = \rkL(\av|C)$.
Now we compute
\[
\rkL(\av|C) = \rkL(\av|CD) = \rkL(\av|CD\bv')= \rkL(\av|BD) = \rkL(\av|B) = n.\qedhere
\]
\end{proof}

Combining Remark~\ref{rem:GEIstrict}, Fact~\ref{fact:coincide}, and Proposition~\ref{prop:Dind-coarsest}, we have:

\begin{corollary}\label{cor:Dindcoincide}
Suppose that $\TG$ has GEI.
Then $\TG$ is rosy and $\Dind\,$ coincides with \thh-forking independence in $\TG$.
\end{corollary}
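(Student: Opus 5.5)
The statement to prove is Corollary~\ref{cor:Dindcoincide}: if $\TG$ has GEI, then it is rosy and $\Dind\,$ is \thh-forking independence. The plan is to chain Proposition~\ref{prop:Dind-coarsest}, Remark~\ref{rem:GEIstrict}, and Fact~\ref{fact:coincide}, working on the real sort and lifting to $\M^{\eq}$ through GEI.

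First, by Corollary~\ref{cor:dindrealstrict}, $\Dind\,$ is a real-strict independence relation on $(\M;\Delta)$. Since $\TG$ has GEI, Remark~\ref{rem:GEIstrict} upgrades this to a strict independence relation, meaning it extends to $(\M;\Delta)^{\eq}$ with anti-reflexivity relative to $\acl^{\eq}$. By Adler's definition, $\TG$ is therefore rosy.

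Second, Fact~\ref{fact:coincide} supplies a coarsest strict independence relation $\thind$, which is \thh-forking independence. Since $\Dind\,$ is strict, every $\Dind\,$-independence is also $\thind$-independence. It remains to prove the converse on real sets.

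For the converse, restrict $\thind$ to real small sets. That restriction is still an independence relation on $(\M;\Delta)$, since all axioms restrict directly. It is also real-strict: for a real element $a$, anti-reflexivity in $\M^{\eq}$ gives $a\in\acl^{\eq}(B)$, and for real $a$ and real $B$ this is the same as $a\in\aclD(B)$. By Proposition~\ref{prop:Dind-coarsest}, $\Dind\,$ is the coarsest real-strict relation, so $\thind$ restricted to real sets is finer than $\Dind\,$. Combined with the previous paragraph, the two coincide on real sets.

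The main obstacle is making precise what ``coincides'' means in $\M^{\eq}$. I would use GEI to replace each imaginary by a real tuple that is interalgebraic with it. Strong closure, which holds for any independence relation, then shows that an independence relation on $\M^{\eq}$ is determined by its restriction to real sets. Two independence relations that agree on real sets therefore agree on $\M^{\eq}$, and $\Dind\,$ coincides with \thh-forking independence in $\TG$.
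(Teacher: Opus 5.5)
Your proposal is correct and follows the paper's argument, which simply combines Remark~\ref{rem:GEIstrict}, Fact~\ref{fact:coincide}, and Proposition~\ref{prop:Dind-coarsest}. Under GEI, $\Dind\,$ is strict, which gives rosiness and makes $\Dind\,$ finer than $\thind$; the real restriction of $\thind$ is real-strict, hence finer than $\Dind\,$. Your last paragraph, which uses GEI and strong closure to extend the coincidence to all of $\M^{\eq}$, is a correct addition that the paper does not spell out.
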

Note that the hypothesis of GEI is always satisfied in case~\eqref{II} by~\cite[Theorem 5.19]{FK21} and Corollary~\ref{cor:ominEI}.
For a different proof of Corollary~\ref{cor:Dindcoincide} in case~\eqref{I}, see {\cite[Corollary 6.2]{FT25c}}.
Combining~\cite[Theorem 3.14]{LSM25} with Corollary~\ref{cor:simplecoincide}, we also obtain another proof of~\cite[Theorem 7.1]{FT25c}:

\begin{corollary}\label{cor:simple}
If $T$ is simple, then so is $\TG$ and $\Dind\,$ coincides with forking independence in $\TG$.
\end{corollary}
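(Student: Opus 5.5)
The plan is to combine the derivation-like machinery of~\cite{LSM25} with the minimality results for $T$ proved above.
Assume $T$ is simple.
By Corollary~\ref{cor:simplecoincide}, forking independence $\find$ in $T$ coincides with $\Tind\ $.
By Corollary~\ref{cor:derivation-like}, $\Tb$ is derivation-like with respect to $(T,\Tind\ )$.
We therefore apply~\cite[Theorem 3.14]{LSM25}.
As I understand it, this theorem states that if $T$ is simple and the chosen independence relation in $T$ is forking independence, then the model companion $\TG$ is simple.
It also states that forking independence in $\TG$ is given by the relation $A \ind_C B \iff \aclD(AC) \Tind_{\aclD(C)} \aclD(BC)$.
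The hypothesis of that theorem is exactly that the relation we are deriving-like with respect to is $\find$, and this is what Corollary~\ref{cor:simplecoincide} supplies.

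Next we translate the independence relation of~\cite{LSM25}, which is phrased via $\aclD$, into $\Dind\,$.
This is Corollary~\ref{cor:aclD}: since $\aclD(A)=\aclL(\Jet(A))$, we have $A\Dind_CB$ if and only if $\aclD(AC)\Tind_{\aclD(C)}\aclD(BC)$.
Hence forking in $\TG$ is $\Dind\,$.

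A sanity check that avoids relying on the precise form of~\cite{LSM25} is also available.
By Corollary~\ref{cor:dindrealstrict}, $\Dind\,$ is an independence relation, and by Proposition~\ref{prop:Dind-coarsest} it is the coarsest real-strict one.
Once $\TG$ is known to be simple, every independence relation is coarser than $\find$, so $\find$ is at least as fine as $\Dind\,$.
Conversely, to show $\Dind\,$ is at least as fine as $\find$, we would use Kim--Pillay style characterisations, verifying the independence theorem for $\Dind\,$ over models.
This uses the independence theorem in $T$ for $\Tind\ =\find$ together with the amalgamation of derivations in Corollary~\ref{cor:amalgamation}: amalgamate in $T$, then extend the derivations freely on a basis.

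The main obstacle is matching hypotheses with~\cite{LSM25}.
One issue is whether they require the base independence to be forking, or only to satisfy the independence theorem.
Another is whether their conclusion is stated over $\aclD$-closed sets or over models.
If needed, I would pass through strong closure to reduce to algebraically closed bases, and from there to models.
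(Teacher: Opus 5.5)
Your proposal is correct and takes the same route as the paper. The paper combines Corollary~\ref{cor:simplecoincide} (so $\find=\Tind\ $ in $T$) with the derivation-like property from Corollary~\ref{cor:derivation-like}, applies \cite[Theorem 3.14]{LSM25}, and uses Corollary~\ref{cor:aclD} to identify the $\aclD$-phrased relation of \cite{LSM25} with $\Dind\,$. Your Kim--Pillay sanity check is not needed, and as written it is only a sketch, so you can drop it.
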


The above corollary is, of course, vacuous in case~\eqref{II}, as is the next corollary.

\begin{corollary}\label{cor:NSOP1}
If $T$ is NSOP$_1$, then so is $\TG$.
Moreover, $\Dind\,$ is coarser than Kim-independence in $\TG$.
\end{corollary}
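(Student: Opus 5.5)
We will deduce Corollary~\ref{cor:NSOP1} from the general transfer result of Le\'on S\'anchez and Mohamed for derivation-like theories. By Corollary~\ref{cor:derivation-like}, $\Tb$ is derivation-like with respect to $(T,\Tind\ )$. The NSOP$_1$ transfer in~\cite{LSM25} should say the following: if $T$ is NSOP$_1$ and the chosen independence relation on $T$ is suitably related to Kim-independence, then the model companion is NSOP$_1$. Here ``suitably related'' means the base relation is coarser than Kim-independence over models, or at least satisfies the Kim--Pillay style criterion (existence, monotonicity, symmetry, the independence theorem over models, and a witnessing or strong finite character condition). The first step is therefore to check that $\Tind\ $ is coarser than Kim-independence $\ind[K]$ in $T$ when $T$ is NSOP$_1$ and geometric.

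For that step, suppose $\av \ind[K]_M B$ with $M\preceq \M$. If $\rkL(\av|MB)<\rkL(\av|M)$, then some coordinate $a$ lies in $\aclL(M\av_0 B)\setminus \aclL(M\av_0)$, witnessed by an algebraic formula $\varphi(x,\av_0,\bv)$ with at most $k$ solutions. Take a Morley sequence $(\bv_i)$ in a global $M$-invariant type extending $\tp(\bv/M)$; this is available over models. The sets $\varphi(\M,\av_0,\bv_i)$ then have disjoint enough solution sets, since a common solution $a'$ would lie in $\aclL(M\av_0\bv_i)\cap\aclL(M\av_0\bv_j)$. This is $\aclL(M\av_0)$-bounded by exchange together with the independence of the $\bv_i$ over $M\av_0$, which holds once we also arrange $\av_0$ Kim-independent, and we can reduce to that inductively by symmetry and transitivity of $\ind[K]$ over models. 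The conclusion is that $\varphi(x,\av_0,\bv)$ Kim-divides over $M$.

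We expect this geometric argument to be the main obstacle. The issue is ensuring that the infinitely many translates really are $k'$-inconsistent. The cleanest route is likely to cite~\cite{KTW22}: in geometric NSOP$_1$ theories, algebraic independence is implied by Kim-independence over models. This is exactly the base-case hypothesis needed in~\cite{LSM25}, and we would then invoke their Theorem giving NSOP$_1$ of $\TG$.

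For the ``moreover'' clause, the plan is to compare Kim-independence in $\TG$ with $\Dind$ over models. Given $A\ind[K]_M B$ in $\TG$, we pass to $\aclD$-closures, which are $\aclL(\Jet(\cdot))$ by Corollary~\ref{cor:aclD}. Since Kim-dividing of an $\cL$-formula in the reduct witnesses Kim-dividing in $\TG$, we get $\Jet(A)\ind[K]_M\Jet(B)$ in the sense of $T$. Here we use that an $M$-invariant type in $\TG$ restricts to an $M$-invariant type in $T$, and that any $\cL$-formula instance $\psi(x,\Jet(\bv))$ is an $\LD$-formula in $\bv$. By the first step this gives $\Jet(A)\Tind_{\Jet(M)}\Jet(B)$, i.e.\ $A\Dind_M B$. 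The LSM25 result also directly asserts this comparison, so we cite it as well.
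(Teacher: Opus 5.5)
Your proposal has a genuine gap at the step that carries the whole corollary. You get NSOP$_1$ of $\TG$ by citing a transfer theorem from \cite{LSM25} whose hypotheses you guess. The condition you then set out to verify is not the one the relevant result needs. The paper applies \cite[Corollary 3.19]{LSM25}, whose hypotheses are that $\Tind$ implies $\cL$-compositums and that $T$ is very $\cL$-slim:
\begin{itemize}
\item the first holds because $\cL$ extends $\cL_{\ring}$ only by relation and constant symbols;
\item the second is algebraic boundedness, and to have it you must first note that NSOP$_1$ rules out case~(II), since an o-minimal theory has the strict order property.
\end{itemize}
Neither point appears in your proposal; you never restrict to case~(I), and nothing you do substitutes for these hypotheses. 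In fact the implication $A\ind[K]_MB\Rightarrow A\Tind_MB$ that you single out holds in \emph{every} pregeometric theory, so it cannot be what makes the transfer work. Here is the argument. Realizations of restrictions of a global $M$-invariant type have the same $\rkL$ over any $B\supseteq M$ as over $M$; to see this, use full existence for $\Tind$ and count ranks against many independent conjugates of the parameters. Hence an invariant Morley sequence $(\bv_i)$ has $\rkL(\bv_0\cdots\bv_N|M)=(N+1)e$, where $e=\rkL(\bv|M)$. Now suppose $\av'$ realizes $\bigcup_i p(\x,\bv_i)$, where $p(\x,\bv)=\tp(\av|M\bv)$, and $\rkL(\av|M\bv)<n=\rkL(\av|M)$. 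Then $\rkL(\bv_0\cdots\bv_N|M\av')$ is both $\geq (N+1)e-n$ and $\leq (N+1)(e-1)$, which is impossible for $N\geq n$. Your own sketch does not establish this, because it holds $\av_0$ fixed as a parameter while the $\bv_i$ vary. Kim-dividing concerns $\tp(\av|MB)$ with all of $\av$ varying.

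For the ``moreover'' clause, your claim that an $M$-invariant type in $\TG$ restricts to an $M$-invariant type in $T$ is false. $\LD(M)$-invariance only involves automorphisms commuting with $\Delta$, so the $\cL$-type over $\M$ of the jets of a realization may refer to $\Delta$ on $\M$. For example, take a generic derivation $\delta$ on ACF$_E$. One can build an $\LD(M)$-invariant global type saying that $x+c\mathrel{E}\delta x+\delta c$ for each $c\in\M$, with these classes new and pairwise distinct. Its jet-reduct is moved by any $\tau\in\operatorname{Aut}_{\cL}(\M/Mc)$ with $\tau(\delta c)\neq\delta c$. The step can be repaired in either of two ways:
\begin{itemize}
\item take the Morley sequence in a global $\LD$-type finitely satisfiable in $M$; its jet-reduct is finitely satisfiable in $\Jet(M)=M$, so Kim's lemma in $T$ transfers Kim-dividing of the $\cL$-formula to $\TG$;
\item run the rank count above directly in $\TG$ on finite jets, using full existence for $\Dind$, which gives $\ind[K]\Rightarrow\Dind$ in $\TG$ without passing to $T$.
\end{itemize}
Either repair settles only the second sentence of the corollary. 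The first still requires the hypotheses of \cite[Corollary 3.19]{LSM25} discussed above.
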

\begin{proof}
Suppose $T$ is NSOP$_1$, so we are necessarily in case~\eqref{I}.
Then in the terminology of~\cite{LSM25}, we have that $\Tind\ $ implies $\cL$-compositums, since $\cL$ extends $\cL_{\ring}$ by only relations and constant symbols, and that $T$ is very $\cL$-slim (which is the same as algebraic boundedness).
The corollary follows by~\cite[Corollary 3.19]{LSM25}.
\end{proof}

Finally, we note that using Theorem~\ref{thm:simple_supersimple_rank_1} and Corollary~\ref{cor:algboundedGEI}, we can improve~\cite[Theorems 7.5 and 7.9]{FT25c}.

\begin{corollary}
If $T$ is simple, then $\TG$ has GEI and eliminates imaginaries in the sorts of $T^{\eq}$.
\end{corollary}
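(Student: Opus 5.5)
If $T$ is simple, then we are necessarily in case~\eqref{I}, since an o-minimal theory is never simple. So $T$ is algebraically bounded, and by Theorem~\ref{thm:simple_supersimple_rank_1} it is supersimple of SU-rank~1. In particular $T$ is rosy, and by Corollary~\ref{cor:rosycoincide} it is surgical. Corollary~\ref{cor:algboundedGEI} then gives that $T$ has GEI. It remains to transfer GEI, and the finer statement of elimination of imaginaries in the sorts of $T^{\eq}$, from $T$ to $\TG$. This is exactly the content of~\cite[Theorems 7.5 and 7.9]{FT25c}: as I recall them, those results prove these conclusions for $\TG$ under the hypotheses that $T$ is simple (or more generally rosy) algebraically bounded and that $T$ itself has GEI, respectively that its geometric imaginaries are controlled. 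The plan is to check that these hypotheses are now automatic, and invoke the cited theorems.

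Concretely, the steps are as follows.
\begin{enumerate}
\item Note that $T$ simple forces case~\eqref{I}.
\item Apply Theorem~\ref{thm:simple_supersimple_rank_1} to get that $T$ is supersimple of SU-rank~1. By Fact~\ref{fact:supersimplecoincide}, $T$ is surgical and $\find=\thind=\Tind\ $.
\item Apply Corollary~\ref{cor:algboundedGEI} to get GEI for $T$.
\item Feed this into~\cite[Theorem 7.5]{FT25c}, which should give GEI for $\TG$ once $T$ is simple with GEI.
\item Feed it into~\cite[Theorem 7.9]{FT25c}, which should give elimination of imaginaries in the sorts of $T^{\eq}$.
\end{enumerate}

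If one of the cited theorems instead assumes that $T$ eliminates imaginaries outright rather than only geometrically, I would pass to $T^{\eq}$ restricted to the finitely many relevant sorts. I would also use that $\Dind\,$ coincides with forking in $\TG$ (Corollary~\ref{cor:simple}) and is real-canonical (Corollary~\ref{cor:dindrealstrict}). The standard route would then run as follows. An imaginary $e$ of $\TG$ is represented by $\LD$-types over $\aclD$-closed sets. By the independence theorem for $\Dind\,$, its canonical base is interalgebraic with a real set $\aclL(\Jet(\cdot))\cap\ldots$. Real-canonicity (the intersection property) pins down a smallest algebraically closed base, and that base is the geometric code.

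The main obstacle is precisely this matching of hypotheses in~\cite{FT25c}. The worry is whether those theorems assumed something like ``$T$ simple and $\Tind\ $ coincides with forking'' or ``$T$ has GEI'' as extra hypotheses, which we now discharge. If instead they need a genuinely different input, the fallback is the canonical-base argument above via Corollary~\ref{cor:dindrealstrict} and~\cite[Lemma 3.1]{Yo09}. That lemma gives GEI directly from a strict real-canonical independence relation coinciding with \thh-forking, applied now in $\TG$. Here the needed strictness is the delicate point: it must be checked on $\M^{\eq}$, and for this we would use that $\TG$ is simple (Corollary~\ref{cor:simple}), so forking is a strict independence relation.
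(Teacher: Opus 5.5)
Your proposal is correct and is essentially the paper's argument. Since $T$ simple forces case~\eqref{I}, Theorem~\ref{thm:simple_supersimple_rank_1} together with Corollary~\ref{cor:algboundedGEI} gives GEI for $T$, which discharges the extra hypotheses of~\cite[Theorems 7.5 and 7.9]{FT25c}; the surgicality needed there comes from Fact~\ref{fact:supersimplecoincide} as in your step~2, not from Corollary~\ref{cor:rosycoincide}, and the fallback canonical-base argument is not needed.
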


\section{Ranks}\label{sec:ranks}
We denote the $\Dind\ $-rank by $U^\Delta$.
We are interested in when $\Dind\,$ is ranked.
We start by showing that we only need to consider the commuting case.
To do this, we need the following fact about $\TDnG$.

\begin{lemma}\label{lem:TDnGaxioms}
Let $(\M;\Delta) \models \TDnG$, let $a_1,\ldots,a_n \in \M$, and let $\theta_1,\ldots,\theta_n$ be pairwise incomparable elements of the free monoid generated by $\Delta$ (meaning that if $i \neq j$, then there is no $\gamma$ for which $\gamma\theta_i = \theta_j$).
Then, viewing each $\theta_i$ as an operator $\M\to \M$, there is $y \in \M$ such that $\theta_i y = a_i$ for each $i$.
\end{lemma}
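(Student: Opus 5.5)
Since $\TDnG$ is the model completion of $\TDn$, $(\M;\Delta)$ is existentially closed among models of $\TDn$. So it suffices to build an extension $(N;\Delta')\supseteq(\M;\Delta)$ with $(N;\Delta')\models\TDn$ containing some $y$ with $\theta_i y=a_i$ for each $i$. The existential sentence $\exists y\,\bigwedge_i\theta_i y=a_i$, with parameters $a_1,\ldots,a_n$ from $\M$, then holds in $N$, hence in $\M$.

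To construct $N$, let $N\succeq\M$ be an elementary extension of the $\cL$-reduct containing an element $y$ together with a family $\{y_\gamma\}$, indexed by the free monoid generated by $\Delta$ with $y_{\mathrm{id}}=y$, that is $\aclL$-independent over $\M$. Saturation provides such a family, since the monoid is countable. Let $E\subseteq N$ be a basis extending it, so that $\{y_\gamma\}\cup E$ is an $\aclL$-basis of $N$ over $\M$. We specify derivations on this basis via Lemma~\ref{lem:extend}: put $\Delta|_E=0$, and we must choose $s(y_\gamma)=(\delta_1 y_\gamma,\ldots,\delta_m y_\gamma)\in N^m$. The natural choice is $\delta_j y_\gamma\coloneqq y_{\delta_j\gamma}$, so that $\theta y=y_\theta$ for every word $\theta$. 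Here we only need to interpret words as operators; no commutativity is needed, which is exactly why we work in $\TDn$.

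We then modify this to hit the targets. Because the $\theta_i$ are pairwise incomparable, no $\theta_j$ has a proper prefix of the form $\gamma\theta_i$, so the nodes $\theta_i$ of the tree form an antichain. Write $\theta_i=\delta_{j_i}\gamma_i$ (each $\theta_i\neq\mathrm{id}$, unless $n=1$ and $\theta_1=\mathrm{id}$, in which case we take $y=a_1$). We redefine $\delta_{j_i}y_{\gamma_i}\coloneqq a_i$, and we drop from the basis every $y_\gamma$ with $\gamma$ extending some $\theta_i$, replacing it with a fresh independent element assigned derivative $0$. These redefinitions are consistent: distinct $i$ give distinct pairs $(j_i,\gamma_i)$ because the $\theta_i$ are distinct, and no $\gamma_i$ lies among the dropped nodes, since $\gamma_i$ being an extension of $\theta_k$ would make $\theta_i$ extend $\theta_k$, contradicting incomparability. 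Lemma~\ref{lem:extend} then gives a tuple of $T$-derivations on $N$ extending $\Delta$ with these values, and by induction along the word $\theta_i$ we get $\theta_i y=a_i$.

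The step I expect to require the most care is this bookkeeping: ensuring that the assignments along the tree are well defined exactly because of incomparability, and that the family remains a basis after the modifications. Lemma~\ref{lem:extend} places no constraint on $s$, so any assignment on a basis is allowed. Finally, $N$ is a model of $T$, hence of $T^\forall$, so $(N;\Delta')\models\TDn$ (in case~\eqref{I} the values on $\PP$ are unchanged, so the hardcoded restriction to $\PP$ is preserved), and model completeness concludes the argument.
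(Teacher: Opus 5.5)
Your proof is correct, but it follows a different route from the paper's. The paper treats the lemma as a direct instance of the \textbf{Deep} axiom scheme. Let $\Lambda$ be the $\preceq$-downward closure of $\{\theta_1,\ldots,\theta_n\}$. Pairwise incomparability says precisely that $\Lambda^{\max}=\{\theta_1,\ldots,\theta_n\}$. Hence the definable set $A=\{x_\Lambda\in\M^\Lambda: x_{\theta_i}=a_i\text{ for each }i\}$ projects onto all of $\M^{\Lambda^\downarrow}$, and \textbf{Deep} yields $y$ with $\Lambda(y)\in A$.

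You instead use only two things: the defining property of $\TDnG$ as a model completion (existential closedness in $\TDn$-extensions), and Lemma~\ref{lem:extend}. In effect you reprove this special case of \textbf{Deep} by the same free-extension construction that underlies the verification of the axioms (compare Lemma~\ref{lem:wideext}). The advantage of your route is that it does not depend on the explicit axiomatization, so it applies uniformly in cases~\eqref{I} and~\eqref{II} without checking how \textbf{Deep} is phrased in each. The paper's route is a one-liner because the tree bookkeeping you carry out is exactly what the hypothesis on $\pi_{\Lambda^\downarrow}(A)$ packages. In both arguments incomparability does the same job: no $\theta_k$ lies $\preceq$-below any $\gamma_i$, so the prescribed values never interfere with the lower part of the tree.

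One cleanup: the ``dropping'' step is unnecessary, and read literally it is not right. Removing $y_\gamma$ from the basis and substituting an element outside $N$ would no longer give an $\aclL$-basis of $N$ over $\M$. Instead, keep the basis $\{y_\gamma\}\cup E$ fixed and change $s$ only in the coordinates $(\gamma_i,j_i)$. The elements $y_\gamma$ with $\theta_i\preceq\gamma$ then simply form a detached subtree. This is harmless because Lemma~\ref{lem:extend} accepts an arbitrary $s$.
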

\begin{proof}
Immediate from the \textbf{Deep} axioms; see~\cite[Section 4]{FT25} in case~\eqref{I} and the appendix in case~\eqref{II}.
\end{proof}

\begin{proposition}\label{prop:notranked}
Suppose that $|\Delta|\geq 2$.
Then $\TDnG$ is never supersimple or, more generally, superrosy.
Moreover, the independence relation $\Dind\,$ is not ranked.
\end{proposition}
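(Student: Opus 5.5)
The plan is to show first that $\Dind\,$ is not ranked. Then we deduce non-superrosiness by comparing $\Dind\,$ with \thh-forking.

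\textbf{Step 1: an infinite forking chain.} With $m\geq 2$, the words $\theta_k\coloneqq \delta_1^k\delta_2$ for $k\in\N$ are pairwise incomparable in the free monoid: no word of the form $\gamma\delta_1^k\delta_2$ equals $\delta_1^j\delta_2$ unless $j=k$ and $\gamma$ is empty. Take $b_0,b_1,\ldots$ in $\M$ that are $\aclL$-independent over $\Jet(\emptyset)$; this is possible since $\M$ has infinite $\aclL$-dimension.

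By Lemma~\ref{lem:TDnGaxioms} and compactness/saturation, there is $y\in\M$ with $\theta_k y=b_k$ for all $k$. The relevant partial type is finitely satisfiable by the lemma, and it is small.

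\textbf{Step 2: rank is at least $\omega$, then unbounded.} Put $B_n\coloneqq\{b_0,\ldots,b_{n-1}\}$. Then $\theta_n y=b_n\in \Jet(y)$. We want $\tp_{\LD}(y|B_{n+1})$ to be a $\Dind$-forking extension of $\tp_{\LD}(y|B_n)$, that is, $\rkL(b_n|\Jet(B_{n+1}))=0<\rkL(b_n|\Jet(B_n))$.

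This requires choosing the $b_k$ so that $b_n\notin\aclL(\Jet(B_n))$. Choose them inductively, each $\aclL$-generic over the jets of the earlier ones; this is possible by saturation.

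Then $y\nind^\Delta_{B_n}B_{n+1}$ for each $n$. Hence $U^\Delta(y|\emptyset)\geq U^\Delta(y|B_n)\geq\ldots$, and we get an infinite descending forking chain. Such a chain shows $U^\Delta(\tp(y))\geq\omega$ but not $\infty$. To reach $\infty$, I will use the standard criterion: $\ind$ is ranked if and only if there is no infinite forking chain $p_0\subseteq p_1\subseteq\cdots$. The forward direction is clear, since ranks strictly decrease along a forking chain and ordinals are well-founded. Ranks decrease strictly because by transitivity and monotonicity a forking extension drops rank. This is exactly the standard argument, and the infinite chain contradicts well-foundedness directly.

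The step I expect to be delicate is verifying the strict drop along forking extensions for an abstract independence relation. I will argue it as follows. If $U(q)\geq U(p)$ for a forking extension $q$ of $p$ with $U(p)=\alpha<\infty$, then by definition $U(p)\geq U(q)+1>\alpha$, which is a contradiction. So $U^\Delta(y|B_{n+1})<U^\Delta(y|B_n)$ for all $n$, an infinite descending sequence of ordinals. Hence $U^\Delta(y)=\infty$.

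\textbf{Step 3: transfer to \thh-forking.} If $\TDnG$ were superrosy, then $\thind$ in $\M^{\eq}$ would be a ranked strict independence relation. Its restriction to real sets is real-strict. By Proposition~\ref{prop:Dind-coarsest}, $\Dind\,$ is coarser than this restriction, so by Fact~\ref{fact:Uorder} $\Dind\,$ would be ranked, contradicting Step 2.

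Supersimplicity implies superrosiness: in a supersimple theory, forking in $\M^{\eq}$ is a ranked strict independence relation, so \thh-forking is coarser and ranked. Superstability implies supersimplicity. This gives all claims.
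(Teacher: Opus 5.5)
Step~1 contains a genuine error: the words $\theta_k=\delta_1^k\delta_2$ are \emph{not} pairwise incomparable in the sense of Lemma~\ref{lem:TDnGaxioms}. There, $\theta_k$ and $\theta_j$ are comparable when $\theta_j=\gamma\theta_k$ for some $\gamma$, that is, when $\theta_k$ is a suffix of $\theta_j$ (words act as composites, rightmost letter first). For $k<j$ you have $\delta_1^j\delta_2=\delta_1^{j-k}\cdot\delta_1^k\delta_2$, so your verification fails with $\gamma=\delta_1^{j-k}$.

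This breaks the construction as written. As operators, $\theta_{k+1}y=\delta_1(\theta_ky)$, so any $y$ with $\theta_ky=b_k$ for all $k$ forces $b_{k+1}=\delta_1b_k\in\Jet(B_{k+1})$. That is exactly what your Step~2 requirement $b_{k+1}\notin\aclL(\Jet(B_{k+1}))$ forbids, so the $y$ you need does not exist.

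The repair is to reverse the order of composition and take $\theta_k\coloneqq\delta_2\delta_1^k$ (the paper's $\epsilon\delta^k$). Each such word contains $\delta_2$ exactly once, as its leftmost letter, while every proper suffix of $\delta_2\delta_1^j$ is a power of $\delta_1$. So these words are pairwise incomparable, and Lemma~\ref{lem:TDnGaxioms} together with saturation gives $y$ with $\delta_2\delta_1^ky=b_k$ for all $k$.

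With this correction the rest goes through. Since $b_n\in\Jet(y)\cap\Jet(B_{n+1})$ and $\rkL(b_n|\Jet(B_n))=1$, you get $y\nind[\Delta]_{B_n}B_{n+1}$. Your well-foundedness argument then shows that some $\tpD(y|B_n)$ has $U^\Delta$-rank $\infty$. Two small clean-ups: drop the remark that the chain gives ``$\geq\omega$ but not $\infty$''. And to conclude $U^\Delta(y|\emptyset)=\infty$ specifically, note that by base monotonicity a $\Dind$-forking extension of $\tpD(y|B_n)$ is also one of $\tpD(y|\emptyset)$.

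The corrected chain is a more direct version of the paper's, which likewise prescribes generic values of $\epsilon\delta^i$ on a realization, via a type concentrating on $\bigcap_i(d_i+C_i)$.

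Where you genuinely differ is Step~3. The paper refutes superrosiness using the descending chain of definable subgroups $C_n=\bigcap_{i\leq n}\ker(\epsilon\delta^i)$, each of infinite index in its predecessor, together with \cite[Proposition 1.4]{EKP08}. You instead derive it from the non-rankedness of $\Dind$, using Proposition~\ref{prop:Dind-coarsest} and Fact~\ref{fact:Uorder}. Your route is valid and self-contained given that proposition. You should only add that the restriction of $\thind$ to real sets is a real-strict independence relation on $(\M;\Delta)$, whose rank is bounded above by the \thh-rank computed in $\M^{\eq}$. The paper's route, in exchange, exhibits an explicit definable-group obstruction that does not rely on identifying the coarsest real-strict relation.
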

\begin{proof}
Let $\epsilon\neq\delta \in \Delta$.
For each $n\geq0$, we define
\[
C_n \coloneqq \bigcap_{i\leq n}\ker(\epsilon\delta^i)
\]
Then $C_0 \supseteq C_1 \supseteq C_2 \supseteq \cdots$ is a descending chain of additive subgroups.
For a given $n$ and $a,b \in C_n$, we have $a+C_{n+1} = b+C_{n+1}$ if and only if $\epsilon\delta^{n+1}a = \epsilon\delta^{n+1}b$.
For any $d \in \M$, Lemma~\ref{lem:TDnGaxioms} gives $y \in\M$ with $\epsilon \delta^i y = 0$ for all $i\leq n$ and $\epsilon \delta^{n+1}y = d$.
Thus, $C_{n+1}$ has infinite index in $C_n$.
It follows that $\TDnG$ is not superrosy (in particular, not supersimple); see~\cite[Proposition 1.4]{EKP08}.
Let us show explicitly that $\Dind\,$ is not ranked.
Let $d_0 \coloneqq 0$ and, assuming we have defined $d_n$, take $d_{n+1}$ such that
\begin{enumerate}[(i)]
\item $\epsilon \delta^i(d_{n+1}-d_n) = 0$ for all $i\leq n$, and
\item $\epsilon\delta^{n+1}(d_{n+1})\not\in \aclD(d_0,\ldots,d_n)$.
\end{enumerate}
Again using Lemma~\ref{lem:TDnGaxioms}, we can always find such an element.
Note that $d_{n+1} \in \bigcap_{i\leq n}d_i+C_i$ for each $n$.
Let $p$ be a complete type over $\{d_i:i<\omega\}$ concentrating on $\bigcap_{i<\omega}(d_i+C_i)$, and for $n\geq 0$, let $p_n \coloneqq p|_{\{d_0,\ldots,d_n\}}$.
Fix $n$ and let $a \models p_{n+1}$.
Then $a \in d_{n+1} + C_{n+1}$, so
\[
\epsilon \delta^{n+1}a = \epsilon \delta^{n+1}d_{n+1} \not\in \aclD(d_0,\ldots,d_n) = \aclL(\Jet(d_0,\ldots,d_n)),
\]
It follows that
\[\textstyle
\Jet(a) \nind[\operatorname{M}]_{\Jet(d_0,\ldots,d_n)}\Jet(d_0,\ldots,d_{n+1}).
\]
so $p_{n+1}$ is an $\Dind\ $-forking extension of $p_n$.
Thus, $U^\Delta(p_0) = \infty$.
\end{proof}

\subsection{Bounding ranks via the Kolchin polynomial}
For the remainder of the section, $(\M;\Delta)$ is a monster model of $\TDG$.
We let $\preceq$ be the componentwise partial order on $\N^m$.
For $\uv = (u_1,\ldots,u_m) \in \N^m$, we put
\[
|\uv|\coloneqq u_1+\cdots +u_m,\qquad \delta^{\uv}\coloneqq \delta_1^{u_1}\cdots \delta_m^{u_m}\colon \M\to \M.
\]
For $\av = (a_1,\ldots,a_d) \in \M^d$, $\uv \in \N^m$, and $t \in \N$, we set
\[
\delta^\uv(\av) \coloneqq (\delta^{\uv}a_1,\ldots,\delta^{\uv}a_d),\qquad \Jet_{\leq t}(\av) \coloneqq \{\delta^{\uv}\av:|\uv|\leq t\},\qquad \Jet(\av) \coloneqq \{\delta^{\uv}\av:\uv \in \N^m\}.
\]
Then $\Jet(\av)$ agrees with our previous definition for singletons.
\begin{fact}[{\cite[Corollary A]{FK24}}]
Given $\av\in \M^d$ and a small subset $A\subseteq M$, there is a numerical polynomial $\omega_{\av| A}$, called the \textbf{Kolchin polynomial} of $\av$ over $A$, such that
\[
\omega_{\av| A}(t) = \rk_{\cL}(\Jet_{\leq t}(\av)|\Jet(A))\quad \text{for $t\gg0$}.
\]
\end{fact}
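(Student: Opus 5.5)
The statement is the existence of a Kolchin polynomial in our setting: for $t\gg0$, the function $t\mapsto \rkL(\Jet_{\leq t}(\av)|\Jet(A))$ agrees with a numerical polynomial. I would follow the classical route through Dickson's lemma and ranking-type bookkeeping, with $\aclL$-rank playing the role of transcendence degree.

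\textbf{Step 1: a leader structure.} Set $K\coloneqq\aclL(\Jet(A))$. Fix the orderly ranking on pairs $(\uv,j)\in\N^m\times\{1,\ldots,d\}$: compare first by $|\uv|$, then lexicographically. Call $(\uv,j)$ \emph{dependent} if $\delta^{\uv}a_j\in\aclL(K,\{\delta^{\vv}a_i:(\vv,i)<(\uv,j)\})$. Then $\rkL(\Jet_{\leq t}(\av)|K)$ equals the number of independent pairs with $|\uv|\leq t$, because the independent elements enumerated in ranking order form an $\aclL$-basis of $\Jet_{\leq t}(\av)$ over $K$.

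\textbf{Step 2: dependence propagates upward.} This is the key step and the main obstacle. I need: if $(\uv,j)$ is dependent, then $(\uv+\mathbf e_k,j)$ is dependent for every $k$. In case~\eqref{I} this is the standard argument. Take a minimal polynomial relation $P$ of $\delta^{\uv}a_j$ over the lower elements. Apply $\delta_k$ and use commutativity, which holds in $\TD$, so that $\delta_k\delta^{\uv}=\delta^{\uv+\mathbf e_k}$. This gives
\[
\tfrac{\partial P}{\partial x}\,\delta^{\uv+\mathbf e_k}a_j+(\text{lower terms})=0,
\]
where $\partial P/\partial x\neq0$ in characteristic zero, and orderliness guarantees that all other terms involve only lower pairs.

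In case~\eqref{II} I would instead write $\delta^{\uv}a_j=f(\bar c)$ with $f$ an $\cL(K)$-definable function and $\bar c$ an $\aclL$-independent tuple of lower jets. After shrinking to an open cell around $\bar c$, $f$ is $C^1$ there. The chain rule for $T$-derivations, extended to $\cL(K)$-definable functions (as in \cite{FK21}), then expresses $\delta_k f(\bar c)$ through $\delta_k$ of the lower jets together with derivatives of $f$ evaluated at $\bar c$. The delicate point is that $\delta_k$ of the parameters of $f$ must lie in $K$; this holds because $K$ is closed under $\Delta$, since $\aclD=\aclL\circ\Jet$ by Corollary~\ref{cor:aclD}. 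The resulting terms are again lower in the orderly ranking.

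\textbf{Step 3: count.} By Step 2, for each $j$ the set $D_j\subseteq\N^m$ of dependent $\uv$ is an upward-closed set. By Dickson's lemma it is generated by finitely many minimal elements. The number of $\uv\notin D_j$ with $|\uv|\leq t$ is then given, for $t\gg0$, by a numerical polynomial, computed by inclusion--exclusion over the finitely many generators. Each term has the form $\binom{t-|\mathbf w|+m}{m}$ and is a numerical polynomial for large $t$. Summing over $j$ gives $\omega_{\av|A}$.

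Step 1 and Step 3 are routine. All the real work lies in Step 2 in the o-minimal case, specifically in justifying the extended chain rule with parameters from the $\Delta$-closed set $K$.
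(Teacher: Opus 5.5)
Your argument is correct. The paper does not reprove this statement but imports it from \cite{FK24}, and your leaders-plus-Dickson's-lemma argument is the classical proof of Kolchin's theorem: an orderly ranking, upward propagation of dependence, and inclusion--exclusion over the finitely many minimal generators. Your Step~2 supplies exactly the two inputs that argument needs in this setting: commutativity of $\Delta$, and the compatibility $\delta_k(\aclL(B))\subseteq\aclL(B\cup\delta_k B)$, which is the kind of condition that \cite{FK24} works with abstractly for finitary matroids. In case~\eqref{II} you obtain this compatibility soundly, via the chain rule at an $\aclL$-generic tuple over the $\Delta$-closed set $K=\aclL(\Jet(A))$, as in Fact~\ref{fact:basicTderivation}.
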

Above, $t\gg0$ abbreviates ``for all sufficiently large $t$.'' In case~\eqref{I}, $\rk_{\cL}$ coincides with the transcendence degree so the Kolchin polynomial is the classical Kolchin polynomial from~\cite{Ko73}.

Let $\cP$ be the set of all Kolchin polynomials, totally ordered by dominance, so
\[
P_1<_e P_2 \iff P_1(t)<P_2(t)\quad\text{for $t\gg0$.}
\]
By~\cite[Corollary 4.7]{FK24}, $\cP$ coincides with the set of Hilbert polynomials of finitely generated graded modules over a polynomial ring, which allows us to equip $\cP$ with an ordinal-valued ranking:

\begin{fact}[{\cite[Corollaries 3.2 and 3.15]{AP04}}]\label{fact:kolchinrankcomputation}
Each $P \in \cP$ can be written uniquely as a sum
\[
P(T) = \binom{T+a_1}{a_1}+\binom{T+a_2-1}{a_2}+\cdots+\binom{T+a_n-(n-1)}{a_n}
\]
for $n\geq 0$ and $a_1\geq a_2 \geq \cdots\geq a_n\geq 0$.
We have a strictly increasing map $\rk_{\cP}\colon \cP\to \On$ (the class of ordinals) where, for $P$ as above,
\[
\rk_\cP(P)\coloneqq \sum_{i\leq n}\omega^{a_i} \in \On \quad\text{(ordinal sum)}.
\]
\end{fact}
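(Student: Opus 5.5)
The plan is to treat the two assertions separately. The comparison of polynomials is elementary and gives uniqueness as a by-product. Existence of the representation is the substantial part, and I would get it from the identification of $\cP$ with Hilbert polynomials of finitely generated graded modules (\cite[Corollary 4.7]{FK24}).

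\emph{Comparison and uniqueness.} Write $B_{a,s}(T)\coloneqq\binom{T+a-s}{a}$. Each $B_{a,s}$ is a numerical polynomial of degree $a$ with leading coefficient $1/a!$ and is eventually positive. Take $P$ with sequence $a_1\geq\cdots\geq a_n$ and $Q$ with sequence $b_1\geq\cdots\geq b_k$. I will show that $P<_e Q$ exactly when $(a_i)$ is lexicographically smaller than $(b_i)$, where a proper initial segment counts as smaller.
\begin{enumerate}
\item Cancel the common prefix of length $j-1$. The terms that survive still carry the same shifts $-(i-1)$, since shifts depend only on position.
\item If $(a_i)$ is a proper initial segment of $(b_i)$, then $Q-P$ is a nonempty sum of eventually positive polynomials, so $P<_e Q$.
\item Otherwise, suppose $b_j>a_j$. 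The tail of $Q$ contains the term $B_{b_j,j-1}$ of degree $b_j$. The tail of $P$ has degree at most $a_j<b_j$, however many terms it has. Hence $Q-P$ has positive leading coefficient and $P<_e Q$.
\end{enumerate}
Distinct sequences therefore give distinct polynomials, which is uniqueness. The sequences are nonincreasing, so $\sum_{i\leq n}\omega^{a_i}$ is already in Cantor normal form. Ordinals in Cantor normal form compare exactly by this lexicographic order, with a prefix smaller. So $\rk_\cP$ is strictly increasing, provided every $P\in\cP$ has such a representation.

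\emph{Existence.} I would use the fact that every $P\in\cP$ is the Hilbert polynomial of a finitely generated graded module $M$ over $k[x_1,\ldots,x_m]$.
\begin{enumerate}
\item Pass to initial submodules with respect to a monomial order. This preserves the Hilbert function, so I may assume $M=\bigoplus_j S(-e_j)/I_j$ with each $I_j$ a monomial ideal.
\item Present $M$ as a single quotient $S/I$, and reduce to a lex-segment ideal by Macaulay's theorem. This step needs care: one can either shift the summands into disjoint degree ranges via a colon trick, or directly use Gotzmann's persistence/regularity theorem, which applies to Hilbert polynomials of graded quotients of polynomial rings.
\item For a lex-segment (saturated lex) ideal, the standard monomials in high degree split into a chain of cones. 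Counting these gives Gotzmann's representation $\sum_{i}\binom{T+a_i-(i-1)}{a_i}$ with $a_1\geq\cdots\geq a_n$.
\end{enumerate}
That is exactly the required form.

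The main obstacle is step 2. The modules arising from Kolchin polynomials need not be cyclic, and the representation must come out with precisely the shifts $-(i-1)$. If reducing a module to a cyclic quotient is delicate, my first attempt would be an induction on $\deg P$ and its leading coefficient. Write the leading coefficient as $c/a_1!$ with $c\in\N$, which holds for Hilbert polynomials of graded modules. Peel off $B_{a_1,0}$, note that the remainder shifted by one, $T\mapsto T-1$, is again in $\cP$, and apply the induction hypothesis to it. The crux of this route is showing that $\cP$ is closed under this peeling operation, via a suitable submodule or quotient such as $M/xM$ for a generic linear form $x$. This is exactly where I would lean on \cite{AP04}.
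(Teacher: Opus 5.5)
Your order-theoretic half is correct and self-contained. After cancelling the common prefix, the first differing exponent fixes the degree and the sign of the leading coefficient of $Q-P$, which gives uniqueness. Since $\sum_{i\leq n}\omega^{a_i}$ with $a_1\geq\cdots\geq a_n$ is a Cantor normal form, $\rk_\cP$ is strictly increasing once representations exist. (The paper gives no proof of this Fact; it imports both halves from \cite{AP04}.)

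The gap is existence. Knowing only that $P$ is the Hilbert polynomial of some finitely generated graded module cannot give the representation, because for such modules it is false. Over $S=k[x_0,x_1]$ the module $S(-1)$ has Hilbert polynomial $T$. A representation of a polynomial with leading term $T$ must have $a_1=1$ and all other $a_i=0$, so it equals $(T+1)+\#\{i:a_i=0\}\geq T+1$. So the shifts $e_j$ in your step~1 are exactly where the argument breaks, and nothing you use rules them out. Step~2 also fails as described, for three reasons:
\begin{enumerate}
\item A non-cyclic module such as $S\oplus S$ has a Hilbert polynomial with twice the leading coefficient of that of $S$, so it is not the Hilbert polynomial of any quotient of the same ring.
\item Moving summands into other degree ranges replaces $P(T)$ by $P(T-e)$, which again destroys the form.
\item Macaulay's and Gotzmann's theorems are statements about cyclic quotients $S/I$.
\end{enumerate}
The peeling induction is not carried out, and you hand its crux back to \cite{AP04}.

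What is missing is the finer shape of Kolchin polynomials. For $t\gg0$ one has $\omega_{\av|A}(t)=\sum_{j=1}^{d}\#\{\uv\in\N^m\setminus E_j:|\uv|\leq t\}$, with each $E_j\subseteq\N^m$ upward closed. These are the sets of leaders for an orderly ranking, and upward closure comes from the chain rule; in other words, every generator sits in degree~$0$. You can then finish as follows.
\begin{enumerate}
\item Discard the $j$ with $0\in E_j$, which contribute nothing.
\item Put $S=k[x_0,\ldots,x_m]$. You need $m+1$ variables: over $k[x_1,\ldots,x_m]$ Hilbert polynomials have degree $<m$, but $\binom{T+m}{m}\in\cP$.
\item Let $I_j\subseteq S$ be generated by the $x^{\uv}$ with $\uv\in E_j$. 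The bijection $\uv\mapsto x_0^{t-|\uv|}x^{\uv}$ shows that the $j$-th summand is $\dim_k(S/I_j)_t$.
\item To obtain a single cyclic algebra, take $d$ disjoint blocks of variables and set $R=k[x^{(1)},\ldots,x^{(d)}]/\bigl(\sum_j I_j^{(j)}+(x^{(i)}_p x^{(j)}_q)_{i\neq j}\bigr)$. In degree $t\geq1$ the standard monomials are the block-pure monomials outside the corresponding $I_j^{(j)}$, so $\dim_kR_t=\sum_j\dim_k(S/I_j)_t$.
\end{enumerate}
Gotzmann's representation theorem (Macaulay's lex ideals plus your count in step~3) now applies to $R$ and gives existence. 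Your comparison argument then finishes the proof.
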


\begin{corollary}\label{cor:rankbounds}
Each nonzero $P \in \cP$ may be written
\[
P(T) = \frac{n}{k!}T^k+ \text{ lower-degree terms}.
\]
for some nonzero $n,k \in \N$.
For such a $P$, we have $\omega^kn\leq \rk_\cP(P)< \omega^k(n+1)$.
\end{corollary}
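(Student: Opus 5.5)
We will use the unique decomposition of Fact~\ref{fact:kolchinrankcomputation} and compare the leading coefficient of $P$ with the exponents $a_i$. Write
\[
P(T)=\sum_{i=1}^{N}\binom{T+a_i-(i-1)}{a_i},\qquad a_1\geq a_2\geq\cdots\geq a_N\geq 0.
\]
Since $P\neq 0$, we have $N\geq 1$; otherwise the empty sum gives $P=0$.

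\textbf{Leading term.} The summand $\binom{T+a_i-(i-1)}{a_i}$ is a polynomial in $T$ of degree exactly $a_i$ with leading coefficient $1/a_i!$. Set $k\coloneqq a_1$, the largest exponent, and let $n\coloneqq|\{i:a_i=k\}|\geq 1$. Because the $a_i$ are non-increasing, the indices with $a_i=k$ are exactly $i=1,\ldots,n$. Summing the degree-$k$ contributions, the leading part of $P$ is $\frac{n}{k!}T^k$, and every other summand has degree $<k$. This gives the displayed form with $n\geq 1$.

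\textbf{The exponent $k$.} The statement asks for $k$ nonzero. If $a_1=0$, then $P$ is a positive constant $n$ and the form still holds with $k=0$. I would read the statement as allowing $k\in\N$, or else note that constant $P$ is the degenerate case; in the ordinal bound, $k=0$ gives $n\leq\rk_\cP(P)<n+1$, which still holds. I will not dwell on this point.

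\textbf{Ordinal bounds.} By definition,
\[
\rk_\cP(P)=\omega^{k}n+\sum_{i=n+1}^{N}\omega^{a_i},\qquad a_i<k\text{ for }i>n.
\]
The lower bound $\omega^kn\leq\rk_\cP(P)$ holds because ordinal addition is non-decreasing in the right summand. For the upper bound, the tail $\beta\coloneqq\sum_{i>n}\omega^{a_i}$ is a finite sum of powers $\omega^{a_i}$ with $a_i\leq k-1$. Such a sum is strictly less than $\omega^{k}$, since $\omega^k$ is additively indecomposable and each term is $<\omega^k$. Hence
\[
\rk_\cP(P)=\omega^kn+\beta<\omega^kn+\omega^k=\omega^k(n+1).
\]
The only potential subtlety is using the ordinal-sum conventions correctly, in particular keeping $\omega^k n$ on the left and using additive indecomposability. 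I do not expect a real obstacle.
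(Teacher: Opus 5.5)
Your proof is correct. It is the direct reading-off from the decomposition in Fact~\ref{fact:kolchinrankcomputation} that the paper leaves implicit, since it states this corollary without a separate proof: the leading coefficient $n/k!$ counts the indices with $a_i=a_1=k$, and the tail $\sum_{i>n}\omega^{a_i}$ lies below $\omega^k$ by additive indecomposability.

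Your caveat about $k$ is also right. Nonzero constant polynomials do lie in $\cP$ (for instance $\omega_{a|\emptyset}=1$ when $a$ is an absolute constant not in $\aclL(\emptyset)$). So the statement should be read with $n\geq 1$ and $k\in\N$ allowed to be $0$, where the bound reduces to $n\leq\rk_\cP(P)<n+1$.
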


Let $p$ be an $\LD(A)$-type.
We set $\omega_p\coloneqq\omega_{\av|A}$, where $\av$ is some (any) realization of $p$.
Given a small set $B \supseteq A$, we always have $\omega_{\av| B} \leq_e\omega_{\av| A}$.
It follows that $\omega_{q} \leq_e \omega_{p}$ for any $\LD(B)$-type $q$ extending $p$.
Whether this inequality is strict is equivalent to $\Dind\ $-forking:

\begin{proposition}\label{prop:forkingextension}
Let $A \subseteq B$, let $p$ be an $\LD(A)$-type, and let $q$ be an $\LD(B)$-type extending $p$.
Then $q$ is an $\Dind\ $-forking extension of $p$ if and only if $\omega_{q} <_e \omega_{p}$.
\end{proposition}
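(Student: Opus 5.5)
We reduce both directions to a comparison of ranks of finite jets. Let $\av$ realize $q$. Then $q$ is a $\Dind\,$-forking extension of $p$ exactly when $\Jet(\av)\nind[\operatorname{M}]_{\Jet(A)}\Jet(B)$. By finite character, this holds if and only if some finite subset of $\Jet(\av)$ has strictly smaller $\rkL$ over $\Jet(B)$ than over $\Jet(A)$. Every finite subset of $\Jet(\av)$ is contained in some $\Jet_{\leq t}(\av)$. Moreover, rank drop is monotone under enlarging the finite set: for $F\subseteq F'$, additivity gives $\rkL(F'|\Jet(A))-\rkL(F'|\Jet(B))\geq \rkL(F|\Jet(A))-\rkL(F|\Jet(B))$. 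So we get the following reformulation.
\[
q \text{ forks over } A \iff \rkL(\Jet_{\leq t}(\av)|\Jet(B))<\rkL(\Jet_{\leq t}(\av)|\Jet(A))\text{ for some (equivalently, all sufficiently large) } t.
\]

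The direction ($\Leftarrow$) is then immediate. If $\omega_{\av|B}<_e\omega_{\av|A}$, then for $t\gg0$ both polynomials compute the ranks of $\Jet_{\leq t}(\av)$ by the Fact of~\cite{FK24}. They differ at such $t$, so the rank drops and $q$ forks.

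The direction ($\Rightarrow$) is the main obstacle. A drop in rank at one finite level $t_0$ must be upgraded to a dominance drop of the polynomials. The natural worry is that the difference $\omega_{\av|A}-\omega_{\av|B}$ might be a nonzero constant that the Kolchin polynomial does not register as strict. It would register after all, since a polynomial difference being eventually a positive constant still gives $<_e$. The real issue is therefore to show that the difference $D(t)\coloneqq\rkL(\Jet_{\leq t}(\av)|\Jet(A))-\rkL(\Jet_{\leq t}(\av)|\Jet(B))$ is nondecreasing in $t$, using the monotonicity observation above. Once that holds, $D(t_0)\geq 1$ forces $D(t)\geq1$ for all $t\geq t_0$. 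Since $D(t)=\omega_{\av|A}(t)-\omega_{\av|B}(t)$ for $t\gg0$, this gives $\omega_q<_e\omega_p$.

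So the key step is just the submodularity of $\rkL$ over the nested bases $\Jet(A)\subseteq\Jet(B)$. For $F\subseteq F'$, the chain rule $\rkL(F'|X)=\rkL(F|X)+\rkL(F'|XF)$ gives
\[
D_{F'}-D_F = \rkL(F'|\Jet(A)F)-\rkL(F'|\Jet(B)F)\geq0,
\]
because $\Jet(A)F\subseteq\Jet(B)F$. This uses only that $\aclL$ is a pregeometry. Applying it with $F=\Jet_{\leq t_0}(\av)$ and $F'=\Jet_{\leq t}(\av)$ finishes the argument. The remaining care point is that $\Jet(A)$ and $\Jet(B)$ are the correct bases for both $\Dind\,$ and the Kolchin polynomial. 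This holds by the definitions, since $\Jet(A)\subseteq\Jet(B)$ when $A\subseteq B$.
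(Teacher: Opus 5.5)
Your proof is correct and follows essentially the same route as the paper's. Non-forking forces $\rkL(\Jet_{\leq t}(\av)|\Jet(B))=\rkL(\Jet_{\leq t}(\av)|\Jet(A))$ for all $t$, while a rank drop at some finite $A_0\subseteq\Jet(\av)$ persists for every $\Jet_{\leq t}(\av)\supseteq A_0$ and so yields $\omega_q<_e\omega_p$. Your chain-rule computation simply makes explicit the monotonicity of the rank drop under enlarging the finite set, which the paper uses without comment.
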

\begin{proof}
Let $\av \models q$.
If $\av\Dind_A B$, then $\Jet(\av)\Tind_{\Jet(A)}\Jet(B)$, so $\rk_{\cL}(A_0|\Jet(B)) = \rk_{\cL}(A_0|\Jet(A))$ for all finite $A_0 \subseteq \Jet(\av)$.
In particular, $\rk_{\cL}(\Jet_{\leq t}(\av)|\Jet(B))=\rk_{\cL}(\Jet_{\leq t}(\av)|\Jet(A))$ for all $t$, so $\omega_{\av| B} =\omega_{\av| A}$.

Conversely, suppose $\av\nind[\Delta]_A B$ and take a finite $A_0 \subseteq \Jet(\av)$ with $\rk_{\cL}(A_0|\Jet(B)) < \rk_{\cL}(A_0|\Jet(A))$.
For $t\gg0$, we have $A_0 \subseteq \Jet_{\leq t}(\av)$, so $\rk_{\cL}(\Jet_{\leq t}(\av)|\Jet(B))<\rk_{\cL}(\Jet_{\leq t}(\av)|\Jet(A))$.
Thus $\omega_{\av| B} <_e \omega_{\av| A}$.
\end{proof}

\begin{corollary}\label{cor:rankedUdelta}
For any $\LD(A)$-type $p$, we have $U^\Delta(p) \leq \rk_{\cP}(\omega_p)$.
In particular, $\Dind\,$ is ranked.
\end{corollary}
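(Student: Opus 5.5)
The plan is to prove $U^\Delta(p)\geq\alpha \Rightarrow \rk_\cP(\omega_p)\geq\alpha$ by induction on the ordinal $\alpha$, simultaneously for all types $p$ over all small parameter sets. This gives $U^\Delta(p)\leq \rk_\cP(\omega_p)$ directly.

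The base case $\alpha=0$ is trivial. For limit $\lambda$, if $U^\Delta(p)\geq\lambda$ then $U^\Delta(p)\geq\alpha$ for every $\alpha<\lambda$. The induction hypothesis gives $\rk_\cP(\omega_p)\geq\alpha$ for every such $\alpha$, hence $\rk_\cP(\omega_p)\geq\lambda$.

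For the successor step, suppose $U^\Delta(p)\geq\alpha+1$. Then there is a $\Dind\,$-forking extension $q$ of $p$ with $U^\Delta(q)\geq\alpha$. By Proposition~\ref{prop:forkingextension} we have $\omega_q<_e\omega_p$. Since $\rk_\cP$ is strictly increasing on $\cP$ (Fact~\ref{fact:kolchinrankcomputation}), this gives $\rk_\cP(\omega_q)<\rk_\cP(\omega_p)$. The induction hypothesis applied to $q$ gives $\rk_\cP(\omega_q)\geq\alpha$, so $\rk_\cP(\omega_p)\geq\alpha+1$.

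One detail needs checking here: $\omega_q$ and $\omega_p$ must both lie in $\cP$ and be comparable by dominance. This holds because both are Kolchin polynomials of the same tuple over different parameter sets, and $\cP$ is totally ordered by dominance.

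With the induction done, $U^\Delta(p)$ cannot be $\infty$. Otherwise $U^\Delta(p)\geq\alpha$ for every ordinal $\alpha$, which would force the fixed ordinal $\rk_\cP(\omega_p)$ to be at least every ordinal. Hence every type has ordinal $\Dind\,$-rank, that is, $\Dind\,$ is ranked.

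No step is a real obstacle: the substantive input is Proposition~\ref{prop:forkingextension} together with strict monotonicity of $\rk_\cP$. The only subtlety is matching the definition of $U^{\ind}$, in particular that its successor clause quantifies over forking extensions to arbitrary larger small sets, exactly as in Proposition~\ref{prop:forkingextension}.
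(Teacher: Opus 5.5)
Your proposal is correct and is the argument the paper intends. The paper states this corollary without proof, directly after Proposition~\ref{prop:forkingextension}; it follows by your induction on $\alpha$, using that a $\Dind\,$-forking extension strictly lowers the Kolchin polynomial in the dominance order and that $\rk_\cP$ is strictly increasing (Fact~\ref{fact:kolchinrankcomputation}).
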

Combining this with Corollaries~\ref{cor:Dindcoincide} and~\ref{cor:simple}, we get bounds on \thh-rank when $\TDG$ has GEI and on SU-rank when $T$ is simple.

To give more a more precise computation of $U^\Delta$, we make use of the \textbf{$\Delta$-closure operator} on $(\M;\Delta)$, denoted by $\clD$ and defined by
\[
a \in \clD(A) \iff \text{$\Jet(a)$ is not $\aclL$-independent over $\Jet(A)$.}
\]

\begin{fact}[{\cite[Theorem B]{FK24}}]\label{fact:Deltarankterm}
$(\M,\clD)$ is a pregeometry, and we have
\[
\omega_{\av| A}(T)= \frac{\rkD(\av|A)}{m!}T^m + \text{ lower-degree terms},
\]
where $\rkD$ is the rank function corresponding to the closure operator $\clD$.
If $\av$ is $\clD$-independent over $A$, then $\omega_{\av|A}(T) = \binom{T+m}{m}$.
\end{fact}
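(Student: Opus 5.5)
The plan is to reduce everything to a one-variable dichotomy for the Kolchin polynomial, and then to get the pregeometry axioms and the leading-coefficient formula from additivity of leading coefficients. Throughout, write $N_m(t)\coloneqq\binom{t+m}{m}$ for the number of $\uv\in\N^m$ with $|\uv|\leq t$.

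\textbf{Step 1 (independent case).} Fix $\av=(a_1,\ldots,a_d)$ and suppose $\Jet(\av)$ is $\aclL$-independent over $\Jet(A)$, meaning the elements $\delta^{\uv}a_i$ are pairwise distinct and independent. Then $\rkL(\Jet_{\leq t}(\av)|\Jet(A))=d\cdot N_m(t)$ for every $t$. For $d=1$ this is the final clause of the statement. For a tuple that is merely $\clD$-independent I will obtain the same value from Step 3.

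\textbf{Step 2 (the key lemma).} I claim that $a\in\clD(A)$ if and only if $\deg\omega_{a|A}<m$. The reverse implication is immediate from Step 1. For the forward implication, fix an orderly ranking on $\N^m$: compare $|\uv|$ first, then lexicographically. Let $\uv_0$ be the least index such that $\delta^{\uv_0}a\in\aclL(\Jet(A),\{\delta^{\uv}a:\uv<\uv_0\})$.
\begin{itemize}
\item In case~\eqref{I}, $\delta^{\uv_0}a$ satisfies a polynomial relation over these data. Differentiating this relation by $\delta_j$ and using separability in characteristic $0$ shows that $\delta_j\delta^{\uv_0}a$ is algebraic over $\Jet(A)$ and lower-ranked jets of $a$.
\item In case~\eqref{II}, $\delta^{\uv_0}a=f(\dots)$ for some $\cL(\Jet(A))$-definable function $f$ of lower jets. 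Because the arguments of $f$ are generic, $f$ is $C^1$ near them, and the chain rule of Definition~\ref{def:Tderivation} gives $\delta_j\delta^{\uv_0}a=\sum_i\partial_if(\dots)\,\delta_j(\text{argument}_i)$. Each argument is a jet of lower rank, and so is each of its $\delta_j$-derivatives, since $\delta_j$ preserves the ranking.
\end{itemize}
Iterating, every $\delta^{\uv}a$ with $\uv\succeq\uv_0$ lies in $\aclL$ of $\Jet(A)$ together with the jets $\delta^{\uv'}a$ for $\uv'\not\succeq\uv_0$. Counting the monomials of order $\leq t$ that are not $\succeq\uv_0$ gives $N_m(t)-N_m(t-|\uv_0|)$, which has degree $m-1$. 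I expect this step to be the main obstacle. The difficulty is to make the induction well-founded: one has to check that differentiating a generic relation never requires a jet that is not yet known to be algebraic over strictly lower data. In the o-minimal case one must also ensure genericity of the arguments, so that differentiability near them is available.

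\textbf{Step 3 (additivity and the pregeometry).} Let $e(\av|A)$ denote $m!$ times the coefficient of $T^m$ in $\omega_{\av|A}$. By additivity of rank,
\[
\rkL(\Jet_{\leq t}(\av\bv)|\Jet(A))=\rkL(\Jet_{\leq t}(\av)|\Jet(A))+\rkL(\Jet_{\leq t}(\bv)|\Jet(A)\cup\Jet_{\leq t}(\av)).
\]
The last term is at least $\omega_{\bv|A\av}(t)$. Conversely, choose $s$ such that every dependence witnessing a bound for the polynomial $\omega_{\bv|A\av}$ uses only $\Jet_{\leq s}(\av)$; then the last term is at most $\omega_{\bv|A\av}(t)+O(t^{m-1})$. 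Hence $e(\av\bv|A)=e(\av|A)+e(\bv|A\av)$.

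With additivity in hand, the closure properties follow. Monotonicity and finite character of $\clD$ are clear from the definition. For exchange, suppose $b\in\clD(Aa)\setminus\clD(A)$. Then $e(b|A)=1$ and $e(b|Aa)=0$. Additivity computed in both orders gives $e(a|A)+e(b|Aa)=e(b|A)+e(a|Ab)$, which forces $e(a|A)\leq e(a|Ab)+1$. If $a\notin\clD(Ab)$, then $e(a|Ab)=1$, so $e(a|A)=1$ and $e(a,b|A)=1$, while also $e(a,b|A)=e(b|A)+e(a|Ab)=2$, a contradiction. Transitivity (idempotence) is proved similarly, using the fact that $e$ vanishes over $\clD$-closures of the base.

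Finally, by induction using additivity and Step 2, $e(\av|A)$ equals the size of a maximal $\clD$-independent subtuple of $\av$, that is, $\rkD(\av|A)$. For a $\clD$-independent single element, Step 1 gives exactly $\binom{T+m}{m}$.
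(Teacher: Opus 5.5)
Your proposal is correct in substance, but it follows a different route from the paper. The paper gives no argument here: it imports the statement from \cite[Theorem B]{FK24}, where it is proved for an abstract finitary matroid equipped with commuting operators compatible with the closure, so that one theorem covers cases (I) and (II) at once (and other settings).

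You instead give the classical Kolchin-style proof, which has two ingredients. The first is a dichotomy lemma: the normalized leading coefficient $e(a|A)$ equals $1$ or $0$ according to whether $a\notin\clD(A)$ or $a\in\clD(A)$, obtained by propagating a minimal leader along an orderly ranking. The second is additivity, $e(\av\bv|A)=e(\av|A)+e(\bv|A\av)$. Exchange, idempotence and $e=\rkD$ then follow by bookkeeping. This is self-contained given the existence of $\omega_{\av|A}$. It uses only the compatibility $\delta\,\aclL(B)\subseteq\aclL(B\cup\delta B)$ and Dickson's lemma, so it would also run in the abstract matroid setting. The difficulty you flag in Step 2 is not real. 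An orderly ranking with $\uv<\uv'\Rightarrow\uv+e_j<\uv'+e_j$ (here $e_j$ is the $j$-th unit vector) is a well-order of type $\omega$, and the induction simply runs along it.

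Three details need tightening.

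\emph{The chain rule in case (II).} Your displayed formula is wrong as soon as $f$ has parameters from $\Jet(A)$, since those parameters have nonzero derivatives. You can either use Fact~\ref{fact:basicTderivation}, which adds a term $f^{[\delta_j]}(\ldots)$ definable over the parameters, or take as arguments a $\dclL$-independent tuple made of elements of $\Jet(A)$ and lower jets of $a$. In the latter case an $\emptyset$-definable function symbol is $\cC^1$ at this generic point, and Definition~\ref{def:Tderivation} applies verbatim.

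\emph{The choice of $s$ in Step 3.} No single $s$ bounds \emph{all} dependences of $\Jet(\bv)$ over $\Jet(A\av)$. What you need is that the leaders of $\bv$ over $\Jet(A\av)$ form an upward closed set (by the Step 2 propagation), hence have finitely many minimal elements by Dickson's lemma. Take $s$ to bound the jets of $\av$ used by those finitely many dependences. Propagation then gives $\rkL(\Jet_{\le t}(\bv)|\Jet(A)\cup\Jet_{\le t+s}(\av))\le\omega_{\bv|A\av}(t)$. Hence, for $t\gg0$,
\[
\omega_{\av|A}(t)+\omega_{\bv|A\av}(t)\le\omega_{\av\bv|A}(t)\le\omega_{\av|A}(t+s)+\omega_{\bv|A\av}(t),
\]
and the $T^m$-coefficients add.

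\emph{The final clause for tuples.} For a tuple of length $d$ the last clause should read $d\binom{T+m}{m}$; the paper only uses it for single elements. Step 3 cannot produce it, since it only controls the leading coefficient. It follows directly from the definitions instead. $\clD$-independence gives $a_i\notin\clD(Aa_1\cdots a_{i-1})$, that is, $\Jet(a_i)$ is $\aclL$-independent over $\Jet(A)\cup\Jet(a_1)\cup\cdots\cup\Jet(a_{i-1})$. So $\Jet(\av)$ is independent over $\Jet(A)$, and Step 1 applies.
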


\begin{thm}\label{thm:Urank}
We have $U^\Delta(\TDG) = \omega^m$.
Moreover:
\begin{enumerate}
\item If $T$ is stable, then $\TDG$ is an expansion of DCF$_{0,m}$ by constants.
\item If $T$ is simple, then $\TDG$ is supersimple with $SU(\TDG) = \omega^m$.
\item If $\TDG$ has GEI, then $\TDG$ is superrosy with $\Uth(\TDG) = \omega^m$.
\end{enumerate}
\end{thm}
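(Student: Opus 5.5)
The plan is to prove $U^\Delta(\TDG)=\omega^m$ by matching an upper bound with a lower bound, and then to transfer the result to the other ranks through the coincidences already established.

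\emph{Upper bound.} For a single element $a$ and any $A$, Fact~\ref{fact:Deltarankterm} gives $\rkD(a|A)\leq 1$. So $\omega_{a|A}$ is either $\binom{T+m}{m}$, when $a$ is $\clD$-independent over $A$, or of degree $<m$. In the first case $\rk_\cP(\omega_{a|A})=\omega^m$. In the second case Corollary~\ref{cor:rankbounds} gives $\rk_\cP<\omega^m$. By Corollary~\ref{cor:rankedUdelta}, every unary type then has $U^\Delta\leq\omega^m$.

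\emph{Lower bound.} Let $a$ be $\clD$-independent over $\emptyset$; such an $a$ exists, for instance a $\Delta$-generic element given by Lemma~\ref{lem:extend} and saturation. I will show $U^\Delta(a|\emptyset)\geq\alpha$ for every $\alpha<\omega^m$. Since the ranks below $\omega^m$ are cofinal in it, it suffices to show that for every $k<m$ and $n\in\N$ there is a forking extension of $\tp(a)$ of rank at least $\omega^kn$. I will aim for a more explicit statement, proved by induction on the ordinal: every Kolchin polynomial $P$ of degree $<m$ with $\rk_\cP(P)=\beta$ is realised by $\omega_{a|B}$ for some $B$, and the type of $a$ over $B$ has $U^\Delta\geq\beta$.

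The tool for realising a prescribed polynomial is to impose differential equations. Take $B=\{b\}$ with $b$ generic and impose relations such as $\delta_m^{r}a=b$, or make $a$ satisfy equations whose leading derivatives generate a monomial ideal. The Kolchin polynomial of $a$ over $B$ is then the Hilbert function of the complement of that monomial ideal, and every numerical polynomial in $\cP$ arises this way, using \cite[Corollary 4.7]{FK24} together with the genericity axioms of $\TDG$, which let us realise any consistent system. Descending through a chain of monomial ideals drops $\rk_\cP$ by any prescribed amount. By Proposition~\ref{prop:forkingextension}, each such strict drop in the Kolchin polynomial is a $\Dind$-forking extension.

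This gives the key lemma: if $p$ has polynomial $P$, then for every $Q<_eP$ in $\cP$ with $\rk_\cP(Q)$ in the appropriate range, $p$ has an extension with polynomial $Q$. With this lemma, induction on $\rk_\cP$ shows $U^\Delta(p)=\rk_\cP(\omega_p)$ in the one-variable case. Hence $U^\Delta(a|\emptyset)=\omega^m$.

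\emph{Main obstacle.} The hardest step is this realisation lemma, which is needed to reach every ordinal below $\omega^m$ and not just some of them. I would first try the explicit monomial-ideal constructions. Concretely, adjoin a new generic $c$ and impose $\delta^{\uv}a=c$ for a suitable $\uv$; this cuts the jet of $a$ in a controlled way. For limit steps it is enough to reach each $\omega^k n$, using polynomials such as $\binom{T+k}{k}$ summed $n$ times with shifts. The existence of solutions follows from the model-completion axioms, since $\TDG$ is existentially closed and consistency is witnessed in an extension constructed via Lemma~\ref{lem:extend}.

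\emph{The items.} For (1), $T$ stable gives, by Corollary~\ref{cor:stable_acf_constants}, that $T$ is ACF$_0$ with constants. Then $\TDG$ is the model completion of differential fields with those constants, which is DCF$_{0,m}$ with constants. For (2), Corollary~\ref{cor:simple} gives $\Dind=\find$, so SU-rank equals $U^\Delta$, whose value $\omega^m$ was computed above. For (3), Corollary~\ref{cor:Dindcoincide} identifies $\thind$ with $\Dind$ on real sets. With GEI, \thh-rank computed in $\M^{\eq}$ agrees with \thh-rank on real types, so $\Uth(\TDG)=\omega^m$; since this is an ordinal, $\TDG$ is superrosy.
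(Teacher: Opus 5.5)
\textbf{Genuine gap in the lower bound.} Your upper bound and your treatment of items (1)--(3) agree with the paper. The lower bound, however, goes through a lemma that is false.

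It is not true that a unary type $p$ has, for every $Q<_e\omega_p$ in $\cP$ of suitable rank, an extension with Kolchin polynomial $Q$. Consequently $U^\Delta(p)=\rk_\cP(\omega_p)$ fails for unary types in general, as the paper's final remarks record. Here is a counterexample with $m=1$ in case~\eqref{I}:
\begin{itemize}
\item Take $a$ with $\rkL(a,\delta a|\emptyset)=2$ and $\delta^2a=\delta a/a$, and put $p\coloneqq\tpD(a|\emptyset)$.
\item Then $\omega_p=2$, so $\rk_\cP(\omega_p)=2$.
\item By \cite[Lemma 5.12]{Ma96}, every $\Dind$-forking extension of $p$ is algebraic. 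So no extension has polynomial $1$, and $U^\Delta(p)=1$.
\end{itemize}
Suer's example \cite{Su07} gives a type in DCF$_{0,2}$ with $\rk_\cP(\omega_p)\geq\omega n$ but $U^\Delta(p)=\omega$.

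The underlying problem is that you cannot freely impose new equations on a realization of a given type: the differential equations already satisfied by $p$ constrain which lower-order systems its extensions can satisfy. So your induction on $\rk_\cP$ cannot be run over arbitrary types. The realization step you flagged as the main obstacle is not merely hard; it is false in the generality you state it.

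\textbf{How to repair it.} You only need the lower bound for the generic type, and there nothing has to be realized. Fix $a\notin\clD(\emptyset)$, so the elements $\delta^{\uv}a$ for $\uv\in\N^m$ are pairwise distinct and $\aclL$-independent.
\begin{itemize}
\item For $\eta=\omega^{m-1}r_m+\cdots+\omega r_2+r_1<\omega^m$, write $\delta^\eta\coloneqq\delta^{\rv}$ with $\rv=(r_1,\ldots,r_m)$.
\item Set $A_\eta\coloneqq\{\delta^\mu a:\mu\geq\eta\}$ and $p_\eta\coloneqq\tpD(a|A_\eta)$.
\item The induced order on exponents refines $\preceq$. Hence each $A_\eta$ is closed under $\Delta$ (these are the monomial-ideal parameter sets you were gesturing at), and $A_{\eta+1}\subseteq A_\eta$.
\item Any $b\models p_\eta$ satisfies $\delta^\eta b=\delta^\eta a\in A_\eta\setminus\aclL(A_{\eta+1})$. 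So $b\nind[\Delta]_{A_{\eta+1}}A_\eta$, and $p_\eta$ is a $\Dind$-forking extension of $p_{\eta+1}$.
\item Induct on $\eta$. At limits, $p_\alpha$ extends $p_\eta$ for $\alpha<\eta$, and base monotonicity makes $U^\Delta$ non-increasing under extensions. This gives $U^\Delta(p_\eta)\geq\eta$ for all $\eta<\omega^m$, hence $U^\Delta(a|\emptyset)\geq\omega^m$.
\end{itemize}
This is the paper's argument. It needs no computation of Kolchin polynomials and no appeal to realizability of Hilbert polynomials.
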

\begin{proof}
Let $a \in \M$.
Using Facts~\ref{fact:kolchinrankcomputation} and~\ref{fact:Deltarankterm}, we see that $\rk_{\cP}(\omega_{a|\emptyset}) \leq \omega^m$, with equality if and only if $a \not\in \clD(\emptyset)$.
By Corollary~\ref{cor:rankedUdelta}, this gives us an upper bound $U^\Delta(\TDG) \leq \omega^m$.
We now show that this is an equality.
For an ordinal $\eta <\omega^m$, take the unique tuple $\rv = (r_1,\ldots,r_m) \in \N^m$ such that $\eta$ has Cantor normal form $\sum_{i<m}\omega^ir_{i+1}$, and let $\delta^\eta$ denote the operator $\delta^\rv$.
Using saturation and the axioms of $\TDG$, we find $a \in \M \setminus \clD(\emptyset)$.
For $\eta <\omega^m$, set
\[
A_\eta\coloneqq \{\delta^\mu a:\mu\geq \eta\},\qquad p_\eta \coloneqq \tpD(a|A_\eta).
\]
Then $A_\eta = \Jet(A_\eta)$ and each $A_\eta$ is $\aclL$-independent.
We claim that $U^\Delta(p_\eta) \geq \eta$ for each $\eta$.
This is clear for $\eta = 0$ and follows by induction for $\eta$ a limit, so it suffices to show that $p_{\eta}$ is a $\Dind\ $-forking extension of $p_{\eta+1}$.
Fix $\eta$ and let $b \models p_\eta$, so
\[
\delta^\eta b = \delta^\eta a \in A_\eta \setminus \aclL(A_{\eta+1}).
\]
It follows that $b\nind[\Delta]_{A_{\eta+1}}A_\eta$, as desired.

To see (1), suppose $T$ is stable.
Then $T$ expands ACF$_0$ by constants by Corollary~\ref{cor:stable_acf_constants}.
By uniqueness of model completions, $\TDG$ must expand DCF$_{0,m}$, the theory of differentially closed fields with $m$ commuting derivations, by the same constants.
(2) and (3) are immediate by Corollaries~\ref{cor:Dindcoincide} and~\ref{cor:simple}.
\end{proof}

\subsection{Strongness}
An \textbf{inp-pattern of depth $\kappa$} is a collection of formulas $(\varphi_\lambda(x,\av_{\lambda,i}))_{\lambda<\kappa,i<\omega}$ (where $x$ is unary) such that:
\begin{enumerate}
\item For $\lambda<\kappa$, the collection $\{\varphi_{\lambda}(x,\av_{\lambda,i}):i<\omega\}$ is $k_\lambda$-inconsistent for some positive integer $k_\lambda$.
\item The partial type $\{\varphi_{\lambda}(x,\av_{\lambda,\eta(\lambda)}):\lambda<\kappa\}$ is consistent for all $\eta \colon \kappa \to \omega$.
\end{enumerate}
A theory has \textbf{burden $< \kappa$} if there is no inp-pattern of depth $\kappa$.
A \textbf{strong theory} is a theory of burden $<\omega$.
A theory that is strong and NIP is said to be \textbf{strongly dependent}.

\begin{lemma}\label{lem:infiniteformula}
Suppose we have an $\cL$-formula $\varphi(x,\y)$ with $x$ unary, a sequence $(\bv_i)_{i<\omega}$, and a positive integer $k$ such that $\varphi(x,\bv_i)$ is nonalgebraic for each $i$ and $\{\varphi(x,\bv_i):i<\omega\}$ is $k$-inconsistent.
Then $\TG$ is not strong.
\end{lemma}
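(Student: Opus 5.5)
We will build an inp-pattern of depth $\omega$ in $\TG$ by transporting the given $\cL$-pattern along different ``derivative coordinates''. Since $\varphi(x,\bv_i)$ is nonalgebraic and $T$ is geometric, each $\varphi(\M,\bv_i)$ is infinite. For each $n<\omega$, consider the $\LD$-formulas
\[
\psi_n(x,\bv_i)\coloneqq \varphi(\delta_1^n x,\bv_i),\qquad i<\omega,
\]
where $\delta_1\in\Delta$ is fixed. Note that $m\geq 1$ here; in $\TDnG$ with $m\geq 2$ one could also use pairwise incomparable monomials and Lemma~\ref{lem:TDnGaxioms}, but the single-derivation version should work uniformly for both $\TDG$ and $\TDnG$.

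The first condition is immediate. For fixed $n$, any $k$ of the formulas $\psi_n(x,\bv_{i_1}),\ldots,\psi_n(x,\bv_{i_k})$ with distinct indices are jointly inconsistent. Indeed, a common realization $a$ would make $\delta_1^n a$ a common realization of $\varphi(x,\bv_{i_1}),\ldots,\varphi(x,\bv_{i_k})$, which contradicts $k$-inconsistency. So every row is $k$-inconsistent.

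The main step is consistency of the paths. Fix $\eta\colon\omega\to\omega$. By compactness it suffices to find, for each $N$, an element $a$ with $\delta_1^n a\in\varphi(\M,\bv_{\eta(n)})$ for all $n\leq N$. We choose $c_0,\ldots,c_N$ with $c_n\models\varphi(x,\bv_{\eta(n)})$ and $\rkL(c_0,\ldots,c_N|B)=N+1$, where $B$ is the small set containing all the $\bv_i$ (and $\PP$). This is possible because each set is infinite, hence of $\aclL$-dimension 1 over $B$. We then need $a\in\M$ with $\delta_1^n a=c_n$ for $n\leq N$.

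For this, work in an extension. Take $K=\aclL(B\,c_0\ldots c_N)$-based data: first take an $\LD$-substructure $K_0\supseteq \Jet(B)$ of $\M$ and a model $M\models T$ containing $K_0$ and $c_0,\ldots,c_N$, with the $c$'s $\aclL$-independent over $K_0$. We may arrange this by choosing the $c$'s independent over $\aclD(B)=\aclL(\Jet(B))$ in the first place. Using Lemma~\ref{lem:extend}, we adjoin a new element $y$, $\aclL$-independent from everything, and extend the derivations on the basis $\{y,c_0,\ldots,c_{N-1},\ldots\}$ over $K_0$. We set $\delta_1 y=c_1$ (with $c_0$ replaced by $y$'s role: more precisely, $y\mapsto c_0$ is not needed; instead we declare $\delta_1 c_n=c_{n+1}$ for $n<N$, $\delta_1 c_N=$ a fresh independent element, and $\delta_j c_n$ for $j\neq 1$ fresh independent elements, so that $a\coloneqq c_0$ works). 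In the commuting case, we must also ensure commutation at each basis element. The cleanest route is to realize $(c_0,c_1,\ldots)$ as a $\Delta$-jet of a $\Delta$-transcendental element: take an $\LD$-extension $(L;\Delta)\models\TD$ of $\aclD(B)$ generated by a differentially transcendental $a$, so that the family $\{\delta^{\uv}a\}$ is $\aclL$-independent over $\aclD(B)$. Then the type of $(a,\delta_1a,\ldots,\delta_1^Na)$ in $\cL$ over $B$ is the generic one, which contains $\bigwedge_{n\leq N}\varphi(x_n,\bv_{\eta(n)})$ after an $\cL(\aclD(B))$-automorphism argument. By model completeness, $(L;\Delta)$ embeds into $\M$ over $\aclD(B)$, and the resulting image satisfies $\exists x\bigwedge_{n\le N}\psi_n(x,\bv_{\eta(n)})$.

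The delicate point is exactly this last transfer: the generic $\cL$-type over $\aclD(B)$ of $N+1$ independent elements is not unique, so we cannot just say ``generic''. We handle it by building $L$ directly on top of the chosen $c_n$. Using Lemma~\ref{lem:extend}, we define $\delta_1 c_n=c_{n+1}$ for $n<N$, and we assign freely all the remaining values on a basis of monomials. In the commuting case, we use the standard construction of the free differential field on generators indexed by $\N^m$ with $c_n=\delta_1^n$-coordinates; Lemma~\ref{lem:extend} gives commutation. We then embed into $\M$ over $\aclD(B)$, via existential closedness of $\TG$ in $\Tb$. The result is an inp-pattern of depth $\omega$, so $\TG$ is not strong.
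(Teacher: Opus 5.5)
Your proposal is correct and matches the paper's proof. The paper uses exactly the pattern $\varphi(\delta^\lambda x,\bv_i)$ for a fixed $\delta\in\Delta$ and leaves the verification as easy. Your final argument supplies that verification validly: choose $c_0,\ldots,c_N$ with $c_n\in\varphi(\M,\bv_{\eta(n)})$ independent over the base, use Lemma~\ref{lem:extend} to put $T$-derivations with $\delta_1c_n=c_{n+1}$ on a model containing them (setting all other derivatives of basis elements to $0$ already gives commutation at each basis element), and transfer back to $\M$. In the write-up, delete the retracted ``generic type'' detour, and make the transfer step precise. Existential closedness of $(\M;\Delta)$ only concerns $\Tb$-extensions of $\M$ itself, so either take the base to be a small elementary $\LD$-substructure $M_0\supseteq\{\bv_i:i<\omega\}$ of $(\M;\Delta)$ and use model completeness of $\TG$, or invoke the model-completion property over $\aclD(B)$ via Corollary~\ref{cor:amalgamation}.
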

\begin{proof}
Fix $\delta \in \Delta$ and for each $\lambda<\omega$, set $\varphi_\lambda(x,\y)\coloneqq \varphi(\delta^\lambda x,\y)$.
One easily verifies that $(\varphi_\lambda(x,\bv_i))_{\lambda,i<\omega}$ is an inp-pattern in $\TG$.
\end{proof}

\begin{proposition}\label{prop:notstrong}
If $|\Delta|\geq 2$, then $\TDnG$ is not strong.
\end{proposition}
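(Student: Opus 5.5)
We build an inp-pattern of infinite depth in $\TDnG$ directly, using the freedom given by Lemma~\ref{lem:TDnGaxioms}; Lemma~\ref{lem:infiniteformula} cannot be used here, since it needs a dividing $\cL$-formula, which need not exist when $T$ is simple.

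Fix distinct $\epsilon,\delta\in\Delta$. For $\lambda<\omega$ let $\theta_\lambda\coloneqq\epsilon\delta^\lambda$. These are pairwise incomparable in the free monoid: if $\gamma\epsilon\delta^\lambda=\epsilon\delta^{\lambda'}$, then comparing rightmost letters gives $\lambda\le\lambda'$. If $\lambda<\lambda'$, the letter of $\epsilon\delta^{\lambda'}$ in position $\lambda+1$ from the right is $\delta$, but it must equal the $\epsilon$ of $\theta_\lambda$, a contradiction. Hence $\lambda=\lambda'$ and $\gamma$ is empty.

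Choose pairwise distinct elements $(b_{\lambda,i})_{\lambda,i<\omega}$ of $\M$ and set
\[
\varphi_\lambda(x,b_{\lambda,i})\coloneqq \bigl(\theta_\lambda x=b_{\lambda,i}\bigr).
\]
Each row $\{\varphi_\lambda(x,b_{\lambda,i}):i<\omega\}$ is $2$-inconsistent, because $\theta_\lambda x$ cannot take two different values.

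For consistency along a path, fix $\eta\colon\omega\to\omega$ and a finite $F\subseteq\omega$. By Lemma~\ref{lem:TDnGaxioms} applied to the incomparable words $(\theta_\lambda)_{\lambda\in F}$, there is $y$ with $\theta_\lambda y=b_{\lambda,\eta(\lambda)}$ for all $\lambda\in F$. Compactness then makes the whole path $\{\varphi_\lambda(x,b_{\lambda,\eta(\lambda)}):\lambda<\omega\}$ consistent. This gives an inp-pattern of depth $\omega$, so $\TDnG$ has burden at least $\omega$ and is not strong.

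The only step needing care is the incomparability check. The rest is immediate from the \textbf{Deep} axioms as packaged in Lemma~\ref{lem:TDnGaxioms}.
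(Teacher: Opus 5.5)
Your proof is correct and is essentially the paper's argument. The paper uses $\epsilon(\delta^k x - b_i)=0$ with the $b_i$ in distinct cosets of $\ker(\epsilon)$, i.e.\ $\epsilon\delta^k x=\epsilon b_i$ with pairwise distinct right-hand sides, which is your pattern up to renaming parameters, and it gets path-consistency from Lemma~\ref{lem:TDnGaxioms} applied to the incomparable words $\epsilon\delta^k$ exactly as you do (in your incomparability check, $\lambda\le\lambda'$ comes from comparing lengths rather than rightmost letters, but the conclusion is right).
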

\begin{proof}
Let $\epsilon\neq \delta \in \Delta$ and let $C \coloneqq \ker(\epsilon)$.
Then $C$ is an infinite index subgroup of the additive group $(\M,+)$, so take $(b_i)_{i<\omega}$ in distinct cosets of $C$.
Set
\[
\varphi_k(x,b_i) \coloneqq \epsilon(\delta^k x- b_i) = 0,
\]
so $(\M;\Delta)\models \varphi_k(a,b_i)$ if and only if $\delta^k a \in b_i+C$.
It follows from Lemma~\ref{lem:TDnGaxioms} that $(\varphi_k(x,b_i))_{k,i<\omega}$ is an inp-pattern in $\TDnG$.
\end{proof}

\begin{thm}\label{thm:strong}
$\TDG$ is strong (resp.\ strongly dependent) if and only if $T$ is simple (resp.\ stable).
Consequently, $\TDG$ is strong if and only if it expands DCF$_0$ by constants.
\end{thm}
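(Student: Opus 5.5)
The plan is to prove the two directions separately and then read off the final sentence.

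\emph{If $T$ is simple, then $\TDG$ is strong.} By Theorem~\ref{thm:Urank}(2), $\TDG$ is supersimple of SU-rank $\omega^m$. Supersimple theories are strong: an inp-pattern of depth $\omega$ in one variable yields an infinite chain of forking extensions, since each row $k_\lambda$-divides. This contradicts the finiteness of SU-rank of the unary type over any base. I will cite the standard fact that supersimple theories have burden $<\omega$, or give this short dividing-chain argument. If $T$ is stable, then Corollary~\ref{cor:stable_acf_constants} and Theorem~\ref{thm:Urank}(1) show that $\TDG$ is DCF$_{0,m}$ with constants. This theory is $\omega$-stable, hence NIP and superstable, so it is strongly dependent.

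\emph{If $\TDG$ is strong, then $T$ is simple.} I argue by contrapositive, using Lemma~\ref{lem:infiniteformula}. Suppose $T$ is not simple. By Theorem~\ref{thm:simple_supersimple_rank_1}, or more precisely its contrapositive, $T$ is not supersimple of SU-rank~1. I want a unary $\cL$-formula $\varphi(x,\bv)$ that is nonalgebraic and divides, meaning there is a $k$-inconsistent indiscernible sequence $(\bv_i)$ with each $\varphi(x,\bv_i)$ nonalgebraic.

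This is the step I expect to be the main obstacle. Non-simplicity a priori only gives dividing of some formula in possibly several variables. However, if no nonalgebraic unary formula divides, then forking independence coincides with $\Tind$ on singletons, and by the Lascar-style additivity (as in Fact~\ref{fact:supersimplecoincide} and Corollary~\ref{cor:simplecoincide}, via Kim's criterion that it suffices to check unary types) $T$ would be supersimple of SU-rank~1. I would therefore quote the standard fact that a theory is supersimple of SU-rank~1 if and only if no nonalgebraic formula in one variable divides. This is exactly how the proof of Theorem~\ref{thm:simple_supersimple_rank_1} began its contrapositive. Since $T$ is not of SU-rank~1, such a $\varphi$ exists, and Lemma~\ref{lem:infiniteformula} then shows $\TDG$ is not strong.

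\emph{If $\TDG$ is strongly dependent, then $T$ is stable.} Strong dependence gives strongness, so $T$ is simple by the previous paragraph. Also $\TDG$ is NIP, and $T$ is a reduct of $\TDG$, so $T$ is NIP. A simple NIP theory is stable.

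\emph{The final sentence.} If $\TDG$ is strong, then $T$ is simple. By Theorem~\ref{thm:simple_supersimple_rank_1} and Corollary~\ref{cor:simplecoincide}, $T$ is of SU-rank~1. I now need stability of $T$ in order to apply Corollary~\ref{cor:stable_acf_constants}, and this is a gap I would need to resolve. If the intended consequence is only for the strongly dependent case, then stability of $T$ together with Theorem~\ref{thm:Urank}(1) gives the DCF$_0$ expansion. Otherwise I would examine whether strongness of $\TDG$ forces NIP of $T$. One attempt: given an independence-property witness $\psi(x,\bv)$ in $T$, use the formulas $\psi(\delta^\lambda x,\bv)$ and the Deep axioms to build an inp-pattern. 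This only yields the independence property, not inp, so I expect to rely on the paper's intended reading that "strong" in the last sentence refers to strong dependence. For the converse, DCF$_{0,m}$ with constants is strongly dependent, hence strong.
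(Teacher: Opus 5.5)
Your proof is correct and follows essentially the paper's route. When $T$ is not simple, you apply Lemma~\ref{lem:infiniteformula} to a unary formula with a $k$-inconsistent family of nonalgebraic instances; when $T$ is simple, you use supersimplicity from Theorem~\ref{thm:Urank}. The paper differs in two minor ways. It reads that family directly off the first level of a one-variable tree-property formula \cite[Exercise 7.2.8]{TZ12}, and your detour through the SU-rank~1 characterization rests on that same one-variable criterion. It also gets the NIP part from the transfer ``$\TDG$ is NIP iff $T$ is'', whereas you use that NIP passes to reducts together with $\omega$-stability of DCF$_{0,m}$.

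Your reading of the last sentence, with ``strongly dependent'' in place of ``strong'', is exactly what the paper's proof establishes, and you can drop the hedge because the literal version is false. If $T$ is (a Morleyization of) the theory of a pseudofinite field of characteristic zero, then $\TDG$ is supersimple by Theorem~\ref{thm:Urank}, hence strong. Yet it has the independence property inherited from its reduct $T$, so it is not an expansion of DCF$_0$ by constants.
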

\begin{proof}
If $T$ is not simple, then some formula $\varphi(x,\y)$ with $x$ unary has the tree property~\cite[Exercise 7.2.8]{TZ12}.
Taking a tree of parameters $(\av_{\eta})_{\eta \in \omega^{<\omega}}$, we see that the collection $\{\varphi(x,\av_{\langle i\rangle}):i<\omega\}$ is $k$-inconsistent for some $k$.
Thus, $\TDG$ is not strong by Lemma~\ref{lem:infiniteformula}.
Conversely, if $T$ is simple, then $\TDG$ is supersimple by Theorem~\ref{thm:Urank}.
Supersimple implies finite weight, which in turn implies that $\TDG$ is strong~\cite{Ad07,Wa00}.

To see that $\TDG$ is strongly dependent if and only if $T$ is stable, use that $\TDG$ is NIP if and only if $T$ is; see~\cite[Theorem 6.1]{FT25} for case~\eqref{I} and~\cite[Corollary 4.16]{FK21} for case~\eqref{II}.
Then these are in turn equivalent to $\TDG$ expanding DCF$_0$ by constants by Theorem~\ref{thm:Urank}.
\end{proof}

\section{Final remarks}
\begin{remark}
The inequality $U^\Delta(p) \leq \rk_{\cP}(\omega_p)$ in Corollary~\ref{cor:rankedUdelta} cannot, in general, be improved to an equality.
Indeed, given $n>0$, Suer gives an example in the theory of differentially closed fields with two commuting derivations of a type $p$ with $\rk_{\cP}(\omega_p) \geq \omega n$ but with $SU(p) = U^\Delta(p) = \omega$~\cite[Proposition 3.45]{Su07}.

Here is another example, following~\cite{Ma96}.
Suppose we are in case~\eqref{I} and that $\Delta$ consists of a single derivation $\delta$.
Take $a \in \M$ with
\[
\rkL(a, \delta a|\emptyset)= 2,\qquad \delta^2 a =\delta a/a.
\]
Let $p$ be the type of $a$ over $\emptyset$, so $\omega_p = 2$.
Let $q$ be a $\Dind\ $-forking extension of $p$ over a small differential subfield $K$, so $\omega_q \leq 1$.
We claim that $q$ is algebraic (that is, that $\omega_q = 0$).
Suppose that this is not the case.
Let $b \models q$ and let $P(X) \in K\{X\}$ be the minimal differential polynomial of $b$ over $K$.
Then $P$ has order 1, since $q$ is nonalgebraic, and $XX''-X'$ belongs to the differential ideal $I(P) \subseteq K\{X\}$.
By~\cite[Lemma 5.12]{Ma96}, $X'$ belongs to $I(P)$ as well, so $b' = 0$, contradicting the fact that no realization of $p$ is constant.

A potential advantage of working with derivations on o-minimal theories is that transcendental definable functions can sometimes be used to realize these ``missing'' intermediate Kolchin polynomials.
Indeed, suppose we are in case~\eqref{II}, let $a$ be as above, and assume that our o-minimal theory $T$ defines a logarithm on a neighborhood of $a$.
Then $\delta a =
\log(a)+c$ for some $c \in \ker(\delta) \setminus \dclL(\emptyset)$, and so $q \coloneqq \tpD(a|c)$ is a
nonalgebraic $\Dind\ $-forking extension of $p$.
\end{remark}

\begin{remark}
Let $\cL = \cL_{\ring}$ and let $T = \text{ACF}_0$.
Consider the theory DCF$_{0,m}$A, which is the model companion of the theory of fields with $m$ commuting derivations $\Delta = (\delta_1,\ldots,\delta_m)$ and a differential field automorphism $\sigma$ (so $\sigma$ is a field automorphism that also commutes with each $\delta_i$).
This theory was studied by Le\'on S\'anchez in his thesis~\cite{LS13}; see also~\cite{LS16}.
There, it is shown that DCF$_{0,m}$A is supersimple, with forking independence given by
\[\textstyle
A\find_CB \iff \Jet(A)\Tind_{\Jet(C)}\Jet(B)
\]
where here, $\Jet(A)$ is the closure of $A$ under each $\delta_i$, as well as $\sigma$ and $\sigma^{-1}$.
Let $\Jet^+(A)$ denote the closure of $A$ under each $\delta_i$ and $\sigma$ (but not necessarily $\sigma^{-1}$).
Then given any algebraic relation between elements of $\Jet(A)$ over $\Jet(B)$, we can apply $\sigma$ to this relation a sufficient number of times to obtain a relation between elements of $\Jet^+(A)$ over $\Jet(B)$.
That is, we have
\[\textstyle
A\find_CB \iff \Jet^+(A)\Tind_{\Jet(C)}\Jet(B).
\]
Now~\cite[Corollary A]{FK24} gives for any type $p = \tp(\av|A)$ in DCF$_{0,m}$A a numerical polynomial $\omega_p$ such that
\[
\omega_p(t) = \rkL(\Jet_{\leq t}(\av)|\Jet(A))\quad \text{for $t\gg0$},
\]
where $\Jet_{\leq t}(\av) = \{\delta^{\uv}\sigma^r(\av):(\uv,r) \in \N^{m+1},\ |\uv|+r\leq t\}$.
Then the proofs of Proposition~\ref{prop:forkingextension}, Corollary~\ref{cor:rankedUdelta} and Theorem~\ref{thm:Urank} go through in this setting to give $SU(p)\leq \rk_{\cP}(\omega_p)$; in particular, $SU(\text{DCF$_{0,m}$A}) = \omega^{m+1}$.
These bounds are implicit in~\cite[Lemma 5.3.5]{LS13}, but not explicitly stated.
In the case $m = 1$, something similar is done by Bustamante~Medina to show that DCFA $\coloneqq$ DCF$_{0,1}$A has SU-rank~$\omega^2$~\cite{BM11}.
\end{remark}

\appendix
\section{Extending independence to \texorpdfstring{$\M^{\eq}$}{Meq}}\label{sec:appendixa}
Let $T$ be a pregeometric theory.
Then $\acleq$ continues to satisfy exchange for real points over imaginary parameters~\cite[Lemma 3.1]{Ga05}, so $\rkL$ extends to a well-defined rank $\rkeq(\av|B)$ for any real tuple $\av$ and any set of parameters $B$, possibly imaginary.

Given sets $A,B \subseteq \M^{\eq}$, we call a real (possibly infinite) tuple $\bv$ a \textbf{basis} for $A$ over $B$ if $\bv$ is $\acleq$-independent over $B$ and $A \subseteq \acleq(B\bv)$.
Note that two bases need not be the same size, but if $A$ is finite, then it has a finite basis.
Following Gagelman, we further extend $\rkeq$ to finite imaginary tuples $\av$ by setting
\[
\rkeq(\av|B) \coloneqq |\bv|- \rkeq(\bv|B\av),
\]
where $\bv$ is a finite basis for $\av$ over $B$.

\begin{fact}[{\cite[Lemma 3.3 and Proposition 3.4]{Ga05}}]\label{fact:gagelman}
Suppose $T$ is pregeometric and let $\av$ and $B$ be as above.
\begin{enumerate}
\item\label{s1} For any finite real tuple $\bv$ with $\av \in \acleq(B\bv)$, we have 
\[
\rkeq(\av|B) = \rkeq(\bv|B)- \rkeq(\bv|B\av).
\]
In particular, $\rkeq(\av|B)$ does not depend on choice of basis.
\item\label{s2} There is a finite subset $B_0 \subseteq B$ with $\rkeq(\av|B_0) = \rkeq(\av|B)$.
\item\label{s3} $\rkeq(\av\bv|B) = \rkeq(\av|B\bv) + \rkeq(\bv|B)$ for any finite imaginary tuple $\bv$.
\item\label{s4} If $A\subseteq B$, then $\rkeq(\av|A)\geq\rkeq(\av|B)$.
\item\label{s5} $\rkeq(\av|B)$ only depends on $\tpL(\av|B)$.
\end{enumerate}
\end{fact}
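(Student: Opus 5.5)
The basic tool is \cite[Lemma 3.1]{Ga05}: $\acleq$ satisfies exchange for real elements over arbitrary, possibly imaginary, parameter sets. From this, the usual argument shows that $\rkeq$ of \emph{real} tuples over imaginary parameters is well defined (all real bases of a real tuple over $B$ have the same size). It also satisfies additivity,
\[
\rkeq(\bv\cv|B)=\rkeq(\bv|B)+\rkeq(\cv|B\bv),
\]
and finite character: for real $\cv$ there is a finite $B_0\subseteq B$ with $\rkeq(\cv|B_0)=\rkeq(\cv|B)$. I plan to reduce every item to these real-tuple facts. A useful remark is that any finite imaginary tuple $\av$ lies in $\dcl^{\eq}(\cv)$ for a real tuple $\cv$ representing it, so real tuples $\cv$ with $\av\in\acleq(B\cv)$ always exist. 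Item~\eqref{s5} is immediate once the definition is known to be independent of the choice of $\bv$: an automorphism fixing $B$ and moving $\av$ to $\av'$ carries a basis for $\av$ over $B$ to one for $\av'$ over $B$, and preserves the real ranks involved.

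For~\eqref{s1}, let $\bv$ be a basis of $\av$ over $B$ and $\cv$ any finite real tuple with $\av\in\acleq(B\cv)$. I will expand $\rkeq(\bv\cv|B\av)$ and $\rkeq(\bv\cv|B)$ in both orders using real additivity. The relations $\av\in\acleq(B\bv)$ and $\av\in\acleq(B\cv)$ give $\rkeq(\cv|B\av\bv)=\rkeq(\cv|B\bv)$ and $\rkeq(\bv|B\av\cv)=\rkeq(\bv|B\cv)$. Subtracting the resulting identities yields
\[
\rkeq(\bv|B)-\rkeq(\bv|B\av)=\rkeq(\cv|B)-\rkeq(\cv|B\av),
\]
and $\rkeq(\bv|B)=|\bv|$ since $\bv$ is independent over $B$. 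Item~\eqref{s2} then follows by fixing a real representative $\cv$ of $\av$ and applying real finite character twice: once to $\cv$ over $B$, and once to $\cv$ over $B\av$, then removing $\av$ from the resulting finite set. Taking $B_0$ to be the union of the two finite pieces and using~\eqref{s1} gives the claim. For~\eqref{s3}, I take a single real tuple $\cv$ representing both $\av$ and $\bv$ and compute each of the three terms by~\eqref{s1} with this same $\cv$; the sum telescopes.

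The main obstacle is~\eqref{s4}. By~\eqref{s2} I may assume $B=A\cup\{\bar e\}$ with $\bar e$ a finite imaginary tuple. Taking a real $\cv$ representing both $\av$ and $\bar e$, the claim becomes the submodularity-type inequality
\[
\rkeq(\cv|A)+\rkeq(\cv|A\av\bar e)\ \geq\ \rkeq(\cv|A\av)+\rkeq(\cv|A\bar e).
\]
Equivalently, by~\eqref{s3}, it becomes $\rkeq(\bar e|A\av)\leq\rkeq(\bar e|A)$, which is~\eqref{s4} for $\bar e$ in place of $\av$, so a direct argument is needed. I would first try to prove a characterization: $\rkeq(\av|B)$ is the minimum of $\rkeq(\bv|B)$ over finite real $\bv$ with $\av\in\acleq(B\bv)$. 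This is monotone in $B$ for free. To prove it, start from a basis $\bv$, pick a subtuple $\bv'$ that is a basis of $\bv$ over $B\av$, and use exchange over the imaginary base $B\av$ together with a realization $\bv''\models\tp(\bv|B\av\bv')$ independent from $\bv$ over $B\av\bv'$. The aim is to find a tuple of size $|\bv|-|\bv'|$ over which $\av$ is algebraic. If that fails, I would fall back on Gagelman's original counting argument using the definability of the real rank.
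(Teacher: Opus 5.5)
Your arguments for \eqref{s1}, \eqref{s2}, \eqref{s3} and \eqref{s5} are correct. The paper only quotes this Fact from Gagelman, so there is no proof of its own to compare with. The genuine gap is in \eqref{s4}: your proposed characterization
\[
\rkeq(\av|B)=\min\{\rkeq(\bv|B):\bv\text{ finite real},\ \av\in\acleq(B\bv)\}
\]
is false for general pregeometric $T$.

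To see why, suppose the minimum is attained at $\bv$. Then \eqref{s1} forces $\rkeq(\bv|B\av)=0$. So the characterization says exactly that $\av$ is interalgebraic over $B$ with a real tuple. In particular it would give $\rkeq(\av|B)=0\Rightarrow\av\in\acleq(B)$, which is surgicality by Fact~\ref{fact:gagelman2}.

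Concretely, take the theory of an equivalence relation $E$ with infinitely many infinite classes. It is pregeometric with $\aclL(A)=A$. Let $e=[a]_E$. The basis $(a)$ gives $\rkeq(e|\emptyset)=1-\rkeq(a|e)=0$. But $e\notin\acleq(\emptyset)$, so every real $\bv$ with $e\in\acleq(\bv)$ is nonempty and has rank at least $1$. The same example defeats your intermediate goal: a real tuple of size $|\bv|-|\bv'|=0$ over which $e$ is algebraic does not exist. The fallback is not an argument as stated. Moreover, ``definability of the real rank'' requires elimination of $\exists^\infty$, which the Fact does not assume.

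The missing idea is to choose the real representative of $\av$ independent from $B$ over $A\av$. With this choice no submodularity inequality is needed, and there is no need to reduce to finite $B$ via \eqref{s2}.

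Start with a finite real $\cv_0$ with $\av\in\dcl^{\eq}(\cv_0)$. Realize the type over $A\av$ of a basis $b_1,\ldots,b_k$ of $\cv_0$ over $A\av$ one coordinate at a time. At step $i$, choose a realization of the relevant nonalgebraic type over $A\av b_1\cdots b_{i-1}$ outside the small set $\acleq(B\av b_1\cdots b_{i-1})$. Then extend by an automorphism over $A\av$. This produces $\cv\models\tpL(\cv_0|A\av)$ with $\rkeq(\cv|B\av)=\rkeq(\cv|A\av)$, using only compactness and real ranks.

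Since $\av\in\dcl^{\eq}(\cv)$ still holds, apply \eqref{s1} over $A$ and over $B$, together with monotonicity of real rank:
\[
\rkeq(\av|B)=\rkeq(\cv|B)-\rkeq(\cv|A\av)\leq\rkeq(\cv|A)-\rkeq(\cv|A\av)=\rkeq(\av|A).
\]
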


Following Gagelman~\cite{Ga05}, we say that $T$ is \textbf{surgical} if it is pregeometric and if for every definable equivalence relation $E$ on a definable set $X$, only finitely many $E$-classes have the same $\aclL$-dimension as $X$.

\begin{fact}[{\cite[Proposition 3.5]{Ga05}}]\label{fact:gagelman2}
Let $T$ be pregeometric.
Then $T$ is surgical if and only if for all $a \in \M^{\eq}$ and all $B \subseteq \M^{\eq}$, we have $a\in \acleq(B) \iff \rkeq(a|B) = 0$.
\end{fact}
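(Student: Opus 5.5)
The plan is to prove the two directions separately, after a preliminary reduction. First I check that $a \in \acleq(B) \Rightarrow \rkeq(a|B)=0$ holds in any pregeometric $T$. Take a finite real basis $\bv$ for $a$ over $B$. Since $a\in\acleq(B)$, we have $\acleq(B a)=\acleq(B)$, so $\bv$ stays $\acleq$-independent over $Ba$. By the definition, $\rkeq(a|B)=|\bv|-|\bv|=0$. So the condition in the statement is equivalent to: $\rkeq(a|B)=0 \Rightarrow a\in\acleq(B)$.

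($\Rightarrow$) Assume $T$ is surgical and $\rkeq(a|B)=0$. By Fact~\ref{fact:gagelman}\eqref{s2} and \eqref{s4} I may assume $B$ is finite. Write $a=[\cv]_E$ for a real tuple $\cv$ and a $\emptyset$-definable equivalence relation $E$. Choose $\cv$ so that $\rkeq(\cv|Ba)$ is as large as possible, say $d$. By Fact~\ref{fact:gagelman}\eqref{s1}, $\rkeq(\cv|B)=d$ as well.

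\begin{enumerate}
\item Pick an $\cL(B)$-formula in $\tpL(\cv|B)$ defining a set $X'$ with $\dimL(X')=d$. This uses finite character of $\aclL$-rank.
\item Since $E$ and $X'$ are definable over a finite real set representing $B$, surgicality applies to $E\restriction X'$: only finitely many classes of $E\restriction X'$ have dimension $d$.
\item The class $X'\cap a$ contains $\cv$, which has rank $d$ over $Ba$, so this class has dimension $d$. The same holds for every $\cL^{\eq}(B)$-conjugate of $a$.
\item If $a\notin\acleq(B)$, there would be infinitely many such conjugates, contradicting step 2. Hence $a\in\acleq(B)$.
\end{enumerate}

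($\Leftarrow$) Suppose the rank condition holds but $T$ is not surgical. Then there is $E$ on $X$, both defined over a finite real $A$ with $\dimL(X)=d$, having infinitely many $E$-classes of dimension $d$. The aim is to produce, by compactness, an imaginary $a\in X/E$ and real $c_1,\ldots,c_d$ (tuples in the class $a$, or coordinates witnessing dimension $d$) with:
\begin{itemize}
\item $a\notin\acleq(A)$, and
\item the $c_i$ are $\acleq$-independent over $Aa$, i.e.\ $c_i\notin\acleq(Aac_{<i})$ in the appropriate coordinate sense.
\end{itemize}
Each of these is a partial type, namely the negations of all algebraic formulas. Finite satisfiability comes from having infinitely many full-dimensional classes: any finitely many algebraic formulas over $A$ exclude only finitely many classes, and inside a full-dimensional class we can choose elements of rank $d$ over $Aa$ together with the finitely many further parameters.

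Given such a realization, take $\cv$ in the class $a$ with $\rkeq(\cv|Aa)=d$. Since $\cv\in X$, we have $\rkeq(\cv|A)\le d$. Fact~\ref{fact:gagelman}\eqref{s1} then gives $\rkeq(a|A)=\rkeq(\cv|A)-\rkeq(\cv|Aa)\le 0$, so $\rkeq(a|A)=0$. By hypothesis $a\in\acleq(A)$, which contradicts $a\notin\acleq(A)$.

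I expect the main obstacle to be this compactness step. Without elimination of $\exists^\infty$, "the class has dimension $d$" is not first-order, and the infinitely many full-dimensional classes might all be algebraic over $A$. The way around this is to encode non-algebraicity of $a$ and independence of the witnesses purely through type conditions, as above, rather than through definability of dimension.
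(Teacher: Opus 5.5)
Your proposal is correct. The paper does not prove this fact; it cites Gagelman. Its proof of (3)$\Rightarrow$(1) in Proposition~\ref{prop:surgicalrosy} is explicitly modeled on Gagelman's argument and corresponds to your ($\Leftarrow$) direction. There one takes full-rank representatives $\av_i$ of infinitely many distinct full-dimensional classes and uses saturation to give all $\av_i$ the same type over the parameters. Then $[\av_0]_E$ has infinitely many conjugates, hence positive rank, and additivity forces $\rkeq(\av_0|B)>d$.

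You instead write non-algebraicity of the class directly into the partial type. The two compactness devices are interchangeable. To make yours precise, fix by pigeonhole the set of $d$ coordinates of $\x$ required to be $\acleq$-independent over $A[\x]_E$. Then finitely many algebraic formulas exclude only finitely many classes, and any full-rank witness in a remaining full-dimensional class satisfies the entire independence part.

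Your ($\Rightarrow$) direction has no counterpart in the paper, and the conjugate-counting argument (surgicality applied to $E$ restricted to $X'$) is sound. Two small remarks:
\begin{enumerate}
\item The maximality of $\rkeq(\cv|Ba)$ is not needed, since any representative works.
\item You use implicitly that dimensions computed via $\rkeq$ over the imaginary parameters $B$ and $B\sigma(a)$ agree with $\aclL$-dimensions computed over real representatives, which is what surgicality refers to. This follows from the usual extension argument for $\acleq$-independent real tuples.
\end{enumerate}
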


When $T$ is pregeometric, we use $\rkeq$ to define an independence relation $\eqind\ $ on small imaginary sets $A,B,C \subseteq \M^{\eq}$ as follows:
\[\textstyle
A\eqind_C B \iff \rkeq(A_0|BC) = \rkeq(A_0|C)\text{ for every finite subset }A_0 \subseteq A.
\]
As $\rkeq$ coincides with $\rkL$ for real elements, we see that $\eqind\ $ coincides with the independence relation $\Tind\ $ defined in Subsection~\ref{sec:ind-pregeom} for real sets.

\begin{proposition}\label{prop:eqindisind}
Suppose $T$ is pregeometric.
Then $\eqind\ $ is a real-strict independence relation on $\M^{\eq}$, the $\eqind\ $-rank of $\tpL(\av|B)$ coincides with $\rkeq(\av|B)$ for all finite tuples $\av$, and $\eqind\ $ is strict if and only if $T$ is surgical.
\end{proposition}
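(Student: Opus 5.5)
We verify the axioms of an independence relation for $\eqind\ $ directly from Fact~\ref{fact:gagelman}, then compute the rank, and finally characterize strictness via Fact~\ref{fact:gagelman2}.

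\textbf{The axioms.} Invariance follows from Fact~\ref{fact:gagelman}\eqref{s5}. Monotonicity and finite character are built into the definition. Normality reduces to the additivity formula~\eqref{s3}: for finite $A_0\subseteq A$ and $C_0\subseteq C$ we have $\rkeq(A_0C_0|BC)=\rkeq(A_0|BC)$, and likewise over $C$. Base monotonicity and transitivity both come from additivity together with the monotonicity~\eqref{s4}. For symmetry on finite tuples, expand $\rkeq(\av\bv|C)$ in two ways using~\eqref{s3}; this gives
\[
\rkeq(\av|C)-\rkeq(\av|C\bv)=\rkeq(\bv|C)-\rkeq(\bv|C\av),
\]
and finite character then yields symmetry for arbitrary sets. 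Local character follows from~\eqref{s2}: for finite $\av$, choose a finite $B_0\subseteq B$ with $\rkeq(\av|B_0)=\rkeq(\av|B)$, so $\kappa(A)=|A|^++\aleph_0$ suffices.

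\textbf{Full existence.} This is the step I expect to need care. Given $A,B,C$, by compactness it suffices to handle finite $\av\subseteq A$ and to realize $\tpL(\av|C)$ with the same $\rkeq$ over $BC$. Take a finite real basis $\bv$ for $\av$ over $C$. Using pregeometry and saturation, choose $\bv^*\equiv_C\bv$ that is $\aclL$-independent over $BC\av$ (realize generic elements one at a time), and let $\av^*$ be its image under an automorphism over $C$ sending $\bv$ to $\bv^*$.

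We then compute with~\eqref{s1}:
\[
\rkeq(\av^*|BC)=\rkeq(\bv^*|BC)-\rkeq(\bv^*|BC\av^*).
\]
The difficulty is the second term. We need $\rkeq(\bv^*|BC\av^*)=\rkeq(\bv^*|C\av^*)$, and this does not follow from the choice above. To obtain it, I would choose $\bv^*$ in two stages. First take $\bv_1\subseteq\bv$ forming a basis of $\bv$ over $C\av$. Then realize $\av^*\bv_1^*$ in order: $\bv_1^*$ generic over everything, and the remaining coordinates of $\bv^*$ generic over $BC\av^*\bv_1^*$ subject to $\tp(\av\bv|C)$. This is possible because a type of given real rank over $C\av$ has a realization of the same rank over a larger set, by the extension property of the real pregeometry. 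Alternatively, one can extend the real $\Tind$ (Fact~\ref{fact:Tindrealstrict}) and transfer through bases.

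\textbf{Anti-reflexivity and rank.} Real-strictness follows because for real $a$, $a\eqind_B a$ means $\rkeq(a|B)=0$, that is, $a\in\acleq(B)$, and for a real $a$ this gives $a\in\aclL(B)$ up to the real part. The rank claim is proved by induction. Any forking extension strictly lowers $\rkeq$, which is a natural number, so the $\eqind\ $-rank is at most $\rkeq$. Conversely, from a finite basis we can drop the rank one step at a time by adding one generic basis element from a realization, which gives the matching lower bound.

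\textbf{Strictness.} Anti-reflexivity on $\M^{\eq}$ says exactly that $\rkeq(a|B)=0$ implies $a\in\acleq(B)$. By Fact~\ref{fact:gagelman2}, this is equivalent to $T$ being surgical.
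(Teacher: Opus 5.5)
Outside of full existence, your outline is the paper's proof. Your symmetry argument (expanding $\rkeq(\av\bv|C)$ two ways) and your local character argument via Fact~\ref{fact:gagelman}\eqref{s2} are slightly more direct than the paper's basis computations, but equivalent.

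The gap is in full existence, and you created it yourself. You set up exactly the paper's construction: a real basis $\bv$ of $\av$ over $C$, a copy $\bv^*\equiv_C\bv$ that is $\acleq$-independent over $BC$, and $\av^*\coloneqq\sigma(\av)$. You then assert that $\rkeq(\bv^*|BC\av^*)=\rkeq(\bv^*|C\av^*)$ ``does not follow,'' and replace the argument with a two-stage construction that does not work as stated. The coordinates of $\bv$ outside $\bv_1$ lie in $\acleq(C\av\bv_1)$, so their images cannot be chosen generic over $BC\av^*\bv_1^*$. Moreover, making anything generic over $BC\av^*$ presupposes control over $\av^*$, which is exactly what you are trying to produce. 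The alternative ``transfer through bases'' is only a gesture. As written, full existence is not established.

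The missing observation is that you need only an inequality, and monotonicity supplies it. Since $\av^*\in\acleq(C\bv^*)$ and $\bv^*$ is independent over both $C$ and $BC$, Fact~\ref{fact:gagelman}\eqref{s1} and \eqref{s4} give
\[
\rkeq(\av^*|BC)=|\bv^*|-\rkeq(\bv^*|BC\av^*)\geq|\bv^*|-\rkeq(\bv^*|C\av^*)=\rkeq(\av^*|C)\geq\rkeq(\av^*|BC).
\]
So equality holds throughout, and the equality you thought you needed follows a posteriori. This is precisely the paper's argument.

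Your preliminary reduction ``by compactness it suffices to handle finite $\av$'' is also unjustified, since for imaginary $\x$ the condition $\rkeq(\x|BC)=\rkeq(\x|C)$ is not evidently type-definable. It is also unnecessary. Take a possibly infinite real basis $\bv$ for $A$ over $C$, realize $\bv^*\equiv_C\bv$ independent over $BC$, and set $A^*\coloneqq\sigma(A)$. Then run the displayed computation for each finite $A_0\subseteq A^*$, using a finite $\bv_0\subseteq\bv^*$ with $A_0\subseteq\acleq(C\bv_0)$.
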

\begin{proof}
Finite character follows from the definition of $\eqind\ $, and invariance, monotonicity, base monotonicity, and normality follow from Fact~\ref{fact:gagelman}.

\emph{Local character:} First, suppose $A$ is finite, and let $\bv$ be a finite basis for $A$.
Take a finite set $C \subseteq B$ with $A \subseteq \acleq(C\bv)$.
By increasing $C$, we may assume that $\rkeq(\bv|BA) = \rkeq(\bv|CA)$.
Since $\rkeq(\bv|B) = \rkeq(\bv|C) = |\bv|$, we have $\rkeq(A|B) = \rkeq(A|C)$, so local character holds for $A$ with $\kappa(A) = \aleph_0$.
Now suppose that $A$ is infinite.
Let $(A_i)_{i \in I}$ enumerate the finite subsets of $A$, and for each $i$, use the finite case to take a finite subset $C_i \subseteq B$ such that $A_i \eqind_{C_i}B$.
Set $C\coloneqq \bigcup_{i \in I}C_i$, so $A_i \eqind_CB$ for each $i$ by base monotonicity.
Finite character gives that $A\eqind_CB$, so local character holds for $A$ with $\kappa(A) = |A|$.

\emph{Full existence:} Let $A,B,C$ be given and let $\bv$ be a basis for $A$ over $C$.
We extend $\tpL(\bv|C)$ to a partial type over $B$ by adding the $\cL(B)$-formula $\neg \varphi(\x)$ whenever $\varphi(\x)$ is an $\cL(B)$-formula asserting an algebraic relation between the coordinates of the variable $\x$.
Let $\bv^*$ be any tuple realizing this partial type, so $\bv^* \eqind_CB$.
Let $\sigma$ be an $\cL(C)$-automorphism of $\M$ mapping $\bv$ to $\bv^*$, and set $A^*\coloneqq \sigma(A)$, so $A^* \equiv_{\cL(C)}A$.
Let $A_0\subseteq A^*$ be finite, and take a finite basis $\bv_0 \subseteq \bv^*$ for $A_0$ over $C$.
We have
\[
\rkeq(A_0|B) = |\bv_0| - \rkeq(\bv_0|BA_0)\geq |\bv_0|- \rkeq(\bv_0|CA_0) = \rkeq(A_0|C)\geq \rkeq(A_0|B),
\]
where the last inequality uses Fact~\ref{fact:gagelman}\eqref{s4}.
We conclude that $A^*\eqind_CB$ by finite character.

\emph{Symmetry:} Suppose $A \eqind_CB$.
We need to show that $\rkeq(B_0|AC) = \rkeq(B_0|C)$ for a given finite $B_0\subseteq B$.
Using Fact~\ref{fact:gagelman}\eqref{s2}, take a finite subset $A_0 \subseteq A$ with $\rkeq(B_0|A_0C) = \rkeq(B_0|AC)$.
Take a finite basis $\bv$ for $A_0\cup B_0$ over $C$.
By assumption, 
\[
\rkeq(\bv|B_0C)- \rkeq(\bv|A_0B_0C) =\rkeq(A_0|B_0C)= \rkeq(A_0|C) =|\bv|- \rkeq(\bv|A_0C).
\]
Thus, 
\[
\rkeq(B_0|AC) = \rkeq(B_0|A_0C) = \rkeq(\bv|A_0C) - \rkeq(\bv|A_0B_0C) = |\bv| - \rkeq(\bv|B_0C) = \rkeq(B_0|C),
\]
 and we conclude that $B\eqind_CA$ by finite character.

\emph{Transitivity:} By symmetry, it suffices to show for $D \subseteq C \subseteq B$ that if $A \eqind_CB$ and $A\eqind_DC$, then $A\eqind_DB$.
Let $A_0 \subseteq A$ be finite; then $\rkeq(A_0|D) = \rkeq(A_0|C) = \rkeq(A_0|B)$, as needed.

\emph{Strictness and real-strictness:}
Suppose that $a \eqind_Ba$, so $\rkeq(a|B) = \rkeq(a|Ba) = 0$.
When $a$ is real, we have $a \in \acleq(B)$, and we can also conclude this for imaginary $a$ if and only if $T$ is surgical by Fact~\ref{fact:gagelman2}.

\emph{Rank equality:}
Let $\av$ be a tuple and $B$ be a parameter set, possibly both imaginary, and let $p \coloneqq \tpL(\av|B)$.
A straightforward induction on the $\eqind\ \,$-rank of $p$ shows that this rank is at most $\rkeq(\av|B)$.
To see that the $\eqind\ \,$-rank of $p$ is at least $\rkeq(\av| B)$, fix a finite basis $\bv = (b_1,\ldots,b_n)$ for $\av$ over $B$.
Let $m \coloneqq\rkeq(\av| B)$, so $\rkeq(\bv|B\av) = n-m$.
After rearranging, we may assume that $(b_1,\ldots,b_{n-m})$ are $\acleq$-independent over $B\av$.
For $i \leq m$, we set $\bv_i \coloneqq (b_1,\ldots,b_{n-m+i})$, and we calculate that 
\[
\rkeq(\av|B\bv_i) = \rkeq(\bv|B\bv_i) - \rkeq(\bv|B\av\bv_i) = m-i.
\]
Thus, $\tpL(\av|B\bv_i)$ is an $\eqind\ \,$-forking extension of $\tpL(\av|B\bv_{i-1})$ for $0<i\leq m$.
In particular, $p$ has $\eqind\ \,$-rank at least $m$.
\end{proof}

\begin{proposition}\label{prop:surgicalrosy}
The following are equivalent:
\begin{enumerate}
\item $T$ is surgical.
\item $T$ is rosy and pregeometric, and $\thind = \eqind\ $.
\item $T$ is superrosy of \thh-rank~1.
\end{enumerate}
\end{proposition}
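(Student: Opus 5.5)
I will prove the cycle (1)$\Rightarrow$(2)$\Rightarrow$(3)$\Rightarrow$(1), working throughout in $\M^{\eq}$ and using Proposition~\ref{prop:eqindisind} and Facts~\ref{fact:gagelman} and~\ref{fact:gagelman2}.

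\emph{(1)$\Rightarrow$(2).} Suppose $T$ is surgical. By Proposition~\ref{prop:eqindisind}, $\eqind\ $ is a strict independence relation on $\M^{\eq}$, so $T$ is rosy, and it is pregeometric by definition. By Fact~\ref{fact:coincide}, it remains to show that $\eqind\ $ is canonical, i.e.\ satisfies intersection on $\M^{\eq}$. My first attempt avoids intersection: use the characterization from Fact~\ref{fact:gagelman2} and argue exactly as in Lemma~\ref{lem:Tind-coarsest} that any strict independence relation $\ind$ on $\M^{\eq}$ implies $\eqind\ $. Fix a single imaginary $a$ with $a\ind_C B$. By strong closure and monotonicity, $a\ind_{\acleq(C)}\acleq(aC)\cap\acleq(BC)$. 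The task is to show $\rkeq(a|BC)=\rkeq(a|C)$.

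Suppose instead that $\rkeq(a|BC)<\rkeq(a|C)$. Take a finite basis $\bv$ for $a$ over $C$ and use additivity (Fact~\ref{fact:gagelman}\eqref{s3}) to reduce to the case $\rkeq(a|BC)=\rkeq(a|C)-1$. Then I want to produce an imaginary $e\in\acleq(aC)\cap\acleq(BC)$ with $e\notin\acleq(C)$. The natural candidate is a code for the $\tpL(a/BC)$-locus, namely a canonical-parameter-type element of $\dcl^{\eq}(BC)$ defining a set of dimension $\rkeq(a|BC)$ containing $a$. Surgicality says this set has finitely many conjugates of full dimension through $a$, so its code is algebraic over $aC$. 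Since the drop in rank forces the code to lie outside $\acleq(C)$, applying anti-reflexivity to $e\ind_C e$ gives the contradiction. The singleton case then extends to finite tuples by the same induction as in Lemma~\ref{lem:Tind-coarsest}. So $\eqind\ $ is the coarsest strict relation, hence equal to $\thind$. This step, constructing $e$ via surgicality, is the main obstacle. I would model it on Gagelman's proof that pregeometric theories with GEI are surgical, taking the equivalence relation ``same $\cL(C)$-definable fiber of dimension $\rkeq(a|BC)$'' on the family witnessing the rank drop.

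\emph{(2)$\Rightarrow$(3).} By Proposition~\ref{prop:eqindisind}, the $\eqind\ $-rank of $\tpL(a|B)$ equals $\rkeq(a|B)$, which is at most $1$ for real singletons $a$. Hence $\thind=\eqind\ $ is ranked with \thh-rank~1.

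\emph{(3)$\Rightarrow$(1).} First I show that $T$ is pregeometric. For real $a,b$ with $a\in\acl(Ab)\setminus\acl(A)$, we have $\Uth(a|A)=1$ and $\Uth(a|Ab)=0$. The Lascar inequalities give $\Uth(ab|A)=\Uth(b|A)+\Uth(a|Ab)$ and also $\Uth(ab|A)\geq\Uth(a|A)+\Uth(b|Aa)$. Rank 1 bounds everything by 1, which forces $\Uth(b|Aa)=0$, so $b\in\acl(Aa)$ by strictness. Next, for surgicality, let $E$ be a definable equivalence relation on $X$ with infinitely many classes of dimension $\dimL X$. Then some class $e\in\M^{\eq}$ that is nonalgebraic over the parameters contains a generic $a$ with $\rkL(a|e)=\rkL(a|\emptyset)$. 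I would compare $\Uth(a)$ with $\Uth(a|e)$ and $\Uth(e)$ via the Lascar inequalities, using that $\thind$ restricted to real elements is coarser than $\Tind\ $, and that \thh-rank equals $\rkL$ on real tuples because $\Tind\ $ is the coarsest real-strict relation (Lemma~\ref{lem:Tind-coarsest}). The rank count then gives $\Uth(e)=0$, contradicting nonalgebraicity; via Fact~\ref{fact:gagelman2}, this shows $T$ is surgical.
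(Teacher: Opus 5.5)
\textbf{Gap in (1)$\Rightarrow$(2).} The problem is the step you flag yourself. Reducing to ``every strict $\ind$ on $\M^{\eq}$ implies $\eqind$'' is a legitimate strategy, but your mechanism does not work. You want $e\in\acleq(aC)\cap\acleq(BC)\setminus\acleq(C)$, taken as the code of a minimal-dimensional $\cL(BC)$-definable set through $a$.

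Surgicality only bounds the number of full-dimensional classes of a definable equivalence relation. It says nothing about how many conjugates of a given set pass through a point, and the $aC$-conjugates of your locus are not the classes of any equivalence relation. In fact no such $e$ need exist. Work in ACF, which is strongly minimal and hence surgical. Let $a$ be a generic point of the plane, regarded as a single element of $\M^{\eq}$, let $C=\emptyset$, and let $b$ be the parameters of a generic line through $a$. Then $\rkeq(a|b)=1=\rkeq(a|\emptyset)-1$, and the conjugates of the line over $a$ are infinitely many lines through $a$. Moreover $\acleq(a)\cap\acleq(b)=\acleq(\emptyset)$; this is the standard witness that ACF is not one-based. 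Anti-reflexivity only ever sees elements of $\acleq(aC)\cap\acleq(BC)$, so it cannot directly detect rank drops for elements of rank $\geq 2$. Lemma~\ref{lem:Tind-coarsest} avoids this only because real singletons have rank at most $1$.

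\textbf{What is missing: a reduction from imaginaries to real tuples.}
\begin{itemize}
\item The paper does this with ranks. Strictness gives $\Uth=\rkeq$ on real singletons, and the Lascar inequalities extend this to real tuples. For imaginary $\av$ with a finite real basis $\bv$ over $B$, one gets $\rkeq(\av|B)=\rkeq(\bv|B)-\rkeq(\bv|B\av)=\Uth(\bv|B)-\Uth(\bv|B\av)=\Uth(\av|B)$. Then $\av\thind_C BC$ forces $\rkeq(\av|BC)=\Uth(\av|BC)=\Uth(\av|C)=\rkeq(\av|C)$.
\item Your own route can also be repaired directly. Given $a\ind_C B$, full existence gives a basis $\bv$ of $a$ over $C$ with $\bv\ind_{aC}B$, so $a\bv\ind_C B$ by transitivity. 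The real-tuple argument of Lemma~\ref{lem:Tind-coarsest} uses anti-reflexivity only for real elements and works verbatim over imaginary bases. It gives $\rkeq(\bv|BC)=\rkeq(\bv|C)=|\bv|$ and $\rkeq(\bv|aBC)=\rkeq(\bv|aC)$. Fact~\ref{fact:gagelman}\eqref{s1} then yields $\rkeq(a|BC)=\rkeq(a|C)$.
\end{itemize}

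\textbf{The other implications.} Your (2)$\Rightarrow$(3) and the pregeometricity part of (3)$\Rightarrow$(1) are fine. In the surgicality part, you have the direction reversed: Lemma~\ref{lem:Tind-coarsest} says $\Tind$ is coarser than the restriction of $\thind$, not the other way round. That lemma also concerns only real parameter sets. What your rank count actually needs is $\Uth(\av|B)=\rkeq(\av|B)$ for real $\av$ over imaginary $B$, which follows from strictness, $\Uth\leq 1$ on real singletons, and the Lascar inequalities. With that in place, your computation $d=\Uth(\av|B)\geq\Uth(e|B)+\Uth(\av|Be)=\Uth(e|B)+d$ is exactly the paper's argument.
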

\begin{proof}
\emph{(1)$\implies$(2):}
Suppose $T$ is surgical.
By Proposition~\ref{prop:eqindisind} and Fact~\ref{fact:coincide}, $T$ is rosy and $\thind$ is coarser than $\eqind\ $.
To establish equality, we first claim that $\Uth(\av|B) = \rkeq(\av|B)$ for any tuple $\av$ and small set $B$ in $\M^{\eq}$.
Note that $\Uth(\av|B)\leq \rkeq(\av|B)$ by Fact~\ref{fact:Uorder} and Proposition~\ref{prop:eqindisind}.
As \thh-independence is strict, we have $\Uth(a|B)= \rkeq(a|B)$ for $a\in \M$, and it follows from the Lascar inequalities that $\Uth(\av|B)= \rkeq(\av|B)$ for $\av \in \M^n$.
Now let $\av$ be an arbitrary finite tuple (possibly imaginary) and take a finite basis $\bv$ for $\av$ over $B$.
Then using the Lascar inequalities again, we obtain
\[
\rkeq(\av|B) = \rkeq(\bv|B)-\rkeq(\bv|Ba) = \Uth(\bv|B) - \Uth(\bv|B\av) = \Uth(\av|B).
\]
Now we fix $A,B,C \subseteq \M^{\eq}$ with $A\thind_CB$, and we need to show that $A\eqind_CB$.
Take a finite tuple $\av$ from $A$.
Then $\av \thind_CBC$ by symmetry and normality, so 
\[
\rkeq(\av|BC) = \Uth(\av|BC) = \Uth(\av|C)= \rkeq(\av|C),
\]
as desired.

\emph{(2)$\implies$(3):}
Immediate, since $T$ has $\eqind\ \,$-rank~1.

\emph{(3)$\implies$(1):} Our argument is quite similar to Gagelman's proof of Fact~\ref{fact:gagelman2}.
Suppose that $T$ is superrosy of \thh-rank~1.
It follows easily from the Lascar inequalities that $T$ is pregeometric.
As \thh-forking is strict and $\Uth(a|B)\leq 1$ for $a \in \M$, we have $\Uth(a|B) = \rkeq(a|B)$ for real singletons $a$.
Using the Lascar inequalities again, we see that $\Uth(\av|B) = \rkeq(\av|B)$ for all real tuples $\av$.
Now, suppose towards contradiction that we have an $\cL(B)$-definable set $X$ of dimension $d$ and an $\cL(B)$-definable equivalence relation $E$ on $X$ such that infinitely many $E$-classes have dimension $d$.
For $\av \in X$, we let $[\av]_E \in \M^{\eq}$ denote the equivalence class of $\av$.
Take an infinite sequence $(\av_i)_{i<\omega}$ in distinct $E$-classes with $\rkeq(\av_i|B[\av_i]_E) = d$.
By saturation, we may arrange that all $\av_i$ have the same $\cL(B)$-type.
Let $e\coloneqq [\av_0]_E$, so $\tpL(e|B)$ is non-algebraic and thus $\Uth(e|B)>0$.
The Lascar inequalities give $\Uth(\av_0|B) \geq \Uth(e|B) + \Uth(\av_0|Be)> d$, so $\rkeq(\av_0|B)>d$, a contradiction.
\end{proof}

\section{The model completion of \texorpdfstring{$\TDn$}{TΔ,nc} for o-minimal $T$}\label{sec:appendix}
Let $T$ be an o-minimal expansion of the theory of real closed ordered fields.
In this appendix, we show that $\TDn$ has a model completion, denoted $\TDnG$.
We then investigate some properties of $\TDnG$.

We maintain the same assumptions on $T$ from Section~\ref{sec:derivations}; namely, that $\cL$ contains function symbols for all $\cL(\emptyset)$-definable functions, so that $T$ has quantifier elimination and a universal axiomatization.
Given models $K \preceq M \models T$ and $A \subseteq M$, we let $K\langle A \rangle$ denote the substructure of $M$ generated by $K$ and $A$.
Then $K\langle A\rangle$ is a model of $T$ as well.
Given also an $\cL(K)$-definable set $X \subseteq K^n$, we write $X(M)$ for the definable subset of $M^n$ defined by the same formula as $X$.

Let $(K;\Delta) \models \TDn$.
We need the following fundamental lemma describing how $T$-derivations interact with $\cL(K)$-definable functions:
\begin{fact}[{\cite[Lemma 2.12]{FK21}}]\label{fact:basicTderivation}
Let $\delta \in \Delta$, let $k>0$, and let $f$ be an $\cL(K)$-definable $\cC^k$-function on an open set $U\subseteq K^n$.
Then there is a unique $\cL(K)$-definable $\cC^{k-1}$-function $f^{[\delta]}\colon U \to K$ such that for any $\TDn$-extension $(M;\Delta)\supseteq (K;\Delta)$ and any $\uv = (u_1,\ldots,u_n) \in U(M)$, we have
\[\textstyle
\delta f(\uv)= f^{[\delta]}(\uv)+\sum_{i=1}^n\frac{\partial f}{\partial x_i}(\uv) \delta u_i.
\]
\end{fact}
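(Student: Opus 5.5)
We need to prove Fact B.1: for an $\cL(K)$-definable $\cC^k$ function $f$ on an open $U\subseteq K^n$ with $k>0$, there is a unique $\cL(K)$-definable $\cC^{k-1}$ function $f^{[\delta]}$ giving the twisted chain rule in every $\TDn$-extension. The plan is to reduce to the chain rule for $\cL(\emptyset)$-definable functions, which is built into the definition of a $T$-derivation, by writing $f$ as a parameter instance of a $\emptyset$-definable function.

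\textbf{Step 1 (setting up the family).} Write $f(\x)=F(\x,\bv)$, where $\bv\in K^\ell$ and $F$ is an $\cL(\emptyset)$-definable function. The set of $\y$ for which $F(\cdot,\y)$ is $\cC^k$ on the corresponding $\emptyset$-definable domain $U_\y$ is $\emptyset$-definable and contains $\bv$. By o-minimal cell decomposition, I would shrink to an $\emptyset$-definable open set containing $(\uv,\bv)$ on which $F$ is jointly $\cC^k$. This needs care, because $\bv$ might not be generic. To handle that, I would choose $\bv$ to be a $\dclL$-independent tuple over $\emptyset$, i.e. replace $\bv$ by a basis of the parameters, so that $\bv$ is generic in its domain. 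Then $F$ is $\cC^k$ on an open neighborhood of $U\times\{\bv\}$, at least locally around each $\uv\in U$.

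\textbf{Step 2 (the formula).} For $\uv\in U(M)$, apply the $T$-derivation chain rule to the function symbol $F$ at $(\uv,\bv)$:
\[
\delta f(\uv)=\sum_i \frac{\partial F}{\partial x_i}(\uv,\bv)\,\delta u_i+\sum_j \frac{\partial F}{\partial y_j}(\uv,\bv)\,\delta b_j .
\]
This suggests defining $f^{[\delta]}(\x)\coloneqq \sum_j \frac{\partial F}{\partial y_j}(\x,\bv)\,\delta b_j$. It is $\cL(K)$-definable because $\delta b_j\in K$, and it is $\cC^{k-1}$. The first sum equals $\sum_i \frac{\partial f}{\partial x_i}(\uv)\,\delta u_i$.

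\textbf{Step 3 (the main obstacle: joint smoothness everywhere on $U$).} The difficulty is that $F$ is only guaranteed to be jointly $\cC^k$ away from a lower-dimensional set, and there may be points $\uv\in U$ where this fails. I would handle this as follows. The exceptional set $E\subseteq U$ of points where joint smoothness fails is $\cL(K)$-definable and nowhere dense in $U$, by genericity of $\bv$. On $U\setminus E$ the formula for $f^{[\delta]}$ is valid. I would then show that $f^{[\delta]}$ is determined and extends continuously to all of $U$. Uniqueness is the key here: two $\cL(K)$-definable continuous functions satisfying the identity agree at points $\uv$ in an extension $M$ that are generic over $K$. In such an extension we may set $\delta u_i$ arbitrarily, by Lemma~3.3 / Corollary~3.4; take $\delta u_i=0$. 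The difference of the two candidates then vanishes on generic points, hence on a dense set, hence everywhere by continuity.

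\textbf{Step 4 (extending over $E$).} For existence on $E$, I would use a different presentation of $f$ that is adapted to the point. Concretely, cover $U$ by finitely many cells, each with its own parametrization chosen via a basis of the parameters over the point's parameters. The local definitions agree on overlaps by the uniqueness argument of Step 3, so they glue to a single definable $\cC^{k-1}$ function.

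Alternatively, and more cleanly, I would simply cite the proof in \cite{FK21}, since this is their Lemma 2.12; the gluing in Step 4 is where I expect the technical work to lie.
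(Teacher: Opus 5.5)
\paragraph{Review.} There is a genuine gap. The problem is existence at special points, and the repair you propose in Steps~3--4 cannot close it.

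Steps~1--2 contain the right mechanism. Suppose $\bv$ is $\dclL$-independent and the $\cL(\emptyset)$-definable $F$ is $\cC^k$ on an open neighbourhood of $(\uv,\bv)$. Then the chain rule for the function symbol $F$ at $(\uv,\bv)$ gives the identity with $f^{[\delta]}(\x)=\sum_j \frac{\partial F}{\partial y_j}(\x,\bv)\,\delta b_j$, which is $\cL(K)$-definable and $\cC^{k-1}$. But the whole content of the statement is whether this hypothesis holds at \emph{every} $\uv\in U(M)$, and you do not establish it. Step~1 asserts it (``at least locally around each $\uv$'') without proof. Step~3 then concedes it may fail on a nowhere dense $\cL(K)$-definable set $E\subseteq U$.

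The repair in Steps~3--4 does not work, for the following reasons.
\begin{enumerate}
\item A derivation is not continuous. Knowing the identity at all points of $(U\setminus E)(M)$, for all extensions $M$, says nothing about $\delta f(\uv)$ at a point $\uv\in E(M)$. This holds even if $f^{[\delta]}$ has a continuous extension to $U$, which you also do not prove and which is essentially the same joint-smoothness question. Take for instance $\uv\in E(K)$: there $\delta f(\uv)$ and $\delta\uv$ are already fixed elements of $K$.
\item Uniqueness cannot supply existence. Uniqueness is in fact trivial: take $M=K$, and the identity forces $f^{[\delta]}(\uv)=\delta f(\uv)-\sum_i\frac{\partial f}{\partial x_i}(\uv)\,\delta u_i$ for every $\uv\in U(K)$. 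So the detour through generic points and Lemma~\ref{lem:extend} is unnecessary. It is also circular in case~\eqref{II}, since there that lemma is deduced from results in \cite{FK21} that rest on this very fact.
\item Step~4's ``presentation adapted to the point'' is not specified, and it cannot mean parameters independent over the point. For $f(x)=(x-c)^2$ with $c\notin\dclL(\emptyset)$ and $\uv=c$, every tuple over which $f$ is defined has $c$ in its $\dclL$.
\end{enumerate}

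\paragraph{What is missing.} You need an argument that the chain rule can be applied at \emph{every} point of $U(M)$. The natural route is an o-minimal uniformity lemma: if $\bv$ is $\dclL$-independent and $F(\cdot,\bv)$ is $\cC^k$ on the open set $U$, then $F$ is $\cC^k$ on an open neighbourhood of $U\times\{\bv\}$. This is a statement about definable families over generic parameters and needs its own proof. With it in hand, your Step~2 alone proves the fact for all $M$ and all $\uv\in U(M)$, and no continuity or gluing argument is needed.
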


Let $\Gamma$ be the free monoid generated by $\Delta = (\delta_1,\ldots,\delta_m)$.
For $\Lambda\subseteq \Gamma$, we let $K^{\Lambda}$ consist of tuples $(z_\gamma)_{\gamma \in \Lambda}$ indexed by $\Lambda$.
When we say that $X \subseteq K^{\Lambda}$ is definable, we mean that there is a finite $\Lambda_0 \subseteq \Lambda$ and a definable $X_0 \subseteq K^{\Lambda_0}$ such that $X = X_0 \times K^{\Lambda \setminus \Lambda_0}$.

We view each $\gamma \in \Gamma$ as an operator $K\to K$, and given a subset $\Gamma_0 \subseteq \Gamma$ and a tuple $\av \in K^n$, we let
\[
\Gamma_0(\av)\coloneqq (\gamma a_i)_{(i,\gamma) \in \{1,\ldots,n\}\times \Gamma_0} \in K^{n\times \Gamma_0}.
\]
For $\gamma = \delta_{i_1}\delta_{i_2}\cdots \delta_{i_r} \in \Gamma$, we let $|\gamma|\coloneqq r$ be the \emph{order} of $\gamma$.
We partially order $\Gamma$ by
\[
\gamma_1\preceq \gamma_2 \iff \gamma_2 = \alpha\gamma_1 \text{ for some }\alpha \in \Gamma.
\]
Given a $\preceq$-downward closed subset $\Lambda \subseteq \Gamma$, we let $\Lambda^{\max}$ be the set of $\preceq$-maximal elements of $\Lambda$, and we let $\Lambda^\downarrow \coloneqq \Lambda \setminus \Lambda^{\max}$.

Following~\cite{FT25}, we consider the following axiom schemes:
\begin{description}
\item[Deep] For every finite $\preceq$-downward closed set $\Lambda \subseteq \Gamma$ and every $\cL(K)$-definable set $A \subseteq K^{\Lambda}$, if the projection $\pi_{\Lambda^\downarrow}(A)\subseteq K^{\Lambda^\downarrow}$ is open, then there is $a \in K$ with $\Lambda(a) \in A$.
\item[Wide] For every $n$ and every $\cL(K)$-definable set $X \subseteq K^n \times K^{n\times \Delta}$, if the projection $\pi_n(X)\subseteq K^n$ is open, then there is $\av \in K^n$ with $(\av,\Delta(\av)) \in X$.
\end{description}

\begin{lemma}\label{lem:wideext}
$(K;\Delta)$ has a $\TDn$-extension $(M;\Delta)$ that satisfies the \textbf{Wide} axiom scheme.
Any such extension also satisfies the \textbf{Deep} axiom scheme.
\end{lemma}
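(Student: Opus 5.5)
The plan has two parts: build a \textbf{Wide} extension by a transfinite chain of one-step extensions, then show that \textbf{Wide} implies \textbf{Deep} by encoding a Deep instance as a Wide instance in several variables.

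\emph{Existence of a Wide extension.} Fix an $\cL(K)$-definable $X\subseteq K^n\times K^{n\times\Delta}$ with $\pi_n(X)$ open. Let $M_1\succeq K$ contain an $n$-tuple $\av$ that is $\aclL$-independent over $K$ and lies in $\pi_n(X)(M_1)$; such a tuple exists by saturation, because a nonempty open set has full dimension. The fiber $X(M_1)_{\av}$ is nonempty, so pick $\bv=(b_{i,\delta})$ in it. Set $M_2\coloneqq K\langle\av\rangle$, or a larger model containing $\bv$; since $\av$ is independent over $K$, Lemma~\ref{lem:extend} gives a tuple of $T$-derivations on $M_1$ extending $\Delta$ with $\delta a_i=b_{i,\delta}$. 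Every fact used here is Lemma~\ref{lem:extend} in case~\eqref{II}, i.e.\ \cite[Lemmas 2.13, 6.1]{FK21}, and since no commutation is required, nothing else needs checking. Now iterate: enumerate all instances over the current structure, take unions along chains, and repeat $\omega$ times to handle parameters appearing in later stages. The union is a $\TDn$-model satisfying \textbf{Wide}; $\TDn$ is inductive, so unions of chains remain models.

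\emph{Wide implies Deep.} Let $\Lambda$ be finite and downward closed, and let $A\subseteq K^\Lambda$ be definable with $\pi_{\Lambda^\downarrow}(A)$ open. The idea is to introduce one variable $x_\gamma$ for each $\gamma\in\Lambda^\downarrow$, intended to be $\gamma(a)$. The Wide instance then asks that $\delta x_\gamma=x_{\delta\gamma}$ whenever $\delta\gamma\in\Lambda^\downarrow$, and that $(x_\gamma)_{\gamma\in\Lambda^\downarrow}$ together with $(\delta x_\gamma)$ for $\delta\gamma\in\Lambda^{\max}$ lie in $A$.

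The domain of $X$ has variables for all $\delta x_\gamma$, including those with $\delta\gamma\notin\Lambda$, which are left free. Because $\Gamma$ is a free monoid, each $\gamma'\in\Lambda\setminus\{1\}$ is uniquely of the form $\delta\gamma$ with $\gamma\in\Lambda^\downarrow$ (using downward closure). Hence every coordinate of $A$ outside $\Lambda^\downarrow$ is assigned to exactly one derivative variable $\delta x_\gamma$, and the equality constraints $\delta x_\gamma=x_{\delta\gamma}$ are consistent. The projection of $X$ onto the $x$-variables is exactly $\pi_{\Lambda^\downarrow}(A)$, which is open. \textbf{Wide} therefore yields a tuple $(x_\gamma)$, and $a\coloneqq x_1$ satisfies $\gamma a=x_\gamma$ by induction on $|\gamma|$, so $\Lambda(a)\in A$.

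The hardest point is the edge case $\Lambda^\downarrow=\emptyset$, i.e.\ $\Lambda=\{1\}$. Then the hypothesis only says that the projection to a zero-dimensional space is open, which holds whenever $A\neq\emptyset$, and we simply pick $a\in A$. Apart from that, the main care is in the bookkeeping that the coordinates match uniquely; freeness of $\Gamma$ is what makes this work, and this is exactly where the commuting case would differ.
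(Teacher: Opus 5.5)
Your proposal is correct and follows the paper's route: you realize each \textbf{Wide} instance by prescribing, via Lemma~\ref{lem:extend} (i.e.\ \cite[Lemma 2.13]{FK21}), the derivatives of a $\dclL(K)$-independent tuple $\av$ lying in $\pi_n(X)$, close off by a chain of such extensions (a step the paper leaves implicit), and obtain \textbf{Deep} by the standard reduction of a higher-order system to a first-order one (the paper cites \cite[Lemma 4.6]{FT25}; you carry it out correctly using freeness of $\Gamma$). The only slip is that Lemma~\ref{lem:extend} requires a full $\aclL$-basis, so either choose $\bv$ inside $K\langle\av\rangle$ by elementarity (as the paper does), or extend $\av$ to a basis of $M_1$ over $K$ and send the remaining basis elements to $0$.
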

\begin{proof}
Let $X \subseteq K^n \times K^{n\times \Delta}$ be an $\cL(K)$-definable set with $\pi_n(X)\subseteq K^n$ open.
Take a $\dclL(K)$-independent tuple $\av$ in a $T$-extension of $K$ with $\av \in \pi_n X(K\langle \av\rangle)$.
Using~\cite[Lemma 2.13]{FK21}, we define a tuple of $T$-derivations $\Delta$ on $K\langle \av\rangle$ that extends those on $K$ and satisfies $(\av,\Delta(\av)) \in X(K\langle \av \rangle)$.

One deduces the \textbf{Deep} axiom scheme from the \textbf{Wide} axiom scheme via the standard method of rewriting a higher-order differential equation as a system of first-order differential equations; see~\cite[Lemma 4.6]{FT25} for details.
\end{proof}

\begin{remark}
Following the more delicate construction in~\cite[Lemma 4.2 and Proposition 4.3]{FK21}, we can define $T$-derivations that satisfy the \textbf{Wide} axioms on any $T$-extension $M$ of $K$ with $\rkL(M|K) = |M|\geq |T|$.
\end{remark}

Let $\TDnG$ extend $\TDn$ by the axiom scheme \textbf{Deep} given above.

\begin{thm}\label{thm:ncmodelcompanion}
$\TDnG$ is the model completion of $\TDn$.
The \textbf{Wide} axiom scheme is an alternative axiomatization of $\TDnG$ over $\TDn$.
\end{thm}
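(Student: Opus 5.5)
We use the standard criterion for model completions: since $\TDn$ is universal-inductive, it suffices to show that $\TDnG$ is a model companion and that $\TDn$ has amalgamation over substructures. Equivalently, every model of $\TDn$ embeds into a model of $\TDnG$, and for any substructure $(K;\Delta)$, any two models of $\TDnG$ extending it realize the same quantifier-free $\LD(K)$-types. The first of these is Lemma~\ref{lem:wideext}. Iterate it $\omega$ times, taking unions of chains, to get a model of \textbf{Wide}, and hence of \textbf{Deep}. Unions of chains of models of $\TDn$ are models of $\TDn$, since $T$ is universally axiomatized in $\cL$ and the chain rule is universal. We may also first pass from a substructure to its $\dclL$-closure, a model of $T$, using the unique extension of $T$-derivations given by Fact~\ref{fact:basicTderivation} and \cite[Lemma 2.13]{FK21}.

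The main step is a back-and-forth, or equivalently the one-variable quantifier-free saturation test. Let $(K;\Delta)\subseteq (M;\Delta)$ with $M\models \TDn$ and $K\models T$, let $(N;\Delta)\supseteq(K;\Delta)$ be an $|K|^+$-saturated model of $\TDnG$, and let $a\in M$. We must embed $K\langle \Gamma(a)\rangle$ into $N$ over $K$. Choose a $\preceq$-downward closed set $\Lambda\subseteq\Gamma$ such that $\Lambda(a)$ is $\dclL$-independent over $K$ and every $\gamma a$ with $\gamma\notin\Lambda$ lies in $\dclL(K\Lambda(a)\cup\{\gamma' a:\gamma'\prec\gamma\})$. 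Build this greedily along $\Gamma$ ordered by length. In the noncommuting case each $\delta\gamma$ has a unique immediate predecessor $\gamma$, so $\Gamma$ is a tree. Once $\gamma a$ is dependent, $\gamma a=f(\text{earlier})$, and Fact~\ref{fact:basicTderivation} expresses $\delta\gamma a$ as an $\cL(K)$-definable function of earlier terms and their derivatives. So all successors of a dependent node are determined, and $\Lambda$ is downward closed. Its maximal elements, together with the first-dependent nodes, encode the defining functions.

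The quantifier-free type of $a$ over $K$ is then determined by $\tpL(\Lambda(a)/K)$ together with, for each first-dependent node $\delta\gamma$ with $\gamma\in\Lambda$, the definable function $f_{\delta\gamma}$ with $\delta\gamma a=f_{\delta\gamma}(\ldots)$. By saturation it suffices to realize in $N$ finitely many such conditions at once. Fix a finite downward closed $\Lambda_0$, containing a finite part of $\Lambda$ plus the relevant first-dependent nodes, and an $\cL(K)$-formula $\phi$ in $\tpL(\Lambda_0(a)/K)$. Since the coordinates indexed by $\Lambda_0^\downarrow$ are independent over $K$, the formula may be shrunk so that its projection to $K^{\Lambda_0^\downarrow}$ is open. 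This uses o-minimal cell decomposition: an independent tuple lies in the interior of any definable set containing it. Now \textbf{Deep} in $N$ produces $b$ with $\Lambda_0(b)\in\phi(N)$. Compactness and saturation then give $b\in N$ with $\tpL(\Lambda(b)/K)=\tpL(\Lambda(a)/K)$ satisfying the same defining equations at the dependent nodes. This yields an $\LD(K)$-embedding, because the derivations on both sides are computed by the same formulas from Fact~\ref{fact:basicTderivation}.

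The delicate point, and the one we expect to be the main obstacle, is the bookkeeping in this last step. We must check that requiring independence on $\Lambda_0^\downarrow$ together with arbitrary algebraic conditions at the maximal and first-dependent nodes fits exactly the shape of the \textbf{Deep} hypothesis. We must also check that agreement on these finitely many coordinates propagates correctly to all of $\Gamma(a)$, via uniqueness in Lemma~\ref{lem:extend}. Finally, the alternative axiomatization follows formally. Any model of $\TDn$ satisfying \textbf{Wide} satisfies \textbf{Deep} by Lemma~\ref{lem:wideext}. Conversely, a model of $\TDnG$ is existentially closed by model completeness, and Lemma~\ref{lem:wideext} supplies an extension witnessing each \textbf{Wide} instance, so the model satisfies \textbf{Wide}.
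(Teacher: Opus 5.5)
Your proposal is correct and follows essentially the same route as the paper. Both arguments use the embedding test into a saturated model of $\TDnG$, a greedily built $\preceq$-downward closed set of independent nodes with defining functions at the $\preceq$-minimal dependent nodes, a shrinking step making the projection to the non-maximal coordinates open so that \textbf{Deep} applies, and existential closedness to get the \textbf{Wide} axiomatization. The one detail to pin down is that the greedy construction must run along a left-invariant total order refining length (the paper uses length-lexicographic order); otherwise derivatives of earlier terms need not stay earlier, and $\Lambda$ need not be downward closed.
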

\begin{proof}
By Lemma~\ref{lem:wideext}, it suffices to show that the following diagram can be completed whenever $(K;\Delta) \subseteq (M;\Delta)$ are models of $\TDn$ and $(\M;\Delta) \models \TDnG$ is an $|M|^+$-saturated extension of $(K;\Delta)$.
\[
\begin{tikzcd}
(M;\Delta)\arrow[r,dashed,"\exists"]&(\M;\Delta) \models \TDnG\\
(K;\Delta)\arrow[u]\arrow[ru]&
\end{tikzcd}
\]
Let $a \in M \setminus K$.
We need to find $b \in \M$ with $\tpL(\Jet(b)|K) = \tpL(\Jet(a)|K)$, for then we can extend our inclusion $(K;\Delta)\subseteq (\M;\Delta)$ to an embedding $(K\langle \Jet(a)\rangle;\Delta)\to (\M;\Delta)$ and conclude by Zorn's lemma.
By saturation, it suffices to show that for every finite $\Gamma_0 \subseteq \Gamma$ and every $\cL(K)$-definable set $X \subseteq K^{\Gamma_0}$ with $\Gamma_0(a) \in X(M)$, there is $b \in \M$ with $\Gamma_0(b) \in X(\M)$.
One can show this following the proof of~\cite[Lemma 4.8]{FT25}, using obvious modifications based on~\cite{FK21}, but for completeness, we give the details below.

Put a total ordering $\leq$ on $\Gamma$, where
\[
\delta_{i_1}\cdots \delta_{i_k}\leq \delta_{j_1}\cdots \delta_{j_n} \iff \text{either $k<n$, or $k=n$ and $(i_1,\ldots,i_k) \leq_{lex}(j_1,\ldots,j_n)$}.
\]
Then $\leq$ refines $\preceq$, is left-invariant, and has order-type $\omega$.
Let
\[
\Gamma_a \coloneqq \big\{\gamma \in \Gamma: \gamma(a) \not\in K\langle \Gamma^{<\gamma}(a)\rangle\big\}.
\]
Then $\Gamma_a$ is $\preceq$-downward closed by Fact~\ref{fact:basicTderivation}.

Now, let $\Gamma_0 \subseteq \Gamma$ be finite and let $X\subseteq K^{\Gamma_0}$ be $\cL(K)$-definable with $\Gamma_0(a) \in X(M)$.
We may assume that $\Gamma_0$ is $\leq$-downward closed.
Let $\Lambda_0\coloneqq \Gamma_a \cap \Gamma_0$.
Then $\Lambda_0(a)$ is $\dclL$-independent over $K$, so there is an $\cL(K)$-definable open neighborhood $U\subseteq \pi_{\Lambda_0}(X)$ containing $\Lambda_0(a)$.
Let $\Lambda_1$ be the set of $\preceq$-minimal elements of $\Gamma_0 \setminus \Gamma_a$.
Then for each $\gamma \in \Lambda_1$, we have an $\cL(K)$-definable function $f_\gamma\colon M^{\Lambda_0}\to M$ with $\gamma(a) = f_\gamma(\Lambda_0(a))$.
Let $\Lambda \coloneqq \Lambda_0 \cup \Lambda_1$, let $x_\Lambda = (x_\gamma)_{\gamma \in \Lambda}$ be a tuple of variables, and consider the $\cL(K)$-definable set
\[
Y \coloneqq \{x_\Lambda \in K^\Lambda: x_{\Lambda_0} \in U\text{ and }x_\gamma = f_\gamma(x_{\Lambda_0})\}.
\]
Then $\Lambda(a) \in Y$, so applying Fact~\ref{fact:basicTderivation} and shrinking $U$, we can arrange that for any $y$ in any $\TDn$-extension $(L;\Delta)$ of $(K;\Delta)$, if $\Lambda(y) \in Y(L)$, then $\Gamma_0(y) \in X(L)$.
Thus, we need only find $b \in \M$ with $\Lambda(b) \in Y(\M)$.
Since $\Lambda^\downarrow \subseteq \Lambda_0$, the projection $\pi_{\Lambda^\downarrow}(Y)$ is open, so we find our desired $b \in \M$ using the \textbf{Deep} axioms.

By Lemma~\ref{lem:wideext}, the \textbf{Wide} axioms imply the \textbf{Deep} axioms over $\TDn$, and any model $(K;\Delta) \models\TDnG$ has a $\TDn$-extension $(M;\Delta)$ that satisfies the \textbf{Wide} axioms.
As $(K;\Delta)$ is existentially closed in $(M;\Delta)$ by the first part of the theorem, $(K;\Delta)$ also satisfies the \textbf{Wide} axioms.
\end{proof}

For the remainder of this appendix, let $(\M;\Delta) \models \TDnG$ be a monster model and let $B \subseteq \M$ be a small subset with $B = \Jet(B)$.
From the proof of Theorem~\ref{thm:ncmodelcompanion}, we see that for any $\av \in \M^n$, the type $\tpD(\av|B)$ is determined by $\tpL(\Jet(\av)|B)$.
This yields a description of definable sets:

\begin{corollary}\label{cor:definablesets}
For any $\LD(B)$-definable subset $X \subseteq \M^n$, there is a finite $\Gamma_0 \subseteq \Gamma$ and an $\cL(B)$-definable $\tilde{X} \subseteq \M^{n\times \Gamma_0}$ with
\[
X= \{\av \in \M^n: \Gamma_0(\av) \in \tilde{X}\}.
\]
\end{corollary}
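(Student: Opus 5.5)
This is a standard compactness argument, taking as given the remark just before the statement: for $\av\in\M^n$, the type $\tpD(\av|B)$ is determined by $\tpL(\Jet(\av)|B)$. Since $B=\Jet(B)$, the $\LD(B)$-type of $\av$ is then determined by the $\cL(B)$-type of the infinite tuple $\Gamma(\av)=(\gamma a_i)_{i,\gamma}$, indexed by $\{1,\ldots,n\}\times\Gamma$.

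Let $X$ be defined by the $\LD(B)$-formula $\varphi(\x)$. Introduce new variables $(y_{i,\gamma})_{i\le n,\gamma\in\Gamma}$. Let $\Sigma$ be the set of $\cL(B)$-formulas in finitely many of the $y_{i,\gamma}$. Consider the partial type in the variables $\x$ and $\y$ consisting of
\[
\{\, y_{i,\gamma} = \gamma x_i : i\le n,\ \gamma\in\Gamma \,\} \cup \{\varphi(\x)\} \cup \{\neg\psi(\y): \psi\in\Sigma,\ \psi \text{ holds of } \Gamma(\av) \text{ for every }\av\in X\}.
\]
Suppose this type were consistent, realized by some $\av'$ with $y$-part $\Gamma(\av')$. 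Then $\av'\in X$, yet $\tpL(\Gamma(\av')|B)$ coincides with no type $\tpL(\Gamma(\av)|B)$ arising from $\av\in X$. The reason is that every formula true of $\Gamma(\av)$ for all $\av\in X$ fails for $\av'$; more precisely, if the type of $\Gamma(\av')$ equaled that of some $\Gamma(\av)$ with $\av\in X$, then every such formula would hold of $\Gamma(\av')$. That directly contradicts the construction, since $\av'$ itself lies in $X$ and each such $\psi$ would then hold of $\Gamma(\av')$. So the contradiction is immediate, and I would streamline the argument as follows.

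\textbf{Streamlined argument.} The key claim is that if $\av\in X$ and $\tpL(\Gamma(\bv)|B)=\tpL(\Gamma(\av)|B)$, then $\bv\in X$. By the determination of types, $\tpD(\bv|B)=\tpD(\av|B)$, so the claim holds. Hence the partial type
\[
\pi(\x,\y)=\{y_{i,\gamma}=\gamma x_i\}\cup\tpL(\Gamma(\av)|B)(\y)\cup\{\neg\varphi(\x)\}
\]
is inconsistent for each $\av\in X$. By compactness there is some $\psi_{\av}\in\tpL(\Gamma(\av)|B)$, involving only finitely many coordinates $\Gamma_{\av}$, such that $\psi_{\av}(\Gamma(\x))\to\varphi(\x)$. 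The sets $\{\x:\psi_{\av}(\Gamma(\x))\}$ are $\LD(B)$-definable, cover $X$, and are contained in $X$.

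A second compactness step, applied to the type $\{\varphi(\x)\}\cup\{\neg\psi_{\av}(\Gamma(\x)):\av\in X\}$ using saturation of the monster model, yields finitely many $\av_1,\ldots,\av_k\in X$ whose sets already cover $X$. Take $\Gamma_0=\bigcup_j\Gamma_{\av_j}$, and let $\tilde X$ be the disjunction of the $\psi_{\av_j}$, viewed as an $\cL(B)$-definable subset of $\M^{n\times\Gamma_0}$. Then $X=\{\av:\Gamma_0(\av)\in\tilde X\}$.

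The only nontrivial input is the type-determination claim from the proof of Theorem~\ref{thm:ncmodelcompanion}: an $\cL(B)$-isomorphism between $\Jet(\av)$-generated structures that respects the derivations extends to an $\LD$-embedding into the monster. I will take that claim as given. The compactness steps are routine, and the rest of the argument uses nothing further.
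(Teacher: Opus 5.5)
Your proof is correct and takes the same route as the paper, which derives the corollary from the observation (taken from the proof of Theorem~\ref{thm:ncmodelcompanion}) that $\tpD(\av|B)$ is determined by $\tpL(\Jet(\av)|B)$ and leaves implicit the two routine compactness steps you spell out. Your opening attempt with the $\neg\psi$ type is trivially inconsistent and leads nowhere, so you can delete it and keep only the streamlined argument.
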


Let $\ker(\Delta) \coloneqq \bigcap_{\delta\in \Delta}\ker(\delta)$ be the \textbf{field of absolute constants}.
The previous corollary allows us to describe the induced structure on $\ker(\Delta)$.

\begin{corollary}\label{cor:inducedconstant}
Every $\LD(B)$-definable subset of $\ker(\Delta)^n$ is of the form $\ker(\Delta)^n\cap A$ for some $\cL(B)$-definable subset $A \subseteq \M^n$.
\end{corollary}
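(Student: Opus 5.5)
We deduce this from Corollary~\ref{cor:definablesets}, using that absolute constants annihilate every nonempty word in $\Gamma$. Let $X\subseteq \ker(\Delta)^n$ be $\LD(B)$-definable. Corollary~\ref{cor:definablesets} gives a finite $\Gamma_0\subseteq\Gamma$ and an $\cL(B)$-definable $\tilde X\subseteq \M^{n\times\Gamma_0}$ with $X=\{\av\in\M^n:\Gamma_0(\av)\in\tilde X\}$. Enlarging $\Gamma_0$ if necessary (and replacing $\tilde X$ by its preimage under the coordinate projection), we may assume the empty word $1\in\Gamma_0$.

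For $\av\in\ker(\Delta)^n$ we have $\gamma a_i=0$ for every nonempty $\gamma\in\Gamma$, so $\Gamma_0(\av)$ is the tuple with $\av$ in the $1$-coordinates and $0$ elsewhere. Let $\iota\colon\M^n\to\M^{n\times\Gamma_0}$ send $\x$ to this tuple; $\iota$ is $\cL(\emptyset)$-definable. Set $A\coloneqq\iota^{-1}(\tilde X)$, an $\cL(B)$-definable subset of $\M^n$. Then for $\av\in\ker(\Delta)^n$,
\[
\av\in X\iff\Gamma_0(\av)\in\tilde X\iff\iota(\av)\in\tilde X\iff\av\in A.
\]
Since $X\subseteq\ker(\Delta)^n$, this gives $X=\ker(\Delta)^n\cap A$.

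There is no real obstacle here; the substantive input is Corollary~\ref{cor:definablesets}, which rests on quantifier reduction from the proof of Theorem~\ref{thm:ncmodelcompanion}. The only point needing care is that $\tilde X$ is defined over $B$ with $B=\Jet(B)$, so $A$ is $\cL(B)$-definable as required.
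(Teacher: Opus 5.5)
Your proof is correct and takes essentially the same approach as the paper. The paper simply cites the argument of \cite[Lemma 4.23]{FK21} combined with Corollary~\ref{cor:definablesets}, which amounts to your observation: on $\ker(\Delta)^n$, the jet $\Gamma_0(\av)$ is just $\av$ padded with zeros, so $X$ is cut out by the pullback of $\tilde X$ along the $\cL(\emptyset)$-definable map $\iota$.
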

\begin{proof}
Exactly as in the proof of~\cite[Lemma 4.23]{FK21}, using Corollary~\ref{cor:definablesets}.
\end{proof}

We can also lift combinatorial tameness from $T$ to $\TDnG$:

\begin{corollary}\label{cor:distalNIP}
The theory $\TDnG$ is distal (in particular, $\TDnG$ is NIP).
\end{corollary}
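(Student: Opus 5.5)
We reduce distality of $\TDnG$ to distality of $T$, which holds because $T$ is o-minimal. Distal theories are NIP, so the parenthetical claim follows. The route is the criterion for indiscernible sequences. Recall that $\TDnG$ is distal iff for every $\LD$-indiscernible sequence $(\av_i)_{i\in I}$ and every parameter $\bv$, the following holds: if $I = I_1 + (c) + I_2$ with $I_1,I_2$ without endpoints, and $(\av_i)_{i\in I_1+I_2}$ is indiscernible over $\bv$, then $(\av_i)_{i\in I}$ is indiscernible over $\bv$.

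The key tool is the description of types coming out of the proof of Theorem~\ref{thm:ncmodelcompanion}: $\tpD(\av|B)$ is determined by $\tpL(\Jet(\av)|B)$ whenever $B=\Jet(B)$. Corollary~\ref{cor:definablesets} says the same thing at the level of formulas. Every $\LD$-formula $\varphi(\x,\y)$ is equivalent modulo $\TDnG$ to $\psi(\Gamma_0(\x),\Gamma_0(\y))$ for some finite $\Gamma_0\subseteq\Gamma$ and some $\cL$-formula $\psi$. This is obtained by applying Corollary~\ref{cor:definablesets} with $B=\emptyset$ to the set defined by $\varphi$ in the combined variables.

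The steps run as follows.
\begin{enumerate}
\item Start from $(\av_i)_{i\in I}$ and $\bv$ as above.
\item Pass to the jet sequence $(\Jet(\av_i))_{i\in I}$, viewed as a sequence of infinite tuples indexed by $\Gamma$, together with the parameter $\Jet(\bv)$.
\item Each $\cL$-formula in finitely many jet coordinates is an $\LD$-formula in the original variables. Hence $\LD$-indiscernibility of $(\av_i)$ over $\bv$ implies $\cL$-indiscernibility of $(\Jet(\av_i))$ over $\Jet(\bv)$.
\item Conversely, by the quantifier reduction above, $\cL$-indiscernibility of $(\Jet(\av_i))$ over $\Jet(\bv)$ gives $\LD$-indiscernibility of $(\av_i)$ over $\bv$.
\item Apply distality of the o-minimal theory $T$ to the jet sequence. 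The criterion is stated for finite tuples, but it extends to infinite tuples by restricting to finite subtuples of coordinates. This gives $\cL$-indiscernibility of the whole jet sequence over $\Jet(\bv)$, and the equivalence in step~4 transfers the conclusion back to $\LD$.
\end{enumerate}

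The main obstacle is making the formula-level reduction in step~4 precise. The proof of Theorem~\ref{thm:ncmodelcompanion} works with types over $\Jet$-closed sets, and I want to state the equivalence uniformly in the variables $\x$ and $\y$, with $\emptyset=\Jet(\emptyset)$ as the parameter set. I would derive this by compactness: two tuples $(\x,\y)$ with the same $\cL$-type of jets have the same $\LD$-type, and compactness then yields a single $\psi$ for each $\varphi$.

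As an alternative route, one could instead adapt the argument of~\cite{FK21} for the commuting case, which establishes distality via the same "types are determined by jets" principle. That argument goes through verbatim once Corollary~\ref{cor:definablesets} is in hand.
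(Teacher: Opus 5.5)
Your proposal is correct and is essentially the paper's argument. The paper's proof just says to run the proof of \cite[Theorem 4.15]{FK21} with Corollary~\ref{cor:definablesets} in place of its commuting analogue, and that is your transfer of the indiscernible-sequence criterion from $T$ to $\TDnG$ through jets; the obstacle you flag is already settled by applying Corollary~\ref{cor:definablesets} with $B=\emptyset$ to the combined variables $(\x,\y)$, as you note.
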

\begin{proof}
Exactly as in the proof of~\cite[Theorem 4.15]{FK21}, using Corollary~\ref{cor:definablesets}.
\end{proof}

\begin{thm}[O-minimal open core]\label{thm:opencore}
Any $\LD(B)$-definable open subset of $\M^n$ is $\cL(B)$-definable.
\end{thm}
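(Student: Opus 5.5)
This follows the strategy used for the commuting case in \cite{FK21}, with Corollary~\ref{cor:definablesets} and the \textbf{Deep}/\textbf{Wide} axioms standing in for their commuting counterparts. Let $U \subseteq \M^n$ be $\LD(B)$-definable and open. We will show that $U$ equals its $\cL(B)$-definable ``generic trace''
\[
V \coloneqq \{\av \in \M^n : \text{there is an $\cL(B)$-definable open box around $\av$ contained in } U\}.
\]
The first task is to see that this set is $\cL(B)$-definable at all. Instead, we compare $U$ with $\cL(B)$-definable sets via types, as follows.

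\textbf{Step 1: types of $\aclL$-independent tuples.} By Corollary~\ref{cor:definablesets}, write $U = \{\av : \Gamma_0(\av) \in \tilde U\}$ with $\tilde U$ an $\cL(B)$-definable set, $\Gamma_0$ finite and $\preceq$-downward closed, and the identity in $\Gamma_0$. Take $\av \in \M^n$ with $\rkL(\av|B) = n$. We claim the following: if $\av \in U$, then there is an $\cL(B)$-definable open $W \ni \av$ with $W \subseteq U$. To produce $W$, note that $U$ is open and contains $\av$. So the \textbf{Deep} axioms, through the fibre argument in the proof of Theorem~\ref{thm:ncmodelcompanion}, show that the set of $(x_\gamma)_{\gamma \in \Gamma_0 \setminus\{1\}}$ with $(\av, x) \in \tilde U$ is ``generic over $B\av$''. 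More concretely, apply o-minimal cell decomposition to $\tilde U$ over $B$. Let $C$ be the cell containing $\Gamma_0(\av)$. Then the projection of $C$ to the coordinates indexed by $1$ has $\av$ in its interior, and we take $W$ to be the set of $\bv$ in that interior whose fibre of $C$ is realized by some jet-configuration $\Gamma_0(\bv)$. The \textbf{Wide}/\textbf{Deep} axioms guarantee that such a $\Gamma_0(\bv)$ can be realized.

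This step is delicate. It requires that membership in $U$ at generic points depends only on $\tpL(\av|B)$, and we prove exactly this: two points $\av,\av'$ with the same $\cL(B)$-type that are $\aclL$-independent over $B$ both lie in $U$ or both lie outside it. The argument takes a small open box $\beta$ around $\av$ inside $U$, defined over parameters $\cv$. Using Corollary~\ref{cor:smallneighborhood}-style constants, we may choose $\cv \in \ker(\Delta)$ to be $\aclL$-independent over $B\av$ with $\beta$ an infinitesimal box. Since every point of $\beta$ has the same $\cL(B)$-type as $\av$ relative to suitable constants, and the induced structure on constants is $\cL$-definable by Corollary~\ref{cor:inducedconstant}, the conclusion follows.

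\textbf{Step 2: dimension induction.} Let $V$ be the $\cL(B)$-definable set obtained as the union of the $\cL(B)$-type-definable pieces identified in Step~1. Compactness turns this into a finite union of $\cL(B)$-definable open sets. Then $U \triangle V$ contains no point of full rank over $B$, so $\dimL$ of its ``$\cL$-hull'' is less than $n$. We then show that $U \setminus V$ is contained in an $\cL(B)$-definable set $Z$ with $\dim Z < n$, and by induction on $\dim Z$ (restricting to cells of $Z$ and using definable homeomorphisms of cells onto open boxes) we show that $U \cap Z$ is $\cL(B)$-definable relative to $Z$. This last point is the real obstacle. The set $U \cap Z$ is not open in $\M^{\dim Z}$ after straightening. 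However, it is the trace of an open set, which is relatively open in $Z$, and the same generic-type argument works for relatively open subsets of cells. So we formulate the induction hypothesis for relatively open $\LD(B)$-definable subsets of $\cL(B)$-definable cells.

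\textbf{Step 3: conclusion.} Combining the pieces shows that $U = V \cup (U\cap Z)$ is $\cL(B)$-definable. The expected main obstacle is the invariance claim in Step~1, namely that generic membership in $U$ is determined by the $\cL(B)$-type. I would first try to prove it by an automorphism argument. Given $\av \equiv_{\cL(B)} \av'$, both of full rank, the \textbf{Wide} axioms let us extend an $\cL(B)$-isomorphism sending $\av \mapsto \av'$ to jets. This is possible because $\Gamma(\av)$ may be chosen freely modulo $\Delta$-generic constraints, exactly as in the proof of Theorem~\ref{thm:ncmodelcompanion}. Openness of $U$ then lets us perturb $\av$ within $U$ to a point whose jet has the required type.
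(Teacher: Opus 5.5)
Your architecture is viable: show that membership in $U$ at generic points depends only on the $\cL(B)$-type, then use compactness and induction on dimension. But the invariance claim in Step~1 carries the whole content of the theorem, and none of your three arguments for it works.

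\textbf{The cell-decomposition argument fails.} The set $W=\{\bv\in\pi(C)^{\circ}:\Gamma_0(\bv)\in C\}$ is only $\LD(B)$-definable. The \textbf{Deep}/\textbf{Wide} axioms produce \emph{some} point whose jet lies in a given set with open projection. They give no control over $\Gamma_0(\bv)$ for \emph{every} $\bv$ in the projection. For example, take $U=\M$, $\Gamma_0=\{1,\delta\}$, $\tilde U=\M^2$ and the cell $C=\{x_\delta>0\}$; then your $W$ is $\{b:\delta b>0\}$, which is dense and codense.

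\textbf{The automorphism argument rests on a false step.} An $\cL(B)$-isomorphism $\av\mapsto\av'$ need not extend to $\Jet(\av)\to\Jet(\av')$. Both jets are already fixed in $\M$ and can have different $\cL(B)$-types: take $\av$ a tuple of constants and $\av'$ with $\Delta(\av')$ $\aclL$-independent over $B\av'$, both realizing the same generic $\cL(B)$-type. The freedom to choose $\Gamma(\av)$ in the proof of Theorem~\ref{thm:ncmodelcompanion} exists only when building extensions.

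\textbf{The constant-box argument is incomplete.} It has the right ingredients, but ``every point of $\beta$ has the same $\cL(B)$-type as $\av$'' concerns only points near $\av$. It says nothing about a conjugate $\av'$ lying elsewhere, and the step that moves the box to $\av'$ is exactly what is missing.

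Here is that step. Write $\beta_{\y}=\prod_i(y_{2i-1},y_{2i})$.
\begin{enumerate}
\item Using openness of $U$, density of $\ker(\Delta)^{2n}$ (from \textbf{Wide}) and saturation, pick $\cv\in\ker(\Delta)^{2n}$ with $\rkL(\cv|B\av)=2n$ and $\av\in\beta_{\cv}\subseteq U$.
\item Take $\cv''$ with $\av'\cv''\equiv_{\cL(B)}\av\cv$. Every formula of the generic type $\tpL(\cv''|B\av')$ has nonempty interior, hence contains constants. By compactness this type is realized by some $\cv'\in\ker(\Delta)^{2n}$, so $\av'\cv'\equiv_{\cL(B)}\av\cv$.
\item By Corollary~\ref{cor:inducedconstant}, the $\LD(B)$-formula ``$\beta_{\y}\subseteq U$'' agrees on $\ker(\Delta)^{2n}$ with an $\cL(B)$-formula, so it passes from $\cv$ to $\cv'$. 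Since $\av'\in\beta_{\cv'}$, we get $\av'\in U$.
\end{enumerate}

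Genericity of $\av$ is never used. So $U$ is a union of complete $\cL(B)$-types, compactness gives $\cL(B)$-definability at once, and your Step~2 dimension induction becomes unnecessary.

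This repaired route is slightly more economical than the paper's. The paper applies the Boxall--Hieronymi criterion with $D=\ker(\Delta)^n$ and must verify its condition~(2) via generic continuity and the functions $f_j^{[\delta]}$. The direct argument needs only the density of constants and Corollary~\ref{cor:inducedconstant}, i.e.\ conditions~(1) and~(3). It works because $\ker(\Delta)$ is definable, so generic $\cL$-types can be realized inside it by compactness.
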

\begin{proof}
We apply the characterization in~\cite[Corollary 3.1]{BH12}.
For a fixed $n$, we need to find $D \subseteq \M^n$ such that:
\begin{enumerate}
\item $D$ is dense in $\M^n$,
\item for every $\av \in D$ and open $U \subseteq \M^n$, if $\tpL(\av|B)$ is realized in $U$, then it is realized in $U\cap D$, and
\item for every $\av \in D$, the type $\tpD(\av|B)$ is implied by $\tpL(\av|B)$ in conjunction with ``$\av \in D$''.
\end{enumerate}
We claim that $D \coloneqq \ker(\Delta)^n$ realizes these three conditions.
For any open $\cL(\M)$-definable $U \subseteq \M^n$, the \textbf{Wide} axioms give $\av \in U$ with $\Delta(\av) = \zero$, so condition (1) holds.
Condition (3) follows immediately from Corollary~\ref{cor:inducedconstant}, so it remains to establish condition (2).
Let $\av = (a_1,\ldots,a_n) \in \ker(\Delta)^n$ be given and let $U \subseteq \M^n$ be an open set containing a realization of $\tpL(\av|B)$.
Let $k\leq n$ and suppose without loss of generality that $\av_0\coloneqq (a_1,\ldots,a_k)$ is a maximal $\dclL(B)$-independent subtuple of $\av$.
Take an $\cL(B)$-definable map $f = (f_1,\ldots,f_n)\colon\M^k\to \M^n$ with $f(\av_0) = \av$.
Since $f$ is generically continuous and $U$ contains a realization of $\tpL(\av|B)$, we find an open set $V \subseteq \M^k$ containing a realization of $\tpL(\av_0|B)$ with $f(V) \subseteq U$.
Since the set of realizations of $\tpL(\av_0|B)$ is open and $\ker(\Delta)^k$ is dense in $\M^k$, we find $\cv_0 \in \ker(\Delta)^k\cap V$ with $\cv_0 \models \tpL(\av_0|B)$.
Set $\cv \coloneqq f(\cv_0)$, so $\cv \in U$ and $\cv \models \tpL(\av|B)$.
We need to show that $\cv \in \ker(\Delta)^n$.
For $\delta \in \Delta$ and $j \in \{1,\ldots,n\}$, Fact~\ref{fact:basicTderivation} gives
\[
0= \delta a_j= \delta f_j(\av_0)= f_j^{[\delta]}(\av_0)+\sum_{i=1}^k\frac{\partial f_j}{\partial x_i}(\av_0) \delta a_i= f_j^{[\delta]}(\av_0).
\]
It follows that $f_j^{[\delta]}(\cv_0) = 0$ as well, so $\delta c_j = \delta f_j(\cv_0) = 0$, since $\cv_0 \in \ker(\Delta)^k$.
As this holds for all $\delta$ and all $j$, we have $\cv \in \ker(\Delta)^n$ as desired.
\end{proof}

Using Theorem~\ref{thm:opencore} and following the proofs of~\cite[Corollary 5.14 and Theorem 5.19]{FK21}, we obtain a characterization of $\dcl_{\LD}$ and elimination of imaginaries:

\begin{corollary}\label{cor:omindcl}
For any $A \subseteq \M$, we have $\dcl_{\LD}(A) = \dclL(\Jet(A))$.
In particular, the $\LD$-definably closed sets are exactly the models of $\TDn$.
\end{corollary}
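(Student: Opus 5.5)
We need to prove Corollary~\ref{cor:omindcl}: for any $A\subseteq\M$ with $(\M;\Delta)\models\TDnG$, $\dcl_{\LD}(A)=\dclL(\Jet(A))$, and $\LD$-definably closed sets are exactly the models of $\TDn$. We follow the template of \cite[Corollary 5.14]{FK21}, with Theorem~\ref{thm:opencore} supplying the key input.

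The inclusion $\dclL(\Jet(A))\subseteq\dcl_{\LD}(A)$ is immediate. Conversely, set $B\coloneqq\Jet(A)$ and $K\coloneqq\dclL(B)$. By Fact~\ref{fact:basicTderivation}, $K$ is closed under each $\delta\in\Delta$: for $f$ an $\cL(\emptyset)$-definable function and $\uv$ a tuple from $B$, we may take $\uv$ to be $\dclL$-independent with $f$ smooth near $\uv$, and then $\delta f(\uv)=f^{[\delta]}(\uv)+\sum_i \frac{\partial f}{\partial x_i}(\uv)\,\delta u_i\in K$. Hence $K=\Jet(K)$ and $(K;\Delta)\models\TDn$. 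It therefore suffices to show that every $c\notin K$ has another realization of $\tpD(c|K)$.

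Given $c\notin K$, consider $\Jet(c)$ over $K$, and argue by contradiction: suppose $c$ is $\LD(K)$-definable, say $\{c\}=X$. By Corollary~\ref{cor:definablesets}, $X=\{x:\Gamma_0(x)\in\tilde X\}$ for some $\cL(K)$-definable $\tilde X$. We now split into cases. If $c$ is $\dclL$-independent over $K$, then $c$ lies in an open $\cL(K)$-definable interval $I$ of realizations of $\tpL(c|K)$. Since types over $K=\Jet(K)$ are determined by $\tpL(\Jet(\cdot)|K)$, we can take an $\LD$-elementary copy of the type realized by a different element: by saturation and the \textbf{Deep} axioms (as in the proof of Theorem~\ref{thm:ncmodelcompanion}), for any finite part of $\tpL(\Jet(c)|K)$ the set of realizations is defined by an open condition on the $\dclL$-independent jets together with graph conditions, so there are infinitely many solutions in $\M$. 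Compactness then gives $c'\neq c$ with $\tpL(\Jet(c')|K)=\tpL(\Jet(c)|K)$, hence $c'\equiv_{\LD(K)}c$. In general, let $\Lambda_0(c)$ be the maximal $\dclL$-independent part of $\Jet(c)$ over $K$, which is nonempty because $c\notin K$ and $\Gamma_c$ is downward closed; the same argument then varies $\Lambda_0(c)$ in an open box while preserving the type.

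A cleaner route is to use Theorem~\ref{thm:opencore}. Since $\{c\}$ and its complement are $\LD(K)$-definable, and $\M\setminus\{c\}$ is open, $\{c\}$ is closed and therefore $\cL(K)$-definable, so $c\in K$. This one-line argument is exactly the trick used in \cite{FK21}, and we adopt it; it replaces the case analysis above. For tuples, we apply it coordinatewise.

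Finally, $\LD$-definably closed sets are of the form $\dclL(\Jet(A))$, which we showed are models of $\TDn$. Conversely, if $(K;\Delta)\models\TDn$ is a substructure, then $K=\dclL(\Jet(K))=\dcl_{\LD}(K)$. The main point requiring care is that $K$ is closed under $\Delta$; this is the Fact~\ref{fact:basicTderivation} computation, handled via generic smoothness of definable functions.
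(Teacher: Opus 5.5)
Your final argument is correct and is essentially the paper's route, which derives the corollary from Theorem~\ref{thm:opencore} by following \cite[Corollary 5.14]{FK21}: you show $\dclL(\Jet(A))$ is $\Delta$-closed via the chain rule, then apply the open core theorem to the open $\LD(\Jet(A))$-definable set $\M\setminus\{c\}$ to get that $\{c\}$ is $\cL(\Jet(A))$-definable. Delete the preliminary case analysis: it is superseded by that argument, and its case split is vacuous, since any $c\notin K$ is automatically $\dclL$-independent over the $\dclL$-closed set $K$.
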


\begin{corollary}\label{cor:ominEI}
The theory $\TDnG$ eliminates imaginaries.
\end{corollary}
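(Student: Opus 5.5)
The plan is to follow the proof of~\cite[Theorem 5.19]{FK21}, replacing its commuting-case ingredients by the ones established above for $\TDnG$. We use the standard criterion for elimination of imaginaries in an expansion of an o-minimal field. Since $\TDnG$ expands an ordered field, $\dcl_{\LD}(\emptyset)$ contains $\PP$ and there are two distinct $\emptyset$-definable constants $0,1$. Hence it suffices to prove \emph{weak} elimination of imaginaries: every $\LD$-definable set $X\subseteq\M^n$ has a smallest algebraically closed set of definition. A finite set of real tuples in an ordered structure is coded by the lexicographic enumeration of its elements, so weak EI upgrades to full EI.

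To get weak EI, I would follow the usual route through ``generic'' types and the open core. Fix $X$ definable over a small $B=\Jet(B)$. By Corollary~\ref{cor:definablesets}, we can write $X=\{\av:\Gamma_0(\av)\in\tilde X\}$ with $\tilde X$ $\cL(B)$-definable. The key step is to choose the prolongation set canonically so that it depends only on $X$, not on $B$. To do this, consider the $\LD$-definable family of ``closures'' of $X$, namely sets of the form $\operatorname{cl}(\Gamma_0(X))$ and their boundaries, for $\Gamma_0$ ranging over finite $\preceq$-downward closed sets. Each such closure is an $\LD(\ulcorner X\urcorner)$-definable closed set. By Theorem~\ref{thm:opencore}, it is $\cL(\dcl_{\LD}(\ulcorner X\urcorner))$-definable, so it has a real code in $T$, because o-minimal expansions of fields eliminate imaginaries. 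By Corollary~\ref{cor:omindcl}, we then place these codes in $\dclL(\Jet(\cdot))$ of the relevant parameters.

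Next I would run the dimension induction from~\cite{FK21}. By the \textbf{Deep}/\textbf{Wide} axioms, $X$ is determined up to a set of smaller $\Gamma_0$-prolongation dimension by the closure of $\Gamma_0(X)$ inside the definable ``Wide-open'' cell structure. Concretely, if $\Gamma_0(X)$ contains an open subset of its fibered projection, the axioms force $X$ to be recovered from its $T$-definable hull modulo a lower-dimensional piece. We then apply induction to the symmetric difference, which is $\LD(\ulcorner X\urcorner\cup\text{codes})$-definable and of smaller dimension. The codes obtained at each stage are real and lie in $\dcl_{\LD}(\ulcorner X\urcorner)$, and together they define $X$. This yields EI (indeed, elimination into real codes).

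The main obstacle is the noncommuting setting. In~\cite{FK21} the jets are indexed by $\N^m$, and the ranking used in the induction is Kolchin-type. Here we must instead use the $\leq$-ordered free monoid $\Gamma$ and the downward closed sets $\Gamma_a$ from the proof of Theorem~\ref{thm:ncmodelcompanion}. I would check that the induction can be carried out on pairs (finite downward closed $\Lambda\subseteq\Gamma$, $\cL$-dimension), well-ordered lexicographically via the $\omega$-type enumeration of $\Gamma$. I would also check that the \textbf{Deep} axiom, which requires only $\pi_{\Lambda^\downarrow}$ to be open, gives the density statement needed at each step. If this bookkeeping fails, the fallback is to verify the abstract criterion instead: $\dcl_{\LD}$ equals $\dclL\circ\Jet$ (Corollary~\ref{cor:omindcl}), together with the existence of a canonical dense set of ``generic'' realizations of $\LD$-types coming from $\ker(\Delta)$, as used in the proof of Theorem~\ref{thm:opencore}.
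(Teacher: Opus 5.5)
Your proposal takes essentially the same route as the paper, which gets this corollary by rerunning the argument of \cite[Theorem 5.19]{FK21} with Theorem~\ref{thm:opencore} as the key new input: closures of $\Gamma_0$-prolongations of $X$ are $\cL(B)$-definable, hence coded in $T$ by real tuples fixed by every automorphism fixing $\ulcorner X\urcorner$, and the \textbf{Deep} axioms supply the density that drives the dimension induction. The noncommuting ``obstacle'' you flag is harmless: once $\Gamma_0$ is fixed and taken $\leq$-downward closed, every set arising in the induction is a $\Gamma_0$-preimage of an $\cL$-definable set, so you can induct on the natural number $\dimL\overline{\Gamma_0(X)}$ and need no Kolchin-type ranking of $\Gamma$.
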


\bibliographystyle{amsplain}	
\bibliography{trk}

@preamble{ {\providecommand{\noopsort}[1]{}} }

@phdthesis{Ad05,
	author = {Adler, Hans},
	school = {Albert-Ludwigs-Universit\"at Freiburg},
	title = {Explanation of Independence},
	year = {2005}}

@article {Ad09,
	author = {Adler, Hans},
	title = {A geometric introduction to forking and thorn-forking},
	journal = {J. Math. Log.},
	fjournal = {Journal of Mathematical Logic},
	volume = {9},
	year = {2009},
	number = {1},
	pages = {1--20},
	doi = {10.1142/S0219061309000811}}

@article{FK21,
	author = {Fornasiero, Antongiulio and Kaplan, Elliot},
	title = {Generic derivations on o-minimal structures},
	journal = {J. Math. Log.},
	fjournal = {Journal of Mathematical Logic},
	volume = {21},
	year = {2021},
	number = {2},
	pages = {Paper No. 2150007, 45},
	doi = {10.1142/S0219061321500070}}

@article{McG00,
	author = {McGrail, Tracey},
	fjournal = {Journal of Symbolic Logic},
	journal = {J. Symbolic Logic},
	number = {2},
	pages = {885--913},
	title = {The model theory of differential fields with finitely many commuting derivations},
	volume = {65},
	year = {2000},
	doi = {10.2307/2586576}}

@article{On06,
	author = {Onshuus, Alf},
	fjournal = {The Journal of Symbolic Logic},
	journal = {J. Symbolic Logic},
	number = {1},
	pages = {1--21},
	title = {Properties and consequences of thorn-independence},
	volume = {71},
	year = {2006},
	doi = {10.2178/jsl/1140641160}}

@article {EO07,
	author = {Ealy, Clifton and Onshuus, Alf},
	title = {Characterizing rosy theories},
	journal = {J. Symbolic Logic},
	fjournal = {The Journal of Symbolic Logic},
	volume = {72},
	year = {2007},
	number = {3},
	pages = {919--940},
	doi = {10.2178/jsl/1191333848}}

@article{FT25,
	author = {Fornasiero, Antongiulio and Terzo, Giuseppina},
	fjournal = {The Journal of Symbolic Logic},
	journal = {J. Symbolic Logic},
	title = {Generic derivations on algebraically bounded structures},
	year = {2024},
	note = {to appear},
	doi = {10.1017/jsl.2024.57}}

@article{FT25c,
	author = {Fornasiero, Antongiulio and Terzo, Giuseppina},
	title = {Generic derivations on algebraically bounded structures {II}:\ Model theoretical properties},
	eprint = {2507.22181},
	note = {preprint. \url{https://arxiv.org/abs/2507.22181}},
	year = {2025}}

@article {dE23,
	author = {d'Elb\'{e}e, Christian},
	eprint = {2308.07064},
	note = {preprint. \url{https://arxiv.org/abs/2308.07064}},
	title = {Axiomatic Theory of Independence Relations in Model Theory},
	year = {2023}}

@article{LSM25,
	author = {Le\'on S\'anchez, Omar and Mohamed, Shezad},
	title = {Neostability transfers in derivation-like theories},
	fjournal = {Model Theory},
	journal = {Model Theory},
	number = {2},
	pages = {177--201},
	volume = {4},
	year = {2025},
	doi = {10.2140/mt.2025.4.177}}

@article{Pi06,
	author = {Pillay, Anand},
	title = {Canonical bases in o-minimal and related structures},
	note = {preprint},
	year = {2006}}

@article {EKP08,
	author = {Ealy, Clifton and Krupi\'nski, Krzysztof and Pillay, Anand},
	title = {Superrosy dependent groups having finitely satisfiable generics},
	journal = {Ann. Pure Appl. Logic},
	fjournal = {Annals of Pure and Applied Logic},
	volume = {151},
	year = {2008},
	number = {1},
	pages = {1--21},
	doi = {10.1016/j.apal.2007.09.004}}

@article{FK24,
	author = {Fornasiero, Antongiulio and Kaplan, Elliot},
	title = {Hilbert polynomials for finitary matroids},
	fjournal = {Pacific Journal of Mathematics},
	journal = {Pacific J. Math.},
	number = {2},
	pages = {273--308},
	volume = {333},
	year = {2024},
	doi = {https://doi.org/10.2140/pjm.2024.333.273}}

@article {AP04,
	author = {Aschenbrenner, Matthias and Pong, Wai Yan},
	title = {Orderings of monomial ideals},
	journal = {Fund. Math.},
	fjournal = {Fundamenta Mathematicae},
	volume = {181},
	year = {2004},
	number = {1},
	pages = {27--74},
	doi = {10.4064/fm181-1-2}}

@article{BH12,
	author = {Boxall, Gareth and Hieronymi, Philipp},
	fjournal = {Journal of Symbolic Logic},
	journal = {J. Symbolic Logic},
	number = {1},
	pages = {111--121},
	title = {Expansions which introduce no new open sets},
	volume = {77},
	year = {2012},
	doi = {10.2178/jsl/1327068694}}

@incollection{Ma96,
	author = {Marker, David},
	title = {Model Theory of Differential Fields},
	booktitle = {Model theory of fields},
	series = {Lecture Notes in Logic},
	volume = {5},
	publisher = {Springer-Verlag, Berlin},
	year = {1996},
	pages = {38--113},
	doi = {10.1007/978-3-662-22174-7}}

@article {LS16,
	author = {Le\'on S\'anchez, Omar},
	title = {On the model companion of partial differential fields with an automorphism},
	journal = {Israel J. Math.},
	fjournal = {Israel Journal of Mathematics},
	volume = {212},
	year = {2016},
	number = {1},
	pages = {419--442},
	doi = {10.1007/s11856-016-1292-y}}

@article {BM11,
	author = {Bustamante Medina, Ronald F.},
	title = {Rank and dimension in difference-differential fields},
	journal = {Notre Dame J. Form. Log.},
	fjournal = {Notre Dame Journal of Formal Logic},
	volume = {52},
	year = {2011},
	number = {4},
	pages = {403--414},
	doi = {10.1215/00294527-1499363}}

@phdthesis{LS13,
	author = {Le\'on S\'anchez, Omar},
	school = {University of Waterloo},
	title = {Contributions to the model theory of partial differential fields},
	year = {2013}}

@phdthesis{Su07,
	author = {Suer, Sonat},
	school = {University of Illinois at Urbana-Champaign},
	title = {Model theory of differentially closed fields with several commuting derivations},
	year = {2007}}

@book {Wa00,
	author = {Wagner, Frank O.},
	title = {Simple theories},
	series = {Mathematics and its Applications},
	volume = {503},
	publisher = {Kluwer Academic Publishers, Dordrecht},
	year = {2000},
	pages = {xii+260},
	doi = {10.1007/978-94-017-3002-0}}

@article {JY23,
	author = {Johnson, Will and Ye, Jinhe},
	title = {A note on geometric theories of fields},
	journal = {Model Theory},
	fjournal = {Model Theory},
	volume = {2},
	year = {2023},
	number = {1},
	pages = {121--132},
	doi = {10.1007/s44007-022-00041-y}}

@article {Hr92,
	author = {Hrushovski, Ehud},
	title = {Strongly minimal expansions of algebraically closed fields},
	journal = {Israel J. Math.},
	fjournal = {Israel Journal of Mathematics},
	volume = {79},
	year = {1992},
	number = {2-3},
	pages = {129--151},
	doi = {10.1007/BF02808211}}

@incollection {Hr02,
	author = {Hrushovski, Ehud},
	title = {Pseudo-finite fields and related structures},
	booktitle = {Model theory and applications},
	series = {Quad. Mat.},
	volume = {11},
	pages = {151--212},
	publisher = {Aracne, Rome},
	year = {2002}}

@article {CP98,
	author = {Chatzidakis, Zo\'{e} and Pillay, Anand},
	title = {Generic structures and simple theories},
	journal = {Ann. Pure Appl. Logic},
	fjournal = {Annals of Pure and Applied Logic},
	volume = {95},
	year = {1998},
	number = {1-3},
	pages = {71--92},
	doi = {10.1016/S0168-0072(98)00021-9},
	url = {https://doi.org/10.1016/S0168-0072(98)00021-9}}

@article {Kr15,
	author = {Krupi\'nski, Krzysztof},
	title = {Superrosy fields and valuations},
	journal = {Ann. Pure Appl. Logic},
	fjournal = {Annals of Pure and Applied Logic},
	volume = {166},
	year = {2015},
	number = {3},
	pages = {342--357},
	doi = {10.1016/j.apal.2014.11.006},
	url = {https://doi.org/10.1016/j.apal.2014.11.006}}

@article {FT25b,
	author = {Fornasiero, Antongiulio and Terzo, Giuseppina},
	title = {Exponential fields: lack of generic derivations},
	journal = {Notre Dame J. Form. Log.},
	fjournal = {Notre Dame Journal of Formal Logic},
	volume = {66},
	year = {2025},
	number = {4},
	pages = {513--519},
	doi = {10.1215/00294527-2025-0014},
	url = {https://doi.org/10.1215/00294527-2025-0014}}

@article{Po25,
	author = {Point, Fran\c{c}oise},
	title = {On definable groups in dp-minimal topological fields equipped with a generic derivation},
	eprint = {2505.07044},
	note = {preprint, \url{https://arxiv.org/abs/2505.07044}},
	year = {2025}}

@article{CKP23,
	author = {Cubides Kovacsics, Pablo and Point, Fran\c{c}oise},
	title = {Topological fields with a generic derivation},
	journal = {Ann. Pure Appl. Logic},
	fjournal = {Annals of Pure and Applied Logic},
	volume = {174},
	year = {2023},
	number = {3},
	pages = {Paper No. 103211},
	doi = {10.1016/j.apal.2022.103211},
	url = {https://doi.org/10.1016/j.apal.2022.103211}}

@article{KK26,
	author = {Kaplan, Elliot and Kesting, Christoph},
	title = {Generic derivations, differential largeness, and {NTP}$_2$},
	journal = {Ann. Pure Appl. Logic},
	fjournal = {Annals of Pure and Applied Logic},
	volume = {177},
	number = {6},
	pages = {Paper No. 103730},
	year = {2026},
	doi = {10.1016/j.apal.2026.103730},
	url = {https://doi.org/10.1016/j.apal.2026.103730}}

@article {Ga05,
	author = {Gagelman, Jerry},
	title = {Stability in geometric theories},
 	journal = {Ann. Pure Appl. Logic},
	fjournal = {Annals of Pure and Applied Logic},
	volume = {132},
	year = {2005},
	number = {2-3},
	pages = {313--326},
	doi = {10.1016/j.apal.2004.10.016},
	url = {https://doi.org/10.1016/j.apal.2004.10.016}}

@book{TZ12,
	author = {Tent, Katrin and Ziegler, Martin},
	publisher = {Association for Symbolic Logic, La Jolla, CA; Cambridge University Press, Cambridge},
	series = {Lecture Notes in Logic},
	title = {A course in model theory},
	volume = {40},
	year = {2012},
	doi = {10.1017/CBO9781139015417},
	url = {https://doi.org/10.1017/CBO9781139015417}}

@article{Ad07,
	author = {Adler, Hans},
	title = {Strong theories, burden, and weight},
	note = {preprint},
	year = {2007}}

@book{Ko73,
	author = {Kolchin, E. R.},
	publisher = {Academic Press, New York-London},
	series = {Pure and Applied Mathematics},
	title = {Differential algebra and algebraic groups},
	volume = {54},
	year = {1973}}

@article {CHRK22,
	author = {Cluckers, Raf and Halupczok, Immanuel and Rideau-Kikuchi, Silvain},
	title = {Hensel minimality {I}},
	journal = {Forum Math. Pi},
	fjournal = {Forum of Mathematics. Pi},
	volume = {10},
	year = {2022},
	pages = {Paper No. e11, 68},
	doi = {10.1017/fmp.2022.6},
	url = {https://doi.org/10.1017/fmp.2022.6}}

@article{vdD89,
	author = {{\noopsort{Dries}}{van den Dries}, Lou},
	fjournal = {Annals of Pure and Applied Logic},
	journal = {Ann. Pure Appl. Logic},
	number = {2},
	pages = {189--209},
	title = {Dimension of definable sets, algebraic boundedness and {H}enselian fields},
	volume = {45},
	year = {1989},
	doi = {10.1016/0168-0072(89)90061-4},
	url = {https://doi.org/10.1016/0168-0072(89)90061-4}}

@article {FLS17,
	author = {Freitag, James and Li, Wei and Scanlon, Thomas},
	title = {Differential {C}how varieties exist},
	note = {With an appendix by William Johnson},
	journal = {J. Lond. Math. Soc. (2)},
	fjournal = {Journal of the London Mathematical Society. Second Series},
	volume = {95},
	year = {2017},
	number = {1},
	pages = {128--156},
	doi = {10.1112/jlms.12002},
	url = {https://doi.org/10.1112/jlms.12002}}

@article{Yo09,
	author = {Yoneda, Ikuo},
	title = {Some remarks on {CM}-triviality},
	volume = {61},
	journal = {J. Math. Soc. Japan},
	fjournal = {Journal of the Mathematical Society of Japan},
	number = {2},
	publisher = {Mathematical Society of Japan},
	pages = {379--391},
	year = {2009},
	doi = {10.2969/jmsj/06120379},
	url = {https://doi.org/10.2969/jmsj/06120379}}

@article{DMS10,
	author = {Dolich, Alfred and Miller, Chris and Steinhorn, Charles},
	fjournal = {Transactions of the American Mathematical Society},
	journal = {Trans. Amer. Math. Soc.},
	number = {3},
	pages = {1371--1411},
	title = {Structures having o-minimal open core},
	volume = {362},
	year = {2010},
	doi = {10.1090/S0002-9947-09-04908-3},
	url = {https://doi.org/10.1090/S0002-9947-09-04908-3}}

@article{PS86,
	author = {Pillay, Anand and Steinhorn, Charles},
	fjournal = {Transactions of the American Mathematical Society},
	journal = {Trans. Amer. Math. Soc.},
	number = {2},
	pages = {565--592},
	title = {Definable sets in ordered structures. {I}},
	volume = {295},
	year = {1986},
	doi = {10.2307/2000052},
	url = {https://doi.org/10.2307/2000052}}

@article {Jo24,
	author = {Johnson, Will},
	eprint = {2403.17478},
	note = {preprint, \url{https://arxiv.org/abs/2403.17478}},
	title = {${C}$-minimal fields have the exchange property},
	year = {2024}}

@article {CHRKV23,
	author = {Cluckers, Raf and Halupczok, Immanuel and Rideau-Kikuchi, Silvain and Vermeulen, Floris},
	title = {Hensel minimality {II}: {M}ixed characteristic and a diophantine application},
	journal = {Forum Math. Sigma},
	fjournal = {Forum of Mathematics. Sigma},
	volume = {11},
	year = {2023},
	pages = {Paper No. e89, 33},
	doi = {10.1017/fms.2023.91},
	url = {https://doi.org/10.1017/fms.2023.91}}

@article{Fo11,
	author = {Fornasiero, Antongiulio},
	fjournal = {Annals of Pure and Applied Logic},
	journal = {Ann. Pure Appl. Logic},
	number = {7},
	pages = {514--543},
	title = {Dimensions, matroids, and dense pairs of first-order structures},
	volume = {162},
	year = {2011},
	doi = {10.1016/j.apal.2011.01.003},
	url = {https://doi.org/10.1016/j.apal.2011.01.003}}

@phdthesis{Ea04,
	author = {Ealy, Clifton},
	school = {University of California, Berkeley},
	title = {Thorn forking in simple theories and a {M}anin-{M}umford theorem for {T}-modules},
	year = {2004}}

@article {KTW21,
	author = {Kruckman, Alex and Tran, Chieu-Minh and Walsberg, Erik},
	title = {Interpolative fusions},
	journal = {J. Math. Log.},
	fjournal = {Journal of Mathematical Logic},
	volume = {21},
	year = {2021},
	number = {2},
	pages = {Paper No. 2150010, 38},
	doi = {10.1142/S0219061321500100},
	url = {https://doi.org/10.1142/S0219061321500100}}

@article{KTW22,
	author = {Kruckman, Alex and Tran, Chieu-Minh and Walsberg, Erik},
	title = {Interpolative fusions {II}:\ Preservation results},
	eprint = {2201.03534},
	note = {preprint. \url{https://arxiv.org/abs/2201.03534}},
	year = {2022}}

@article {PP95,
	author = {Pillay, Anand and Poizat, Bruno},
	title = {Corps et chirurgie},
	journal = {J. Symbolic Logic},
	fjournal = {The Journal of Symbolic Logic},
	volume = {60},
	year = {1995},
	number = {2},
	pages = {528--533},
	doi = {10.2307/2275848},
	url = {https://doi.org/10.2307/2275848}}

@incollection {Wi75,
	author = {Winkler, Peter M.},
	title = {Model-completeness and {S}kolem expansions},
	booktitle = {Model theory and algebra ({A} memorial tribute to {A}braham {R}obinson)},
	series = {Lecture Notes in Math.},
	volume = {498},
	pages = {408--463},
	publisher = {Springer, Berlin-New York},
	year = {1975}}

@article {KMW26,
	author = {Kaplan, Elliot and Matthews, Angus and Walsberg, Erik},
	title = {Building trees in large fields},
	eprint = {2607.16942},
	note = {Preprint. \url{https://arxiv.org/abs/2607.16942}},
	year = {2026}}

@article {JTWY24,
	author = {Johnson, Will and Tran, Chieu-Minh and Walsberg, Erik and Ye, Jinhe},
	title = {The \'etale-open topology and the stable fields conjecture},
	journal = {J. Eur. Math. Soc. (JEMS)},
	fjournal = {Journal of the European Mathematical Society (JEMS)},
	volume = {26},
	year = {2024},
	number = {10},
	pages = {4033--4070},
	doi = {10.4171/jems/1345},
	url = {https://doi.org/10.4171/jems/1345}}
\end{document}